\theoremstyle{plain}
\newcommand{\colim@}[2]{%
  \vtop{\m@th\ialign{##\cr
    \hfil$#1\operator@font lim$\hfil\cr
    \noalign{\nointerlineskip\kern1.5\ex@}#2\cr
    \noalign{\nointerlineskip\kern-\ex@}\cr}}%
}
\newcommand{\colim}{%
  \mathop{\mathpalette\colim@{\rightarrowfill@\textstyle}}\nmlimits@
}
\newcommand{\prolim@}[2]{%
  \vtop{\m@th\ialign{##\cr
    \hfil$#1\operator@font lim$\hfil\cr
    \noalign{\nointerlineskip\kern1.5\ex@}#2\cr
    \noalign{\nointerlineskip\kern-\ex@}\cr}}%
}
\newcommand{\prolim}{%
  \mathop{\mathpalette\colim@{\leftarrowfill@\textstyle}}\nmlimits@
}
\newcommand{\bA}{{\mathbb A}}
\newcommand{\bF}{{\mathbb F}}
\newcommand{\bG}{{\mathbb G}}
\newcommand{\bP}{{\mathbb P}}
\newcommand{\bQ}{{\mathbb Q}}
\newcommand{\bR}{{\mathbb R}}
\newcommand{\bW}{{\mathbb W}}
\newcommand{\bZ}{{\mathbb Z}}
\newcommand{\cF}{{\mathscr F}}
\newcommand{\cG}{{\mathscr G}}
\newcommand{\cM}{{\mathscr M}}
\newcommand{\cN}{{\mathscr N}}
\newcommand{\caG}{{\mathcal G}}
\newcommand{\caO}{{\mathcal O}}
\newcommand{\caS}{{\mathcal S}}
\newcommand{\caU}{{\mathcal U}}
\newcommand{\caW}{{\mathcal W}}
\newcommand{\caX}{{\mathcal X}}
\newcommand{\fP}{{\mathfrak P}}
\newcommand{\fX}{{\mathfrak X}}
\newcommand{\fa}{{\mathfrak a}}
\newcommand{\fb}{{\mathfrak b}}
\newcommand{\ff}{{\mathfrak f}}
\newcommand{\fm}{{\mathfrak m}}
\newcommand{\fp}{{\mathfrak p}}
\newcommand{\fq}{{\mathfrak q}}
\DeclareMathOperator{\id}{id}
\newcommand{\ev}{{\rm ev}}
\DeclareMathOperator{\im}{im}
\DeclareMathOperator{\GL}{GL}
\DeclareMathOperator{\Perf}{Perf}
\newcommand{\Perff}{\Perf_{\overline{\bF}_q}}
\DeclareMathOperator{\Spec}{Spec}
\newcommand{\obF}{\overline{\bF}_q}
\DeclarePairedDelimiterX{\norm}[1]{\lVert}{\rVert}{#1}
\DeclareMathOperator{\Spa}{Spa}
\newcommand{\cd}{{\rm cd}}
\DeclareMathOperator{\Gal}{Gal}
\newcommand{\sep}{{\rm sep}}
\newcommand{\nr}{{\rm nr}}
\DeclareMathOperator{\nil}{{\rm nil}}
\newcommand\reallywidehat[1]{%
\savestack{\tmpbox}{\stretchto{%
  \scaleto{%
    \scalerel*[\widthof{\ensuremath{#1}}]{\kern-.6pt\bigwedge\kern-.6pt}%
    {\rule[-\textheight/2]{1ex}{\textheight}}
  }{\textheight}%
}{0.5ex}}%
\stackon[1pt]{#1}{\tmpbox}%
}
\newtheorem*{rep@theorem}{\rep@title}
\newcommand{\newreptheorem}[2]{%
\newenvironment{rep#1}[1]{%
 \def\rep@title{#2 \ref{##1}}%
 \begin{rep@theorem}}%
 {\end{rep@theorem}}}
\newtheorem{thm}{Theorem}[section]
\newtheorem{prop}[thm]{Proposition}
\newtheorem{cor}[thm]{Corollary}
\newtheorem*{introCor}{Corollary}
\newtheorem{lm}[thm]{Lemma}
\newtheorem{conj}[thm]{Conjecture}
\newtheorem*{conj*}{Conjecture}
\newtheorem*{thma}{Theorem A}
\newtheorem*{thmb}{Theorem B}
\newtheorem*{thmc}{Theorem C}
\theoremstyle{definition}
\newtheorem{Def}[thm]{Definition}
\newtheorem{rem}[thm]{Remark}
\newtheorem{ex}[thm]{Example}
\newenvironment{pro*}[1][Proof]{{\it{#1:}} }{}
\newcommand\rar{ \rightarrow }
\newcommand\tar{ \twoheadrightarrow }
\newcommand\har{ \hookrightarrow }
\newcommand\Rar{ \Rightarrow }
\newcommand\LRar{ \Leftrightarrow }
\newcommand\longrar{\longrightarrow}
\newcommand\ord{\mathop{\rm ord}}
\newcommand\charac{\mathop{ \rm char}}
\newcommand\dirlim{\mathop{\underrightarrow{\lim} }}
\newcommand{\sm}{{\,\smallsetminus\,}}
\DeclareMathOperator\supp{supp}
\newcommand{\diag}{{\rm diag}}
\DeclareMathOperator\Lang{Lang}
\newcommand*{\doublerightarrow}[2]{\mathrel{
  \settowidth{\@tempdima}{$\scriptstyle#1$}
  \settowidth{\@tempdimb}{$\scriptstyle#2$}
  \ifdim\@tempdimb>\@tempdima \@tempdima=\@tempdimb\fi
  \mathop{\vcenter{
    \offinterlineskip\ialign{\hbox to\dimexpr\@tempdima+1em{##}\cr
    \rightarrowfill\cr\noalign{\kern.5ex}
    \rightarrowfill\cr}}}\limits^{\!#1}_{\!#2}}}
\newcommand*{\triplerightarrow}[1]{\mathrel{
  \settowidth{\@tempdima}{$\scriptstyle#1$}
  \mathop{\vcenter{
    \offinterlineskip\ialign{\hbox to\dimexpr\@tempdima+1em{##}\cr
    \rightarrowfill\cr\noalign{\kern.5ex}
    \rightarrowfill\cr\noalign{\kern.5ex}
    \rightarrowfill\cr}}}\limits^{\!#1}}}
\newcounter{absatzcounter}[section]
\numberwithin{equation}{section}
\numberwithin{equation}{section}
\begin{document}

\title{Arc-descent for the perfect loop functor and $p$-adic Deligne--Lusztig spaces}
\author{Alexander B. Ivanov}
\address{Mathematisches Institut \\ Universit\"at Bonn \\ Endenicher Allee 60 \\ 53115 Bonn, Germany}
\email{ivanov@math.uni-bonn.de}
\maketitle

\makeatletter
\newenvironment{abstracts}{%
  \ifx\maketitle\relax
    \ClassWarning{\@classname}{Abstract should precede
      \protect\maketitle\space in AMS document classes; reported}%
  \fi
  \global\setbox\abstractbox=\vtop \bgroup
    \normalfont\Small
    \list{}{\labelwidth\z@
      \leftmargin3pc \rightmargin\leftmargin
      \listparindent\normalparindent \itemindent\z@
      \parsep\z@ \@plus\p@
      \let\fullwidthdisplay\relax
      \itemsep\medskipamount
    }%
}{%
  \endlist\egroup
  \ifx\@setabstract\relax \@setabstracta \fi
}

\newcommand{\abstractin}[1]{%
  \otherlanguage{#1}%
  \item[\hskip\labelsep\scshape\abstractname.]%
}
\makeatother

\begin{abstracts}
\abstractin{english}
We prove that the perfect loop functor $LX$ of a quasi-projective scheme $X$ over a local non-archimedean field $k$ satisfies arc-descent, strengthening a result of Drinfeld. Then we prove that for an unramified reductive group $G$, the map $LG \rar L(G/B)$ is a $v$-surjection. This gives a mixed characteristic version (for $v$-topology) of an equal characteristic result (in \'etale topology) of Bouthier--\v{C}esnavi\v{c}ius.

In the second part of the article, we use the above results to introduce a well-behaved notion of Deligne--Lusztig spaces $X_w(b)$ attached to unramified $p$-adic reductive groups. We show that in various cases these sheaves are ind-representable, thus partially solving a question of Boyarchenko. Finally, we show that the natural covering spaces $\dot X_{\dot w}(b)$ are pro-\'etale torsors over clopen subsets of $X_w(b)$, and analyze some examples.
\end{abstracts}

\tableofcontents
\vspace{-6ex}
\section{Introduction}\label{sec:intro}

This paper has two parts. In the first we analyze the behavior of vector bundles over the fraction field of the Witt vectors in the arc- and $v$-topologies on perfect schemes, and deduce consequences for the (perfect) loop space of a scheme over a non-archimedean local field. In the second part we use the above to introduce a well-behaved notion of Deligne--Lusztig spaces attached to $p$-adic reductive groups and to establish various properties of these spaces.

\subsection*{Loop spaces and arc-topology} For a $\bQ_p$-scheme\footnote{For simplicity of notation we work with $\bQ_p$ in this introduction, whereas in the main body of the article everything is done for arbitrary non-archimedean local field.} $X$, the perfect loop space of $X$ is the set-valued functor $LX \colon \Perf \rar {\rm Sets}$ on the category of perfect $\bF_p$-algebras, which sends $R$ to $LX(R) = X(W(R)[p^{-1}])$, where $W(R)$ is the ring of $p$-typical Witt-vectors of $R$. A result of Drinfeld \cite{Drinfeld_18} implies that $LX$ is a sheaf for the fpqc-topology on $\Perf$. For quasi-projective $X$ we prove the following strengthening.

\begin{thma}[Theorem \ref{thm:LX_is_vsheaf}] \hypertarget{thm:A}
Let $X/\bQ_p$ be quasi-projective. Then $LX$ is an arc-sheaf.
\end{thma}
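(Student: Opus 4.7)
By Drinfeld's theorem (cited above), $LX$ is already known to be an fpqc-sheaf on $\Perf$, so it remains to upgrade fpqc-descent to arc-descent. The plan is to invoke a criterion in the style of Bhatt--Mathew: an fpqc sheaf $F$ on $\Perf$ is an arc-sheaf once descent is verified along a restricted class of test covers built from rank $\leq 1$ absolutely integrally closed perfect valuation rings. This localizes the problem to a setting in which $W(V)[1/p]$ can be analyzed concretely.

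Next, I would exploit quasi-projectivity to reduce to two model situations. Fix an embedding $X \hookrightarrow \bar X \hookrightarrow \bP^N_{\bQ_p}$, where $\bar X$ is the scheme-theoretic closure of $X$ in $\bP^N$, and let $Z := \bar X \setminus X$ be the closed complement.

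\emph{Projective case.} Via the functor-of-points description, $L\bP^N(R)$ parametrizes locally-free rank-$1$ quotients of $\caO^{N+1}$ on $\Spec W(R)[1/p]$, and the subfunctor $L\bar X \subseteq L\bP^N$ is cut out by the vanishing of the homogeneous equations defining $\bar X$. Arc-descent for $L\bP^N$ therefore reduces to arc-descent for line bundles on $W(R)[1/p]$, which is exactly the key technical input established in the first part of the paper; the equational conditions carving out $L\bar X$ descend automatically (a section of a bundle that vanishes arc-locally vanishes on the base). Hence $L\bar X$ is an arc-sheaf.

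\emph{Open inclusion.} One has the tautological description
\[ LX(R) = \{\, f \in L\bar X(R) : f^{-1}(Z) = \emptyset \text{ in } \Spec W(R)[1/p] \,\}. \]
To see that the condition $f^{-1}(Z) = \emptyset$ is arc-local, it suffices to prove the following surjectivity statement: for any arc-cover $R \to R'$ of perfect $\bF_p$-algebras, the induced map $\Spec W(R')[1/p] \to \Spec W(R)[1/p]$ is surjective on points. Granted this, a hypothetical non-empty $f^{-1}(Z)$ would be hit by $\Spec W(R')[1/p]$, forcing $f_{R'}^{-1}(Z) \neq \emptyset$ and contradicting $f_{R'} \in LX(R')$.

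The main obstacle is precisely the surjectivity lemma. Unlike in equal characteristic, primes of $W(R)[1/p]$ are not controlled by primes of $R$, so one has to analyze $\Spec W(V)[1/p]$ for a perfect valuation ring $V$ directly. After the reduction afforded by the arc-descent criterion one is in the situation of an extension $V \to V'$ of rank $\leq 1$ perfect valuation rings. When $V \to V'$ is faithfully flat the map $W(V) \to W(V')$ is also faithfully flat and surjectivity is immediate; the non-flat strata of a general arc-cover must be treated by an explicit computation of the spectrum of $W(-)[1/p]$, presumably drawing on the same analysis that underlies the vector bundle descent of the first part of the paper.
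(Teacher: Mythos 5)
Your overall architecture matches the paper's: arc-descent for vector bundles over $W(R)[1/p]$ handles $\bP^N$, closed immersions are handled by injectivity of $W(R)\to W(R')$ (so that a section vanishing after an arc-cover vanishes already on the base), and the open-immersion step requires a point-lifting statement for $\Spec W(R')[1/p]\to \Spec W(R)[1/p]$. But the step you yourself flag as the main obstacle is a genuine gap, and the reduction you propose there is the wrong one: you reduce the open case to \emph{full} surjectivity of $\Spec W(R')[1/p]\to\Spec W(R)[1/p]$ for an arc-cover $R\to R'$. That is far stronger than needed and is not established anywhere (nor in the paper). Even in the best case of a faithfully flat extension of perfect valuation rings $V\to V'$, the map $W(V)\to W(V')$ is only $p$-completely faithfully flat, and $\Spec W(V)[1/p]$ is a large, poorly controlled space; there is no evident way to hit an arbitrary prime. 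Your fallback, ``an explicit computation of the spectrum of $W(-)[1/p]$,'' is precisely what one cannot do.

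The missing idea is that for an open immersion $Y\subseteq \bar X$ it suffices to check the factorization on \emph{closed} points: $\Spec W(R)[1/p]$ is affine, so every point specializes to a closed point, and $Y$ is stable under generalization (Lemma \ref{lm:LX_arc_reduction_to_PPn}). One is thus reduced to showing that every \emph{maximal} ideal of $W(R)[1/p]$ lifts to $\Spec W(R')[1/p]$ (Corollary \ref{cor:arc_cover_surj_on_closed_primes}), and this is accessible because a maximal ideal $\fP$ is closed for the $p$-adic topology; dominating $W(R)/(\fP\cap W(R))$ by a valuation ring and passing to an immediate specialization produces a \emph{continuous} rank-$\leq 1$ valuation on $W(R)$ with support $\fP\cap W(R)$, i.e.\ a point of $\Spa_{\leq 1}W(R)$. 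Proposition \ref{prop:surjectivity_on_valuations} then lifts such valuations along arc-covers by pushing down to $\Spa_{\leq 1}(R)$ via precomposition with the Teichm\"uller lift, using the arc-cover hypothesis to extend the resulting non-archimedean field, and invoking $p$-complete faithful flatness only in the residual valuation-ring case. (Separately, the ``Bhatt--Mathew style criterion'' you invoke at the outset to restrict to rank-$\leq 1$ valuation-ring test covers is not a citable statement in the form you use it; the paper works with arbitrary arc-covers $R\to R'$ directly, and in any case such a reduction would not remove the need for the closed-point lifting argument above.)
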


The arc-topology on schemes was studied in \cite{BhattM_18} and independently in \cite{Rydh_submersions_II}. A map of qcqs schemes $S' \rar S$ is an arc-cover if any immediate specialization in $S$ lifts to $S'$. Restricted to perfect $\bF_p$-schemes, the arc-topology agrees with the canonical topology on this category \cite[Thm.~5.16]{BhattM_18}. We prove Theorem \hyperlink{thm:A}{A} for $\bP^n_{\bQ_p}$ by showing, using perfectoid techniques from \cite{ScholzeW_20}, that vector bundles on $W(R)[p^{-1}]$ form an arc-stack in $R$. Then we deduce it for quasi-projective schemes by analyzing the effect of open and closed immersions.

Locally in the arc-topology (and even in the $v$-topology) every qcqs scheme admits a covering by the spectrum of a product of valuation rings. We show, extending some results of Kedlaya \cite{Kedlaya_16_ringAinf}, that all vector bundles over such affine schemes are trivial.

\begin{thmb}[Theorem \ref{lm:no_torsors}]
Let $A \in \Perf$ be such that any connected component of $\Spec A$ is the spectrum of a valuation ring. Then all finite locally free $W(A)[p^{-1}]$-modules of constant rank are free. 
\end{thmb}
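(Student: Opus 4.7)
The strategy is a two-step reduction: pointwise use Kedlaya's theorem for a single perfect valuation ring, and then patch using the profinite structure of $\pi_0(\Spec A)$.

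Let $M$ be a finite locally free $W(A)[p^{-1}]$-module of constant rank $r$, and write $\pi_0 := \pi_0(\Spec A)$, a profinite space; for each $c \in \pi_0$ let $V_c$ be the valuation ring with $\Spec V_c$ the corresponding connected component. The first step is to invoke Kedlaya's theorem \cite{Kedlaya_16_ringAinf}: for the perfect valuation ring $V_c$, every finite projective $W(V_c)[p^{-1}]$-module is free. Applied to the base change $M_c := M \otimes_{W(A)[p^{-1}]} W(V_c)[p^{-1}]$, which is finite projective of rank $r$, this gives a basis of $M_c$ for every $c$.

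For the second step, fix $c$ and lift a chosen basis of $M_c$ to elements $e_1,\dots,e_r \in M$; this defines a map $\varphi \colon W(A)[p^{-1}]^r \to M$ that becomes an isomorphism after base change along $W(A)[p^{-1}] \to W(V_c)[p^{-1}]$. Since $M$ is finitely presented, the locus in $\Spec W(A)[p^{-1}]$ where $\varphi$ is an isomorphism is open; pushing down along $\Spec W(A)[p^{-1}] \to \Spec A$ (which factors through the Witt vector reduction $W(A) \to A$) yields an open subset $U_c \subseteq \Spec A$ containing the entire component $\Spec V_c$. The goal is then to shrink $U_c$ to a clopen neighborhood $W_c \subseteq U_c$ of the component. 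Granted this, profiniteness (hence quasi-compactness) of $\pi_0$ produces finitely many $W_{c_1},\dots,W_{c_n}$ that cover $\Spec A$; refining to a clopen partition $\Spec A = \bigsqcup_i W_i$ corresponds to a decomposition $A = \prod_i A_i$ into finitely many factors. Since Witt vectors commute with finite products, $W(A)[p^{-1}] = \prod_i W(A_i)[p^{-1}]$ and $M$ splits accordingly into free rank-$r$ pieces, giving the desired global trivialization.

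The main obstacle is the clopen-neighborhood refinement. The topological input needed is that, for a qcqs scheme $\Spec A$ whose connected components are spectra of valuation rings, every open neighborhood of a component contains a clopen neighborhood of that component. I would deduce this from the standard fact that in a spectral space a connected component is the intersection of all clopens containing it, combined with quasi-compactness of the closed complement of the open $U_c$; alternatively, one can work modulo $p$ first, noting that $W(A)/p = A$ and that finite projective $A$-modules of constant rank are free (since, component by component, they are free over the local ring $V_c$, and one uses the same profinite patching there), and then lift a basis $p$-adically using Nakayama, which is clean because $W(A)$ is $p$-adically complete.
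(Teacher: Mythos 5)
Your overall architecture --- settle the case of a single perfect valuation ring, then patch over the profinite set $T=\pi_0(\Spec A)$ --- is the same as the paper's, but the patching step has two genuine gaps. First, there is no morphism $\Spec W(A)[p^{-1}]\rar\Spec A$ to ``push down'' along: the Witt vector reduction $W(A)\rar A$ gives a closed immersion $\Spec A\har\Spec W(A)$ landing in the locus $p=0$, which is disjoint from $\Spec W(A)[p^{-1}]$. The correct fibration is over $T$ itself, via the inclusion of the ring of locally constant functions ${\rm Cont}_{\rm disc}(T,\bQ_p)\subseteq W(A)[p^{-1}]$ (the map $\pi$ of \eqref{eq:morphism_to_spec_of_discretes}); the non-isomorphism locus of $\varphi$ is closed in the affine scheme $\Spec W(A)[p^{-1}]$, hence quasi-compact, so its image in the Hausdorff space $T$ is closed, and its complement would give the clopen neighbourhood of $c$ you need. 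Second, and more seriously, that complement contains $c$ only if the \emph{entire fiber} of $\pi$ over $c$ lies in the isomorphism locus --- and this fiber is $\Spec\bigl(W(A)\otimes_{{\rm Cont}(T,\bZ_p),\,c}\bZ_p\bigr)[p^{-1}]$, of which $W(V_c)[p^{-1}]$ is only the ($p$-adically) completed version (Lemma \ref{lm:WAtau_is_padic_completion_of_fiber}). Freeness of $M\otimes W(V_c)[p^{-1}]$, which is what the valuation-ring case gives you, therefore does not say that $\varphi$ is an isomorphism on the actual fiber, and the spreading-out over the filtered colimit $\dirlim_V (W(A)\otimes_{{\rm Cont}(T,\bZ_p)}{\rm Cont}(V,\bZ_p))[p^{-1}]$ cannot even start. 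The paper bridges exactly this point with Gabber--Ramero's decompletion result for Henselian pairs \cite[5.4.42]{GabberR_03}; without some such input your argument does not close.

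Two further remarks. Your fallback of ``working modulo $p$ and lifting a basis by Nakayama'' cannot work: $M$ is a module over $W(A)[p^{-1}]$, where $p$ is a unit, and it carries no given $W(A)$-lattice --- producing one (i.e.\ extending the bundle across the locus $p=0$) is precisely the hard content of the theorem. Relatedly, even the valuation-ring input is not a bare citation: before \cite[Thm.~2.7]{Kedlaya_16_ringAinf} applies, one must first extend $M$ from $\Spec W(V_c)[p^{-1}]$ across the generic point of the special fiber, which the paper does via the Beauville--Laszlo lemma and Noetherian approximation in \S\ref{sec:notorsors_step1}--\ref{sec:notorsors_step2}.
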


In the equal characteristic case, a related result (for more general $A$, but only for modules of rank $1$) was shown for the \'etale topology by Bouthier--\v{C}esnavi\v{c}ius \cite[Cor.~3.1.5]{BouthierC_19}. Theorem \hyperlink{thm:B}{B} has the following consequence for reductive groups over $\bQ_p$.

\begin{introCor}[Corollary \ref{thm:arc_exactness_and_splitting}]
Let $G/\bQ_p$ be an unramified reductive group, and let $B \subseteq G$ be a $\bQ_p$-rational Borel subgroup. Then $LG \rar L(G/B)$ is surjective in the $v$-topology.
\end{introCor}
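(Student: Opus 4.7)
The plan is to prove the following pointwise statement: for every perfect $\bF_p$-algebra $R$ and every $y \in L(G/B)(R) = (G/B)(W(R)[p^{-1}])$, there exists a $v$-cover $R \to R^+$ such that the image of $y$ in $L(G/B)(R^+)$ lifts to $LG(R^+)$. Since $G \to G/B$ is a Zariski-locally trivial right $B$-torsor (trivial over each translate of the big cell), pulling back along $y$ produces a $B$-torsor $P_y$ on $X := \Spec W(R)[p^{-1}]$, and the lifting problem is equivalent to trivializing $P_y$ after passage to a $v$-cover.

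First I would choose a $v$-cover $R \to R^+$ such that every connected component of $\Spec R^+$ is the spectrum of an absolutely integrally closed valuation ring; such covers generate the $v$-topology on $\Perf$ and put us in the setting of Theorem~B. In particular each component contains every finite extension of $\bF_p$, so $W(R^+)[p^{-1}]$ is an algebra over $\bQ_p^{\nr}$. Writing $X^+ := \Spec W(R^+)[p^{-1}]$, it then suffices to trivialize the pullback $P_{y,X^+}$.

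Next, I would use the exact sequence $1 \to U \to B \to T \to 1$, where $U$ is the unipotent radical and $T = B/U$ the torus quotient, to reduce the problem to trivializing the induced $T$-torsor and then the residual $U$-torsor. Since $G$ is unramified, $T$ is an unramified torus splitting over some $\bQ_{p^n} \subset \bQ_p^{\nr}$; hence $T_{X^+} \cong \bG_m^r$. The induced $T$-torsor $P_y \times^B T$ on $X^+$ therefore corresponds to an $r$-tuple of line bundles on $X^+$, and Theorem~B forces each of these to be trivial. Thus the $T$-torsor is trivial and $P_{y,X^+}$ admits a reduction to a $U$-torsor $Q$ on $X^+$.

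To finish, I would trivialize $Q$ using that $U$ admits a filtration by normal subgroup schemes with successive quotients isomorphic to $\bG_a^{d_i}$. Since $X^+$ is affine, $\coh^1(X^+, \caO_{X^+}) = 0$, so $\bG_a$-torsors on $X^+$ are trivial; iterating the non-abelian long exact cohomology sequence along the filtration yields that every $U$-torsor on $X^+$ is trivial, and therefore $Q$, hence $P_{y,X^+}$, is trivial, which delivers the sought lift of $y$ to $LG(R^+)$. The essential input is Theorem~B, which supplies the vanishing of $\Pic(W(R^+)[p^{-1}])$ that powers the torus step; the splitting of the unramified torus and the triviality of unipotent torsors on affine schemes are formal reductions once this Picard-theoretic fact is in hand. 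Accordingly, the only real obstacle is already resolved by Theorem~B.
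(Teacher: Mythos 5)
Your proposal is correct and follows essentially the same route as the paper: reduce via the standard $v$-cover by products of valuation rings (Lemma \ref{lm:standard_vcover}) to the rings $A$ of Theorem \hyperlink{thm:B}{B}, identify the lifting obstruction with the triviality of a $B$-torsor on $\Spec \bW(A)[1/\varpi]$, and kill it by splitting $B$ into its split torus quotient (handled by Theorem \hyperlink{thm:B}{B}, i.e.\ vanishing of $H^1(\GL_1)$) and its split unipotent radical (handled by $H^1_{\rm et}(S,\bG_a)=0$ on affines). The only cosmetic difference is that you phrase the reduction via pullback of the Zariski-locally trivial torsor $G \rar G/B$, where the paper invokes the exactness of $1 \rar B \rar G \rar G/B \rar 1$ as \'etale sheaves and the nonabelian cohomology sequence.
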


Applying Theorem \hyperlink{thm:A}{A} above, Ansch\"utz generalized this corollary recently to the case of arbitrary parabolic subgroups of $G$\cite[Cor.~11.5]{Anschuetz_20}.

\subsection*{$p$-adic Deligne--Lusztig spaces} 
Classical Deligne--Lusztig theory \cite{DeligneL_76} studies families of varieties attached to a reductive group $G$ over a finite field $\bF_q$. Their $\ell$-adic cohomology contains essentially complete information about the representation theory of the finite Chevalley group $G(\bF_q)$. Forty years ago Lusztig suggested the existence of a similar theory over $p$-adic fields \cite{Lusztig_79}. Since then, several related constructions were studied by many people, see in particular \cite{Lusztig_04, Stasinski_09, Boyarchenko_12, BoyarchenkoW_16, Ivanov_15_ADLV_GL2_unram, ChenS_17, Chan_siDL, CI_ADLV, CI_loopGLn}. Nevertheless, until now a satisfactory formalism of ``$p$-adic Deligne--Lusztig varieties'' did not exist; let alone a suitably general definition, it was not even clear in which category they should live. 

Our definition works as follows. Let $G_0$ be an unramified reductive group over $\bQ_p$, and denote by $G$ its base change to $\breve \bQ_p$, the completion of a maximal unramified extension of $\bQ_p$. Fix a maximal torus and a Borel subgroup $T \subseteq B \subseteq G$, both defined over $\bQ_p$. Let $W$ be the Weyl group of $T$ in $G$ and for $w \in W$, let $\caO(w) \subseteq (G/B)^2$ denote the $G$-orbit corresponding to $w$ by the Bruhat decomposition. For $w \in W$, $b \in G(\breve \bQ_p)$ define (Definition \ref{def:Xwb_and_covers}) the \emph{$p$-adic Deligne--Lusztig space} $X_w(b)$ by the Cartesian diagram of functors on $\Perf_{\overline{\bF}_p}$,
\[
\xymatrix{
X_w(b) \ar[r] \ar[d] & L\caO(w)\ar[d] \\ L(G/B) \ar[r]^-{(\id, b\sigma)} \ar[r] & L(G/B) \times L(G/B)
}
\]
where the lower horizontal arrow is the graph of the geometric Frobenius morphism\footnote{Note that $L(G/B)$ lives in characteristic $p$, hence admits a geometric Frobenius. In contrast, the Frobenius action on $(G/B)(\breve \bQ_p)$ surely does not extend to a scheme morphism over $\breve \bQ_p$. This is a crucial difference with the classical case, where $G/B$ itself lives over $\overline\bF_p$ and admits a geometric Frobenius.} of $L(G/B)$ composed with left multiplication by $b$. Similarly, for a lift $\dot w \in G(\breve \bQ_p)$ of $w$, one has a functor $\dot X_{\dot w}(b)$ equipped with a map $\dot X_{\dot w}(b) \rar X_w(b)$.
By Theorem \hyperlink{thm:A}{A}, $X_w(b)$, $\dot X_{\dot w}(b)$ are arc-sheaves. 

Let $G_b$ be the functorial $\sigma$-centralizer of $b$, cf. \S\ref{sec:bsigma_fixed_points}. It is a $\bQ_p$-group, isomorphic to an inner form of a Levi subgroup of $G$. The locally profinite group $G_b(\bQ_p)$ acts (continuously) on $X_w(b)$. Similarly, there is an outer unramified form $T_w$ of $T$, such that $G_b(\bQ_p) \times T_w(\bQ_p)$ acts on $\dot X_{\dot w}(b)$.

In \cite{CI_loopGLn,CI_DrinfeldStrat} the spaces $\dot X_{\dot w}(b)$ for $G = \GL_n$, $w$ Coxeter and $b$ basic (i.e., $G_b$ is an inner form of $G$) were studied in detail, and it was shown that their $\ell$-adic cohomology partially realizes local Langlands and Jacquet--Langlands correspondences. This gives the hope that the spaces $X_w(b)$ for general $G$ allow an elegant geometric construction of a big portion of smooth $G(\bQ_p)$-representations in nicely organized families, and shed new light on local Langlands and Jacquet--Langlands correspondences. 

\subsection*{Properties of $X_w(b)$} In the classical theory, if $w\in W$ is contained in a parabolic subgroup $B \subseteq P \subseteq G$, the Deligne--Lusztig variety $X_w$ admits a certain disjoint union decomposition indexed over $G(\bF_q)/P(\bF_q)$ \cite[\S3]{Lusztig_76_Fin}. In Theorem \ref{cor:disjoint_dec_of_Xwb_complete} we prove an analogous decomposition for the $p$-adic spaces $X_w(b)$. This is more complicated in various respects: we have to make use of Theorem \hyperlink{thm:A}{A},  the additional parameter $b$ appears and there are some non-vanishing Galois cohomology groups. As a consequence, we deduce a sufficient criterion for emptyness of $X_w(b)$ (note that classical Deligne--Lusztig varieties are always non-empty).

\begin{introCor}[Corollary \ref{cor:emptyness_Xwb_support}] If $b$ is not $\sigma$-conjugate to any element of the smallest $\bQ_p$-rational parabolic subgroup $P \subseteq G$ containing $w$, then $X_w(b) = \varnothing$. 
\end{introCor}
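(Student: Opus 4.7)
The plan is to extract the statement as the contrapositive of the $\sigma$-conjugacy information inherent in the disjoint union decomposition for $X_w(b)$ established in Theorem~\ref{cor:disjoint_dec_of_Xwb_complete}. That theorem, modelled on Lusztig's classical decomposition in the case where $w$ lies in the Weyl group $W_P$ of a $\bQ_p$-rational parabolic $P \supseteq B$, expresses $X_w(b)$ as a disjoint union of sub-sheaves indexed by a set which is non-empty only when some class in $B(P)$ maps to $[b] \in B(G)$---equivalently, only when $b$ is $\sigma$-conjugate in $G(\breve\bQ_p)$ to an element of $P(\breve\bQ_p)$. Granted this form of the index set, the corollary is immediate from the hypothesis that no such element exists.

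To illustrate directly why the obstruction to non-emptyness must be $\sigma$-conjugacy of $b$ into $P$, one argues as follows. Since $X_w(b)$ is an arc-sheaf (Theorem~\hyperlink{thm:A}{A}), non-emptyness yields an algebraically closed perfect field $\bar k \supseteq \obF$ with a section $\bar x \in (G/B)(E)$, where $E \coloneqq W(\bar k)[p^{-1}]$, satisfying $(\bar x, b\sigma(\bar x)) \in \caO(w)(E)$. Projecting along $\pi \colon G/B \to G/P$, the orbit $\caO(w)$ lands in the diagonal $G$-orbit of $(G/P)^2$ precisely because $w \in W_P$; hence $\pi(\bar x) = b\sigma(\pi(\bar x))$ in $(G/P)(E)$. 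The field $E$ has $\cd \le 1$, so $H^1_{\et}(E, P) = 0$, and $\pi(\bar x)$ lifts to some $g \in G(E)$, giving $g^{-1} b \sigma(g) \in P(E)$.

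The main obstacle is to descend this witness $g \in G(E)$ so as to produce an element of $G(\breve\bQ_p)$ conjugating $b$ into $P(\breve\bQ_p)$; this is the step where the extra Galois cohomology groups flagged in the paragraph preceding the statement appear, since the descent must be controlled by cocycles valued in the $\sigma$-centralizer of the putative element of $P$. This bookkeeping is carried out inside the proof of Theorem~\ref{cor:disjoint_dec_of_Xwb_complete}, and its contrapositive is exactly the asserted emptiness criterion.
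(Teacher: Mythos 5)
Your proposal is correct and is essentially the paper's own argument: the corollary is deduced there from Lemma~\ref{lm:disj_union} (the projection $X_w(b)\to L(G/P_I)$ with $I=\overline{\supp}(w)$ lands in the $b\sigma$-fixed points, because $\caO(w)$ maps into the diagonal orbit of $(G/P_I)^2$) combined with Proposition~\ref{prop:geometric_profiniteness_of_bsigma_fixed_points}(i), which is precisely your geometric-point computation using $H^1(L,P)=1$. The one step you leave open --- descending the witness $g^{-1}b\sigma(g)\in P(L)$ to $\breve k$ --- is resolved in the paper not by cocycle bookkeeping in a $\sigma$-centralizer or by the full Theorem~\ref{cor:disjoint_dec_of_Xwb_complete}, but simply by Kottwitz's field-independence $B(P)=H^1(\langle\sigma\rangle,P(L))=H^1(\langle\sigma\rangle,P(\breve k))$ together with functoriality of $B(-)$: the class $[g^{-1}b\sigma(g)]_P$ has a representative $b''\in P(\breve k)$ with $[b'']_G=[b]_G$, which already gives $[b]_G\cap P(\breve k)\neq\varnothing$.
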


For a $\sigma$-conjugacy class $C \subseteq W$, let $C_{\rm min}$ denote the set of elements of minimal length. Combining Theorem \ref{cor:disjoint_dec_of_Xwb_complete} with Frobenius-cyclic shifts, we deduce the following. 

\begin{introCor}[Corollary \ref{cor:isomorphism_class_depends_only_on_Cmin}]
Let $b \in G(\breve \bQ_p)$ and let $C$ be a $\sigma$-conjugacy class in $W$. All $X_w(b)$ for $w$ varying through $C_{\rm min}$ are mutually $\underline{G_b(\bQ_p)}$-equivariantly isomorphic.
\end{introCor}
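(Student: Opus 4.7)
The strategy is a two-step reduction: a combinatorial reduction to elementary Frobenius-cyclic shifts, and a geometric construction of the isomorphism in that elementary case.

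First, I invoke He's theorem on minimal length elements in $\sigma$-twisted conjugacy classes of $W$: any two $w, w' \in C_{\min}$ are connected by a finite chain of \emph{Frobenius-cyclic shifts} $w_{i+1} = s_i w_i \sigma(s_i)$, with each $s_i$ a simple reflection, $\ell(w_{i+1}) = \ell(w_i)$ and all intermediate $w_i \in C_{\min}$. This reduces the corollary to producing, for each single step $w \mapsto s w \sigma(s)$ with $s$ a simple reflection and $\ell(s w \sigma(s)) = \ell(w)$, a $\underline{G_b(\bQ_p)}$-equivariant isomorphism $X_w(b) \simeq X_{s w \sigma(s)}(b)$.

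Second, I construct this isomorphism by adapting the classical Deligne--Lusztig $\bP^1$-correspondence to the perfect loop setting. Working inside $L(G/B)^2$ (an arc-sheaf by Theorem \hyperlink{thm:A}{A}), I consider the subfunctor
\[
Z = \bigl\{(xB, yB) : \operatorname{inv}(xB, yB) \in \{1,s\},\ \operatorname{inv}(xB, b\sigma(y)B) = w\bigr\},
\]
defined using the parabolic $P_s = B \sqcup BsB$ and its fibration $\pi_s : G/B \to G/P_s$. The length equality $\ell(s w \sigma(s)) = \ell(w)$ forces exactly one of $sw < w$ or $w \sigma(s) < w$, and in either case the two projections $Z \to X_w(b)$ and $Z \to X_{s w \sigma(s)}(b)$ can be shown to be isomorphisms of arc-sheaves; this reduces over geometric points to the standard $\bP^1$-fibre analysis of Bruhat cells under $\pi_s$. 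Left multiplication by $G_b(\bQ_p)$ on $L(G/B)^2$ preserves the defining conditions of $Z$ and of both $X_w(b)$ and $X_{s w \sigma(s)}(b)$, so the resulting isomorphism is automatically equivariant.

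The main obstacle is the second step: the Bruhat-cell analysis which in the classical (equal characteristic) setting is scheme-theoretic must here be promoted to a statement about morphisms of arc-sheaves on $\Perf_{\overline{\bF}_p}$. Theorem \hyperlink{thm:A}{A} is essential, as it ensures that $Z$ is well-defined as a subsheaf of $L(G/B)^2$ and that checking the projections are isomorphisms can be reduced to a verification on sufficiently large perfect test algebras. As a simplification, the disjoint decomposition of Theorem \ref{cor:disjoint_dec_of_Xwb_complete} can be applied first to reduce to the case where $w$ is elliptic in the Levi of the smallest $\bQ_p$-rational parabolic containing it, which cleans up the case analysis in the $\bP^1$-correspondence.
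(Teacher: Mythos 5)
Your overall strategy (reduce to cyclic shifts, then build the isomorphism geometrically) is the paper's strategy, but your combinatorial reduction contains a genuine gap. The claim that \emph{any} two elements of $C_{\rm min}$ are connected by a chain of elementary Frobenius-cyclic shifts $w_{i+1}=s_iw_i\sigma(s_i)$ of constant length is false for general $\sigma$-conjugacy classes. Already for $W=S_3$ with trivial $\sigma$ and $C$ the class of transpositions, $C_{\rm min}=\{s_1,s_2\}$, but the only length-preserving elementary conjugate of $s_1$ is $s_1$ itself (conjugating by $s_2$ raises the length to $3$), so $s_1$ and $s_2$ are not connected by cyclic shifts of any kind. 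Connectivity of $C_{\rm min}$ by cyclic shifts (Theorem \ref{thm:all_min_length_elements_cyclic_shifts}) holds only for \emph{cuspidal} classes with $\overline{\supp}(w)=S$. For the general case the paper must first apply the disjoint decomposition of Theorem \ref{cor:disjoint_dec_of_Xwb_complete} to descend each $X_{w_j}(b)$ to $X_{w_j}^{M_{J_j}}(b_{j,i})$ with $J_j=\overline{\supp}(w_j)$, and then — since two minimal-length elements can have \emph{different} supports $J_1\neq J_2$ — invoke the parametrization of Theorem \ref{thm:non_cuspidal_classes_parametrization} to produce $x\in W$ with $J_2=J_1^x$ and use conjugation by $x$ to identify the two Levis and match up the cuspidal classes inside them. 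You mention the disjoint decomposition only as an optional ``simplification,'' but it is essential, and your proposal never addresses the comparison across distinct Levi subgroups.

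A secondary point: in your geometric step you say the projections $Z\rar X_w(b)$ and $Z\rar X_{sw\sigma(s)}(b)$ are isomorphisms because this ``reduces over geometric points'' to the $\bP^1$-fibre analysis. Checking bijectivity on geometric points is not sufficient for subsheaves of $L(G/B)$: Remark \ref{rem:LPn} exhibits a monomorphism $L^+\bP^n\rar L\bP^n$ that is bijective on all algebraically closed points without being an isomorphism. The correct route (as in Lemma \ref{lm:Frobenius_cyclic_shift}) is to use the scheme-level isomorphism $\caO(w)\cong\caO(w_1)\times_{G/B}\caO(w_2)$ over $\breve k$ together with the fact that $L(\cdot)$ commutes with fiber products; your elementary conjugation $w\mapsto sw\sigma(s)$ with $\ell(sw\sigma(s))=\ell(w)$ is a special case of this cyclic-shift lemma (with $w_1=s$ or $w_2=\sigma(s)$ according to whether $sw<w$ or $w\sigma(s)<w$), so once the combinatorial step is repaired you can simply quote it.
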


Note that for classical Deligne--Lusztig varieties, one only has universal homeomorphisms. As we work over $\Perf$, all Frobenii are invertible, which is the reason why we get isomorphisms.

\smallskip

Next we partially answer a question of Boyarchenko, whether $p$-adic Deligne--Lusztig spaces exist as ind-schemes \cite[Problem 1]{Boyarchenko_12}. 

\begin{thmc}[Corollary \ref{thm:ind_rep_min_length}] \hypertarget{thm:C}
Let $w \in W$ be of minimal length in its $\sigma$-conjugacy class. Then for all $b \in G(\breve \bQ_p)$ and all lifts $\dot w$ of $w$, $X_w(b)$, $\dot X_{\dot w}(b)$ are ind-representable.
\end{thmc}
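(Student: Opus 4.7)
The plan is to combine Corollary \ref{cor:isomorphism_class_depends_only_on_Cmin} and Theorem \ref{cor:disjoint_dec_of_Xwb_complete} as reduction tools, and then to identify the resulting ``standard piece'' with an already-known ind-representable object sitting inside the affine flag variety. First, by Corollary \ref{cor:isomorphism_class_depends_only_on_Cmin} I may replace $w$ by any other element of $C_{\min}$, so I pick a convenient representative; next, by Theorem \ref{cor:disjoint_dec_of_Xwb_complete}, $X_w(b)$ splits into $\underline{G_b(\bQ_p)}$-translates of a single standard piece, and since a disjoint union of ind-representable sheaves indexed by a discrete set is ind-representable, it suffices to treat that standard piece. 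By Corollary \ref{cor:emptyness_Xwb_support} I may furthermore assume $b \in P(\breve\bQ_p)$, where $P = MN$ is the smallest $\bQ_p$-rational parabolic containing $w$, and, after a further standard reduction using $\sigma$-conjugation by elements of the unipotent radical $N$, that $b \in M(\breve\bQ_p)$.

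Next I would appeal to the classification of minimal-length elements in $\sigma$-conjugacy classes due to Geck--Kim--Pfeiffer, and extended to the twisted setting by He and He--Nie. This provides a cyclic-shift/conjugation chain that moves my chosen $C_{\min}$-representative into a standard $\sigma$-stable Levi $M' \subseteq M$ where it becomes $\sigma$-elliptic in $W_{M'}$, i.e., lies in no proper $\sigma$-stable parabolic of $W_{M'}$. Corollary \ref{cor:isomorphism_class_depends_only_on_Cmin} keeps $X_w(b)$ invariant along this chain, so the problem reduces to ind-representability for the standard piece attached to a $\sigma$-elliptic $w$.

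For $\sigma$-elliptic $w$ I would exploit the Iwahori--Bruhat stratification of $L(G/B)$. Each stratum is representable by a perfect scheme of finite type over $\overline{\bF}_p$ thanks to the theory of the mixed-characteristic affine flag variety (Zhu, Bhatt--Scholze); Theorem~\hyperlink{thm:A}{A} then guarantees that the subfunctor of $L(G/B)$ cut out by $g^{-1}b\sigma(g) \in L\caO(w)$ is an arc-sheaf and is correctly computed by its scheme-theoretic description. The key analytic input is that the twisted Lang map $g \mapsto g^{-1}b\sigma(g)$, when restricted to the preimage of $BwB$, is supported on a bounded union of Iwahori strata. Once this bound is in place, $X_w(b)$ becomes a closed subfunctor of an ind-scheme of ind-finite type over $\overline{\bF}_p$, hence itself ind-representable. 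Since $\dot X_{\dot w}(b) \to X_w(b)$ is a pro-\'etale torsor under a profinite group onto a clopen subset (as announced in the abstract and established later in the paper), ind-representability of $\dot X_{\dot w}(b)$ follows formally.

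The main obstacle I anticipate is exactly this boundedness step: it is the precise analogue, in the $p$-adic loop-space setting, of the classical finite-dimensionality theorem for affine Deligne--Lusztig varieties attached to straight elements. Establishing it will require a careful interplay between the Newton point of $b$ and the combinatorics of the $\sigma$-elliptic element $w$ inside $W_{M'}$, most likely via a direct analysis of the twisted Lang map at the level of the Iwahori--Bruhat decomposition of $L(G/B)$, together with the arc-descent statement of Theorem~\hyperlink{thm:A}{A} to legitimately pass between the functorial and scheme-theoretic descriptions.
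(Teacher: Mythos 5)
Your opening reductions (the disjoint decomposition of Theorem \ref{cor:disjoint_dec_of_Xwb_complete}, passage to a Levi, and cyclic shifts via Corollary \ref{cor:isomorphism_class_depends_only_on_Cmin} together with the Geck--Kim--Pfeiffer/He classification of minimal-length elements) are in the right spirit and broadly match the paper's own reductions. But the core of your argument rests on two claims that fail. First, there is no ``Iwahori--Bruhat stratification of $L(G/B)$ by perfect schemes of finite type'': $L(G/B)$ is the loop space of the \emph{finite} flag variety, not the Witt-vector affine flag variety $LG/L^+\caI$, and it is not an ind-scheme at all (cf.\ Remark \ref{rem:LPn}). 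The boundedness/finite-dimensionality theorems for affine Deligne--Lusztig varieties attached to straight elements live in $LG/L^+\caI$ and concern relative position in the extended affine Weyl group; they do not transfer to the subfunctor of $L(G/B)$ cut out by $g \xrightarrow{w} b\sigma(g)$ with $w$ in the finite Weyl group. Second, the ``boundedness'' of the twisted Lang map that you yourself flag as the key input is exactly where all the content lies, and it is false in the form you need: already for $G=\GL_2$, $w=w_0$, $b=1$, one gets $\coprod_{G_0(k)/ZG_0(\caO_k)} L^+\Omega^1$, an infinite disjoint union of perfections of pro-algebraic (not finite-type) schemes, so $X_w(b)$ is neither quasi-compact nor of ind-finite type. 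Your proof therefore defers the entire theorem to an unproved, and as formulated unprovable, lemma.

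The paper's route avoids any boundedness analysis. Following Bonnaf\'e--Rouquier, one rewrites $X_w(b)$ as the fiber product of $L\caO(w,\sigma(w),\dots,\sigma^{d-1}(w))$ against $s \mapsto (s,b\sigma(s),\dots,(b\sigma)^{d-1}(s))$; for a \emph{good} element of $C_{\rm min}$ the braid relation $\underline w\,\sigma(\underline w)\cdots\sigma^{d-1}(\underline w) = \underline w_{I_1}^2\cdots\underline w_{I_r}^2$ forces $\caO(w,\sigma(w),\dots,\sigma^{d-1}(w))$ to be an \emph{affine} finite-type $k$-scheme (Proposition \ref{prop:BonRouq_affine_Ow}), whence its loop space is automatically an ind-scheme (Proposition \ref{prop:loop_of_affine_is_indproaffpfp}), and $X_w(b)$ is closed in it because the graph morphism is representable by closed immersions (Lemma \ref{lm:representability_graph}); the general case then follows by cyclic shifts. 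Your final step for $\dot X_{\dot w}(b)$ is also not formal: $T_w(k)$ is only locally profinite, the torsor statement is established later in the paper, and being pro-\'etale-locally of the form $\underline{T_w(k)}\times X_w(b)_{\bar{\dot w}}$ does not by itself yield ind-representability. The paper instead reruns the same argument with $\dot\caO(\dot w,\sigma(\dot w),\dots)$ in place of $\caO(w,\sigma(w),\dots)$, and explicitly warns that the classical equivalence between the two cases is not available here (Remark \ref{rem:difference_to_Bonnafe_Rouquier}).
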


In particular, this theorem shows that $X_w(b)$ are reasonable geometric objects (in contrast to the ambient space $L(G/B)$, which is not truly of geometric nature, cf. Remark \ref{rem:LPn}). In the proof of Theorem \hyperlink{thmc}{C}, we closely follow the strategy of Bonnaf\'e--Rouquier \cite{BonnafeR_08}, who gave a new proof of a theorem due to Orlik--Rapoport \cite[\S5]{OrlikR_08} and He \cite[Thm.~1.3]{He_07_aff}, stating that a classical Deligne--Lusztig variety $X_w$ is affine if $w\in W$ has minimal length in its $\sigma$-conjugacy class. 
The proof shows ind-representability of $X_w(b)$ also for other $w \in W$ (Theorem \ref{thm:ind_rep_via_Braid}), including the longest elements of all parabolic subgroups of $W$.

Interestingly, it turns out that $X_w(b)$ is quite often not representable by a scheme. Namely, we show in Theorem \ref{thm:non_rep} that if $C \subseteq W$ is a $\sigma$-conjugacy class, such that $X_w(b) \neq \varnothing$ for $w\in C_{\rm min}$, then $X_w(b)$ is not representable by a scheme for all $w\in C \sm C_{\rm min}$.

\smallskip

In the classical theory, the maps $\dot X_{\dot w} \rar X_w$ are finite \'etale $T_w(\bF_q)$-torsors. By contrast, in our situation $\dot X_{\dot w}(b) \rar X_w(b)$ is in general not surjective. We will show in \S\ref{sec:Torsors} that there is a disjoint decomposition $X_w(b) = \coprod_{\bar w} X_w(b)_{\bar w}$, such that for certain $\bar w$ attached to the lift $\dot w$, $\dot X_{\dot w}(b) \rar X_w(b)_{\bar w}$ is a pro-\'etale $T_w(\bQ_p)$-torsor (and all $\dot X_{\dot w}(b)$ lying over the same $X_w(b)_{\bar w}$ are equivariantly isomorphic). In fact, by Theorem \hyperlink{thm:B}{B} it will at least be a $v$-torsor; to deduce that it is even a pro-\'etale torsor we will need a descent result of Gabber. At least in particular cases the discrepancy with the classical theory can be explained by the difference between rational and stable conjugacy classes of maximal  tori in $p$-adic groups \cite[Cor.~4.7]{Ivanov_pDL_Cox_1}.

\smallskip

Finally, we state the following conjecture (spelled out by Chan and the author), which is a variant of what Lusztig conjectured in \cite[p.~171]{Lusztig_79} concerning the $p$-adic Deligne--Lusztig sets attached to anisotropic tori defined there.

\begin{conj}
If $w$ is Coxeter, then $X_w(b)$ is representable by a perfect scheme. 
\end{conj}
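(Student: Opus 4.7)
The plan is to combine Theorem \hyperlink{thm:C}{C}, which already yields ind-representability of $X_w(b)$ for $w$ of minimal length in its $\sigma$-conjugacy class (in particular for Coxeter $w$), with an explicit boundedness argument coming from the ellipticity of Coxeter elements. First I would invoke the disjoint decomposition of Theorem \ref{cor:disjoint_dec_of_Xwb_complete}: for Coxeter $w$ the smallest $\bQ_p$-rational parabolic containing $w$ is $G$ itself, so either $X_w(b) = \varnothing$ by Corollary \ref{cor:emptyness_Xwb_support} or $b$ is forced into a restricted class of $\sigma$-conjugacy classes. Using the Frobenius-cyclic shift of Corollary \ref{cor:isomorphism_class_depends_only_on_Cmin} I would reduce to a convenient representative of the $\sigma$-conjugacy class of $w$, e.g.\ a fixed product of simple reflections.

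Next, following the strategy carried out for $G = \GL_n$ in \cite{CI_loopGLn, CI_DrinfeldStrat}, I would attempt to parameterize $\dot X_{\dot w}(b)$ as a closed subfunctor of a concrete, perfectly of finite type affine scheme. Concretely, for $gB \in L(G/B)$ the condition $g^{-1}b\sigma(g) \in \dot w U$ can be unwound along a sequence of affine root subgroups in the Iwahori decomposition of $g$. The Coxeter hypothesis is what should make this recursion terminate after finitely many steps, producing explicit polynomial equations in coordinates of a perfect affine space and hence representability of $\dot X_{\dot w}(b)$ by a perfect scheme. Once $\dot X_{\dot w}(b)$ is known to be a perfect scheme, the pro-\'etale $T_w(\bQ_p)$-torsor structure established in \S\ref{sec:Torsors} descends representability to the corresponding clopen piece of $X_w(b)$: descent of affine schemes along a profinite pro-\'etale torsor is well behaved, and combined with the disjoint decomposition into $\underline{G_b(\bQ_p)}$-translates one recovers representability of all of $X_w(b)$.

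The main obstacle I foresee is the passage from classical groups, where one can manipulate matrices directly, to arbitrary unramified $G$. In the latter case one needs a uniform root-theoretic input: one must show that for Coxeter $w$ the relevant coordinates of $g$ on the apartment are confined to a range that does not grow along the ind-scheme filtration, so that the colimit presenting $\dot X_{\dot w}(b)$ stabilizes at a single perfect scheme. This boundedness is morally a shadow of the fact that Coxeter elements are elliptic, i.e.\ that $T_w$ is anisotropic modulo the center of $G$; translating that abstract ellipticity into a uniform bound on Iwahori coordinates at the level of the perfect loop space is, I expect, the genuinely hard step, and where substantial new input beyond the results of the present paper will be required.
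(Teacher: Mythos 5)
The statement you are addressing is stated in the paper as a \emph{conjecture}, not a theorem: the paper offers only evidence (the $\GL_2$ and $\GL_n$ examples, \cite[Prop.~2.6]{CI_loopGLn}, and the cases treated in \cite{Ivanov_pDL_Cox_1}) and no proof. Your proposal is therefore best read as a strategy outline, and as such it contains a genuine gap which you yourself identify: the entire content of the conjecture is concentrated in the step you defer, namely showing that the colimit presenting $\dot X_{\dot w}(b)$ (obtained from Theorem \ref{thm:ind_rep_via_Braid} via $L\caO(w,\sigma(w),\dots)$) stabilizes, i.e.\ that the Iwahori/affine-root coordinates of a point of $\dot X_{\dot w}(b)$ are uniformly bounded. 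The preliminary reductions you list (Corollary \ref{cor:emptyness_Xwb_support}, Theorem \ref{cor:disjoint_dec_of_Xwb_complete}, Corollary \ref{cor:isomorphism_class_depends_only_on_Cmin}) are all valid but buy essentially nothing for Coxeter $w$, since then $\overline{\supp}(w)=S$ and the decomposition is trivial; and the explicit matrix recursion you invoke is exactly what is available only for $\GL_n$ in \cite{CI_loopGLn}. So the proposal is circular: the ``genuinely hard step'' is the conjecture.

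A second, fixable but real issue is the last step. First, passing from representability of $\dot X_{\dot w}(b)$ to that of $X_w(b)_{\bar{\dot w}}$ is a \emph{quotient} construction, not descent: one must form $\dot X_{\dot w}(b)/\underline{T_w(k)}$ and show it is a scheme. Second, $T_w(k)$ is locally profinite but in general not profinite even for elliptic $w$ (for $G=\GL_n$ and $w$ Coxeter, $T_w(k)\cong k_n^\times$ has a noncompact factor coming from the center), so ``descent along a profinite pro-\'etale torsor'' does not apply as stated; one must separately argue that the noncompact part acts by permuting connected components (as it does in the $\GL_2$ table, where $X_{w_0}(1)$ is an infinite disjoint union of copies of $L^+\Omega^1$) and that the profinite part has a well-behaved invariant-ring quotient. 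Both points can presumably be handled once the main boundedness input is in place, but they need to be addressed explicitly.
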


Evidence is provided by \cite[Prop.~2.6]{CI_loopGLn} and the examples below. Many cases of this conjecture are shown in \cite{Ivanov_pDL_Cox_1}. We note that the proofs in all these cases in fact lead to a concrete description of $X_w(b)$ in terms of more accessible objects. More optimistically, one might conjecture that $X_w(b)$ is a scheme, whenever $w$ is of minimal length in its $\sigma$-conjugacy class. 

Finally, let us note that Lusztig's set $X = \{g \in G(\breve \bQ_p) \colon g^{-1} \dot w\sigma(g)\dot w^{-1} \in U(\breve \bQ_p)\} / U(\breve \bQ_p) \cap {}^{w^{-1}}U(\breve \bQ_p)$ from \cite[p.~171]{Lusztig_79} (here $U \subseteq B$ is the unipotent radical and $\dot w \in G(\breve \bQ_p)$ is a lift of $w \in W$; thus $F$ from \emph{loc.~cit.} is ${\rm Ad}(\dot w)\circ\sigma$ in our notation) should not be expected to carry a natural structure of a scheme over $\overline \bF_p$ if $w$ is $\sigma$-conjugate to a Coxeter element, but not itself Coxeter. Indeed, by Theorems \ref{thm:non_rep} and \ref{thm:ind_rep_via_Braid} one should rather expect an ind-(perfect scheme).

\subsection*{Case $G_0 = \GL_2$} 

The spaces $X_w(b)$ and $\dot X_{\dot w}(b)$, despite of containing interesting representation-theoretic information, are of rather explicit nature, and can often be described by explicit equations. We discuss various examples in \S\ref{sec:examples}. Let us describe $X_w(b)$ for $G_0 = \GL_2$ here. All of these spaces are schemes.

\begin{table}[H]
\begin{tabular}{|l||*{3}{c|}}\hline \label{tab:1}
\backslashbox{$w$}{$b$} &{$p^{(c,c)}$} ($c \in \bZ$) &{$\left(\begin{smallmatrix} 0 & p^c \\ p^{c+1} & 0 \end{smallmatrix}\right)$} & {$p^{(c, d)}$ ($c>d$)} \\\hline\hline
$1$ & $\underline{\mathbb{P}^1(\bQ_p)}$ &  $\varnothing$ & $\{0,\infty \}$ \\\hline
 $w_0$  & $\coprod\limits_{G_0(\bQ_p)/ZG_0(\bZ_p)} L^+\Omega_{\breve\bZ_p}^1$ & $L^+\bA^1_{\breve\bZ_p} \sqcup L^+\bA^1_{\breve\bZ_p}$ & $L\bG_m$ \\\hline
\end{tabular}
\vspace{1ex}
\caption{$X_w(b)$ for $\GL_2$}
\end{table}

Let $w_0 \in W$ be the longest element, $L^+$ the positive loop functor, $Z \subseteq G_0(\bQ_p)$ the center, and let $\Omega^1_{\caO_{\breve k}} = \Spec \caO_k[T]_{T-T^p}$. Then all essentially different possibilities for $X_w(b)$ for $\GL_2$ are listed in Table \ref{tab:1}. Here $\underline{\mathbb{P}^1(\bQ_p)}$ is the totally disconnected $\overline \bF_p$-scheme, whose underlying topological space is $\bP^1(\bQ_p)$ equipped with $p$-adic topology.

\subsection*{Acknowledgements} The author wants to thank Peter Scholze for numerous very helpful advices concerning this article. In particular, Definition \ref{def:Xwb_and_covers} was suggested by him. The author wants to thank Charlotte Chan, with whom he initially started to work on Lusztig's conjecture. Also he wants to thank Johannes Ansch\"utz for enlightening discussions, David Rydh and Christian Kaiser for helpful remarks, and an anonymous referee for useful suggestions. The author was supported by the DFG via the Leibniz Prize of Peter Scholze.

\section{Notation and preliminaries}

\subsection{Notation}\label{sec:gen_not} Fix a prime number $p$ and denote by $\Perf$ the category of perfect rings of characteristic $p$. For $R \in \Perf$, denote by $\Perf_R$ the category of perfect $R$-algebras.

\subsubsection{Setup}\label{sec:setup_general} Fix a field $\kappa \in \Perf$. For $R \in \Perf_\kappa$ denote by $W(R)$ the ($p$-typical) Witt-vectors of $R$. We work simultaneously in two cases. Therefore we let $\caO_{k_0}$ be either $W(\kappa)$ or $\kappa[\![t]\!]$. In the first resp. second case we say that we work in \emph{mixed} resp. \emph{equal characteristic case}. We also set $k_0 = {\rm Frac}(\caO_{k_0})$, i.e., $k_0$ is either $W(\kappa)[1/p]$ or $\kappa(\!(t)\!)$. 

We fix a finite totally ramified extension $k$ of $k_0$, and we denote by $\varpi$ a uniformizer of $k$, and by $\caO_k$ the integers of $k$. We will indicate in which case we are by writing $\charac k = 0$ resp. $\charac k = p$ in the mixed resp. equal characteristic case. For $R \in \Perf_\kappa$, there is an essentially unique $\varpi$-adically complete and separated $\caO_k$-algebra $\bW(R)$, in which $\varpi$ is not a zero-divisor and which satisfies $\bW(R)/ \varpi\bW(R) = R$. More explicitly 
\[
\bW(R) := \begin{cases}  W(R) \otimes_{W(\kappa)} \caO_k &\text{if $\charac k = 0$} \\ R[\![\varpi]\!] &\text{if $\charac k = p$,}\end{cases}
\]
i.e., in the first case $\bW(R)$ are the ramified Witt vectors, details on which can be found for example in \cite[1.2]{FarguesFontaine_book}. In particular, $\bW(\kappa)[1/\varpi] = k$. If $\bar{\kappa}$ is an algebraic closure of $\kappa$, then we put $\caO_{\breve k} = \bW(\bar\kappa)$ and $\breve k = \bW(\bar\kappa)[1/\varpi]$. This is the $\varpi$-adic completion of a maximal unramified extension of $k$. 

We have a multiplicative map $[\cdot] \colon R \rar \bW(R)$, which is the Teichm\"uller lift if $\charac k = 0$, and the natural embedding otherwise. Slightly abusing terminology, we call $[\cdot]$ the Teichm\"uller lift in both cases. It is canonical and, in particular, independent of the choice of the uniformizer $\varpi$ and functorial in $R$. Moreover, every element of $\bW(R)$ can uniquely be written as a convergent sum $\sum_{i = 0}^\infty [a_i]\varpi^i$ with $a_i \in R$ (if $\charac k = 0$, this uses that $R$ is perfect).

For $R \in \Perf$ we denote by ${\rm Sch}_R$ the category of perfect quasi-compact and quasi-separated (= qcqs) schemes over $R$. For generalities on perfect schemes we refer to \cite{Zhu_17,BhattS_17}.  The functor $\bW(\cdot)$ extends to all of ${\rm Sch}_\kappa$. It takes values in $\varpi$-adic formal schemes.

By a presheaf on $\Perf_R$ we mean a contravariant set-valued functor on $\Perf_R$. If $F$ is a presheaf on $\Perf_R$, and $R' \in \Perf_R$, we sometimes write $F(\Spec R')$ for $F(R')$. Using Yoneda's lemma we regard ${\rm Sch}_R$ as a full subcategory of all presheaves on $\Perf_R$.

\subsubsection{Setup over a finite field}\label{sec:setup_over_finite_field} Our main application concerns the case when $\kappa = \bF_q$ is a finite field with $q$ elements. Then $k$ is a local non-archimedean field and ${\rm Aut}_{\text{cont}}(\breve k/k) \cong \Gal(\obF/\bF_q)$ is topologically generated by the Frobenius automorphism, which we denote by $\sigma$, and which induces the automorphism $x \mapsto x^q$ of $\obF$.

Any $R \in \Perf_{\bF_q}$ possesses the $\bF_q$-linear Frobenius automorphism $x \mapsto x^q$. For any presheaf $\cF_0$ on $\Perf_{\bF_q}$ this induces an automorphism $\sigma_{\cF_0} \colon \cF_0 \rar \cF_0$. Let $\cF = \cF_0 \times_{\Spec \bF_q} \Spec \obF$ be the corresponding presheaf on $\Perf_{\obF}$. We have the \emph{geometric Frobenius} automorphism $\sigma_{\cF} := \sigma_{\cF_0} \times \id$ of $\cF$. 
If $\cF$ is clear from the context, we also write $\sigma$ for $\sigma_{\cF}$.

\subsubsection{Ind-schemes}
Let $R \in \Perf$. An \emph{ind-(perfect scheme)} over $R$ is a functor on $\Perf_R$, which is isomorphic to an inductive limit of perfect schemes $(X_\alpha)_{\alpha \in \bZ_{\geq 0}}$, such that all transition maps $X_\alpha \rar X_{\alpha + 1}$ are closed immersions\footnote{Sometimes in the literature these ind-schemes are called \emph{strict}, whereas the term ``ind-scheme'' is reserved for those $\colim_\alpha X_\alpha$, with the assumption on the transition maps dropped.}. Any perfect scheme is in particular a scheme, and the same holds for ind-(perfect schemes). Therefore we will simply speak of schemes resp. ind-schemes instead of perfect schemes resp. ind-(perfect schemes). Nevertheless, the reader should keep in mind that throughout the article we work only with perfect objects.

\subsubsection{Further notation and conventions}\label{sec:further_not_conv}

For a field $F$ we denote by $F^{\rm sep}$ its separable closure. For a scheme $X$ we denote by $|X|$ its underlying topological space. We abbreviate ``quasi-compact and quasi-separated'' by qcqs.

All occurring locally profinite sets will be second countable, so by ``locally profinite'' we will always mean ``locally profinite and second countable''. Such a set can be written as a countable disjoint union of profinite sets. Indeed, $T$ second-countable + locally compact + Hausdorff $\Rar$ $T$ paracompact, whereas $T$ paracompact + locally compact + totally disconnected $\Rar$ $T = \coprod_{i \in I} T_i$ with $T_i$ compact.

\subsection{$v$- and arc-topologies}\label{sec:vsheaves} We will make use of the $v$-topology on $\Perf$, see \cite[\S2]{BhattS_17} and \cite[\S2]{Rydh_10}. Recall \cite[Def.~2.1]{BhattS_17} that a morphism of qcqs schemes $f\colon X \rar Y$ is a \emph{$v$-cover}, or universally subtrusive, if for any map $\Spec V \rar Y$, with $V$ a valuation ring, there is an extension $V \har W$ of valuation rings and a commutative diagram
\[
\xymatrix{
 \Spec(W) \ar[r] \ar@{->>}[d] & X \ar[d]^f \\
\Spec(V) \ar[r] & Y
}
\]
The $v$-topology on $\Perf$ is the topology induced by $v$-covers on objects in $\Perf$ (regarded as affine schemes). We note that the $v$-topology on $\Perf$ is subcanonical \cite[Thm.~4.1]{BhattS_17}.

We will also use the even stronger arc-topology from \cite{BhattM_18}. Recall that a morphism in $\Perf$ is an \emph{arc-cover} if the above condition holds for all $V$ of rank $\leq 1$, and one can choose $W$ to be of rank $\leq 1$. The arc-topology on $\Perf$ is subcanonical and, moreover, a morphism in $\Perf$ is an arc-cover if and only if it is an universally effective epimorphism \cite[Thm.~5.16]{BhattM_18}. In particular, any arc-sheaf on $\Perf$ extends uniquely to an arc-sheaf on ${\rm Sch}_{\bF_p}$.

\begin{lm}\label{lm:check_surj_vsheaves}
Let $f \colon \cF \rar \cG$ be a morphism of $v$-sheaves on $\Perf_\kappa$, and assume that $\cF$ is qcqs and $\cG$ is quasi-separated. The following are equivalent:
\begin{itemize}
\item[(i)] $f$ is surjective (resp. an isomorphism).
\item[(ii)] For each valuation ring $V \in \Perf_\kappa$ with algebraically closed fraction field, $f(V) \colon \cF(V) \rar \cG(V)$ is surjective (resp. bijective).
\end{itemize}
\end{lm}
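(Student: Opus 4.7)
The strategy is to reduce both parts to a single key fact about the $v$-topology: a morphism $h\colon \cH \to \cS$ of qcqs $v$-sheaves on $\Perf_\kappa$ is $v$-surjective iff $h(V)$ is surjective for every valuation ring $V \in \Perf_\kappa$ with algebraically closed fraction field. The ``only if'' direction uses that $\Spec V$ for such $V$ behaves as a $v$-point (every $v$-cover admits, after refinement, a retraction onto $V$), while the ``if'' direction is the substantive content; both are facts about the $v$-topology developed in \cite{BhattS_17}.

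Given this, the surjective case of the lemma is treated via the pullback trick: $f$ is $v$-surjective iff for every $R \in \Perf_\kappa$ and every $s \in \cG(R)$, the fiber $\cH_s := \cF \times_{\cG, s} \Spec R$ is $v$-surjective onto $\Spec R$. Here $\cH_s$ is qcqs, because $\cF$ is qcqs and $\cG$ is quasi-separated (so that the diagonal $\cG \to \cG \times \cG$ is a quasi-compact morphism and its pullbacks remain so). The key fact therefore applies: $v$-surjectivity of $\cH_s \to \Spec R$ is equivalent to $\cH_s(V) \neq \varnothing$ for every valuation ring $V$ (with algebraically closed fraction field) equipped with a map $R \to V$, which in turn means that $s|_V \in \cG(V)$ lifts to $\cF(V)$. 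Quantifying over $(R,s)$ and $V$ gives exactly hypothesis (ii).

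For the bijective case, surjectivity of $f$ follows as above, and the remaining injectivity is obtained by applying the surjective case of the lemma to the diagonal $\Delta_f\colon \cF \to \cF \times_\cG \cF$. The target is qcqs (again by $\cF$ qcqs, $\cG$ qs), and $\Delta_f$ is automatically a monomorphism, so $\Delta_f$ is iso iff it is surjective, iff $f$ is a monomorphism. The surjective case applied to $\Delta_f$ (with qcqs source and target) reduces this to surjectivity of $\Delta_f(V)$ for every such $V$, which is precisely injectivity of $f(V)$ -- part of (ii). Combining with $v$-surjectivity of $f$, we conclude that $f$ is an isomorphism. The main obstacle is the substantive direction of the key fact: assembling lifts over individual valuation rings with algebraically closed fraction field (obtained by dominating the local rings $R_\fp$ at all points $\fp \in \Spec R$) into a lift over a $v$-cover of their product $\prod_\fp V_\fp$, where the qcqs-ness of the total $v$-sheaf is used in a nontrivial way.
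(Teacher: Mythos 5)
Your architecture coincides with the paper's: base-change along $s \colon \Spec R \rar \cG$ to get a qcqs fibre $\cH_s$ (using that $\cG$ is quasi-separated so that its diagonal is quasi-compact), test surjectivity of $\cH_s \rar \Spec R$ on valuation rings, and obtain injectivity by applying the surjectivity statement to the monomorphism $\cF \rar \cF \times_\cG \cF$, whose target is again qcqs. Those reductions are correct and are exactly what the paper does.

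The gap is the ``key fact'' itself, which is the entire content of the lemma and which you do not prove. Citing \cite{BhattS_17} does not cover it: that reference characterizes $v$-covers of \emph{schemes} via valuation rings, not surjections of arbitrary qcqs $v$-sheaves. Worse, the route you sketch at the end --- lift the section over each connected component (a valuation ring) of the standard cover $\Spec A \rar \Spec R$ with $A$ a product of valuation rings, and then ``assemble'' these componentwise lifts into a section of $\cH_s$ over a cover of $\Spec A$ using qcqs-ness --- does not go through: for a general qcqs $v$-sheaf there is no gluing or spreading-out principle that produces a section over $\Spec A$ from sections over its infinitely many components; such arguments need finite presentation of the target, which $\cH_s$ does not have. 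The correct move is dual: cover the \emph{source}. Quasi-compactness of $\cH_s$ gives a $v$-surjection $X \rar \cH_s$ with $X$ affine; the composite $g \colon X \rar \Spec R$ is then a morphism of affine schemes such that every $\Spec V \rar \Spec R$ lifts to $X$ after an extension of valuation rings (extend $V$ to one with algebraically closed fraction field, lift into $\cH_s$ by (ii), then lift into $X$ over a further extension using that $X \rar \cH_s$ is a $v$-surjection). By the valuation-theoretic definition of $v$-covers, $g$ is a $v$-cover, hence a surjection of $v$-sheaves, and therefore so is $\cH_s \rar \Spec R$. With this step inserted in place of your assembly argument, the rest of your proof is the paper's proof. (A minor further caveat: your justification of (i)$\Rightarrow$(ii), that every $v$-cover of $\Spec V$ admits a retraction after refinement, is not true even for $V$ with algebraically closed fraction field --- e.g.\ $\Spec L \rar \Spec K$ for a large algebraically closed extension $L/K$ has no section; but this easy direction is not the one used in the paper.)
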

\begin{proof}
(i) clearly implies (ii). Now assume the surjectivity part of (ii). To check that $f$ is surjective, it suffices to do so after any base change $\Spec A \rightarrow \cG$ to a representable sheaf, i.e., we may assume that $\cG = Y$ for some $Y \in \Perf_\kappa$ and (as $\cG$ was assumed to be quasi-separated) that $\cF$ is still quasi-compact. 
As $\cF$ is quasi-compact, there is some affine $X \in \Perf_\kappa$ and a surjective map of $v$-sheaves $X \rar \cF$, which by composition with $f$ gives a map of $v$-sheaves $g \colon X \rar Y$ such that still, for any valuation ring $V$ with algebraically closed fraction field, $g(V)$ is surjective. This is a $v$-cover, so it is surjective map of $v$-sheaves. Hence also $f$ is surjective. 

Now assume bijectivity in (ii). We already know that $f$ is surjective, and it remains to prove injectivity. As above we can assume that $\cG = Y \in \Perf_\kappa$ and $\cF$ qcqs. The diagonal of $\cF$ factors through the injective map $g \colon \cF \rar \cF \times_Y \cF$. But by assumption, $g(V)$ is bijective for any valuation ring $V$. Also $\cF \times_Y \cF$ is qcqs, so by the above part of the proof, $g$ is an isomorphism, which implies that $f$ is injective. 
\end{proof}

Every scheme has a $v$-cover of the following very particular shape.

\begin{lm}[\cite{BhattS_17}, Lemma 6.2]\label{lm:standard_vcover} Let $X$ be a qcqs scheme. Then there is a $v$-cover $\Spec A \rar X$ such that 
\begin{itemize}
\item[(1)] Each connected component of $\Spec A$ is the spectrum of a valuation ring.
\item[(2)] The subset $(\Spec A)^c$ of closed points in $\Spec A$ is closed.
\end{itemize}
\end{lm}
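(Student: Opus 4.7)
The plan is to reduce to the affine case and then construct the cover explicitly as the spectrum of a large product of valuation rings, one for each point of $X$, and finally analyze its structure via ultrafilters.

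\textbf{Reduction and construction.} Since $X$ is qcqs, it admits a finite affine open cover whose disjoint union is a Zariski (hence $v$-) cover; a finite disjoint union of schemes satisfying (1) and (2) again satisfies both, so I may assume $X = \Spec R$ is affine. For each point $x \in X$, fix an algebraic closure $C_x$ of $k(x)$ and choose a valuation ring $V_x \subseteq C_x$ dominating the image of $\cO_{X,x}$, using the standard extension theorem for valuations. Put $A = \prod_{x \in X} V_x$; the maps $\Spec V_x \to X$ assemble into a single morphism $f \colon \Spec A \to X$.

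\textbf{$v$-cover property and condition (1).} To see that $f$ is a $v$-cover, let $g \colon \Spec V \to X$ with $V$ a valuation ring, let $x \in X$ be the image of the closed point, and pick a common algebraically closed overfield $K'$ of $\mathrm{Frac}(V)$ and $C_x$; dominating both $V$ and $V_x$ by a valuation ring $W \subseteq K'$ produces the required lift along the projection $\Spec A \to \Spec V_x$. For (1), recall that the idempotents of $A$ are precisely the indicator functions of subsets of $X$, so $\pi_0(\Spec A)$ is the Stone space of the Boolean algebra $2^X$, that is, the Stone--\v{C}ech compactification $\beta X$ of $X$ viewed as a discrete set. The connected component attached to an ultrafilter $\caU$ on $X$ is $\Spec A_{\caU}$, where $A_{\caU} = (\prod_x V_x)/\caU$ is the ultraproduct. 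Since being a valuation ring (together with being a domain) is expressible by a first-order formula in the language of rings, \L{}o\'s's theorem shows that each $A_{\caU}$ is again a valuation ring, giving (1).

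\textbf{Condition (2) and the main obstacle.} Consider the ideal $I = \prod_x \fm_{V_x} \subseteq A$, where $\fm_{V_x}$ denotes the maximal ideal of $V_x$. I claim that $V(I) \subseteq \Spec A$ equals the set of closed points. Each $A_{\caU}$ has a unique maximal ideal (it is a valuation ring), so each connected component of $\Spec A$ contains a unique closed point; under the quotient $A \twoheadrightarrow A_{\caU}$, its preimage visibly contains $I$. Conversely, if $\fp \supseteq I$ is a prime, then $\fp$ determines an ultrafilter $\caU(\fp)$ via the idempotents of $A$ outside $\fp$, and the containment $\fp \supseteq I$ forces $\fp$ to map to the maximal ideal of $A_{\caU(\fp)}$, hence $\fp$ is the (unique) closed point of its component. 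Thus the set of closed points equals $V(I)$ and is closed, proving (2). The main obstacle is precisely this last identification: in a general spectrum closed points need not form a closed subset, so one must exploit both the specific shape of $A$ as a product of valuation rings and the ultrafilter description of $\pi_0(\Spec A)$ to pin the closed points down as the vanishing locus of an explicit ideal.
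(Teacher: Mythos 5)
The overall architecture here---reduce to $X=\Spec R$ affine, cover by the spectrum of a product of valuation rings, identify the connected components of that spectrum with ultraproducts via the idempotent/ultrafilter dictionary, apply \L{}o\'s's theorem for (1), and identify the closed points with $V\bigl(\prod_x \fm_{V_x}\bigr)$ for (2)---is the right one and is essentially how the cited result is proved. Your last two paragraphs are correct as far as they go (for (2) one should also check that the image of $I$ in each ultraproduct is the \emph{whole} maximal ideal rather than merely contained in it, but this is immediate by modifying a representative off a set of the ultrafilter).

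The genuine gap is in the construction of $A$ and the verification that $\Spec A \rar X$ is a $v$-cover. As written, $V_x$ is a valuation ring of $C_x=\overline{k(x)}$ containing the image of $\cO_{X,x}$, which is the field $k(x)$; since $C_x$ is integral over $k(x)$ and valuation rings are integrally closed, this forces $V_x=C_x$, a field. The resulting map $\Spec A \rar X$ is surjective but is \emph{not} a $v$-cover: take $X=\Spec V$ for $V$ a valuation ring of Krull dimension $\geq 1$ and $g=\mathrm{id}$. Any lift $\Spec W \rar \Spec A$ factors through a single connected component of $\Spec A$, which is a one-point scheme (an ultraproduct of fields is a field), so the composite $\Spec W \rar X$ is constant; but $\Spec W \rar \Spec V \rar X$ is surjective for any extension of valuation rings $V \har W$. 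The same objection defeats any charitable reinterpretation in which a single valuation ring $V_x$ is chosen per point: the image of $\Spec V_x \rar X$ is one particular chain of specializations through $x$, whereas a $v$-cover must lift \emph{every} $\Spec V \rar X$, whose image can be an arbitrary chain. The fix is to index the product not by points of $X$ but by a set of representatives of all maps from valuation rings into $X$: for instance over all pairs $(\fp, V_0)$ with $\fp \in \Spec R$ and $V_0$ a valuation ring of an algebraic closure of ${\rm Frac}(R/\fp)$ containing the image of $R$. This is a set of bounded cardinality; every $\Spec V \rar \Spec R$ with kernel $\fp$ factors through such a $V_0$ after replacing $V$ by an extension (first intersect $V$ with ${\rm Frac}(R/\fp)$, then pass to a valuation ring of a common algebraically closed overfield dominating both); and your ultrafilter arguments for (1) and (2) then go through verbatim for this larger product.
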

Moreover, the composition $(\Spec A)^c \har \Spec A \rar \pi_0(\Spec A)$ is then a homeomorphism  of profinite sets (recall that for any qcqs scheme $X$, $\pi_0(X)$ is profinite \cite[Tag 0906]{StacksProject}).

\section{Loop functors}

We fix the setup of \S\ref{sec:setup_general}. The loop functor applied to a $k$-scheme $X$ produces a set-valued functor $LX$ on $\Perf_\kappa$. In this section we review and prove some facts about this construction.

\subsection{Definitions}\label{sec:loop_spaces}

Let $X$ be a scheme over $k$. As in \cite{PappasR_08, Zhu_17}, we have the loop space $LX$ of $X$, which is the functor on $\Perf_\kappa$, 
\[
R \mapsto LX(R) = X(\bW(R)[1/\varpi]).
\]

\begin{prop}[\S1.a of \cite{PappasR_08} and Proposition 1.1 of \cite{Zhu_17}] \label{prop:loop_of_affine_is_indproaffpfp} 
Let $X$ be an affine scheme of finite type over $\kappa$. Then $LX$ is representable by an ind-scheme.
\end{prop}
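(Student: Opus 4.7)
The plan is to realize $LX$ as a closed sub-ind-scheme of $L\bA^n_k$ for some $n$, and then to describe $L\bA^1_k$ explicitly as an ind-scheme via Teichm\"uller expansions. First, choose a closed immersion $X \hookrightarrow \bA^n_k$ defined by polynomials $f_1,\dots,f_m \in k[x_1,\dots,x_n]$. Since $LY(R) = Y(\bW(R)[1/\varpi])$ is given by evaluation on a ring, the functor $L$ preserves finite limits, so $LX$ is the zero fiber of $(f_1,\dots,f_m) \colon L\bA^n_k \to L\bA^m_k$. It will therefore suffice to show (i) that $L\bA^1_k$ is an ind-scheme, and (ii) that polynomial maps $L\bA^n_k \to L\bA^1_k$ are compatible with the ind-structure, so that their zero loci are closed sub-ind-schemes.

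For (i), I use that perfectness of $R \in \Perf_\kappa$ guarantees (\S\ref{sec:setup_general}) that every $x \in \bW(R)[1/\varpi]$ admits a \emph{unique} Teichm\"uller expansion $x = \sum_{i \geq -N}[a_i]\varpi^i$ with $a_i \in R$ and some $N \geq 0$. Setting $\cL_N \colon R \mapsto \varpi^{-N}\bW(R)$, the Teichm\"uller coordinates represent $\cL_N$ by the perfect affine space $\Spec \kappa[x_i^{1/p^\infty} \colon i \geq -N]$. The inclusion $\cL_N \hookrightarrow \cL_{N+1}$ simply adjoins the coordinate $a_{-N-1}$ set to zero, hence is the closed immersion cut out by $x_{-N-1} = 0$. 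Therefore $L\bA^1_k = \colim_N \cL_N$ is an ind-scheme, and so is $L\bA^n_k \cong (L\bA^1_k)^n$. For (ii), one substitutes Teichm\"uller expansions into each $f_j$ and collects coefficients of $\varpi^j$; this produces, on each piece $\cL_N^n$, polynomial expressions in finitely many coordinates (possibly involving $p^\infty$-th roots of the $a_{l,i}$, which is harmless on $\Perf_\kappa$). The induced map lands in some $\cL_{N'}$, and intersecting the closed zero loci for $j = 1,\dots,m$ and passing to the colimit in $N$ realizes $LX$ as a closed sub-ind-scheme of $L\bA^n_k$.

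The main subtlety is that the whole argument leans on working with Teichm\"uller coordinates rather than standard Witt coordinates: multiplicativity of $[\cdot]$ makes the polynomial manipulations transparent, and uniqueness of the expansion -- which crucially requires $R$ to be perfect -- ensures the coordinate description is well-defined. In mixed characteristic, the analogous approach via Witt vector coordinates is more involved since multiplication by $\varpi = p$ is not a simple shift of coordinates; the perfectness hypothesis on $R$ circumvents this difficulty cleanly and is precisely the reason one restricts to $\Perf_\kappa$.
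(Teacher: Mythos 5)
Your argument is correct and is essentially the proof given in the references the paper cites for this proposition (the paper itself offers no proof beyond pointing to Pappas--Rapoport \S1.a and Zhu, Prop.~1.1): embed $X$ into $\bA^n$, represent $R \mapsto \varpi^{-N}\bW(R)$ by an infinite-dimensional perfect affine space via Teichm\"uller coordinates with closed-immersion transition maps, and cut out $LX$ by the vanishing of the coefficients of the $f_j$, which lie in the perfection of the coordinate ring because the Witt addition and multiplication laws are universal (perfected-)polynomials in finitely many coordinates at each level. No gaps; this matches the standard argument.
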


The association $X \mapsto LX$ is functorial. Also, $L(\cdot)$ sends closed immersions of affine schemes of finite type over $k$ to closed immersions of ind-schemes \cite[Lm.~1.2]{Zhu_17}.

\begin{rem}\label{rem:LPn}
For $n\geq 1$, the functor $L\bP^n$ does not seem to be a reasonable geometric object. Indeed, if $L^+$ denotes the functor of positive loops (cf. \S\ref{sec:positive_loops}), we have the perfect  \emph{scheme} $L^+\bP^n$ and a natural inclusion $L^+\bP^n \rar L\bP^n$, which is not an isomorphism. But the valuative criterion for properness implies that $L\bP^n(\ff) = L^+\bP^n(\ff)$ for any algebraically closed field $\ff/\bF_p$, i.e., $L^+\bP^n \rar L\bP^n$ induces a bijection on underlying topological spaces. 
\end{rem}

\begin{lm}\label{lm:L_commutes_limits}
The functor $X \mapsto LX$ commutes with arbitrary limits. 
\end{lm}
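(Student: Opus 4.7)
The plan is to reduce this to the elementary fact that for any fixed ring $A$, the functor $X \mapsto X(A) = \Hom_k(\Spec A, X)$ from $k$-schemes (or more generally from presheaves on $k$-algebras) to sets commutes with limits. This is immediate since $\Hom(-, Y)$ turns the variable $Y$ into a limit-preserving covariant functor, and limits of presheaves are computed objectwise.

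Concretely, given a diagram $i \mapsto X_i$ of $k$-schemes admitting a limit $X = \lim_i X_i$, I would first fix $R \in \Perf_\kappa$ and set $A_R \colonequals \bW(R)[1/\varpi]$. Then by the universal property of the limit,
\[
LX(R) = X(A_R) = \Hom_k(\Spec A_R, \lim_i X_i) = \lim_i \Hom_k(\Spec A_R, X_i) = \lim_i LX_i(R).
\]
Naturality in $R$ is clear, since both sides are obtained by applying the same assignment $R \mapsto A_R$ followed by a limit-preserving Hom functor. Since limits of presheaves on $\Perf_\kappa$ are computed pointwise, this identification upgrades to a natural isomorphism $LX \cong \lim_i LX_i$ of presheaves.

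There is essentially no obstacle here: the argument does not require the limit of the $X_i$ to exist in the category of schemes, because the defining formula $LX(R) = X(\bW(R)[1/\varpi])$ makes sense verbatim for $X$ any presheaf on $k$-algebras, and the computation above only uses that $\Hom$ preserves limits in the second variable. The only mild point to remark on is the distinction between limits taken in schemes versus in presheaves; since $L$ factors as (restriction of a presheaf along $R \mapsto \bW(R)[1/\varpi]$), both interpretations agree whenever the scheme-theoretic limit exists.
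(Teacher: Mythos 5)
Your argument is correct and is precisely the content of the paper's one-line proof (``This follows from the definitions''): $LX(R)=\Hom_k(\Spec \bW(R)[1/\varpi],X)$ preserves limits in $X$, and limits of presheaves on $\Perf_\kappa$ are computed objectwise. Nothing further is needed.
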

\begin{proof} 
This follows from the definitions.
\end{proof}

Now assume the setup of \S\ref{sec:setup_over_finite_field}. Let $X_0$ is an $k$-scheme and put $X = X_0 \times_k \breve k$. By Lemma \ref{lm:L_commutes_limits} we have $LX = LX_0 \times_{\Spec \bF_q} \Spec \obF$. In particular, the presheaf $LX$ carries the geometric Frobenius automorphism $\sigma = \sigma_{LX} \colon LX \rightarrow LX$.

\subsection{Graph morphism}

We again work in the setup of \S\ref{sec:setup_general}. Let $X$ be a separated $k$-scheme. Let $R \in \Perf_\kappa$ and $f_1,f_2 \in LX(R)$. Then $f_1,f_2$ correspond to morphisms 
\[
\tilde f_1, \tilde f_2 \colon \Spec \bW(R)[1/\varpi] \rar X.
\] 
As $X$ is separated, the equalizer 
\[ 
Z := {\rm Eq}\left(\Spec \bW(R)[1/\varpi] \doublerightarrow{\tilde f_1}{\tilde f_2} X \right)
\]
is a closed subscheme of $\Spec \bW(R)[1/\varpi]$. Regarding $\Spec R$ as a presheaf on $\Perf_\kappa$, we consider the subfunctor $F = F_{f_1,f_2}$ of $\Spec R$, such that $(\alpha \colon R \rightarrow R') \in (\Spec R)(R')$ lies in $F(R')$ if and only if the map $\tilde\alpha \colon \Spec \bW(R')[1/\varpi] \rar \Spec \bW(R)[1/\varpi]$ induced by $\alpha$ factors through $Z$.

\begin{lm}\label{lm:representability_coincidence_set}
In the above situation $F$ is representable by a closed subscheme of $\Spec R$.
\end{lm}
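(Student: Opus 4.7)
The plan is to produce an explicit ideal $J\subseteq R$, show $R/J$ is perfect (so that $\Spec(R/J)$ is a perfect closed subscheme of $\Spec R$), and identify $F$ with $\Spec(R/J)$.

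First I would describe $Z$ cleanly. Since $Z\subseteq\Spec\bW(R)[1/\varpi]$ is a closed subscheme, it is cut out by some ideal $I\subseteq\bW(R)[1/\varpi]$, and setting $I':=I\cap\bW(R)$ one has $I=I'[1/\varpi]$, since every element of $I$ has the form $a/\varpi^n$ with $a\in\bW(R)$, forcing $a=\varpi^n\cdot(a/\varpi^n)\in I\cap\bW(R)$. For a morphism $\alpha\colon R\to R'$ in $\Perf_\kappa$ with induced $\bW(\alpha)\colon\bW(R)\to\bW(R')$, the pulled-back map $\tilde\alpha$ factors through $Z$ iff $I$ is killed in $\bW(R')[1/\varpi]$, iff $I'$ is killed in $\bW(R')[1/\varpi]$, iff $\bW(\alpha)(I')=0$ in $\bW(R')$ (using that $\varpi$ is a non-zero divisor in $\bW(R')$).

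Next I would use Teichmüller expansions to translate this condition from $\bW(R')$ down to $R'$. Every $x\in I'$ has a unique convergent expansion $x=\sum_{i\geq 0}[a_i(x)]\varpi^i$ with $a_i(x)\in R$, as recalled in \S\ref{sec:setup_general} (this uses perfectness of $R$ in the mixed characteristic case). By functoriality of $[\,\cdot\,]$ one has $\bW(\alpha)(x)=\sum_{i\geq 0}[\alpha(a_i(x))]\varpi^i$, and the uniqueness of the Witt expansion in $\bW(R')$ gives
\[
\bW(\alpha)(x)=0\ \Longleftrightarrow\ \alpha(a_i(x))=0\text{ for all }i\geq 0.
\]
Let $S:=\{a_i(x)\,:\,x\in I',\ i\geq 0\}\subseteq R$, and let $J\subseteq R$ be the perfect ideal generated by $S$, i.e.\ the ideal generated by $\{s^{1/p^k}:s\in S,\,k\geq 0\}$. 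By construction $J=J^{1/p}$, so $R/J$ is again a perfect $\kappa$-algebra, and $\Spec(R/J)\har\Spec R$ is a closed immersion in $\Perf_\kappa$.

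Finally I would identify $F=V(J)$. Since $R'$ is perfect, $\alpha$ being a ring map satisfies $\alpha(s^{1/p^k})=\alpha(s)^{1/p^k}$, so $\alpha(S)=0$ iff $\alpha(J)=0$, iff $\alpha$ factors through $R/J$. Combined with the equivalences of the previous two paragraphs this gives $\alpha\in F(R')$ iff $\alpha$ factors through $R/J$, proving $F=\Spec(R/J)$. The only genuine subtlety is the care required to keep everything inside $\Perf_\kappa$: the naive ideal generated by $S$ need not be perfect, which is why one must pass to its perfect closure, and the fact that $\alpha$ between perfect rings automatically respects $p$-th roots is what makes this closure cut out the same functor.
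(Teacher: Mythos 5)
Your proof is correct and follows essentially the same approach as the paper: translate the factorization condition into the vanishing of $\bW(\alpha)$ on an ideal of $\bW(R)$, and then, via uniqueness and functoriality of Teichm\"uller expansions, into the vanishing of $\alpha$ on the ideal of $R$ generated by all Teichm\"uller coefficients. The only organizational differences are that the paper first passes to the closure of $Z$ in $\Spec\bW(R)$ and works with the truncations $\bW_n(R)$ (writing $F=\prolim_n F_n$), whereas you work directly with $I'=I\cap\bW(R)$, and you are more explicit than the paper about replacing the coefficient ideal by its perfect closure so that the quotient stays in $\Perf_\kappa$ (which does not change the functor on perfect test rings).
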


\begin{proof}
Let $\overline{Z}$ be the closure of $Z$ in $\Spec \bW(R)$. As $\alpha$ already induces a map $\tilde\alpha^+ \colon \Spec \bW(R') \rar \Spec \bW(R)$, we have for a given $R' \in \Perff$, 
\[
F(R') =  \{ \alpha \colon R \rightarrow R' \colon \text{$\tilde\alpha^+$ factors through $\overline{Z}$} \}
\]
For $n\geq 0$, let $\bW_n(R) = \bW(R)/\varpi^n\bW(R)$, and consider the closed subscheme $\overline Z_n = \overline{Z} \times_{\Spec \bW(R)} \Spec \bW_n(R)$ of $\overline{Z}$.
Let 
$F_n$ be the subfunctor of $\Spec R$, defined by
\[
F_n(R') = \{\alpha \colon R \rightarrow R' \colon \text{corresponding map $\tilde\alpha_n \colon \Spec\bW_n(R') \rightarrow \Spec \bW_n(R)$ factors through $\overline{Z}_n$} \}
\]
As $\prolim_n F_n = F$, we are reduced to show that $F_n$ is represented by a closed subscheme of $\Spec R$. 

Let $\fa \subseteq \bW_n(R)$ be the ideal of $\bW_n(R)$ defining $\overline Z_n$. Any element $a \in \bW_n(R)$ can be written in a unique way as a sum $a = \sum_{i=0}^{n-1}[a_i]\varpi^i$ with $a_i \in R$. Let $\fb \subseteq R$ be the ideal generated by all coefficients $a_i$ when $a = \sum_{i=0}^{n-1}[a_i]\varpi^i$ varies through $\fa$ and $i$ varies through $\{0,1, \dots,n-1\}$. By functoriality of the Teichm\"uller lift, it is clear that the map $\tilde\alpha_n \colon \bW_n(R) \rar \bW_n(R')$ induced by $\alpha$ is given by $\sum_{j=0}^{n-1} [x_i]\varpi^i \mapsto \sum_{j=0}^{n-1} [\alpha(x_i)]\varpi^i$. From this it follows that $\alpha(\fb) = 0 \LRar \tilde\alpha_n(\fa) = 0$. Thus $F_n$ is represented by $\Spec R/\fb$.
\end{proof}

\begin{lm}\label{lm:representability_graph}
Let $X$ be a separated $k$-scheme and let $\beta$ be an endomorphism of $LX$. Then the graph morphism $(id,\beta) \colon LX \rar LX \times LX$ of $\beta$ is representable by closed immersions. In particular, $LX$ is separated.
\end{lm}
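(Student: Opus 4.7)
The plan is to reduce the statement directly to Lemma \ref{lm:representability_coincidence_set} by testing representability pointwise.

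First I would fix $R \in \Perf_\kappa$ together with an arbitrary map $\Spec R \to LX \times LX$, which by definition corresponds to a pair $(f_1,f_2)$ with $f_i \in LX(R)$. The fiber product
\[
F := LX \times_{(\id,\beta),\, LX \times LX} \Spec R
\]
is the subfunctor of $\Spec R$ sending $(\alpha \colon R \to R') \in (\Spec R)(R')$ to the set of those $\alpha$ for which there exists $g \in LX(R')$ satisfying $g = \alpha^\ast f_1$ and $\beta_{R'}(g) = \alpha^\ast f_2$. The first equation forces $g = \alpha^\ast f_1$, so the second becomes $\beta_{R'}(\alpha^\ast f_1) = \alpha^\ast f_2$. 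Since $\beta$ is a natural transformation it is compatible with restriction along $\alpha$, so $\beta_{R'}(\alpha^\ast f_1) = \alpha^\ast \beta_R(f_1)$, and the defining condition for $\alpha \in F(R')$ becomes
\[
\alpha^\ast \beta_R(f_1) \;=\; \alpha^\ast f_2 \quad \text{in } LX(R').
\]

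This is exactly the condition appearing in Lemma \ref{lm:representability_coincidence_set} applied to the two elements $\beta_R(f_1), f_2 \in LX(R)$ (using that $X$ is separated over $k$). Hence $F = F_{\beta_R(f_1), f_2}$ is representable by a closed subscheme of $\Spec R$. Since this holds for every test pair $(f_1,f_2)$, the graph morphism $(\id,\beta)$ is representable by closed immersions.

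For the final assertion, take $\beta = \id_{LX}$: the graph $(\id,\id)$ is the diagonal $\Delta_{LX} \colon LX \to LX \times LX$, which is therefore representable by a closed immersion. By definition this says that $LX$ is separated. There is no serious obstacle here, provided one correctly uses the naturality of $\beta$ to transport the verification into the coincidence-set setting of the previous lemma; the only subtle point worth double-checking is that $\beta$ commutes with the relevant restriction maps, which is automatic since $\beta$ is a morphism of presheaves on $\Perf_\kappa$.
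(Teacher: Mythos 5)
Your argument is correct and is essentially the paper's own proof: both identify the fiber of $(\id,\beta)$ over a test point $(f_1,f_2)\colon \Spec R \to LX\times LX$ with the coincidence functor $F_{\beta_R(f_1),\,f_2}$ of Lemma \ref{lm:representability_coincidence_set}, using naturality of $\beta$ to rewrite the condition as $\alpha^\ast\beta_R(f_1)=\alpha^\ast f_2$. No gaps; the concluding remark about $\beta=\id$ giving separatedness is the intended reading of the last sentence of the lemma.
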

\begin{proof}
Let $R \in \Perf_\kappa$ and let $f_1,f_2 \colon \Spec R \rar LX \times LX$ be an $R$-valued point. We have to show that $G := \Spec R \times_{LX \times LX} LX$ is representable by a closed subscheme of $\Spec R$. In fact, $G$ is a subfunctor of $\Spec R$ and $(\alpha \colon R \rightarrow R') \in (\Spec R)(R')$ lies in $G(R')$ if and only if there exists a (necessarily unique) $\gamma \colon \Spec R' \rightarrow LX$, such that $(\id,\beta) \circ \gamma = (f_1,f_2) \circ \alpha \colon \Spec R' \rar LX \times LX$. Thus (as $\tilde f \circ \tilde\alpha = \widetilde{f\circ\alpha}$ with $\tilde\alpha$ as in the text before Lemma \ref{lm:representability_coincidence_set}), 
\[
G(R') = \{\alpha \colon \Spec R' \rar \Spec R \colon \beta f_1 \alpha = f_2 \alpha \} = F_{\beta f_1,f_2}(R'),
\]
which is representable by Lemma \ref{lm:representability_coincidence_set}.
\end{proof}

\subsection{Positive loops}\label{sec:positive_loops}
Let $\caX$ be an $\caO_k$-scheme. Then the space of positive loops $L^+\caX$ is the functor on $\Perf_\kappa$,
\[
R \mapsto L^+\caX(R) = \caX(\bW(R)).
\]
We also have truncated versions of this. For $r \geq 1$, let $L^+_r\caX$ be the functor on $\Perf_\kappa$, sending $R \mapsto \caX(\bW(R)/\varpi^r\bW(R))$. Suppose that $\caX$ is affine and of finite type over $\caO_k$. Then $L^+\caX$ and $L^+_r\caX$ are representable by schemes, and the latter is perfectly finite presented over $\kappa$. Moreover, if $\caX_\eta$ denotes the generic fiber of $\caX$, then the natural map $L^+\caX \rar L\caX_\eta$ is a closed immersion.

\section{Schemes attached to (locally) profinite sets}\label{sec:schemes_for_profinite_sets}

Let $\kappa$ be a field. For any topological space $T$ we may consider the functor on qcqs $\kappa$-schemes,
\begin{equation}\label{eq:functor_cont_functions_top_space}
\underline T = \underline T_\kappa \colon S \mapsto {\rm Cont}(|S|, T)
\end{equation} 
(we omit $\kappa$ from notation, whenever it is clear from the context). If $T$ is compact Hausdorff, $\underline{T}$ is represented by the affine scheme $\Spec {\rm Cont}(T, \kappa)$, where we write ${\rm Cont}(T,\kappa)$ for the ring of continuous functions $T \rar \kappa$, where $\kappa$ is equipped with the discrete topology. We only will need this for $T$ profinite, so let's recall the proof in that case. We can write $T = \prolim_n T_n$ as an inverse limit of discrete finite sets. Then each $\underline{T}_n$ is represented by the affine scheme $\Spec {\rm Cont}(T_n, \kappa)$, and $\underline T = \prolim_n \underline T_n$ is an inverse limit of affine schemes, hence \cite[Tag 01YW]{StacksProject} itself an affine scheme, the spectrum of $\dirlim_n{\rm Cont}(T_n, \kappa) =  {\rm Cont}(T, \kappa)$. 

We will need a topological version of the above construction. Let $\caO$ be any ring and $0 \neq \varpi \in \caO$ a non-zero divisor contained in the Jacobson radical of $\caO$. Equip the ring $k := \caO[\varpi^{-1}]$ with the $\varpi$-adic topology. Recall from \cite[5.4.15-16]{GabberR_03} (applied to $R = \caO$, $t = \varpi$, $I = \caO$), that there is a natural way to topologize the sets $X(k)$ of $k$-points of all affine $k$-schemes $X$ of finite type, compatible with immersions, and such that for $X = \bA_k^n$ we get $k^n$ with its $\varpi$-adic topology. By \cite[5.4.19]{GabberR_03}, this construction naturally globalizes to all $k$-schemes $X$ locally of finite type, which satisfy the condition
\begin{equation}\label{eq:cond_on_X_topologize}
X(k) = \bigcup_{U \subseteq X} U(k)
\end{equation}
where $U$ ranges over all open affine $k$-subschemes.\footnote{Condition \eqref{eq:cond_on_X_topologize} is satisfied when $X$ is either an open subscheme of a projective $k$-scheme \cite[Lm.~5.4.17]{GabberR_03} or ind-quasi-affine and locally of finite type \cite[Lm.~2.2.6]{BouthierC_19}. We will not make use of these results.}

For a topological space $T$ we may consider the $k$-scheme 
\[
\underline{T}_{k,\varpi} := \Spec {\rm Cont}_{\varpi}(T, k),
\]
the spectrum of the ring of continuous functions $T \rar k$. 
We do not claim that $\underline T_{k,\varpi}$ represents some functor similar as in \eqref{eq:functor_cont_functions_top_space}. Instead it has the following useful property.

\begin{lm}\label{lm:maps_from_underlineTtau}
Let $T$ be a profinite set and $X$ a $k$-scheme satisfying condition \eqref{eq:cond_on_X_topologize}. There is a bijection, functorial in $T$ and $X$, ${\rm Hom}_k(\underline T_{k,\varpi}, X) = {\rm Cont}_{\varpi}(T, X(k))$, where on the right side $\varpi$ indicates that $X(k)$ is endowed with the $\varpi$-adic topology.
\end{lm}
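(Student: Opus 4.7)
\begin{pro*}[Proof plan]
The plan is to first handle the case of $X$ affine of finite type (where the bijection is nearly tautological), and then reduce the general case to it using that a clopen decomposition $T = S \sqcup T'$ induces a ring decomposition ${\rm Cont}_\varpi(T,k) = {\rm Cont}_\varpi(S,k) \times {\rm Cont}_\varpi(T',k)$, hence a scheme-theoretic disjoint union $\underline{T}_{k,\varpi} = \underline{S}_{k,\varpi} \sqcup \underline{T'}_{k,\varpi}$.

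For $X = \Spec k[x_1,\dots,x_n]/I$ affine of finite type, a morphism $\underline{T}_{k,\varpi} \to X$ is the same as an $n$-tuple $(f_1,\dots,f_n) \in {\rm Cont}_\varpi(T,k)^n$ satisfying the relations of $I$; since $X(k) \hookrightarrow \bA^n(k) = k^n$ is a topological embedding cut out by the same relations (by construction of the Gabber--Ramero topology), this is exactly a continuous function $T \to X(k)$.

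For the backward direction in general, given continuous $f \colon T \to X(k)$, I use \eqref{eq:cond_on_X_topologize} and the openness of $U(k) \subseteq X(k)$ to cover $T$ by preimages of affine opens $U_i \subseteq X$ of finite type. Compactness and profiniteness of $T$ yield a finite clopen refinement $T = \bigsqcup_i S_i$ with $f(S_i) \subseteq U_i(k)$; applying the affine case to each $f|_{S_i}$ and gluing gives $\phi$.

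For the forward direction, given $\phi$, define $f(t) := \phi(\ev_t)$ for $\ev_t \colon g \mapsto g(t)$. Quasi-compactness of $\underline{T}_{k,\varpi}$ gives a finite cover by distinguished opens $D(g_j) \subseteq \phi^{-1}(U_{i(j)})$ with $U_{i(j)}$ affine of finite type. The sets $\{t : g_j(t) \neq 0\}$ then cover $T$, and profiniteness refines this to a clopen partition $T = \bigsqcup_k S_k$ with $S_k \subseteq \{t : g_{j(k)}(t) \neq 0\}$. The main obstacle is upgrading the set-theoretic statement ``$\ev_t \in \phi^{-1}(U_{i(j(k))})$ for each $t \in S_k$'' to the scheme-theoretic containment $\underline{S_k}_{k,\varpi} \subseteq \phi^{-1}(U_{i(j(k))})$, since the closed points $\ev_t$ do not exhaust $\underline{T}_{k,\varpi}$. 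This I would resolve by showing $g_{j(k)}|_{S_k}$ is actually a unit in ${\rm Cont}_\varpi(S_k,k)$: compactness of $S_k$ forces the continuous nowhere-zero function $g_{j(k)}|_{S_k} \colon S_k \to k^\times$ to land in some $\varpi^{-N}\caO_k^\times$, so its pointwise reciprocal is continuous. Hence $\underline{S_k}_{k,\varpi} \subseteq D(g_{j(k)})$ as subschemes, and the affine case applied to $\phi|_{\underline{S_k}_{k,\varpi}}$ proves continuity of $f|_{S_k}$, and hence of $f$. The two constructions are easily seen to be mutually inverse and functorial in $T$ and $X$.
\end{pro*}
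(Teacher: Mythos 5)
Your plan is correct and follows essentially the same route as the paper: reduce to the affine (finite type) case via finite clopen decompositions of $T$, where the bijection becomes the tautological identification of tuples in ${\rm Cont}_\varpi(T,k)$ satisfying the relations with continuous maps into $X(k)\subseteq k^n$. The only difference is that you carefully justify the localization of a morphism $\phi\colon \underline{T}_{k,\varpi}\to X$ along a clopen partition of $T$ (via the observation that a nowhere-vanishing continuous function on a compact piece is a unit in ${\rm Cont}_\varpi(S_k,k)$), a point the paper's proof passes over with ``the problem is local on $T$ and on $X$''; this is a valid and welcome filling-in of that step.
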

\begin{proof}
For any disjoint covering by finitely many clopen subsets $T = \bigcup_i T_i$, we have $\underline T_{k,\varpi} = \coprod_i \underline T_{i,k,\varpi}$.
Thus, as $T$ is profinite, the problem is local on $T$ and on $X$ and we may assume that $X$ is affine. We then may assume that $X = \bA_k^n$. Assume we are given a $k$-morphism $\underline T_{k,\varpi} \rar X$. For any $t \in T$, there is a corresponding maximal ideal of ${\rm Cont}_{\varpi}(T,k)$, the kernel of evaluation map at $t$, and the quotient of ${\rm Cont}_{\varpi}(T,k)$ modulo this ideal is isomorphic to $k$. Thus $T$ can be identified with a subset of $|\underline T_{k,\varpi}|$. Let $\alpha \colon \underline T_{k,\varpi} \rar \bA_k^n = \Spec k[x_1,\dots,x_n]$ be a $k$-morphism. Then on underlying topological spaces $\alpha$ maps the subset $T$ of $|\underline T_{k,\varpi}|$ into the subset $\mathbb{A}_k^n(k) \subseteq |\mathbb{A}_k^n|$. Denote the resulting map by $\beta \colon T \rar \bA^n(k) = k^n$. Let $\alpha^\# \colon k[x_1,\dots,x_n] \rar {\rm Cont}_\varpi(T,k)$ be the homomorphism corresponding to $\alpha$. For $1\leq i \leq n$ let $\lambda_i = \alpha^\#(x_i) \in {\rm Cont}_\varpi(T,k)$. The point $t \in T$ is mapped by $\beta$ to the point $(\lambda_1(t), \dots, \lambda_n(t)) \in k^n$. Thus, as $\lambda_i$ is continuous, also $\beta$ is. This defines a map in one direction in the proposition.

Conversely, start with a $\varpi$-adically continuous map $\beta \colon T \rar k^n$. For $f \in k[x_1,\dots,x_n]$ define $\alpha^\#(f) := f \circ \beta \colon T \rar k^n \rar k$. As $\beta$ and $f$ are $\varpi$-adically continuous, also $\alpha^\#(f)$ is. We obtain the $k$-morphism $\alpha \colon \underline{T}_{k,\varpi} \rar \bA_k^n$ of schemes attached to $\alpha^\#$. These two constructions are mutually inverse. Functoriality is clear.
\end{proof}

Assume now the setup of \S\ref{sec:setup_general}. Taking $\caO = \caO_k$, the above considerations apply to the field $k$ equipped with $\varpi$-adic topology. Note that condition \eqref{eq:cond_on_X_topologize} holds for any $k$-scheme $X$ locally of finite type. One verifies directly that for $\kappa \in \Perf$ the ring ${\rm Cont}(T, \kappa)$ is a perfect $\kappa$-algebra.

\begin{lm}\label{lm:Witt_of_continuous_maps} Let $T$ be a profinite set. There are natural isomorphisms $\bW({\rm Cont}(T, \kappa)) \cong {\rm Cont}_{\varpi}(T, \caO_k)$ and $\bW({\rm Cont}(T, \kappa))[1/\varpi] \cong {\rm Cont}_{\varpi}(T, k)$.
\end{lm}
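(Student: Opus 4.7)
The plan is to reduce everything to the finite discrete case by writing $T = \prolim_n T_n$ as a cofiltered limit of finite sets, passing to filtered colimits on the algebra side, and then invoking the fact that the truncated (ramified) Witt vector functor $\bW_r(-) := \bW(-)/\varpi^r$ commutes with filtered colimits. First I would note that, since any continuous map from $T$ into a discrete set factors through some $T_n$, one has
\[
{\rm Cont}(T,\kappa) = \dirlim_n {\rm Cont}(T_n,\kappa) = \dirlim_n \kappa^{T_n}
\]
as $\kappa$-algebras, and for each $r \geq 1$,
\[
{\rm Cont}(T, \caO_k/\varpi^r) = \dirlim_n (\caO_k/\varpi^r)^{T_n},
\]
so that ${\rm Cont}_\varpi(T,\caO_k) = \prolim_r \dirlim_n (\caO_k/\varpi^r)^{T_n}$, all naturally in $T$.

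Next I would observe that $\bW_r$ commutes with filtered colimits. In equal characteristic this is clear from $\bW_r(R) = R[\varpi]/\varpi^r$. In mixed characteristic, $W_r$ is represented, as a set-valued functor on rings, by the finitely presented ring $\bZ[x_0,\ldots,x_{r-1}]$ (with ring structure given by the Witt polynomials), hence commutes with filtered colimits; tensoring up to $\caO_k/\varpi^r$ preserves this. Combined with the facts that $\bW_r$ preserves finite products and that $\bW_r(\kappa) = \caO_k/\varpi^r$, one obtains
\[
\bW_r({\rm Cont}(T,\kappa)) = \dirlim_n \bW_r(\kappa^{T_n}) = \dirlim_n (\caO_k/\varpi^r)^{T_n} = {\rm Cont}(T, \caO_k/\varpi^r).
\]
Passing to the inverse limit over $r$ and using $\bW(-) = \prolim_r \bW_r(-)$ yields the first claimed natural isomorphism $\bW({\rm Cont}(T,\kappa)) \cong {\rm Cont}_\varpi(T, \caO_k)$.

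For the second isomorphism I would simply invert $\varpi$, using that it is a non-zero-divisor in $\bW(R)$ for every $R \in \Perf_\kappa$. This reduces the claim to the identification ${\rm Cont}_\varpi(T, \caO_k)[1/\varpi] = {\rm Cont}_\varpi(T, k)$. Since $T$ is compact, the image of any $f \in {\rm Cont}_\varpi(T, k)$ is bounded and hence contained in $\varpi^{-n}\caO_k$ for some $n \geq 0$, so $\varpi^n f$ lies in ${\rm Cont}_\varpi(T, \caO_k)$; the reverse inclusion is immediate. The only non-formal input in the whole argument is the fact that $\bW_r$ commutes with filtered colimits of perfect $\kappa$-algebras, and that is the main (but very mild) obstacle; everything else is bookkeeping and naturality in $T$ is manifest at each step.
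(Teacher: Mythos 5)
Your argument is correct, but it is genuinely different from the one in the paper. The paper's proof is a two-line appeal to the uniqueness characterization of $\bW(R)$ recalled in \S\ref{sec:setup_general}: $\bW(R)$ is the unique $\varpi$-adically complete and separated $\caO_k$-algebra in which $\varpi$ is a non-zero-divisor and whose reduction mod $\varpi$ is $R$; one then simply observes that ${\rm Cont}_\varpi(T,\caO_k)$ has all three properties when $R = {\rm Cont}(T,\kappa)$. You instead write $T = \prolim_n T_n$ with $T_n$ finite, identify ${\rm Cont}(T,\kappa)$ with $\dirlim_n \kappa^{T_n}$, and push the truncated functors $\bW_r$ through the filtered colimit and the finite products before taking $\prolim_r$. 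The paper's route is shorter but leaves implicit exactly the point your argument makes explicit, namely that ${\rm Cont}_\varpi(T,\caO_k)/\varpi^r = {\rm Cont}(T,\caO_k/\varpi^r)$ and that every continuous $\kappa$-valued function lifts (which rests on local constancy and compactness of $T$, i.e.\ on the same factorization through finite quotients you use). The price you pay is the input that $\bW_r$ commutes with filtered colimits, which you justify adequately in both the equal and mixed characteristic cases (representability of $W_r$ by $\bZ[x_0,\dots,x_{r-1}]$ plus base change along $W(\kappa) \to \caO_k$). Your handling of the second isomorphism, bounding denominators by compactness of $T$, is the same in substance as the paper's ``follows from the first.'' Both arguments are natural in $T$ at every step.
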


\begin{proof}
For $R \in \Perf_\kappa$, $\bW(R)$ is characterised as the unique $\bW(\kappa)$-algebra $\tilde R$, which is complete and separated with respect to the $\varpi$-adic topology, in which $\varpi$ is not a zero-divisor, and which satisfies $\tilde R / \varpi\tilde R = R$. When $R = {\rm Cont}(T,\kappa)$, $\tilde R = {\rm Cont}_{\varpi}(T, \caO_k)$ satisfies these properties. This proves the first claim. The second claim follows from the first. 
\end{proof}

\begin{cor}\label{cor:morphisms_from_const_sheaf_into_loop_group}
Let $T$ be a profinite set and let $X$ be a $k$-scheme, which is locally of finite type. Then ${\rm Hom}(\underline{T}_{\kappa}, LX) = {\rm Cont}_{\varpi}(T, X(k))$. 
\end{cor}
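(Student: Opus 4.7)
The plan is to chain together the two preceding lemmas; this is essentially a formal unraveling of definitions. First, since $T$ is profinite, $\underline{T}_\kappa$ is represented by the affine scheme $\Spec {\rm Cont}(T,\kappa)$, so by Yoneda and the definition of the loop functor,
$$ {\rm Hom}(\underline{T}_\kappa, LX) = LX({\rm Cont}(T,\kappa)) = X\bigl(\bW({\rm Cont}(T,\kappa))[1/\varpi]\bigr). $$

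Next, I would apply Lemma \ref{lm:Witt_of_continuous_maps} to identify $\bW({\rm Cont}(T,\kappa))[1/\varpi]$ with ${\rm Cont}_\varpi(T,k)$. This rewrites the right-hand side above as $X({\rm Cont}_\varpi(T,k)) = {\rm Hom}_k(\underline{T}_{k,\varpi}, X)$, where $\underline{T}_{k,\varpi} = \Spec {\rm Cont}_\varpi(T,k)$.

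Finally, I would invoke Lemma \ref{lm:maps_from_underlineTtau}, which gives ${\rm Hom}_k(\underline{T}_{k,\varpi}, X) = {\rm Cont}_\varpi(T, X(k))$. The hypothesis \eqref{eq:cond_on_X_topologize} needed for that lemma is available because $X$ is locally of finite type over $k$, as noted in the paragraph preceding the statement of the corollary.

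Composing these three identifications yields the claimed equality, with functoriality in $T$ and $X$ inherited from the two source lemmas. There is no real obstacle here beyond careful bookkeeping; the substantive content is packaged in Lemmas \ref{lm:Witt_of_continuous_maps} and \ref{lm:maps_from_underlineTtau}, and what remains is simply to apply them in sequence. The only small point worth verifying is that the topology transported through this chain of bijections agrees on both sides with the $\varpi$-adic topology on $X(k)$, which is automatic since each step is compatible with continuity by construction.
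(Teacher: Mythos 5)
Your proof is correct and is exactly the argument the paper intends: the paper's own proof just says the corollary "formally follows from the definitions and Lemmas \ref{lm:maps_from_underlineTtau} and \ref{lm:Witt_of_continuous_maps}", and your chain (Yoneda for the representable sheaf $\underline{T}_\kappa$, then Lemma \ref{lm:Witt_of_continuous_maps}, then Lemma \ref{lm:maps_from_underlineTtau} using that condition \eqref{eq:cond_on_X_topologize} holds for locally finite type $k$-schemes) is precisely that unraveling.
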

\begin{proof}
This formally follows from the definitions and Lemmas \ref{lm:maps_from_underlineTtau} and \ref{lm:Witt_of_continuous_maps}.
\end{proof}

Here we consider homomorphisms in the category of presheaves on $\Perf_{\kappa}$ (or any full subcategory of sheaves, where these presheaves belong to). Let now $T$ be a locally profinite set (recall the convention from \S\ref{sec:further_not_conv}).

\begin{cor}\label{cor:morphisms_from_cont_locprofin_into_loop_group}
Let $T$ be a locally profinite set, and let $X$ be a $k$-scheme, which is locally of finite type. Then ${\rm Hom}(\underline{T}_{\kappa}, LX) = {\rm Cont}_\varpi(T, X(k))$
\end{cor}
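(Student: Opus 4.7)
The plan is to reduce to the profinite case handled by Corollary~\ref{cor:morphisms_from_const_sheaf_into_loop_group}. First, using the convention from \S\ref{sec:further_not_conv}, I would write $T = \coprod_{i \in I} T_i$ as a countable disjoint union of profinite clopen subsets, and for each finite $F \subseteq I$ set $T_F := \coprod_{i \in F} T_i$, which is again a profinite set.

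The key observation is that for every $R \in \Perf_\kappa$ the space $|\Spec R|$ is quasi-compact, so any continuous map $|\Spec R| \to T$ has image meeting only finitely many of the $T_i$ and therefore factors through some $T_F$. This yields the identification of presheaves on $\Perf_\kappa$,
$$\underline{T}_\kappa \;=\; \varinjlim_{F \subseteq I\ \text{finite}} \underline{T_F}_\kappa,$$
and applying $\Hom(-,LX)$ turns this colimit into a limit,
$$\Hom(\underline{T}_\kappa, LX) \;=\; \varprojlim_F \Hom(\underline{T_F}_\kappa, LX).$$
Since each $T_F$ is profinite, Corollary~\ref{cor:morphisms_from_const_sheaf_into_loop_group} identifies each term on the right with ${\rm Cont}_\varpi(T_F, X(k))$.

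To conclude, I would observe that a map $T \to X(k)$ is $\varpi$-adically continuous if and only if each of its restrictions to the clopen subsets $T_i$ is; equivalently, a continuous map $T \to X(k)$ is the same data as a compatible family of continuous maps $T_F \to X(k)$ indexed over finite $F \subseteq I$. Hence $\varprojlim_F {\rm Cont}_\varpi(T_F, X(k)) = {\rm Cont}_\varpi(T, X(k))$, which completes the argument. The proof is essentially formal; the only real input beyond Corollary~\ref{cor:morphisms_from_const_sheaf_into_loop_group} is the quasi-compactness of $|\Spec R|$, which enables the reduction from locally profinite to profinite sets. I do not anticipate any substantive obstacle.
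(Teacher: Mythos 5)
Your proof is correct and is essentially the paper's argument: the paper also writes $T = \coprod_i T_i$ with $T_i$ profinite, identifies $\underline{T}_\kappa = \coprod_i \underline{T_i}_\kappa$, and invokes Corollary \ref{cor:morphisms_from_const_sheaf_into_loop_group}. Your filtered-colimit formulation over finite subsets, justified by quasi-compactness of $|\Spec R|$, is just a careful unwinding of why that coproduct identification holds on $R$-points.
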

\begin{proof}
Write $T = \coprod_i T_i$ with $T_i$ profinite. Then $\underline{T}_{\kappa} = \coprod_i \underline{T}_{i,\kappa}$. We are done by Corollary \ref{cor:morphisms_from_const_sheaf_into_loop_group}.
\end{proof}

\subsection{A quasi-compactness lemma}\label{sec:qc_lemma}
We work in the setup of \S\ref{sec:setup_over_finite_field}. We prove a lemma needed in \S\ref{sec:bsigma_fixed_points}.

\begin{lm}\label{lm:qc_loc_profin}
Let $X$ be an affine $k$-scheme of finite type. Then $X(k)$, equipped with its $\varpi$-adic topology, is locally profinite, and the map $\underline{X(k)} \rar LX$ induced by Lemma \ref{cor:morphisms_from_cont_locprofin_into_loop_group}  is quasi-compact. 
\end{lm}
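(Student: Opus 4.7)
Plan. For the local profiniteness part, pick a closed embedding $X \har \bA^n_k$; then $X(k)$ is a $\varpi$-adically closed subset of $k^n$. Since $\kappa$ is finite, $\caO_k = \prolim_r \caO_k/\varpi^r$ is a metrizable profinite ring, whence $k = \bigcup_N \varpi^{-N}\caO_k$ is locally profinite and second countable, and these properties pass to $X(k) \subseteq k^n$.

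For the quasi-compactness part, the plan is to produce from the same embedding compatible filtrations of $LX$ and $\underline{X(k)}$ indexed by $N \in \bZ_{\geq 0}$. Let $\varpi^{-N}L^+\bA^n \subseteq L\bA^n$ denote the subfunctor $R \mapsto (\varpi^{-N}\bW(R))^n$. After the isomorphism $\varpi^{-N}L^+\bA^n \cong L^+\bA^n$ given by scaling, the transition $\varpi^{-N}L^+\bA^n \har \varpi^{-N-1}L^+\bA^n$ becomes multiplication by $\varpi$ on $L^+\bA^n$, whose image is cut out by the vanishing of the leading Teichm\"uller coefficient, hence a closed immersion. Thus $L\bA^n = \colim_N \varpi^{-N}L^+\bA^n$ is an ind-(perfect scheme); intersecting with the closed immersion $LX \har L\bA^n$ (see \cite[Lm.~1.2]{Zhu_17}) yields $LX = \bigcup_N LX_{\leq N}$ with each $LX_{\leq N} := LX \cap \varpi^{-N}L^+\bA^n$ qcqs. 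Correspondingly, set $X(k)_N := X(k) \cap (\varpi^{-N}\caO_k)^n$, $T_0 := X(k)_0$, and $T_i := X(k)_i \sm X(k)_{i-1}$ for $i \geq 1$; each $T_i$ is profinite clopen and $\underline{X(k)} = \coprod_{i \geq 0} \underline{T_i}$.

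Now fix a qcqs test object $S = \Spec R \to LX$; it factors through some $LX_{\leq N}$ by the ind-(perfect scheme) property. Split
\[
\underline{X(k)} \times_{LX} S = \coprod_{i \geq 0} \underline{T_i} \times_{LX} S.
\]
For $i \leq N$, $T_i \subseteq (\varpi^{-N}\caO_k)^n$ forces $\underline{T_i} \to LX$ to factor through $LX_{\leq N}$, so the corresponding summand equals $\underline{T_i} \times_{LX_{\leq N}} S$, a fiber product of qcqs schemes, hence qcqs. The remaining key claim is that for $i > N$ the summand is empty; granted this, the pullback is a \emph{finite} disjoint union of qcqs, hence qcqs.

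To prove this emptiness, Corollary \ref{cor:morphisms_from_const_sheaf_into_loop_group} and Lemma \ref{lm:Witt_of_continuous_maps} identify the map $\underline{T_i} \to L\bA^n$ with the tuple $(\lambda_1,\dots,\lambda_n) \in \bW(A)[1/\varpi]^n$ arising from the inclusion $T_i \har X(k) \har k^n$, where $A := {\rm Cont}(T_i,\kappa)$ and $\bW(A)[1/\varpi] = {\rm Cont}_\varpi(T_i, k)$. Writing the Teichm\"uller expansion $\lambda_j = \sum_{m \geq -i}[a_{j,m}]\varpi^m$ with $a_{j,m} \in A$, the condition on any test-ring map $\alpha\colon A \to R'$ that $\bW(\alpha)[1/\varpi](\lambda_j) \in \varpi^{-N}\bW(R')$ for each $j$ forces $\alpha(a_{j,m}) = 0$ whenever $-i \leq m \leq -N-1$; so the projection $\underline{T_i} \times_{LX} S \to \underline{T_i} = \Spec A$ factors through $\Spec(A/I)$ for $I := (a_{j,m})_{-i \leq m \leq -N-1,\,j}$. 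The common vanishing locus of these locally constant functions on the profinite set $T_i$ equals $T_i \cap (\varpi^{-N}\caO_k)^n = T_i \cap X(k)_N$, which is empty because $X(k)_N \subseteq X(k)_{i-1}$ while $T_i \cap X(k)_{i-1} = \varnothing$. A standard compactness argument on $T_i$ (finitely many locally constant $\kappa$-valued functions without common zero on a profinite set generate the unit ideal) gives $I = A$, hence $\Spec(A/I) = \varnothing$ and $\underline{T_i} \times_{LX} S = \varnothing$ as desired. The main obstacle is this final step: unpacking the adjunctions of \S\ref{sec:schemes_for_profinite_sets} far enough to identify the ideal $I$, and then interpreting its vanishing combinatorially.
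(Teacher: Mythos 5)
Your proof is correct and follows essentially the same route as the paper's: both rest on exhausting $L\bA^n$ by the closed subschemes $\varpi^{-N}L^+\bA^n$ and on the Teichm\"uller-coefficient computation identifying the pullback of the constant sheaf with the constant sheaf on the corresponding clopen ball. The paper organizes this slightly more economically --- it first reduces to $X=\bA^n$ via the closed immersion $\underline{X(k)}\har\underline{k^n}$ and then invokes the single identity $\underline{k^n}\times_{L\bA^n}\varpi^{-i}L^+\bA^n_{\caO_k}=\underline{\varpi^{-i}\caO_k^n}$, which packages your two cases (qcqs for $i\le N$, empty for $i>N$) into one statement and avoids the infinite annulus decomposition altogether.
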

\begin{proof}
Fix a closed immersion $X \har \bA^n$ over $k$. It identifies $X(k)$ with a closed subset of the locally profinite set $k^n$, so the first claim follows. As $LX \rar L\bA^n$ is an (even closed) immersion, it is enough to show that the composition $\underline{X(k)} \rar LX \rar L\bA^n$ is quasi-compact. Therefore, it is enough to show that $\underline{X(k)} \rar \underline{k^n}$ and $\underline{k^n} \rar L\bA^n$ are quasi-compact. The first is a closed immersion, and so we are reduced to the case $X = \bA^n$. Exhaust $X$ by $X = \dirlim_i X_i$, with $X_i = \varpi^{-i} L^+\bA_{\caO_k}^n$ for $i \geq 0$. We have $\underline{X(k)} \times_{LX} X_i = \underline{T_i}$, where $T_i = \varpi^{-i}\caO_k^n \subseteq k^n = X(k)$ is profinite. If now $Y \rar LX$ is a map from an affine scheme in $\Perf_{\obF}$, then it factors through $Y \rar X_i$ for $i \gg 0$ and $\underline T \times_{LX} Y = (\underline T \times_{LX} X_i) \times_{X_i} Y = \underline{T_i} \times_{X_i} Y$, which is quasi-compact. 
\end{proof}

\section{Arc-descent for the loop functor}\label{sec:arc_descent}

Here we work in the setup of \S\ref{sec:setup_general}. We will prove the following result.

\begin{thm}\label{thm:LX_is_vsheaf}
Let $X$ be a quasi-projective scheme over $k$. Then $LX$ is an arc-sheaf on $\Perf_\kappa$. 
\end{thm}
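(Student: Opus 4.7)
The plan is to reduce the assertion to the case of projective space $\bP^n_k$ and then prove that case using the moduli-theoretic description of $\bP^n$ together with arc-descent for vector bundles on $\bW(R)[1/\varpi]$.

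First I would carry out a reduction to $\bP^n$. Any quasi-projective $X$ over $k$ can be written as $X = \bar X \sm Y$, where $\bar X \har \bP^n_k$ is a projective scheme (take the scheme-theoretic closure of a locally closed immersion) and $Y \subseteq \bar X$ is closed. I would then reduce to two formal statements about the functor $L$: (i) if $Z \har X'$ is a closed immersion of separated $k$-schemes and $LX'$ is an arc-sheaf, then so is $LZ$; (ii) if moreover $LZ$ is an arc-sheaf, then $L(X' \sm Z)$ is an arc-sheaf. Point (i) follows from Lemma \ref{lm:representability_graph} (and the functoriality of $L$ on closed immersions recorded just after Proposition \ref{prop:loop_of_affine_is_indproaffpfp}), since closed subfunctors of arc-sheaves are arc-sheaves; closed immersions of affine schemes come from surjective ring maps, which can be checked after an arc-cover. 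Point (ii) is a standard complement argument: given descent data in $L(X' \sm Z)$ along an arc-cover $R \to R'$, first descend to a section of $LX'(R)$, and then argue that the corresponding morphism $\Spec \bW(R)[1/\varpi] \to X'$ avoids $Z$ because its pullback to $\Spec \bW(R')[1/\varpi]$ does and $\Spec \bW(R')[1/\varpi] \to \Spec \bW(R)[1/\varpi]$ is scheme-theoretically surjective enough to detect this (one formulates ``avoids $Z$'' as ``does not meet the closed subscheme'' and uses that closed subschemes are detected after a $v$-cover of the base).

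Iterating, the problem reduces to showing that $L\bP^n$ is an arc-sheaf. Here I would exploit the moduli description
\[
\bP^n(A) \;=\; \bigl\{\text{surjections } A^{n+1} \twoheadrightarrow L \colon L \text{ an invertible $A$-module}\bigr\}/\cong,
\]
applied to $A = \bW(R)[1/\varpi]$. Thus an $R$-point of $L\bP^n$ is a line bundle on $\Spec \bW(R)[1/\varpi]$ together with $n+1$ global generating sections, and morphisms of sheaves sit in a diagram whose arc-descent is controlled by two inputs: arc-descent of line bundles on $\Spec \bW(R)[1/\varpi]$ in $R$, and arc-descent of global sections of those line bundles. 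The latter is essentially arc-descent for the underlying quasi-coherent module, which follows once the former (descent of the module itself) is established.

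The main obstacle, and the only genuinely non-formal step, is the arc-descent for line bundles (and, equivalently by passage to frames, for finite locally free $\bW(R)[1/\varpi]$-modules of constant rank) as $R$ varies through $\Perf_\kappa$. The strategy will be to reduce via Lemma \ref{lm:standard_vcover} to arc-covers $R \to R'$ where $\Spec R'$ is a disjoint union of spectra of valuation rings, and then to invoke a trivialization result of the type of Theorem \hyperlink{thm:B}{B} (already announced in the introduction) saying that all vector bundles on $\bW(A)[1/\varpi]$ are free when each connected component of $\Spec A$ is the spectrum of a valuation ring. Combined with the perfectoid $v$-descent techniques of \cite{ScholzeW_20} (applied to the untilts provided by $\bW(R)[1/\varpi]$) and the criterion of Lemma \ref{lm:check_surj_vsheaves} that reduces checking isomorphisms of $v$-sheaves to valuation rings with algebraically closed fraction fields, this should upgrade $v$-descent to arc-descent. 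Once this is in hand, the projective space case, and hence the theorem, follows by assembling line bundle descent and section descent along the moduli description of $\bP^n$.
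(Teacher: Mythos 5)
Your global architecture --- reduce to $\bP^n$ by handling closed and open immersions, then treat $\bP^n$ via descent of line bundles over $\bW(R)[1/\varpi]$ --- is exactly the paper's (Lemma \ref{lm:LX_arc_reduction_to_PPn} plus Proposition \ref{prop:descent_vector_bundles_arc}), and your closed-immersion step is fine: it only needs that $\bW(R)[1/\varpi]$ is reduced and that $\bW(R)[1/\varpi]\to\bW(R')[1/\varpi]$ is injective/dominant (Lemmas \ref{lm:Witt_reduced}, \ref{lm:dominance_arc_Witt}). But the two remaining steps have genuine gaps, and both trace back to the same missing ingredient. For the open-immersion step you assert that $\Spec\bW(R')[1/\varpi]\to\Spec\bW(R)[1/\varpi]$ is ``surjective enough'' to detect that the closed locus $\beta^{-1}(Z)$ is empty. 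It is not: an arc-cover $R\to R'$ of perfect rings only yields a \emph{dominant} map after applying $\bW(\cdot)[1/\varpi]$, and this map is in general neither surjective nor a $v$-cover, so a nonempty closed subset of the target could a priori be missed entirely. The paper's fix is Proposition \ref{prop:surjectivity_on_valuations} (an arc-cover in $\Perf$ induces a surjection on continuous rank-$\leq 1$ valuations of the Witt vectors) together with Corollary \ref{cor:arc_cover_surj_on_closed_primes} (every maximal ideal of $\bW(R)[1/\varpi]$ is hit); one then specializes an arbitrary point of the affine scheme $\Spec\bW(R)[1/\varpi]$ to a closed point and uses that open subsets are stable under generalization. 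You never identify this step, and without it the open case does not close.

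The second gap is the arc-descent for line bundles itself. Lemma \ref{lm:check_surj_vsheaves} is a criterion for surjectivity/bijectivity of a map of $v$-sheaves and cannot certify the arc-sheaf condition; Lemma \ref{lm:standard_vcover} produces $v$-covers, so even after refining, you are left with proving effectivity of descent data along an arc-cover between products of valuation rings, which is the whole content; and Theorem \hyperlink{thm:B}{B} asserts \emph{triviality} of bundles over such rings, not effectivity of descent --- moreover its proof in the paper itself invokes arc-descent of vector bundles (\S\ref{sec:notorsors_step3}), so deriving the descent from it would be circular as the logic is set up. The actual mechanism in the paper is again Proposition \ref{prop:surjectivity_on_valuations}: it guarantees that after the sousperfectoid base change (e.g.\ $-\,\widehat{\otimes}_{\bZ_p}\bZ_p[p^{1/p^\infty}]^{\wedge}_p$ in mixed characteristic) an arc-cover of perfect rings becomes an arc-cover of affinoid perfectoid spaces, so that the arc-upgraded perfectoid descent results (Lemmas \ref{lm:structure_presheaf_sheaf} and \ref{lm:perfd_descent}, adapting Scholze--Weinstein from the $v$- to the arc-topology) apply and give Proposition \ref{prop:descent_vector_bundles_arc}. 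In short: the one non-formal input your outline is missing is precisely the statement that $R\mapsto\bW(R)$ transforms arc-covers of perfect rings into maps that are surjective on rank-one points of the adic spectrum; both halves of your argument silently presuppose some form of it.
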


The proof strategy is: 1) show that if $\Spec R' \rar \Spec R$ is an arc-cover, then $\Spa \bW(R') \rar \Spa \bW(R)$ is an arc-cover, that is, surjective on rank one valuations (Proposition \ref{prop:surjectivity_on_valuations}); 2) Using part 1) and perfectoid techniques from \cite{ScholzeW_20}, show arc-descent for vector bundles on $\bW(R)[1/\varpi]$ (Proposition \ref{prop:descent_vector_bundles_arc}). This gives Theorem \ref{thm:LX_is_vsheaf} for $X = \bP^n$; 3) again exploiting part 1), show that if Theorem \ref{thm:LX_is_vsheaf} holds for $X$, then it holds for any locally closed subscheme of $X$.

\subsection{Witt-vectors and arc-covers}\label{sec:witt_vectors_and_arc_covers}

First we study the effect of the functors $R \mapsto \bW(R)$ resp. $R \mapsto \bW(R)[1/\varpi]$ on arc-covers. 

\subsubsection{Two lemmas}

\begin{lm}\label{lm:Witt_reduced}
Let $R$ in $\Perf_\kappa$. The rings $\bW(R)$, $\bW(R)[1/\varpi]$ are reduced. Moreover, $\bW(\cdot)$ and $\bW(\cdot)[1/\varpi]$ preserve injections of rings.
\end{lm}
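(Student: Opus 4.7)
The plan has two main parts: showing reducedness, and showing injectivity preservation, in each case handling both the ring $\bW(R)$ and its localization $\bW(R)[1/\varpi]$.

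For reducedness, I would first note that by the characterization of $\bW(R)$ recalled in \S\ref{sec:setup_general}, the ring $\bW(R)$ is $\varpi$-adically separated and $\varpi$-torsion free, and satisfies $\bW(R)/\varpi\bW(R) = R$. Since $R$ is perfect, it is reduced. So suppose $x \in \bW(R)$ is nilpotent, say $x^n = 0$. Let $m$ be the largest integer with $x \in \varpi^m \bW(R)$; by $\varpi$-adic separatedness, $m = \infty$ would force $x = 0$, so assume $m < \infty$ and write $x = \varpi^m y$ with $y \notin \varpi\bW(R)$. Then $\varpi^{mn} y^n = 0$, and $\varpi$-torsion freeness yields $y^n = 0$. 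Reducing modulo $\varpi$ gives $\bar y^n = 0$ in the reduced ring $R$, so $\bar y = 0$, contradicting $y \notin \varpi\bW(R)$. Reducedness of $\bW(R)[1/\varpi]$ then follows since it is a localization of the reduced ring $\bW(R)$.

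For injectivity preservation, let $R \har R'$ be an injection in $\Perf_\kappa$. In the equal characteristic case, $\bW(R) = R[\![\varpi]\!] \to R'[\![\varpi]\!] = \bW(R')$ is just the coefficientwise embedding. In the mixed characteristic case, every element of $\bW(R)$ admits a unique convergent Teichm\"uller expansion $\sum_{i \geq 0}[a_i]\varpi^i$ with $a_i \in R$ (this uniqueness uses that $R$ is perfect), and the induced map sends this to $\sum_{i \geq 0}[\iota(a_i)]\varpi^i$ by functoriality of $[\cdot]$. If the image vanishes, uniqueness of the Teichm\"uller expansion in $\bW(R')$ forces $\iota(a_i) = 0$ for all $i$, hence $a_i = 0$, hence the element was zero to begin with. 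For the localization statement, since $\varpi$ is a non-zero divisor in both $\bW(R)$ and $\bW(R')$, inverting $\varpi$ on an injective map of commutative rings keeps it injective.

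The lemma is really a collection of routine facts, and I do not anticipate any genuine obstacle. The only mild subtlety is making sure the Teichm\"uller expansion argument is available, but this is precisely what the discussion in \S\ref{sec:setup_general} already provides.
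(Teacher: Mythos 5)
Your proof is correct and follows essentially the same route as the paper: the injectivity claim via uniqueness of the Teichm\"uller expansion and functoriality of $[\cdot]$ is exactly the paper's argument, and the paper dismisses reducedness as easy (citing a reference in the mixed characteristic case). Your self-contained reducedness argument --- extracting the maximal power of $\varpi$, using that $\varpi$ is a non-zero divisor and that $\bW(R)/\varpi\bW(R)=R$ is reduced --- is a valid filling-in of that step.
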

\begin{proof}
Both claims are immediate in the equal characteristic case. In the other case, the first claim is easy (see e.g. \cite[Lm.~3.7]{Shimomoto_14}), and the second claim follows from the existence and uniqueness of the Witt presentation and the functoriality of Teichm\"uller lifts.
\end{proof}

\begin{lm}\label{lm:dominance_arc_Witt}
Let $R \rar R'$ be an arc-cover in $\Perf_\kappa$. Then $\Spec \bW(R') \rar \Spec \bW(R)$ and $\Spec \bW(R')[1/\varpi] \rar \Spec \bW(R)[1/\varpi]$ are dominant. 
\end{lm}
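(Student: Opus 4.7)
The plan is to reduce dominance to injectivity of the corresponding ring maps, and then to invoke the second part of Lemma \ref{lm:Witt_reduced}. Recall that a morphism of affine schemes $\Spec B \to \Spec A$ with $A$ reduced is dominant if and only if $A \to B$ is injective (since then the kernel is contained in every prime of $A$, hence in the nilradical, hence zero). By the first part of Lemma \ref{lm:Witt_reduced}, both $\bW(R)$ and $\bW(R)[1/\varpi]$ are reduced, so the problem reduces to showing that $\bW(R) \to \bW(R')$ and $\bW(R)[1/\varpi] \to \bW(R')[1/\varpi]$ are injective.

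The first step is to check that $R \to R'$ itself is injective. An arc-cover is in particular a $v$-cover, hence universally submersive (by definition), hence surjective on underlying topological spaces. Thus every prime of $R$ lies in the image of $\Spec R' \to \Spec R$, which forces $\ker(R \to R')$ to be contained in every prime of $R$, i.e. in the nilradical. Since $R \in \Perf_\kappa$ is perfect, it is reduced, so $\ker(R \to R') = 0$.

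The second step is to apply the second assertion of Lemma \ref{lm:Witt_reduced}, which asserts that the functors $\bW(\cdot)$ and $\bW(\cdot)[1/\varpi]$ preserve injections. This immediately gives injectivity of the two maps $\bW(R) \to \bW(R')$ and $\bW(R)[1/\varpi] \to \bW(R')[1/\varpi]$, and combined with the reducedness observation above yields the desired dominance.

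There is no genuine obstacle here: the whole point of the lemma is to package the elementary observation that arc-covers of perfect rings are injective on rings together with the preservation of injections by the Witt vector functor (which in the mixed characteristic case is the nontrivial input, handled in Lemma \ref{lm:Witt_reduced} via the uniqueness of the Witt presentation). The lemma will then feed into the subsequent argument, where dominance of $\Spa \bW(R') \to \Spa \bW(R)$ on rank one points is upgraded to surjectivity via standard valuation-theoretic arguments.
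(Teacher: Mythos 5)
Your proof is correct and is essentially the paper's own argument: injectivity of $R \to R'$ from surjectivity of arc-covers on spectra together with reducedness of the perfect ring $R$, followed by the preservation of injections under $\bW(\cdot)$ and $\bW(\cdot)[1/\varpi]$ from Lemma \ref{lm:Witt_reduced}. One small correction: the implication ``arc-cover $\Rightarrow$ $v$-cover'' is backwards (the arc-topology is finer, so $v$-covers are arc-covers, not conversely); surjectivity of an arc-cover on spectra instead follows directly from the definition, by lifting the trivial (rank-$0$) valuation on each residue field of $R$.
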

\begin{proof}
Arc-covers are surjective on spectra, thus dominant, and hence $\ker(R \rar R') \subseteq \nil(R)$. As $R$ is perfect, it is reduced, and thus $R \rar R'$ is injective. By Lemma \ref{lm:Witt_reduced} the same holds after applying $\bW$ (resp. applying $\bW$ and inverting $\varpi$).
\end{proof}

\subsubsection{Continuous valuations} 

Our next goal will be to prove that if $R \rar R'$ is an arc-cover in $\Perf_\kappa$, then the image of $\Spec \bW(R')[1/\varpi] \rar \Spec \bW(R)[1/\varpi]$ contains all closed points of the target. Therefore we use the adic spectrum, which we first recall. 

Let $A$ be a ring. Recall (for example from \cite[2.3]{ScholzeW_20}) that a \emph{valuation}
on $A$ is a map $|\cdot| \colon A\rar \Gamma\cup\{0\}$ into a totally ordered abelian group $\Gamma$, such that $|0| = 0$, $|1| = 1$, $|xy| = |x|\cdot|y|$, $|x+y| \leq \max(|x|,|y|)$ (where by convention $0<\gamma$ and $\gamma 0 = 0$ for all $\gamma \in \Gamma$). Two valuations $|\cdot|$, $|\cdot|'$ on $A$ are \emph{equivalent} if $|a|\leq |b| \LRar |a| \leq |b|$ for all $a,b \in A$. A valuation is \emph{of rank $\leq 1$}, if it is equivalent to a valuation with value group $\Gamma = \bR_{> 0}^\times$. The \emph{support} of a valuation $|\cdot|$ is the prime ideal $\supp |\cdot| = \{x \in R \colon |x| = 0\}$. If $A$ is a topological ring, then a valuation $|\cdot|$ is said to be \emph{continuous}, if $\{x \in A \colon |x| < \gamma \} \subseteq A$ is open for each $\gamma \in \bR_{> 0}$. If $A^+$ is a subring of a topological ring $A$, then the \emph{adic spectrum} $\Spa(A,A^+)$ of $(A,A^+)$ is the set of equivalence classes of continuous valuations on $A$, such that $|a| \leq 1$ for all $a \in A^+$. We consider the subset $\Spa_{\leq 1}(A,A^+)$ of $\Spa(A,A^+)$ of equivalence classes of continuous valuations of rank $\leq 1$.

If $R \in \Perf_\kappa$, we always equip $R$ with the discrete topology, take $R^+ = R$, and write $\Spa_{\leq 1}(R)$ for $\Spa_{\leq 1}(R,R)$. For $R \in \Perf_\kappa$ we always equip $\bW(R)[1/\varpi]$ with the $\varpi$-adic topology, with respect to which it is separated and complete, and we write $\Spa_{\leq 1} \bW(R)$ for $\Spa_{\leq 1}(\bW(R), \bW(R))$. Note that $R$ and $\bW(R)[1/\varpi]$ are uniform Huber rings, and the latter is also Tate. Moreover, $(R,R)$ and $(\bW(R)[1/\varpi], \bW(R))$ are Huber pairs.

\subsubsection{Witt vectors and the adic spectrum} 

\begin{prop}\label{prop:surjectivity_on_valuations}
Let $R \rar R'$ be an arc-cover in $\Perf$. Then $\Spa_{\leq 1} \bW(R') \rar \Spa_{\leq 1} \bW(R)$ is surjective.
\end{prop}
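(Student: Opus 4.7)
The plan is to carry out a tilt--untilt argument that reduces the claim to the arc-cover condition for $\Spec R' \to \Spec R$, which directly controls rank $\leq 1$ valuations by the very definition of an arc-cover. The perfectoid tilting equivalence of \cite{ScholzeW_20} provides the bridge between the characteristic $0$ spaces $\Spa_{\leq 1}\bW(\cdot)$ and their characteristic $p$ analogues in $\Perf$.

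Given $v \in \Spa_{\leq 1}\bW(R)$, encoded by a continuous morphism of Huber pairs $(\bW(R), \bW(R)) \to (K, V)$ with $V \subseteq K$ a rank $\leq 1$ valuation ring, I would first dispose of the case $v(\varpi) = 0$: then $v$ factors through $\bW(R) \twoheadrightarrow R$, giving a rank $\leq 1$ valuation $\bar v \in \Spa_{\leq 1}(R, R)$. The arc-cover property, applied to the corresponding map $\Spec V \to \Spec R$, lifts $\bar v$ to some $\bar v' \in \Spa_{\leq 1}(R', R')$, and precomposition with $\bW(R') \twoheadrightarrow R'$ produces the required $v'$.

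In the main case $v(\varpi) > 0$, I would first enlarge $K$ by passing to the completion of its algebraic closure, so that $(K, V)$ becomes a rank $\leq 1$ perfectoid pair over $(k, \caO_k)$. Its Fontaine--Scholze tilt $(K^\flat, V^\flat)$ is a perfectoid pair of characteristic $p$ in $\Perf$, and the map $\bW(R) \to V$ induces a tilted rank $\leq 1$ valuation $v^\flat \colon R \to V^\flat$ via the reduction $R = \bW(R)/\varpi \to V/\varpi \to V^\flat$. The arc-cover hypothesis supplies a rank $\leq 1$ extension $V^\flat \hookrightarrow W^\flat$ in $\Perf$ and a map $R' \to W^\flat$ fitting into a commutative diagram, lifting $v^\flat$. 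After completing $W^\flat$ $\varpi$-adically to render it perfectoid, the tilting equivalence produces a unique untilt $W$, a rank $\leq 1$ perfectoid valuation ring in mixed characteristic extending $V$ and with tilt $W^\flat$. The composition $\bW(R') \to \bW(W^\flat) \twoheadrightarrow W$, the second map being the ramified Fontaine $\theta_k$-map associated to the untilt $W$, defines the desired $v' \in \Spa_{\leq 1}\bW(R')$.

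The principal obstacle is ensuring the exact restriction identity $v'|_{\bW(R)} = v$ rather than merely an equivalent valuation. This is handled by performing the untilt step \emph{relative} to the distinguished untilt $V$ of $V^\flat$, so that $V \hookrightarrow W$ is the prescribed extension; functoriality of the Fontaine--Scholze tilting correspondence then propagates this compatibility from the level of perfectoid valuation rings to the level of $\bW$-algebras, yielding the required identity.
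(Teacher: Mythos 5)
Your argument is correct, and it takes a genuinely different route from the paper's, although the two share the same skeleton: push the valuation down to characteristic $p$, apply the arc-cover condition there, and come back up. The paper descends via the map $\mu$ given by precomposition with the Teichm\"uller lift, uses a ``bounded denominators'' subring of $\bW(\caO_K)$ (Lemma \ref{lm:valuation_realized_via_Kedlayas_map}) to reduce to the case where $R \to R'$ is an injective local map of rank-$\leq 1$ valuation rings, and then ascends by exploiting $\varpi$-complete faithful flatness of $\bW(R) \to \bW(R')$ to produce a point upstairs. You instead keep $R, R'$ general, tilt the given valuation to a map $R \to \caO_K^\flat$ via the adjunction $\Hom^{\rm cont}_{\caO_k}(\bW(R), \caO_K) \cong \Hom_\kappa(R, \caO_K^\flat)$, and untilt the arc-lift \emph{relative to the distinguished untilt} $V = \caO_K$; the compatibility $v'|_{\bW(R)} = v$ is then precisely the counit identity of this adjunction together with functoriality of $\bW(\cdot)$ and of $\theta$, with no flatness input. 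Your route buys a cleaner compatibility check and dispenses with the bounded-denominator construction and the reduction to valuation rings, at the price of invoking the tilting equivalence for perfectoid fields and of first replacing the residue field at the support by its completed algebraic closure. Three small points to tighten: (i) the map you write as ``$V/\varpi \to V^\flat$'' goes the wrong way --- what you actually use is that a map $R \to V/\varpi$ from a perfect ring lifts uniquely to $R \to V^\flat = \varprojlim_{\phi} V/\varpi$; (ii) the valuation ring $W^\flat$ furnished by the arc-cover condition need not be perfect, so you should replace it by its perfection (the valuation extends uniquely to ${\rm Frac}(W^\flat)^{1/p^\infty}$ with the same rank and the maps from $R'$ and $V^\flat$ persist) before completing and untilting; (iii) in the equal-characteristic case the tilting step degenerates (the untilt is the identity and $\theta$ is $\caO_M[\![\varpi]\!] \to \caO_M$), which is harmless but worth noting since the proposition covers both cases.
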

\begin{proof}
In the same way as in \cite[Lm.~4.4]{Kedlaya_13} there is a map $\mu \colon \Spa_{\leq 1}\bW(R) \rar \Spa_{\leq 1}(R)$, which is defined by precomposition with the Teichm\"uller lift, i.e., it sends a valuation $|\cdot|$ of $\bW(R)$ to the valuation $\widetilde{|\cdot|} := \mu(|\cdot|) \colon R \stackrel{[\cdot]}{\rar} \bW(R) \rar \bR_{\geq 0}$. 

\begin{lm}\label{lm:valuation_realized_via_Kedlayas_map}
Let $|\cdot| \in \Spa_{\leq 1}(\bW(R))$ and let $\widetilde{|\cdot|} = \mu(|\cdot|) \in \Spa_{\leq 1}(R)$ have support $\fp$. Then $\widetilde{|\cdot|}$ corresponds to a homomorphism $R \tar R/\fp \har \caO_K$ where $K = {\rm Frac}(R/\fp)$ is a non-archimedean field in $\Perf$. By functoriality of $\bW$ we obtain a $\varpi$-adically continuous homomorphism $\bW(R) \rar \bW(\caO_K)$. Then $|\cdot|$ lies in the image of $\Spa_{\leq 1} \bW(\caO_K) \rar \Spa_{\leq 1}\bW(R)$.
\end{lm}
\begin{proof}
We have $\ker(\bW(R) \rar \bW(\caO_K)) = \{\sum_{n\geq 0} [x_n]\varpi^n \colon x_n \in \fp \}$. As $|\cdot|$ is $\varpi$-adically continuous, for each $x = \sum_{n\geq 0} [x_n]\varpi^n \in \ker(\bW(R) \rar \bW(\caO_K))$ we have
\[
| x | = \left| \sum_{n\geq 0} [x_n]\varpi^n \right| = \lim_{N \rar +\infty} \left|\sum_{n\geq 0}^N [x_n]\varpi^n\right| \leq \lim_{n \rar \infty}\max\nolimits_{n=1}^N |[x_n]\varpi^n| = \lim_{n \rar \infty}\max\nolimits_{n=1}^N \widetilde{|x_n|}|\varpi|^n  = 0,
\]
as all $x_n \in \fp$. Thus the support of $|\cdot|$ contains $\ker(\bW(R) \rar \bW(\caO_K))$, i.e., $|\cdot|$ is induced from a valuation (again denoted $|\cdot|$) of $\bW(R/\fp) \cong \bW(R)/ \ker(\bW(R) \rar \bW(\caO_K))$. It remains to show that $|\cdot|$ on $\bW(R/\fp)$ lifts to a valuation in $\Spa_{\leq 1}\bW(\caO_K)$. Inside $\bW(\caO_K)$ we have the subring of elements with bounded denominator:
\[
\bW(\caO_K)' = \{[r]^{-1} \sum_{n=0}^{\infty} \left[x_i\right] \varpi^n \colon r \in R/\fp \text{ and for all $n \geq 0$: } x_n \in R/\fp \text{ and } x_n/r \in \caO_K \}.
\]
(that this is indeed a subring follows from the multiplicativity of the Teichm\"uller lift). Then $(\bW(\caO_K)', \bW(\caO_K)')$ is an affinoid ring with $\varpi$-adic completion equal to $(\bW(\caO_K), \bW(\caO_K))$. Therefore, $\Spa_{\leq 1}\bW(\caO_K)' = \Spa_{\leq 1}\bW(\caO_K)$ and it is enough to lift $|\cdot|$ to $\bW(\caO_K)'$. But here we can (and must) define the valuation $|\cdot|'$ by $\left|[r]^{-1}\sum_{n=0}^{\infty} \left[x_i\right] \varpi^n \right|' := \widetilde{|r|}^{-1}\left|\sum_{n=0}^{\infty} \left[x_i\right] \varpi^n \right|$. Clearly, this is independent of the choice of the presentation as a fraction. Moreover, it is a valuation of rank $1$ extending $|\cdot|$ on $\bW(R/\fp)$, and it remains to check that $|\cdot|'$ is $\varpi$-adically continuous and bounded by $1$. Let $x = [r]^{-1}\sum_{n=0}^{\infty} \left[x_n\right] \varpi^n \in \bW(\caO_K)'$. A computation (similar to the above) using the $\varpi$-adic continuity of $|\cdot|$, along with the fact that $r^{-1}x_n \in \caO_K$ for each $n$, so that $\widetilde{|x_n|} \leq \widetilde{|r|}$, gives $|\sum_{n\geq 0}[x_n]\varpi^n| \leq \widetilde{|r|}$. This in turn gives $|x|' \leq 1$. Finally, $\varpi$-adic continuity of $|\cdot|'$ follows from this and $|\varpi|' = |\varpi| < 1$.
\end{proof}

We continue with the proof of Proposition \ref{prop:surjectivity_on_valuations}. Fix a valuation  $|\cdot| \in \Spa_{\leq 1}\bW(R)$. The attached valuation $\mu(|\cdot|)$ of $R$ corresponds to a homomorphism $R \rar \caO_K$ into the integers of a non-archimedean field in $\Perf$. This gives the frontal commutative square in the diagram \eqref{eq:cube_diagram_valuations}. As $R\rar R'$ is an arc-cover, $\Spa_{\leq 1} R' \rar \Spa_{\leq 1}R$ is surjective (in fact, these statements are equivalent). This means that we can find a non-archimedean field extension $L$ of $K$ with integers $\caO_L$, and a valuation of $R'$ corresponding to a homomorphism $R' \rar \caO_L$, such that the right side of the cube in diagram \eqref{eq:cube_diagram_valuations} is commutative. Then using the map $\mu$ and the functoriality of the involved constructions we can extend these two commutative squares to the full commutative diagram,
\begin{equation}\label{eq:cube_diagram_valuations}
    \begin{tikzcd}[row sep=1.5em, column sep = 1.5em]
    \Spa_{\leq 1} \bW(\caO_L) \arrow[rr] \arrow[dr] \arrow[dd] &&
    \Spa_{\leq 1} \caO_L \arrow[dd] \arrow[dr] \\
    & \Spa_{\leq 1} \bW(\caO_K) \arrow[rr] \arrow[dd] &&
    \Spa_{\leq 1} \caO_K \arrow[dd] \\
    \Spa_{\leq 1} \bW(R') \arrow[rr,] \arrow[dr] && \Spa_{\leq 1} R' \arrow[dr] \\
    & \Spa_{\leq 1} \bW(R) \arrow[rr] && \Spa_{\leq 1} R
    \end{tikzcd}
\end{equation}
where each horizontal arrow is the map $\mu$ for the corresponding ring. Applying Lemma \ref{lm:valuation_realized_via_Kedlayas_map} to the frontal square, we are reduced to the case that $R,R'$ are valuation rings and $R\rar R'$ is a injective local homomorphism. Then $R \rar R'$ is faithfully flat. Hence $\bW(R) \rar \bW(R')$ is $p$-completely faithfully flat. A point in $\Spa_{\leq 1}(\bW(R))$ corresponds to a homomorphism $\bW(R) \rar \caO_K$ to the integers of some non-archimedean field $K$. From the $p$-complete flatness of $\bW(R) \rar \bW(R')$ it follows that $\bW(R') \otimes_{\bW(R)} \caO_K$ has a non-trivial generic fiber. Hence it admits a morphism into the integers $\caO_M$ of some non-archimedean field $M$, and the corresponding composition $\bW(R') \rar \bW(R') \otimes_{\bW(R)} \caO_K \rar \caO_M$ gives a rank one continuous valuation on $\bW(R')$ bounded by $1$, which lifts the original valuation of $\bW(R)$.\end{proof}

\begin{cor}\label{cor:arc_cover_surj_on_closed_primes}
Let $\alpha \colon R \rar R'$ be an arc-cover in $\Perf_\kappa$. The maximal ideals of $\bW(R)[1/\varpi]$ lie in the image of $\Spec \bW(R')[1/\varpi] \rar \Spec \bW(R)[1/\varpi]$.
\end{cor}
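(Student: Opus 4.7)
The plan is to combine Proposition~\ref{prop:surjectivity_on_valuations} with the fact that every maximal ideal of $\bW(R)[1/\varpi]$ is the support of a valuation in $\Spa_{\leq 1}\bW(R)$, extended from $\bW(R)$ to $\bW(R)[1/\varpi]$. Once this realisation is available, for a given maximal ideal $\mathfrak{m}$ of $\bW(R)[1/\varpi]$ I pick the corresponding $|\cdot| \in \Spa_{\leq 1}\bW(R)$, lift it to some $|\cdot|' \in \Spa_{\leq 1}\bW(R')$ via Proposition~\ref{prop:surjectivity_on_valuations}, and take the support of $|\cdot|'$ on $\bW(R')[1/\varpi]$; by functoriality of the support this prime contracts to $\mathfrak{m}$.

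To realise $\mathfrak{m}$ by a continuous rank-$\leq 1$ valuation I set $K := \bW(R)[1/\varpi]/\mathfrak{m}$ and let $B \subseteq K$ be the image of $\bW(R)$, so that $B$ is a domain and $K = B[1/\bar\varpi]$, where $\bar\varpi$ denotes the image of $\varpi$. The element $\bar\varpi$ is a non-unit in $B$: because $\bW(R)$ is $\varpi$-adically complete, $\varpi$ lies in its Jacobson radical, so $\bar\varpi$ lies in the Jacobson radical of the nonzero ring $B$ and cannot be a unit. Choosing a maximal ideal $\mathfrak{p} \subseteq B$ containing $\bar\varpi$, the standard valuation existence theorem produces a valuation ring $V \subseteq \Frac(B)$ dominating $B_\mathfrak{p}$; in particular $B \subseteq V$ and $\bar\varpi \in \mathfrak{m}_V$.

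The next step is to coarsen $V$ to rank $\leq 1$ without losing that $\bar\varpi$ is a non-unit. For this I take $H$ to be the largest convex subgroup of the value group $\Gamma_V$ not containing $v(\bar\varpi)$; then $\Gamma_V/H$ is archimedean (hence of rank $\leq 1$) and the coarsened valuation with value group $\Gamma_V/H$ corresponds to a valuation ring $V' \supseteq V$ of $\Frac(B)$ in which $\bar\varpi$ is still a non-unit. The composition $\bW(R) \twoheadrightarrow B \hookrightarrow V'$ then defines a valuation $|\cdot|$ on $\bW(R)$, bounded by $1$ and of rank $\leq 1$; since $|\varpi| < 1$ and every power of $\varpi$ kills the image of $\bW(R)$ up to an element of smaller absolute value, it is continuous for the $\varpi$-adic topology, i.e.\ defines a point of $\Spa_{\leq 1}\bW(R)$. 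By construction its support on $\bW(R)$ is $\mathfrak{m} \cap \bW(R)$, and since inverting $\varpi$ gives a bijection between primes of $\bW(R)$ not containing $\varpi$ and primes of $\bW(R)[1/\varpi]$, the extended support on $\bW(R)[1/\varpi]$ is exactly $\mathfrak{m}$.

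Finally, Proposition~\ref{prop:surjectivity_on_valuations} lifts $|\cdot|$ to some $|\cdot|' \in \Spa_{\leq 1}\bW(R')$ whose pullback along $\bW(R) \to \bW(R')$ is $|\cdot|$; its support on $\bW(R')[1/\varpi]$ is then a prime contracting to $\mathfrak{m}$, as needed. I expect the rank-one coarsening in the middle step to be the only nontrivial point; all other moves are routine contractions of primes and unwinding of the support map.
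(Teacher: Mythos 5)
Your overall strategy --- realize the maximal ideal $\fm$ as the support of a point of $\Spa_{\leq 1}\bW(R)$ and then invoke Proposition~\ref{prop:surjectivity_on_valuations} --- is exactly the paper's, and most of your steps are sound: the Jacobson-radical argument that $\bar\varpi$ is a nonzero non-unit in $B$, the choice of a valuation ring $V$ of ${\rm Frac}(B)$ dominating $B_{\fp}$, and the final contraction of supports all go through. The gap is the rank-one coarsening, which you yourself flag as the nontrivial point. If $H$ is the largest convex subgroup of $\Gamma_V$ not containing $\gamma := v(\bar\varpi)$, the quotient $\Gamma_V/H$ need \emph{not} be archimedean: what is archimedean is only $H'/H$, where $H'$ is the \emph{smallest} convex subgroup containing $\gamma$. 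For example, if $\Gamma_V = \bZ\times\bZ$ ordered lexicographically and $\gamma = (0,1)$, then $H = 0$ and $\Gamma_V/H$ has rank $2$; worse, in this example \emph{every} coarsening of $V$ of rank $\leq 1$ kills $\gamma$, i.e.\ makes $\bar\varpi$ a unit. Your construction does nothing to rule out such value groups, so the middle step fails as stated (and with it the continuity argument, which also uses that the value group is archimedean).

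The correct move, which is what the paper does, is to pass to the \emph{subquotient} $H'/H$, i.e.\ to the rank-one valuation ring $(V/\fp_{0,V})_{\fq_{0,V}}$ with $\fq_{0,V}=\sqrt{\bar\varpi V}$ and $\fp_{0,V}=\bigcap_{n}\bar\varpi^{n}V$. But the quotient by $\fp_{0,V}$ enlarges the support: the resulting valuation on $\bW(R)$ has support $\fp_0=\{x\colon |x|_V\leq|\bar\varpi|_V^{\,n}\ \forall n\}$, which contains $\fm\cap\bW(R)$ but is a priori bigger. So your assertion that ``by construction the support is $\fm\cap\bW(R)$'' is precisely what breaks for the corrected construction. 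The repair is short but essential: $\varpi\notin\fp_0$, so $\fp_0[1/\varpi]$ is a prime of $\bW(R)[1/\varpi]$ containing $\fm$, hence equal to $\fm$ by maximality. Note that this is the only place where maximality of $\fm$ is genuinely used --- your argument never invokes it, which is a symptom of the gap.
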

\begin{proof}
Let $\fP$ be a maximal ideal of $\bW(R)[1/\varpi]$ and let $\fp = \fP \cap \bW(R)$ be the corresponding prime ideal of $\bW(R)$. The ring $\bW(R)[1/\varpi]$ is equipped with the $\varpi$-adic topology and $\bW(R)$ is an open subring. As $\fP$ is a maximal ideal, it is closed, hence $\fp$ is closed in $\bW(R)$. It follows that $\varpi$ is neither zero nor a unit in the domain $A := \bW(R)/\fp$. Moreover, $A$ has the field of fractions $K := \bW(R)[1/\varpi]/\fP = A[1/\varpi]$. Let $\fq \subseteq A$ be a prime ideal containing $\varpi$. Then there exists a valuation subring $V$ of $K$ with maximal ideal $\fm_V$ dominating the localization $A_\fq$ of $A$, i.e., $A \subseteq A_\fq \subseteq V \subseteq K$ such that $\fm_V \cap A = \fq$. Denote the corresponding valuation on $\bW(R)$ by $|\cdot|_V$.We have $|x|_V \leq 1$ for all $x \in \bW(R)$, $0 < |\varpi|_V < 1$, and the support of $|\cdot|_V$ is $\fp$. Let $\overline\varpi$ be the image of $\varpi$ in $A \subseteq V$. The ideal $\fq_{0,V} = \sqrt{\overline \varpi V}$ of $V$ is the minimal prime ideal containing $\overline\varpi V$, and $\fp_{0,V} = \bigcap_{n\geq 0} \overline\varpi^n V$ is the maximal prime ideal contained in $\overline\varpi V$. The resulting specialization $\fp_{0,V} \rightsquigarrow \fq_{0,V}$ is an immediate one, hence the corresponding valuation ring $(V/\fp_{0,V})_{\fq_{0,V}}$ is of rank $1$ and the image of $\varpi$ is a pseudo-uniformizer (cf. \cite[Rem.~2.2]{BhattM_18}). Let $|\cdot|$ denote the corresponding valuation of $\bW(R)$. It is continuous, hence in $\Spa_{\leq 1}(\bW(R))$, hence by Proposition \ref{prop:surjectivity_on_valuations} can be lifted to a valuation $|\cdot|'$ of $\bW(R')$, whose support, a prime ideal of $\bW(R')$, maps to $\fp_0 := \supp_{|\cdot|}$ under $\Spec \bW(R') \rar \Spec \bW(R)$. It thus remains to show that $\fp_0 = \fp$. But $\fp_0$ is the preimage of $\fp_{0,V}$ in $\bW(R)$, i.e.,
\[
\fp_0 = \{x \in \bW(R) \colon |x|_V \leq |\varpi|_V^n \text{ for all $n > 0$} \}.
\]
As $\varpi \not\in \fp_0$ and $\fp_0 \supseteq \fp$, we have $\fp_0 = \fp$ by maximality of $\fP$. 
\end{proof}

\subsection{Arc-descent for vector bundles over $\bW(R)[1/\varpi]$}

Let ${\rm Perfd}$ denote the category of all perfectoid spaces. Generalizing the $v$-topology \cite[Def.~17.1.1]{ScholzeW_20}, we may define the arc-topology on ${\rm Perfd}$.

\begin{Def}\label{def:arc_top_on_perfd}
We say that a family of morphisms $\{f_i \colon X_i \rar Y\}_{i \in I}$ in ${\rm Perfd}$ is an \emph{arc-cover} if for all quasi-compact open subsets $V \subseteq Y$, there exists a finite subset $I_V \subseteq I$ and a quasi-compact open $U_i \subseteq X_i$ for all $i \in I_V$ such that any rank-$1$-point of $V$ comes from a rank-$1$-point of some of the $U_i$'s. We call the topology on ${\rm Perfd}$ generated by arc-covers the \emph{arc-topology}.
\end{Def}

This topology is stronger than the $v$-topology. Nevertheless, several results from \cite{Scholze_ECD,ScholzeW_20} formulated for the $v$-topology continue to hold for the arc-topology with essentially the same proofs. 
For example we have the following arc-version of \cite[Thm.~8.7, Prop.~8.8]{Scholze_ECD}.

\begin{lm}
\label{lm:structure_presheaf_sheaf}
The pre-sheaf $X \mapsto \caO_X(X)$ is a sheaf for the arc-topology on ${\rm Perfd}$. Moreover, for an affinoid perfectoid $X$, $H_{\rm arc}^i(X,\caO_X) = 0$ for $i > 0$ and $H_{\rm arc}^i(X,\caO_X^+)$ is almost zero for all $i > 0$.   
\end{lm}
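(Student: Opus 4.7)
The plan is to adapt the proofs of the $v$-topology analogues \cite[Thm.~8.7, Prop.~8.8]{Scholze_ECD} to the arc-topology, following the general strategy suggested by the preceding sentence. Although arc-covers on $\mathrm{Perfd}$ are strictly more general than $v$-covers in view of Definition \ref{def:arc_top_on_perfd}, Scholze's proof is robust enough to apply, because it reduces every descent statement to an explicit \v{C}ech computation on a very specific class of ``totally disconnected'' affinoid perfectoid covers $\tilde Y \to X$ where $\tilde Y = \Spa(\tilde B, \tilde B^+)$ with $\tilde B = \prod_i K_i$ a product of algebraically closed perfectoid fields and $\tilde B^+ = \prod_i K_i^+$ a product of their rank-$1$ open bounded valuation subrings. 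Such covers live simultaneously in both topologies.

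First, using the quasi-compactness built into Definition \ref{def:arc_top_on_perfd}, I would reduce the sheaf property and the cohomology vanishing assertions to a single affinoid perfectoid arc-cover $Y = \Spa(B,B^+) \to X = \Spa(A,A^+)$. This step parallels the initial reductions in \cite[\S 8]{Scholze_ECD} and needs no change, since arc-covers of an affinoid perfectoid admit finite affinoid perfectoid refinements.

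Second, I would refine $Y \to X$ by a totally disconnected cover $\tilde Y \to Y$ of the form described above. Because an arc-cover only needs to hit rank-$1$ points, one can choose $\tilde Y \to X$ to be an arc-cover by picking, for each rank-$1$ point of $X$, an extension to an algebraically closed perfectoid field with rank-$1$ open bounded valuation subring, and taking a suitable product. The key point is that all iterated fiber products $\tilde Y^{n/X}$ are again of the same totally disconnected form, so that the \v{C}ech complex of $\caO$, resp.\ $\caO^+$, computes with products of algebraically closed perfectoid fields, resp.\ their valuation subrings.

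Third, on such a cover the desired acyclicity is exactly the content of the analogous computations in \cite{Scholze_ECD, ScholzeW_20}: the \v{C}ech complex of $\caO$ is exact, and that of $\caO^+$ is almost exact in positive degrees, by direct computation exploiting that products of algebraically closed perfectoid fields are ``acyclic'' for these structure sheaves. Feeding this into a \v{C}ech-to-derived spectral sequence gives the vanishing of $H_{\rm arc}^i(X,\caO_X)$ and the almost vanishing of $H_{\rm arc}^i(X,\caO_X^+)$ for $i>0$; specializing to $i = 0$ yields the sheaf property.

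The main obstacle is the second step: one must verify that the totally disconnected refinement from \cite[\S 7]{Scholze_ECD} can be carried out inside the arc-topology, i.e.\ so that the resulting cover is still an arc-cover of $X$ in the sense of Definition \ref{def:arc_top_on_perfd}. This reduces to the statement that every rank-$1$ point of $X$ admits a lift to some $\Spa(K, K^+)$ with $K$ an algebraically closed perfectoid field and $K^+$ of rank $1$, which is the usual rank-reduction argument tailored precisely to the defining property of the arc-topology.
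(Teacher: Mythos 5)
There is a genuine gap in your second and third steps, and it sits exactly where the arc/$v$ distinction has to do real work. You propose to make the \emph{cover} totally disconnected, i.e. to refine $Y\to X$ to $\tilde Y=\Spa(\tilde B,\tilde B^+)\to X$ with $\tilde B$ a product of algebraically closed perfectoid fields, and then assert that all iterated fiber products $\tilde Y^{n/X}$ are ``again of the same totally disconnected form'' so that the \v{C}ech complex can be computed by hand. That stability claim is false: already $\tilde Y\times_X\tilde Y$ has global sections $(\prod_i K_i)\widehat\otimes_A(\prod_j K_j)$, which is not a product of fields (besides the ``diagonal'' factors $K_i\widehat\otimes_{K(x)}K_i$ one gets contributions from ultrafilters on the index set, and over a non-point base the completed tensor products of field extensions are large perfectoid algebras). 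More fundamentally, even granting some form of total disconnectedness of the fiber products, acyclicity properties of $\tilde B$ \emph{alone} cannot prove exactness of the augmented Amitsur complex $0\to A^+/\varpi\to\tilde B^+/\varpi\to(\tilde B^+\otimes_{A^+}\tilde B^+)/\varpi\to\cdots$; that exactness is a statement about the map $A^+\to\tilde B^+$, and requires an (almost) faithful flatness or splitting input relating base and cover, which is precisely the content one is trying to establish.

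The correct route (and the one the paper takes, following \cite[Thm.~8.7]{Scholze_ECD}) makes the \emph{base} totally disconnected, not the cover: after noting separatedness of the presheaf via restriction to rank-$1$ points, one uses a pro-\'etale (hence arc-) cover of $X$ by a totally disconnected space, for which descent is already known, to reduce to $X$ totally disconnected; then one localizes on connected components to reduce to $X=\Spa(K,K^+)$ and, since $K^\circ/K^+$ is almost zero, to $X=\Spa(K,K^\circ)$. At that point $X$ has a unique point and it has rank $1$, so the arc-cover hypothesis — which only guarantees surjectivity onto rank-$1$ points — still forces $|Y|\to|X|$ to be surjective; then \cite[Prop.~7.23]{Scholze_ECD} gives that $K^\circ/\varpi\to S^+/\varpi$ is faithfully flat, and exactness of the \v{C}ech complex follows by flat descent. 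Note that the arc-hypothesis is consumed exactly at this last step; your proposal never isolates a place where the weaker surjectivity of arc-covers suffices, because it never reduces to a rank-$1$ point base. Your first reduction and your final passage from almost exactness of the \v{C}ech complex to the cohomological statements (as in \cite[Prop.~8.8]{Scholze_ECD}) are fine as stated.
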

\begin{proof} The proof goes along the lines of \cite[Thm.~8.7, Prop.~8.8]{Scholze_ECD}. To show the first statement, we first note that $\caO_X(X)$ injects into $\prod_{x \in |X|} K(x)$. Moreover, it is enough to only consider the rank-1 points of $X$, as any point has a unique rank-1 generalization. This implies that $\caO_X$ is separated. By the same arguments as in \cite[Thm.~8.7]{Scholze_ECD} we can reduce to the situation that $X$ is totally disconnected affinoid perfectoid, $Y = \Spa(S,S^+)\rar \Spa(R,R^+) = X$ is a map of affinoid perfectoid spaces, in which it suffices to show that if $\varpi \in R$ is a pseudo-uniformizer, then 
\begin{equation}\label{eq:long_almost_exact}
0 \rar R^+/\varpi \rar S^+/\varpi \rar S^+/\varpi \otimes_{R^+/\varpi} S^+/\varpi \rar \dots
\end{equation}
is almost exact (in fact, we need exactness at $S^+/\varpi$ only).
This can be done locally on $X$, so we can replace $X$ by any of its connected components, i.e., we may assume that $X = \Spa(K,K^+)$ for some perfectoid field $K$. But $K^\circ/K^+$ is almost zero, so that we may replace $K^+$ by $K^\circ$ (and $Y$ by $Y \times_{\Spa(K,K^+)} \Spa(K,K^\circ)$), i.e., we may assume $X = \Spa(K,K^\circ)$. In that situation $X$ consists of a unique rank-$1$ point, so that $|Y| \rar |X|$ is surjective by assumption. By \cite[Prop.~7.23]{Scholze_ECD} $K^\circ/\varpi \rar S^+/\varpi$ is then faithfully flat and we are done with the first claim.

The second claim follows from the almost exactness of \eqref{eq:long_almost_exact} by exactly the same argument as in the proof of \cite[Prop.~8.8]{Scholze_ECD}.
\end{proof}

We have the following version of \cite[Lm.~17.1.8]{ScholzeW_20}.

\begin{lm}\label{lm:perfd_descent}
The fibered category sending any $X \in {\rm Perfd}$ to the category of locally finite free $\caO_X$-modules is a stack for the arc-topology on ${\rm Perfd}$. 
\end{lm}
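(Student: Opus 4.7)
The strategy is to imitate the proof of the analogous $v$-descent statement \cite[Lm.~17.1.8]{ScholzeW_20}, substituting the arc-cohomology input of Lemma \ref{lm:structure_presheaf_sheaf} for the $v$-cohomology input used there. Since both the sheaf property of $\caO_X$ and the almost vanishing of higher cohomology on affinoid perfectoids now carry over from the $v$- to the arc-topology, the argument should adapt without substantive change.

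Concretely, I would first reduce to verifying effectivity of descent data along a single arc-cover $Y \to X$ of affinoid perfectoid spaces, say $X = \Spa(R,R^+)$ and $Y = \Spa(S,S^+)$. The morphism-sheaf axiom is handled directly by the sheaf property of $\caO_X$ in Lemma \ref{lm:structure_presheaf_sheaf}, so the real content is effectivity. Given a finite projective $S$-module $M$ equipped with descent datum, one forms the descended candidate
\[
N \;=\; \ker\!\left(M \rightrightarrows M \otimes_R S\right)
\]
and must show that $N$ is finite projective over $R$ and that the natural map $N \otimes_R S \to M$ is an isomorphism.

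Both points are reduced to exactness of the augmented \v{C}ech complex of $\caO$ along $Y \to X$: modulo a pseudo-uniformizer $\varpi$ this is the almost vanishing of $H^i_{\rm arc}(X,\caO_X^+)$ provided by Lemma \ref{lm:structure_presheaf_sheaf}, and inverting $\varpi$ (with uniform completion) turns the almost statement into an honest one. To establish finite projectivity of $N$, I would further refine the cover by a totally disconnected affinoid perfectoid $X' \to X$, over which finitely generated projective modules decompose nicely and become locally free of constant rank on connected components; the already-established exactness then allows descent of finiteness from $X'$ back to $X$.

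The main technical obstacle is the usual one in perfectoid descent: upgrading the almost-integral cohomological vanishing to a genuine descent statement for finite projective modules. This is precisely the place where almost mathematics enters, and since the almost vanishing in Lemma \ref{lm:structure_presheaf_sheaf} is formally identical to the $v$-version used in \cite[\S17]{ScholzeW_20}, no new phenomena should appear in the arc-setting; the additional difficulty over the $v$-case is entirely absorbed into the proof of Lemma \ref{lm:structure_presheaf_sheaf}, which has already been given.
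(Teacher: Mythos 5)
Your top-level plan --- transport the proof of \cite[Lm.~17.1.8]{ScholzeW_20} to the arc-topology, with Lemma \ref{lm:structure_presheaf_sheaf} supplying the cohomological input --- is exactly what the paper does, and the reduction via \cite[Thm.~2.7.7]{KedlayaL_15} to a single arc-cover of affinoids with full faithfulness coming from the sheaf property is correct. But the concrete mechanism you propose for effectivity is not the one in \cite{ScholzeW_20} and has a gap. An arc-cover $Y = \Spa(S,S^+) \to X = \Spa(R,R^+)$ is not flat, so setting $N = \ker(M \rightrightarrows M \otimes_R S)$ and asserting that finite projectivity of $N$ and the isomorphy of $N \otimes_R S \to M$ both ``reduce to exactness of the augmented \v{C}ech complex of $\caO$'' does not work: the equalizer does not commute with the non-flat base change $-\otimes_R S$, and exactness of the \v{C}ech complex of $\caO$ gives full faithfulness but not effectivity. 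The argument actually used (in \cite{ScholzeW_20} and in the paper) works at the level of the descent cocycle: one first proves effectivity when $R=K$ is a perfectoid field --- a base case you omit entirely --- and then deduces the general case by approximating and successively trivializing the cocycle, using that $\check{H}^1_{\rm arc}(Y/X, M_r(\caO_X^+/\varpi))$ is almost zero. It is this matrix-coefficient vanishing, not the vanishing for $\caO$ itself, that carries the weight, and it is what Lemma \ref{lm:structure_presheaf_sheaf} is invoked for.

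Your totally disconnected refinement is also circular as stated: ``descending finiteness from $X'$ back to $X$'' along $X' \to X$ is again an instance of the statement being proved. It could be repaired by taking $X' \to X$ affinoid pro-\'etale with $X'$ strictly totally disconnected, for which descent of vector bundles is already available from Kedlaya--Liu; but even then each connected component of $X'$ is $\Spa(K,K^+)$ for a perfectoid field $K$, so the field case cannot be bypassed, and one still needs an approximation argument to spread a trivialization from a component to a neighborhood. In short, the reduction to Lemma \ref{lm:structure_presheaf_sheaf} is the right idea, but the effectivity step requires the perfectoid field case plus the cocycle/successive-approximation argument, not the equalizer construction.
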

\begin{proof} The proof goes along the lines of \cite[Lm.~17.1.8]{ScholzeW_20}. Let $\widetilde X = \Spa(\widetilde{R},\widetilde{R}^+) \rar X = \Spa(R,R^+)$ be a morphism of perfectoid affinoids, which is an arc-cover. By \cite[Thm.~2.7.7]{KedlayaL_15} it is sufficient to show that the base change functor from the finite projective $R$-modules to finite projective $\widetilde{R}$-modules equipped with a descent datum is an equivalence of categories. Full faithfullness follows from Lemma \ref{lm:structure_presheaf_sheaf}. As by \cite[Thm.~2.7.7]{KedlayaL_15} vector bundles can be glued over open covers, essential surjectivity can be checked locally.

Now, literally the same argument as in \cite[Lm.~17.1.8]{ScholzeW_20} works and shows the claim in the case that $R$ is a perfectoid field. The argument of \cite[Lm.~17.1.8]{ScholzeW_20} to deduce the general case from the above goes through also here, as $\check{H}^1_{\rm arc}(\widetilde{X}/X, M_r(\caO_X^+/\varpi))$ is almost zero by Lemma \ref{lm:structure_presheaf_sheaf}.
\end{proof}

As a consequence we deduce the following version of \cite[Prop.~19.5.3]{ScholzeW_20}.

\begin{prop}\label{prop:descent_vector_bundles_arc}
The fibered category sending any $R \in \Perf$ to the category of locally finite free $\bW(R)[1/\varpi]$-modules is a stack for the arc-topology.
\end{prop}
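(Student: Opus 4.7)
The plan is to follow the strategy of the $v$-topology analogue \cite[Prop.~19.5.3]{ScholzeW_20}, substituting the arc-topology inputs developed above. For $R \in \Perf_\kappa$, equip $\bW(R)$ with its $\varpi$-adic topology; the pair $(\bW(R)[1/\varpi], \bW(R))$ is affinoid perfectoid (because $R$ is perfect), and we write $X_R := \Spa(\bW(R)[1/\varpi], \bW(R))$. By Kedlaya--Liu gluing \cite[Thm.~2.7.7]{KedlayaL_15}, finite projective $\bW(R)[1/\varpi]$-modules correspond to finite locally free $\caO_{X_R}$-modules, so it suffices to prove that $R \mapsto \{\text{vector bundles on } X_R\}$ is an arc-stack on $\Perf_\kappa$.

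The central step is: if $R \rar R'$ is an arc-cover in $\Perf_\kappa$, then $f \colon X_{R'} \rar X_R$ is an arc-cover of perfectoid spaces in the sense of Definition~\ref{def:arc_top_on_perfd}. Since $X_{R'}$ is quasi-compact, for any quasi-compact open $V \subseteq X_R$ one may simply take $U := f^{-1}(V)$, and it only remains to check that each rank-$1$ point of $V$ lifts to a rank-$1$ point of $X_{R'}$. This is precisely the content of Proposition~\ref{prop:surjectivity_on_valuations}: rank-$1$ points of $X_R$ are exactly the continuous rank-$\leq 1$ valuations on $\bW(R)$ at which $\varpi$ is nonzero, and any lift provided by that proposition automatically satisfies $|\varpi|' = |\varpi| \neq 0$, hence defines a point of $X_{R'}$.

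With the arc-cover $f$ at our disposal, Lemma~\ref{lm:perfd_descent} applied to the \v{C}ech nerve of $f$ produces the required descent of vector bundles on $X_R$, and the Kedlaya--Liu equivalence converts this back to the statement about finite projective $\bW(R)[1/\varpi]$-modules. The most delicate point is verifying that $(\bW(R)[1/\varpi], \bW(R))$ is genuinely affinoid perfectoid for every $R \in \Perf_\kappa$---transparent in equal characteristic and in the unramified mixed-characteristic case, but in the ramified mixed-characteristic setup of \S\ref{sec:setup_general} it requires one to keep track of the relation between $\varpi$ and $p$ when checking surjectivity of Frobenius on the appropriate quotient of $\bW(R)$. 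Once that is settled, the remainder is essentially a formal consequence of Proposition~\ref{prop:surjectivity_on_valuations} and Lemma~\ref{lm:perfd_descent}.
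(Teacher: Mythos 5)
Your overall architecture (reduce to descent of vector bundles on adic spaces, use Proposition~\ref{prop:surjectivity_on_valuations} to see that arc-covers of perfect rings induce arc-covers of the associated adic spectra, then invoke Lemma~\ref{lm:perfd_descent}) is the right one, but the step you yourself flag as ``the most delicate point'' is not merely delicate --- it is false. The pair $(\bW(R)[1/\varpi],\bW(R))$ is \emph{never} affinoid perfectoid: already for $R=\kappa$ a perfect field one gets $k$ itself, which is a discretely valued field ($\bQ_p$, or $\bF_q(\!(t)\!)$, or a finite totally ramified extension thereof) and hence not a perfectoid field; in equal characteristic $\bW(R)=R[\![\varpi]\!]$ is not perfect since $\varpi$ has no $p$-th root, and in mixed characteristic Frobenius is not surjective modulo $p$ on $W(R)$ in the required sense. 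No bookkeeping of the relation between $\varpi$ and $p$ will repair this. Consequently Lemma~\ref{lm:perfd_descent} cannot be applied to $X_{R'}\rar X_R$ directly, and your argument has a genuine hole exactly where the real work lies.

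The correct route, which is the one the paper takes following \cite[Prop.~19.5.3]{ScholzeW_20}, is to observe that $U=\Spa(\bW(R)[1/\varpi],\bW(R))$ is only \emph{sousperfectoid}: after the base change $U'=U\times_{\Spa\bZ_p}\Spa\,\bZ_p[p^{1/p^\infty}]_p^{\wedge}$ one obtains a genuine affinoid perfectoid space, and Proposition~\ref{prop:surjectivity_on_valuations} shows that $\widetilde U'=\widetilde U\times_U U'\rar U'$ is an arc-cover of perfectoid spaces, so Lemma~\ref{lm:perfd_descent} gives descent there. One then still has to descend vector bundles from $U'$ back down to $U$ along $U'\rar U$; this is the content of the last paragraph of the proof of \cite[Prop.~19.5.3]{ScholzeW_20} and is an essential, non-formal part of the argument that your proposal omits entirely. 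So you need two additional ingredients: the sousperfectoid base change, and the descent from the perfectoid cover back to $U$.
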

\begin{proof}
This follows from Lemma \ref{lm:perfd_descent} in the same way as \cite[Prop.~19.5.3]{ScholzeW_20} follows from \cite[Lm.~17.1.8]{ScholzeW_20}\footnote{We emphasize that the role of $p$ from \cite{ScholzeW_20} is played here by $\varpi$, whereas the $\varpi$ from \cite{ScholzeW_20} has no analogue here: in fact, in contrast to \cite{ScholzeW_20}, where $R$ is a perfectoid ring in characteristic $p$ with pseudo-uniformizer $\varpi$, we simply work with a perfect ring $R$ in characteristic $p$.}. We explain the argument in the mixed characteristic case; the other case is similar. Let $R \rar \widetilde{R}$ be an arc-cover in $\Perf$. Let $A^+ = \bW(R)$, $A = A^+[1/\varpi]$ and let $\widetilde{A}^+ = \bW(\widetilde{R})$, $\widetilde{A} = \widetilde{A}^+[1/\varpi]$. Let $U = \Spa(A,A^+)$ and $\widetilde{U} = \Spa(\widetilde{A},\widetilde{A}^+)$. We have to show descent for vector bundles along $\widetilde{U} \rar U$. 

Note that $U$ is sousperfectoid. Indeed, let $\bZ_p[p^{1/p^\infty}]_p^{\wedge}$ denote the $p$-adic completion of $\bZ_p[p^{1/p^{\infty}}]$. Consider $A^{\prime +} = W(R) \widehat{\otimes}_{\bZ_p} \bZ_p[p^{1/p^\infty}]_p^{\wedge}$ and let $A' = A^{\prime +}[1/\varpi]$. Then 
\[ 
U' = U \times_{\Spa \bZ_p} \Spa \bZ_p[p^{1/p^\infty}]_p^{\wedge} = \Spa(A',A^{\prime+})
\] 
is an affinoid perfectoid space. 
Moreover, $\widetilde{U}' = \widetilde{U} \times_{\Spa \bZ_p} \Spa \bZ_p[p^{1/p^\infty}]_p^{\wedge} = \widetilde{U} \times_U U'$ is also affinoid perfectoid and by Proposition \ref{prop:surjectivity_on_valuations} an arc-cover of $U'$. By Lemma \ref{lm:perfd_descent} vector bundles descend along $\widetilde{U}' \rar U'$. Now the last paragraph of the proof of \cite[Prop.~19.5.3]{ScholzeW_20} applies literally.
\end{proof}

\subsection{Proof of Theorem \ref{thm:LX_is_vsheaf}}\label{sec:loop_are_arc_v}

First assume that $X = \bP_k^n$. Then the fact that $LX$ is an arc-sheaf is a consequence of descent of vector bundles, i.~e., Proposition \ref{prop:descent_vector_bundles_arc}. The general case follows from this special case and Lemma \ref{lm:LX_arc_reduction_to_PPn}.

\begin{lm}\label{lm:LX_arc_reduction_to_PPn}
Let $\iota \colon Y \rar X$ be an immersion of $k$-schemes. If $LX$ is an arc-sheaf, then $LY$ also is.
\end{lm}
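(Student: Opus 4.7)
The plan is to reduce the statement to a membership criterion for a subpresheaf. Since $\iota$ is an immersion, it is a monomorphism of schemes, so $L\iota$ realizes $LY$ as a subpresheaf of $LX$. For a subpresheaf of an arc-sheaf to itself be an arc-sheaf, it is enough to verify: for every arc-cover $R \rar R'$ in $\Perf_\kappa$ and every $s \in LX(R)$ with $s|_{R'} \in LY(R')$, we already have $s \in LY(R)$.

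Next, I would replace $Y$ by its schematic image in $X$ and factor the immersion as $Y \hookrightarrow \bar Y \hookrightarrow X$, where $\bar Y$ is the closure (a closed subscheme of $X$) and $Y$ is open in $\bar Y$. Pulling back $Y \hookrightarrow X$ along $s\colon \Spec \bW(R)[1/\varpi] \rar X$ yields an immersion into the affine scheme $\Spec B$, where $B := \bW(R)[1/\varpi]$. Because the target is affine, this pullback has the form $V(J) \sm V(K)$ for two ideals $J \subseteq K$ of $B$, where $V(J)$ is the pullback of $\bar Y$ and $V(K)$ is the pullback of $\bar Y \sm Y$. The morphism $s$ factors through $Y$ precisely when $V(J) \sm V(K) = \Spec B$, that is, when $J = 0$ (using that $B$ is reduced by Lemma \ref{lm:Witt_reduced}) and $K = B$.

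The same description applies after base change to $B' := \bW(R')[1/\varpi]$, so the hypothesis $s|_{R'} \in LY(R')$ translates to $J \cdot B' = 0$ and $K \cdot B' = B'$. From here the two key inputs already established close the argument: Lemma \ref{lm:dominance_arc_Witt} gives that $B \hookrightarrow B'$ is injective, so $J \cdot B' = 0$ forces $J = 0$; and if $K$ were proper in $B$, it would lie in some maximal ideal $\fP \subsetneq B$, but Corollary \ref{cor:arc_cover_surj_on_closed_primes} provides a prime of $B'$ lying over $\fP$, which must contain $K \cdot B'$, contradicting $K \cdot B' = B'$. Hence $K = B$, and $s \in LY(R)$.

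There is no serious obstacle, since both the injectivity of the Witt-vector functor on arc-covers and the surjection on maximal ideals after inverting $\varpi$ are already in hand. The only care needed is to package a general immersion into its closed and open parts and observe that the pullback to the affine scheme $\Spec B$ is then simultaneously controlled by the two ideals $J$ and $K$, one governed by Lemma \ref{lm:dominance_arc_Witt} and the other by Corollary \ref{cor:arc_cover_surj_on_closed_primes}.
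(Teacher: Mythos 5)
Your proof is correct and follows essentially the same route as the paper's: both reduce to showing that a section of $LX$ over $R$ landing in $LY$ after pullback along the arc-cover already lies in $LY$, factor the immersion into its closed and open parts, and handle the closed part via the injectivity of $\bW(R)[1/\varpi]\to\bW(R')[1/\varpi]$ (Lemma \ref{lm:dominance_arc_Witt}) and the open part via surjectivity onto maximal ideals (Corollary \ref{cor:arc_cover_surj_on_closed_primes}). The only difference is presentational: you phrase the two conditions ideal-theoretically ($J=0$, $K=B$), whereas the paper argues topologically with density of the image and specialization to closed points.
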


\begin{proof}
Let $R \rar R'$ be an arc-cover in $\Perf$. Write $W = \Spec \bW(R)[1/\varpi]$, $W' = \Spec \bW(R')[1/\varpi]$ and let $f \colon W' \rar W$ be the corresponding morphism. We must show that $Y(W) = {\rm Eq}(Y(W') \rightrightarrows Y(W' \times_W W'))$, assuming the same holds for $X$. As $Y \rar X$ is an immersion, we have $Y(W) \subseteq X(W)$ and similarly for $W', W' \times_W W'$. The lemma thus reduces to show that whenever we have a commutative diagram
\[
\xymatrix{
W' \ar[r]^f \ar[d] & W \ar[d]^\beta \\
Y \ar@{^(->}[r]^{\iota} & X 
}
\]
of $k$-schemes, there exists a (necessarily unique) $k$-morphism $W \rar Y$ making the diagram commute. But $W$ is reduced by Lemma \ref{lm:Witt_reduced} and $\iota$ is an immersion, thus it suffices to check that $\beta(W) \subseteq Y$ set-theoretically.
By factoring $\iota$, we may assume  that it is either closed or open. Assume first $\iota$ is closed. By Proposition \ref{lm:dominance_arc_Witt}, $W$ contains a dense subset $D$, which maps into $Y$, and then we are done as $\beta(W) = \beta(\overline D) = \overline{\beta(D)} \subseteq \overline{Y} = Y$. Assume now $\iota$ is open. 
By Corollary \ref{cor:arc_cover_surj_on_closed_primes} all closed points of $W$ are mapped into $Y$. As $W$ is affine, any point $w \in W$ specializes to some closed point $w'$. Then $\beta(w)$ specializes to $\beta(w') \in Y$. As $Y$ is open in $X$, it is stable under generalization, hence $\beta(w) \in Y$.
\end{proof}

\section{Extension of vector bundles and loop spaces of Grassmannians}
We work in the setup of \S\ref{sec:setup_general}. Let $A \in \Perf_\kappa$ be a ring such that each connected component of $\Spec A$ is the spectrum of a valuation ring. The set $T = \pi_0(\Spec A)$ of connected components is profinite, and the natural morphism $\Spec A \rar \underline{T}_{\kappa}$ corresponds to an inclusion ${\rm Cont}(T,\kappa) \har A$. Taking Witt-vectors, inverting $\varpi$ and using Lemma \ref{lm:Witt_of_continuous_maps}, we obtain the composed map (notation is as in \S \ref{sec:schemes_for_profinite_sets}):
\begin{align}\label{eq:morphism_to_spec_of_discretes}
\pi \colon \Spec\bW(A)[1/\varpi] \rar \Spec {\rm Cont}_{\varpi}(T,k) \rar \Spec {\rm Cont}_{\rm disc}(T,k) = \underline{T}_k,
\end{align}
where ${\rm Cont}_{\rm disc}(T,k)$ denotes the continuous functions with respect to the discrete topology on $k = \bW(\kappa)[1/\varpi]$, and the last map results from the natural inclusion ${\rm Cont}_{\rm disc}(T,k) \subseteq {\rm Cont}_{\varpi}(T,k)$. 
The main result of this section is the following theorem.

\begin{thm}\label{lm:no_torsors}
Let $A \in \Perf_\kappa$ be a ring such that each connected component of $\Spec A$ is the spectrum of a valuation ring, $T = \pi_0(A)$ and $\pi$ as in \eqref{eq:morphism_to_spec_of_discretes}. For any finite locally free $\bW(A)[1/\varpi]$-module $M$, there is a finite disjoint clopen decomposition $T = \coprod_{i=1}^r T_i$, such that $M|_{\pi^{-1}(\underline{T_i}_k)}$ is free. In particular, if $M$ has constant rank, then it is free.
\end{thm}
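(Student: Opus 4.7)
The plan is to reduce the statement to the case when $T$ is a single point---equivalently, when $A = V$ is a single valuation ring---and then to use the filtered-colimit structure $V_t = \colim_{U \ni t} A_U$ over clopen neighborhoods of $t \in T$, together with quasi-compactness of the profinite set $T$, to spread trivializations.

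\textbf{Valuation-ring case.} First I would establish: if $V \in \Perf_\kappa$ is a valuation ring, then every finite locally free $\bW(V)[1/\varpi]$-module of constant rank $n$ is free. In the equal characteristic case $\bW(V)[1/\varpi] = V(\!(\varpi)\!)$, this follows from a Beauville--Laszlo-style gluing between the complete local ring $V[\![\varpi]\!]$ and its $\varpi$-adic localization. In the mixed characteristic case this is essentially the content of the relevant results of Kedlaya \cite{Kedlaya_16_ringAinf}, which classify vector bundles on $\Spec\bW(V)[1/\varpi]$ via perfectoid techniques on the associated adic space; the extension from rank-one perfectoid fields to general perfect valuation rings is mild but requires some care.

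\textbf{Spreading and stratification.} For each $t \in T$, let $V_t$ be the associated valuation ring and set $M_t := M \otimes_{\bW(A)[1/\varpi]} \bW(V_t)[1/\varpi]$; by the base case, $M_t$ is free of some rank $n(t)$. For a clopen $U \ni t$, let $e_U \in {\rm Cont}(T,\kappa) \subseteq A$ be the associated idempotent and $A_U := A/(1-e_U)A$; then $V_t = \colim_{U \ni t} A_U$ as a filtered colimit of perfect rings. While $\bW$ itself does not preserve such filtered colimits (due to $\varpi$-adic completion), its truncations $\bW_N := \bW/\varpi^N$ do, so finite presentation of $M$ yields $M_t/\varpi^N M_t = \colim_U M_U/\varpi^N M_U$ for every $N \geq 1$. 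Applying this with $N=1$, a free basis $s_1, \dots, s_{n(t)}$ of $M_t$ descends modulo $\varpi$ to elements of $M_U/\varpi M_U$ for some clopen $U \ni t$; an arbitrary lift gives a morphism $\varphi_U \colon \bW(A_U)[1/\varpi]^{n(t)} \rar M_U$ which, after choosing a finitely generated $\varpi$-adically separated integral model $M_U^\circ \subseteq M_U$ with $M_U^\circ[1/\varpi] = M_U$, induces a map $\varphi_U^\circ$ that is an isomorphism modulo $\varpi$ and hence, by $\varpi$-adic completeness of $\bW(A_U)$ together with Nakayama's lemma, an isomorphism on $M_U^\circ$. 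Thus $\varphi_U$ is an isomorphism, showing $M_U$ is free of rank $n(t)$. In particular, $n(t)$ is locally constant on $T$, so by quasi-compactness it takes finitely many values and the $U$'s admit a finite subcover. Refining to a disjoint clopen partition $T = \coprod_{i=1}^r T_i$ gives the desired decomposition.

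\textbf{Main obstacle.} The technically hardest step is the single-valuation-ring base case: extending Kedlaya's classification of vector bundles beyond the original rank-one perfectoid-field context to a general perfect valuation ring $V$, which may have higher rank and a non-algebraically-closed fraction field. By contrast, the spreading step is tractable once the base case is available, provided one respects the distinction between $\bW$ (which does not commute with filtered colimits) and its $\varpi$-power truncations (which do); the passage from freeness modulo $\varpi$ to freeness globally then rests on standard Nakayama-style completeness arguments applied to a well-chosen integral model of~$M$.
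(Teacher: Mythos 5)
Your overall architecture (pass to the fibers of $\Spec A \rar \underline{T}_\kappa$, which are valuation rings; invoke Kedlaya for the base case; spread a trivialization over a clopen neighbourhood and conclude by quasi-compactness of $T$) is the same as the paper's. But the spreading step has a genuine gap. As written, $M_U/\varpi^N M_U = 0$ because $\varpi$ is invertible in $\bW(A_U)[1/\varpi]$, so the identity $M_t/\varpi^N M_t = \colim_U M_U/\varpi^N M_U$ is vacuous; and once you pass to integral models the Nakayama argument does not close. The real difficulty is that $\bW(V_t)[1/\varpi]$ is \emph{not} $\colim_U \bW(A_U)[1/\varpi]$: writing $B = \colim_U \bW(A_U)$, one has $\bW(V_t) = \widehat{B}$ (the $\varpi$-adic completion; this is Lemma \ref{lm:WAtau_is_padic_completion_of_fiber} in the paper). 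Your lifted basis therefore gives a map $\varphi_U \colon \bW(A_U)[1/\varpi]^n \rar M_U$ whose base change to $\widehat{B}[1/\varpi]$ is an isomorphism, but surjectivity of $\varphi_U$ itself cannot be extracted by Nakayama: $\coker(\varphi_U)\otimes \widehat{B}[1/\varpi]=0$ does not imply $\coker(\varphi_U)\otimes B[1/\varpi]=0$ unless $B[1/\varpi]\rar \widehat{B}[1/\varpi]$ is known to be faithfully flat or some substitute is available, and $B$ is not Noetherian. In other words, before Noetherian approximation over the filtered colimit can start, you must \emph{decomplete}: pass from freeness over $\widehat{B}[1/\varpi]$ to freeness over $B[1/\varpi]$. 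The paper does exactly this by observing that $(B,\varpi B)$ is a Henselian pair (a filtered colimit of the complete pairs $(\bW(A_U),\varpi)$) and invoking Gabber--Ramero \cite[5.4.42]{GabberR_03}, which identifies finite projective modules over $B[1/\varpi]$ and $\widehat{B}[1/\varpi]$. Your proposal contains no substitute for this step, and the "well-chosen integral model" does not supply one: a basis of $M_t$ only generates an integral model of $M_t$ up to bounded $\varpi$-torsion, so the induced map is an isomorphism only after inverting $\varpi$ over the \emph{completed} ring, which is where you started.

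A secondary issue is the base case. Citing \cite[Thm.~2.7]{Kedlaya_16_ringAinf} for a single perfect valuation ring is legitimate (it is what the paper ultimately does), but your description of the equal-characteristic case as "a Beauville--Laszlo-style gluing" for $V(\!(\varpi)\!)$ is not an argument: Beauville--Laszlo lets you extend a bundle across the generic point of the $(\varpi=0)$-locus, after which one still needs Noetherian approximation and Kedlaya's theorem (or, as the paper shows, its microbial case combined with an arc-cover glueing along $A \rar A_\fp \times A/\fp$) to trivialize the extended bundle. The "mild extension from rank-one perfectoid fields to general perfect valuation rings" you allude to is in fact the substantive content of that theorem, not a routine remark.
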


This theorem is proven in \S \ref{sec:proof_thm_no_torsors}. First we reduce to the case of a valuation ring, by using Noetherian approximation and a result of Gabber--Ramero, saying that the category of modules does not change when we pass to completion of a Henselian pair. In the case of a valuation ring, the Beauville--Laszlo lemma along with Noetherian approximation allow to extend our vector bundle around the generic point of the $(\varpi = 0)$-locus, which -- along with arc-descent for vector bundles -- reduces the theorem to the case of a (microbial) valuation ring, where it is an (immediate consequence of a) result of Kedlaya \cite{Kedlaya_16_ringAinf}.

\begin{rem}\label{rem:Bouthier_Cesnavicius}
In the equal characteristic case, Theorem \ref{lm:no_torsors} for line bundles is shown for all seminormal Henselian local rings $A$ in \cite[Cor.~3.1.5]{BouthierC_19}. In particular, in the equal characteristic case the first claim of Corollary \ref{thm:arc_exactness_and_splitting} below holds already for the \'etale topology. 
\end{rem}

\begin{cor}\label{cor:no_torsors_GLn_coh_zero}
Let $A$ be as in Theorem \ref{lm:no_torsors}. For any $n\geq 1$, $H^1_{\rm et}(\Spec \bW(A)[1/\varpi], \GL_n) = 0$. 
\end{cor}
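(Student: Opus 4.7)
The plan is to reduce the statement to Theorem \ref{lm:no_torsors} via the standard dictionary between non-abelian $H^1$ of $\GL_n$ and vector bundles. Concretely, for any scheme $X$, the pointed set $H^1_{\rm et}(X,\GL_n)$ is in natural bijection with the set of isomorphism classes of rank $n$ vector bundles on $X$, i.e., of finite locally free $\caO_X$-modules of constant rank $n$. This equivalence rests on two standard inputs: first, $\GL_n$-torsors in the fppf topology correspond to finite locally free sheaves of rank $n$ by fpqc descent for quasi-coherent modules, and second, since $\GL_n$ is smooth, every fppf $\GL_n$-torsor is already an \'etale torsor, so fppf and \'etale $H^1$ agree.

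Applied to the affine scheme $X = \Spec \bW(A)[1/\varpi]$, this identifies $H^1_{\rm et}(X,\GL_n)$ with the set of isomorphism classes of finite locally free $\bW(A)[1/\varpi]$-modules of constant rank $n$. By Theorem \ref{lm:no_torsors}, every such module is free. Hence the pointed set $H^1_{\rm et}(X,\GL_n)$ consists of a single element, namely the class of the trivial torsor, which is exactly the assertion $H^1_{\rm et}(X,\GL_n) = 0$.

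There is no real obstacle: the corollary is a formal consequence of Theorem \ref{lm:no_torsors} together with a well-known reinterpretation of $\GL_n$-torsors. The only point worth a brief comment is the constant-rank hypothesis, which is automatically satisfied for $\GL_n$-torsors (since by definition all fibres of the associated vector bundle have dimension $n$), so one directly lands in the final clause of Theorem \ref{lm:no_torsors}.
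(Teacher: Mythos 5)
Your argument is correct and is exactly the (unwritten) argument the paper intends: the corollary is stated without proof as an immediate consequence of Theorem \ref{lm:no_torsors}, via the standard identification of $H^1_{\rm et}(X,\GL_n)$ with isomorphism classes of rank-$n$ vector bundles. Spelling out the fppf-to-\'etale comparison and the constant-rank point is fine but adds nothing beyond what the paper takes for granted.
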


We now can deduce the corollary needed for our application. 

\begin{cor}\label{thm:arc_exactness_and_splitting}
Let $G$ be a unramified reductive group over $k$ and let $B$ be a $k$-rational Borel subgroup. The sequence 
\[ 
1 \rar LB \rar LG \rar L(G/B) \rar 1
\] 
is exact for the $v$-topology on $\Perf_\kappa$. The same holds for any $k$-rational parabolic subgroup $B$ of a reductive $k$-group $G$, which splits over $\breve k$, if all Levi factors of $B$ are of the form $\GL_m$.
\end{cor}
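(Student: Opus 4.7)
The plan is to reduce $v$-surjectivity of $LG \rar L(G/B)$ to the vanishing of certain non-abelian $\coh^1$'s on $\Spec \bW(A)[1/\varpi]$ for $A$ arising from Lemma \ref{lm:standard_vcover}, and then invoke Corollary \ref{cor:no_torsors_GLn_coh_zero}. Exactness at $LB$ and in the middle is automatic: $B \har G$ is a closed immersion and $L$ commutes with arbitrary limits (Lemma \ref{lm:L_commutes_limits}), so $LB$ identifies with the fibre of $LG \rar L(G/B)$ over the identity. The only substantive point is the $v$-surjectivity of $LG \rar L(G/B)$.

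Fix $R \in \Perf_\kappa$ and $x \in L(G/B)(R) = (G/B)(\bW(R)[1/\varpi])$. Unravelling, $x$ corresponds to a $B$-torsor $P$ on $X_R := \Spec \bW(R)[1/\varpi]$ together with a canonical trivialization of the associated $G$-torsor $P \times^B G$, and a lift of $x$ to $LG(R)$ is the same datum as a global section of $P$. Hence I aim to produce a $v$-cover $R \rar A$ in $\Perf_\kappa$ after which the pulled-back $B$-torsor becomes trivial. Using Lemma \ref{lm:standard_vcover}, I first refine and assume $A \in \Perf_\kappa$ has every connected component of $\Spec A$ equal to the spectrum of a valuation ring.

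I then use the Levi decomposition $B = U \rtimes L$, whose resulting short exact sequence $1 \rar U \rar B \rar L \rar 1$ induces an exact sequence of pointed sets
\[
\coh^1_{\et}(X_A, U) \rar \coh^1_{\et}(X_A, B) \rar \coh^1_{\et}(X_A, L),
\]
and I handle the two outer terms separately. For $U$: the unipotent radical of a $k$-rational parabolic of a reductive group is a $k$-split smooth unipotent group; in our setting the filtration can be constructed explicitly from Galois orbits of positive roots of $T$ in $G$ (for the Borel case) or of roots outside the Levi (for the parabolic case), each orbit contributing a Weil restriction $\Res_{k'/k}\bG_a$ for a finite unramified $k'/k$, which is itself a $k$-split vector group. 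Since $X_A$ is affine and $\coh^i_{\et}(X_A, \bG_a) = 0$ for $i \geq 1$, a straightforward induction along the filtration yields $\coh^1_{\et}(X_A, U) = 0$.

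For $L$: in the second case $L \cong \prod_i \GL_{m_i}$ over $k$, and Corollary \ref{cor:no_torsors_GLn_coh_zero} gives $\coh^1_{\et}(X_A, L) = 0$ directly. In the first case $L = T$ is an unramified $k$-torus; choose a finite unramified extension $k'/k$ splitting $T$ with corresponding residue field extension $\kappa'/\kappa$. Then $A \rar A' := A \otimes_\kappa \kappa'$ is finite étale (hence a $v$-cover in $\Perf_\kappa$), the connected components of $\Spec A'$ remain valuation ring spectra, and $\bW(A')[1/\varpi] = \bW(A)[1/\varpi] \otimes_k k'$, so $T$ splits as $\Gm^r$ over $X_{A'}$ and Corollary \ref{cor:no_torsors_GLn_coh_zero} applies again. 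Hence the composite $R \rar A \rar A'$ is the sought $v$-cover. The main technical subtlety is confirming $k$-splitness of the unipotent radicals in question, which is a classical fact from the theory of reductive groups; all other ingredients are already in place.
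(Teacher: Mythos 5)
Your proof is correct and follows essentially the same route as the paper's: reduce via Lemma \ref{lm:standard_vcover} to covers by products of valuation rings, identify the lifting obstruction with a class in nonabelian $H^1$ of $B$ over $\Spec \bW(A)[1/\varpi]$, kill the unipotent part using $k$-splitness of the unipotent radical and vanishing of $H^1$ of $\bG_a$ on affines, and kill the Levi part by Corollary \ref{cor:no_torsors_GLn_coh_zero} after an unramified base change. The only (cosmetic) difference is that the paper splits all of $G$ over an enlarged $A$ at the outset rather than splitting only the torus at the end; in either version one should note that after the base change $A \mapsto A\otimes_\kappa \kappa'$ one may need to re-apply Lemma \ref{lm:standard_vcover}, since a connected finite \'etale algebra over a valuation ring need not itself be a valuation ring.
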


Note that the statement of the corollary makes sense, as $G/B$ is projective over $k$, so that $L(G/B)$ is an arc-sheaf (and hence also $v$-sheaf) by Theorem \ref{thm:LX_is_vsheaf}. 

\begin{proof}[Proof of Corollary \ref{thm:arc_exactness_and_splitting}]
We must show that the right map is surjective in the $v$-topology. By Lemma \ref{lm:standard_vcover} it suffices to show that $LG(A) \rar L(G/B)(A)$ is surjective for all $A \in \Perf_{\kappa}$ as in Theorem \ref{lm:no_torsors}. As $G$ is split over $\breve k$, we may by enlarging $A$ assume that $G$ is split over $\bW(A)[1/\varpi]$. The sequence
\[1 \rar B \rar G \rar G/B \rar 1 \]
of sheaves of pointed sets on the \'etale site of $\Spec W(A)[1/\varpi]$ is exact by \cite[XXII, 5.8.3]{SGA3}. Taking non-abelian cohomology we get the exact sequence,
\[
1 \rar B(\bW(A)[1/\varpi]) \rar G(\bW(A)[1/\varpi]) \rar (G/B)(\bW(A)[1/\varpi]) \rar H^1_{\rm et}(\Spec \bW(A)[1/\varpi], B).
\]
where the first three terms are equal to $LB(A)$, $LG(A)$ and $L(G/B)(A)$. It remains to show that $H^1_{\rm et}(\Spec \bW(A)[1/\varpi], B) = 0$. We have $B = TU$ with $U$ the unipotent radical of $B$, and $T$ a split torus. Now, $U$ is split (cf. the proof of Lemma \ref{lm:coh_vanishes} below), so has a composition series with subquotients isomorphic to $\bG_a$, and $H^1_{\rm et}(S,\bG_a) = 0$ on any affine base $S$.
We deduce $H^1_{\rm et}(\Spec \bW(A)[1/\varpi],U) = 0$ and it suffices to show that $H^1_{\rm et}(\Spec \bW(A)[1/\varpi],T) = 0$, which is Corollary \ref{cor:no_torsors_GLn_coh_zero}. This shows the first claim of the corollary, and the second has the same proof. \qedhere
\end{proof}

Let us record the following consequence of Theorem \ref{lm:no_torsors} and Lemma \ref{lm:standard_vcover}.

\begin{cor}\label{cor:projective_space_as_quotient}
For $n\geq 1$, the natural map $L(\bA^{n+1} \sm \{0\}) \rar L\bP^n$ of $v$-sheaves on $\Perf_\kappa$ is surjective. Thus, $L\bP^n$ is equal to the $v$-quotient of $L(\bA^{n+1} \sm \{0\})$ by the scalar action of $L\bG_m$.
\end{cor}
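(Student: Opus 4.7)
The plan is to prove the surjectivity statement first, and then to deduce the quotient description from surjectivity together with the fact that $L$ commutes with limits (Lemma~\ref{lm:L_commutes_limits}). For surjectivity, the strategy is to use Lemma~\ref{lm:standard_vcover} to reduce to checking the lifting property over rings $A \in \Perf_\kappa$ whose connected components are spectra of valuation rings, and then invoke Theorem~\ref{lm:no_torsors} to trivialize the line bundle underlying a given point of $L\bP^n(A)$.

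In more detail, I would interpret a section $x \in L\bP^n(A) = \bP^n(\bW(A)[1/\varpi])$ as the data of a finite locally free $\bW(A)[1/\varpi]$-module $\caL$ of rank $1$ together with a surjection $\bW(A)[1/\varpi]^{n+1} \twoheadrightarrow \caL$. Applying Theorem~\ref{lm:no_torsors} to $\caL$ yields a finite clopen decomposition $T = \coprod_{i=1}^r T_i$ of $T = \pi_0(\Spec A)$ on each piece of which the pullback of $\caL$ is free. Such a finite clopen decomposition corresponds to a finite product decomposition $A = \prod_{i=1}^r A_i$, hence to a (Zariski, and in particular $v$-) cover of $\Spec A$. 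After choosing a trivialization $\caL|_{A_i} \cong \bW(A_i)[1/\varpi]$, the given surjection becomes a tuple of elements of $\bW(A_i)[1/\varpi]$ generating the unit ideal, i.e., an element of $(\bA^{n+1}\sm\{0\})(\bW(A_i)[1/\varpi]) = L(\bA^{n+1}\sm\{0\})(A_i)$ mapping to $x|_{A_i}$. Combined with Lemma~\ref{lm:standard_vcover}, this will establish the first claim.

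For the quotient statement, the key input is that $\bA^{n+1} \sm \{0\} \rar \bP^n$ is a Zariski-locally trivial $\bG_m$-torsor, so the action map gives an isomorphism of $k$-schemes $(\bA^{n+1}\sm\{0\}) \times \bG_m \xrightarrow{\sim} (\bA^{n+1}\sm\{0\}) \times_{\bP^n} (\bA^{n+1}\sm\{0\})$, $(x,\lambda)\mapsto(x,\lambda x)$. Since $L$ commutes with limits and products by Lemma~\ref{lm:L_commutes_limits}, applying $L$ produces the analogous isomorphism $L(\bA^{n+1}\sm\{0\}) \times L\bG_m \xrightarrow{\sim} L(\bA^{n+1}\sm\{0\}) \times_{L\bP^n} L(\bA^{n+1}\sm\{0\})$ of $v$-sheaves. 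Together with the surjectivity of $L(\bA^{n+1}\sm\{0\}) \rar L\bP^n$ established above, this is precisely the statement that $L\bP^n$ is the $v$-quotient of $L(\bA^{n+1}\sm\{0\})$ by the scalar $L\bG_m$-action. There is no real obstacle here: both parts follow quite directly from the stated inputs; the only mild subtlety is the translation between a finite clopen decomposition of $\pi_0(\Spec A)$ (produced by Theorem~\ref{lm:no_torsors}) and a finite Zariski cover of $\Spec A$, which will be used to upgrade the ``constant rank implies free'' part of Theorem~\ref{lm:no_torsors} to a $v$-local trivialization of an arbitrary rank-one line bundle.
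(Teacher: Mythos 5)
Your argument is correct and is exactly the route the paper intends: the corollary is recorded there without proof as a direct consequence of Theorem \ref{lm:no_torsors} and Lemma \ref{lm:standard_vcover}, and your write-up supplies precisely the missing details (trivializing the rank-one quotient after the finite clopen refinement, and identifying $L(\bA^{n+1}\sm\{0\}) \times_{L\bP^n} L(\bA^{n+1}\sm\{0\})$ via the $\bG_m$-torsor structure together with Lemma \ref{lm:L_commutes_limits}). Nothing further is needed.
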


\subsection{Proof of Theorem \ref{lm:no_torsors}}\label{sec:proof_thm_no_torsors}

We write $X = \Spec \bW(A)$, $U = \Spec \bW(A)[1/\varpi]$ and let $M$ be a finite locally free $\bW(A)[1/\varpi]$-module.

\subsubsection{Reduction to the case of a valuation ring} \label{sec:notorsors_step4} 
For  each $\tau \in T$, we have the corresponding evaluation map ${\rm Cont}(T,\kappa) \rar \kappa$, and global sections of the fiber of $\Spec A \rar \underline T_\kappa$ over $\tau$  are
\begin{equation}\label{eq:Atau_colimit}
A_\tau=A \otimes_{{\rm Cont}(T,\kappa)} \kappa = \dirlim_V (A \otimes_{{\rm Cont}(T,\kappa)} {\rm Cont}(V,\kappa)), 
\end{equation}
the filtered colimit taken over all open neighborhoods $V$ of $\tau$ in $T$. By assumption, $A_\tau$ is a valuation ring. The map $A \tar A_\tau$ induces a map $\bW(A) \tar \bW(A_\tau)$, which factors through the global sections of the fiber of the map as in \eqref{eq:morphism_to_spec_of_discretes} (but without inverting $\varpi$) corresponding to $\tau$:
\begin{equation}\label{eq:WAtau_factors_through_fiber}
\bW(A) \stackrel{\alpha}{\tar} \bW(A) \otimes_{{\rm Cont}(T,\caO_k), \tau} \caO_k \stackrel{\beta}{\tar} \bW(A_\tau).   
\end{equation}

\begin{lm}\label{lm:WAtau_is_padic_completion_of_fiber}
$\bW(A_\tau)$ coincides with the $\varpi$-adic completion of $\bW(A) \otimes_{{\rm Cont}(T,\caO_k), \tau} \caO_k$ via the map in \eqref{eq:WAtau_factors_through_fiber}.
\end{lm}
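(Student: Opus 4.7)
The plan is to use the universal characterization of $\bW$ on perfect rings by working at each truncation level, where the Witt construction commutes with filtered colimits, and only then taking the inverse limit. Set $N := \bW(A) \otimes_{{\rm Cont}(T,\caO_k), \tau} \caO_k$, and write $\bW_n(R) := \bW(R)/\varpi^n\bW(R)$ for the $n$-th truncated Witt ring. Since $\bW(A_\tau) = \prolim_n \bW_n(A_\tau)$, it suffices to produce natural isomorphisms $N/\varpi^n N \cong \bW_n(A_\tau)$ compatible with the maps induced by $\beta \circ \alpha$ in \eqref{eq:WAtau_factors_through_fiber}; passing to $\prolim_n$ will then identify the $\varpi$-adic completion of $N$ with $\bW(A_\tau)$.

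First I would reduce modulo $\varpi^n$. Using $\caO_k/\varpi^n = \bW_n(\kappa)$, $\bW(A)/\varpi^n = \bW_n(A)$, and Lemma \ref{lm:Witt_of_continuous_maps} to identify ${\rm Cont}(T,\caO_k)/\varpi^n = \bW_n({\rm Cont}(T,\kappa))$, one obtains
\[
N/\varpi^n N = \bW_n(A) \otimes_{\bW_n({\rm Cont}(T,\kappa))} \bW_n(\kappa).
\]
The key step is to resolve $\bW_n(\kappa)$ as a filtered colimit. Since $T$ is profinite and $\kappa$ is discrete, any $f \in {\rm Cont}(T,\kappa)$ with $f(\tau) = 0$ vanishes on some clopen neighborhood of $\tau$; hence evaluation at $\tau$ identifies $\kappa$ with $\dirlim_V {\rm Cont}(V,\kappa)$, where $V$ runs over the cofiltered family of clopen neighborhoods of $\tau$ and ${\rm Cont}(V,\kappa) = 1_V \cdot {\rm Cont}(T,\kappa)$ for the indicator idempotent $1_V$. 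Because the truncated Witt functor $\bW_n$ is defined by polynomial operations on $n$-tuples, it commutes both with filtered colimits of rings and with localization at an idempotent, giving
\[
\bW_n(\kappa) = \dirlim_V \tilde e_V \cdot \bW_n({\rm Cont}(T,\kappa)), \qquad \tilde e_V := [1_V].
\]
Substituting into the tensor product and invoking \eqref{eq:Atau_colimit} then yields
\[
N/\varpi^n N = \dirlim_V \tilde e_V \bW_n(A) = \dirlim_V \bW_n(A_V) = \bW_n\bigl(\dirlim_V A_V\bigr) = \bW_n(A_\tau),
\]
where $A_V = 1_V \cdot A$ is the perfect localization of $A$ at the idempotent $1_V$. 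These isomorphisms are manifestly compatible with the evident projections to $\bW_n(A_\tau)$, and with the map induced by $\beta \circ \alpha$.

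The only non-trivial technical input is the commutation of $\bW_n$ with filtered colimits, which is automatic from the polynomial description of Witt addition and multiplication; the rest is direct bookkeeping with idempotents and quotients. I do not anticipate any substantive obstacle beyond making sure the truncation-then-limit strategy is set up cleanly.
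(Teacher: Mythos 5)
Your proof is correct, and it reorganizes the argument in a genuinely different, more structural way than the paper does. The paper stays at the untruncated level: it identifies $\ker(\beta\alpha)=\{\sum_{i}[a_i]\varpi^i \colon a_i\in\ker(A\tar A_\tau)\}$ explicitly via Teichm\"uller expansions and shows that this kernel is the $\varpi$-adic closure of $\ker(\alpha)$, by multiplying a given element by the characteristic function $\chi_N$ of a small clopen neighbourhood of $\tau$ on which its first $N$ Teichm\"uller coefficients vanish. Your version truncates first; the same topological input --- a locally constant function vanishing at $\tau$ vanishes on a clopen neighbourhood, whose indicator is an idempotent --- enters instead through the identification $\kappa=\dirlim_V{\rm Cont}(V,\kappa)$ together with the commutation of $\bW_n$ (on perfect rings) with filtered colimits and finite product decompositions. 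What your route buys is that you never have to describe $\ker(\beta\alpha)$ or argue about closures: the isomorphism $N/\varpi^n N\cong\bW_n(A_\tau)$ is assembled from functorial identifications and the completion statement is then immediate; the paper's route is more hands-on but shorter to state. One small point you should make explicit: the base ring in \eqref{eq:WAtau_factors_through_fiber} is ${\rm Cont}_{\rm disc}(T,\caO_k)$ (locally constant functions), whereas Lemma \ref{lm:Witt_of_continuous_maps} identifies $\bW({\rm Cont}(T,\kappa))$ with the larger ring ${\rm Cont}_{\varpi}(T,\caO_k)$. These differ, but their reductions modulo $\varpi^n$ coincide (both equal ${\rm Cont}(T,\caO_k/\varpi^n)$ with discrete target), so your identification $N/\varpi^n N=\bW_n(A)\otimes_{\bW_n({\rm Cont}(T,\kappa))}\bW_n(\kappa)$ is valid --- but it deserves a sentence of justification.
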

\begin{proof}
We have to show that the $\varpi$-adic closure of 
\[
\ker(\alpha) = \ker(\ev_\tau \colon {\rm Cont}_{\rm disc}(T, \caO_k)\rar \caO_k)\cdot \bW(A)
\] 
in $\bW(A)$ is equal to $\ker(\beta \alpha) = \left\{\sum_{i=0}^\infty [a_i]\varpi^i \colon a_i \in \ker(A \tar A_{\tau}) \right\}$. But if $a \in \ker(A \rar A_\tau)$, then \eqref{eq:Atau_colimit} shows that there exists some $V_a \subseteq T$ open such that $a \in \ker(A \rar A_{\tau'})$ for all $\tau' \in V_a$. As $T$ is profinite, we may (and do) assume that $V_a$ is also closed (by shrinking it, if necessary). Given $x = \sum_{i=0}^\infty [a_i]\varpi^i \in \ker(\beta\alpha)$, and $N > 0$, let $V_N = \bigcap_{0\leq i < N} V_{a_i}$, and let $\chi_N \colon T \rar \caO_k$ be the characteristic function of $V_N$. It is in ${\rm Cont}_{\rm disc}(T,\caO_k)$ as $V_N$ open and closed. For each $N > 0$, regarding $\chi_N$ as an element of $\bW(A)$, we have $\chi_N x \in \ker(\alpha)$ and $\chi_N x \equiv x \mod \varpi^N \bW(A)$, so that $\chi_Nx \rar x$ for $\varpi$-adic topology.
\end{proof}

Suppose now Theorem \ref{lm:no_torsors} is proven for all valuation rings. Then $M \otimes_{\bW(A)[1/\varpi]} \bW(A_\tau)[1/\varpi]$ is free. By \cite[5.4.42]{GabberR_03} and Lemma \ref{lm:WAtau_is_padic_completion_of_fiber}, its ``decompletion'' $(M \otimes_{{\rm Cont}_{\rm disc}(T,\caO_k), \tau} \caO_k)[1/\varpi]$ is a free $(\bW(A) \otimes_{{\rm Cont}(T,\caO_k), \tau} \caO_k)[1/\varpi]$-module. This latter ring is the filtered colimit of the global sections 
of $\Spec \bW(A)[1/\varpi] \times_{\underline T_k} \underline V_k$, where $V$ goes through open neighbourhoods of $\tau$ in $T$. By Noetherian approximation \cite[Tag 01ZR]{StacksProject}, a fixed trivialization of $(M \otimes_{{\rm Cont}_{\rm disc}(T,\caO_k), \tau} \caO_k)[1/\varpi]$ comes from trivialization of the restriction of $M$ to the open subscheme $U \times_{\underline T_k} \underline V_k$ of $U$ for a sufficiently small open $V$. For varying $\tau \in T$, the opens $V$ form a covering of $T$. Now, using that $T$ is profinite, we are done by \cite[08ZZ]{StacksProject}.

\subsubsection{Extension to the generic point of the $X \sm U$}\label{sec:notorsors_step1}
It remains to prove the theorem for a valuation ring $A$. Let $\fm_A$ be the maximal ideal of $A$ and let $K = {\rm Frac}(A)$. We regard $M$ as a locally free $\caO_U$-module. Let $R = \dirlim_t \bW(A)[\frac{1}{[t]}]$ and write $X_0 = \Spec R$ and $U_0 = \Spec R[1/\varpi]$. The $\varpi$-adic completion of $R$ is $\bW(K)$. We have the finite locally free $\caO_{U_0}$-module $M_2 = M \otimes_{\caO_U} \caO_{U_0}$, and by the Beauville--Laszlo lemma \cite{BeauvilleL_95}, to give a finite locally free $\caO_{X_0}$-module $\cM$ with $\cM \otimes_{\caO_{X_0}} \caO_{U_0} = M_2$ is the same as to give a finite (locally) free $\bW(K)$-submodule $M_1$ in the finite dimensional $W(K)[1/\varpi]$-vector space $M_2 \otimes_R \bW(K) = M \otimes_{\bW(V)} \bW(K)$. As $\bW(K)$ is a discrete valuation ring, this is always possible. Fix such an $M_1$ and let $\cM$ be the corresponding finite locally free $\caO_{X_0}$-module.

Then $\cM$ glues with $\tilde M$ to a vector bundle on $X_0 \cup U$, and Theorem \ref{lm:no_torsors} now follows from \cite[Thm.~2.7]{Kedlaya_16_ringAinf}. Nevertheless, below we explain how to reduce Theorem \ref{lm:no_torsors} to the special case of \cite[Thm.~2.7]{Kedlaya_16_ringAinf} for microbial valuation rings\footnote{Recall that a valuation ring is called microbial, if it possesses a prime ideal of height $1$.}. Our argument differs from that in \cite{Kedlaya_16_ringAinf}, and we believe that it is interesting in its own right.

\subsubsection{Noetherian approximation}\label{sec:notorsors_step2}
For $t \in \fm_A \sm \{0\}$, let $X_t = \Spec \bW(A)[\frac{1}{[t]}]$ and let $U_t = X_t \sm \{\varpi = 0\} = \Spec \bW(A)[\frac{1}{[t]}, \frac{1}{\varpi}]$. We have $\prolim_t X_t = X_0$ and $\prolim_t U_t = U_0$ with all appearing schemes affine. Let $p_t \colon X_0 \rar X_t$ denote the natural map. By \cite[Tag 01ZR]{StacksProject} there is some $t$ and some finitely presented $\caO_{X_t}$-module $\cM_t$  such that $\cM \cong p_t^\ast(\cM_t)$. By \cite[Tag 02JO]{StacksProject} we may, after shrinking $t$ (by this we mean shrinking $X_t$), assume that $\cM_t$ is $\caO_{X_t}$-flat -- and, consequently, locally free -- $\caO_{X_t}$-module. On the other side, for each $t'$, we have the locally free $\caO_{U_{t'}}$-module $M_{t'} = M|_{U_t'}$ satisfying $(p_{t'}|_{U_{t'}})^\ast(M_{t'}) \cong M_2$. Again, by \cite[Tag 01ZR]{StacksProject}, the isomorphism 
\[ 
(p_t|_{U_t})^\ast(\cM_t[1/\varpi]) = p_t^\ast(\cM_t)[1/\varpi] = \cM[1/\varpi] \cong M_2 \cong (p_t|_{U_t})^\ast(M_t) 
\]
on $\Spec(R[1/\varpi])$ must come from some finite level, i.e., after shrinking $t$ further, this comes from an isomorphism of $\caO_{U_t}$-modules $\cM_t[1/\varpi] \cong M_t$. Glueing $\cM_t$ (on $X_t$) with $M$ (on $U$) along this isomorphism over $U_t$, we have extended $M$ to a vector bundle $\cM_t^{(1)}$ on $X_t \cup U$ for some $t \in \fm_A \sm \{0\}$.

Now, if $A$ is microbial, we are done by the microbial case of \cite[Thm.~2.7]{Kedlaya_16_ringAinf} (see also \cite[Prop.~14.2.6]{ScholzeW_20}): indeed, it shows that $\cM_t^{(1)}$ extends to all of $X$, and hence is a trivial. We may thus assume in the following that $A$ is not microbial.

\subsubsection{Glueing along an arc-cover}\label{sec:notorsors_step3} Find some $\fp \subset \fq \in \Spec A$ with $t \not\in \fp,\fq$, such that $\fp \rightsquigarrow \fq$ is an immediate specialization (this is always possible, see \cite[Rem.~2.2]{BhattM_18}). Then $A \rar A_\fp \times A/\fp$ is an arc-cover \cite[Cor.~2.9]{BhattM_18}, and $A/\fp$ is microbial. Let $\cN$ denote the restriction of $\cM_t^{(1)}$ to $\Spec \bW(A_\fp)$, and consider the restriction of $\cM_t^{(1)}$ to $\Spec \bW(A/\fp) \cap (X_t \cup U)$. By the microbial case of \cite[Thm.~2.7]{Kedlaya_16_ringAinf} , the latter extends to a vector bundle $\cN'$ on $\Spec\bW(A/\fp)$ (which is necessarily trivial). We may now glue $\cN$ and $\cN'$ along $\Spec \bW({\rm Frac}(A/\fp))$, on which both are defined and agree.\footnote{Indeed, $S \mapsto \{\text{vector bundles on } \bW(S)\}$ is a stack for arc-topology: for $v$-topology this is \cite[Thm.~4.1]{BhattS_17}, and for the arc-topology the same proof applies (cf. \S\ref{sec:arc_descent}).} This gives a vector bundle $\cN''$ on $X = \Spec \bW(A)$, which is necessarily trivial, and whose restriction to $U$ is isomorphic to $M$. As $\cN''$ is trivial, also $M$ is trivial, and we are done.

\section{Fixed points on loop spaces of partial flag manifolds}

In this section we work in the setup of \S \ref{sec:setup_over_finite_field}. Moreover, we fix a reductive group $G$ over the local field $k = \bW(\bF_q)[1/\varpi]$.

\subsection{$\sigma$-conjugacy classes}\label{sec:Kottwitz_review} We review some results from \cite{Kottwitz_85}, which we need below. Let $\ff$ be any algebraically closed extension of $\obF$. Then $L = \bW(\ff)[1/\varpi]$ is an extension of $\breve k = \bW(\obF)[1/\varpi]$, and the Frobenius automorphism of $\ff$ over $\bF_q$ induces an automorphism $\sigma$ of $L$ over $k$,  so that $L^\sigma = k$ \cite[Lm.~1.2]{Kottwitz_85}. Attached to the reductive group $G$ over $k$ Kottwitz defines\footnote{The definition is only given in the case $\charac k = 0$, but it works similarly in the case $\charac k > 0$.} the set $B(G) = H^1(\langle \sigma \rangle, G(L))$. Concretely, $B(G)$ is the quotient of $G(L)$ modulo $\sigma$-conjugacy: $x$ is $\sigma$-conjugate to $y$ if there exists $g \in G(L)$ such that $g^{-1}x\sigma(g) = y$. We denote the $\sigma$-conjugacy class of $b\in G(L)$ by $[b]$ resp. by $[b]_G$, if we want to specify the ambient group $G$. The set $B(G)$ is independent of the choice of $\ff$. 

Assume now that $G$ is unramified, and fix a $k$-rational maximal torus $T$ of $G$, which is contained in a $k$-rational Borel subgroup. The set $B(G)$ can be parametrized as follows.
Let $\pi_1(G)$ denote the Borovoi fundamental group of $G$, which is isomorphic to the quotient of $X_\ast(T)$ by the coroot lattice. Then one can attach to $[b] \in B(G)$ two invariants, the \emph{Kottwitz point} $\kappa_G(b) \in \pi_1(G)_{\Gal(k^{\rm sep}/k)}$ and the \emph{Newton point} $\nu_b \in (W \backslash X_\ast(T)_\bQ)^{\Gal(k^{\rm sep}/k)}$. Then the map 
\[
(\nu, \kappa_G) \colon B(G) \har (W \backslash X_\ast(T)_\bQ)^{\Gal(k^{\rm sep}/k)} \times \pi_1(G)_{\Gal(k^{\rm sep}/k)}
\]
is injective. Moreover, the image of $\nu_b$ and $\kappa_G(b)$ in $\pi_1(G)_{\Gal(k^{\rm sep}/k)} \otimes_{\bZ} \bQ$ coincide (thus, if $\pi_1(G)_{\Gal(k^{\rm sep}/k)}$ is torsion free, a $\sigma$-conjugacy class is determined by its Newton point). 

\begin{lm}\label{lm:fibers_BPBG}
Let $P \subseteq G$ be a $k$-rational parabolic subgroup of $G$. The fibers of the natural map $B(P) \rightarrow B(G)$ are finite. 
\end{lm}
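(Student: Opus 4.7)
Write $P = MN$ with $M$ a $k$-rational Levi factor and $N$ the unipotent radical. My plan is to first reduce to showing finiteness of the fibers of the induced map $B(M) \to B(G)$, by establishing that the projection $P \twoheadrightarrow M$ induces a bijection $B(P) \xrightarrow{\sim} B(M)$. Surjectivity is immediate from the splitting $M \hookrightarrow P$. For injectivity, suppose $b_1, b_2 \in P(L)$ have $\sigma$-conjugate images in $M(L)$; after conjugating within $M(L) \subseteq P(L)$ we may assume they have the same image $m \in M(L)$, and one is reduced to the vanishing of $H^1(\langle\sigma\rangle, N_m(L))$, where $N_m$ denotes $N$ with $\sigma$-action twisted by $\mathrm{Ad}(m)\sigma$. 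The lower central series of $N$ is $M$-stable, hence $\sigma_m$-stable, and its successive quotients are vector groups; so this vanishing reduces to the surjectivity of $\sigma - 1$ on $L$, which follows from Lang's theorem for $\bG_a$ over $\ff$ together with a Teichm\"uller-lift $\varpi$-adic approximation argument to solve the equation in $\bW(\ff)$ and then invert $\varpi$.

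\textbf{Continuation.} With this reduction in hand, I would analyze $B(M) \to B(G)$ using the Kottwitz parametrization from \S\ref{sec:Kottwitz_review}: both sets inject, via $(\nu,\kappa)$, into the appropriate product of Newton and Kottwitz points, and the image is cut out by the compatibility that the two components agree in $\pi_1(\cdot)_\Gamma \otimes_\bZ \bQ$. Fix $[b]_G \in B(G)$ with invariants $(\bar\nu_G, \bar\kappa_G)$. A preimage $[b']_M$ has invariants $(\bar\nu_M, \bar\kappa_M) \in (W_M \backslash X_\ast(T)_\bQ)^{\Gamma} \times \pi_1(M)_\Gamma$ mapping to $(\bar\nu_G, \bar\kappa_G)$. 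Since the natural projection $W_M \backslash X_\ast(T)_\bQ \to W_G \backslash X_\ast(T)_\bQ$ has fibers of cardinality at most $[W_G : W_M] < \infty$, there are only finitely many possibilities for $\bar\nu_M$.

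\textbf{Continuation.} For each such $\bar\nu_M$, the Kottwitz component $\bar\kappa_M \in \pi_1(M)_\Gamma$ is constrained by (i) its image under $\pi_1(M)_\Gamma \to \pi_1(G)_\Gamma$ equals $\bar\kappa_G$, and (ii) its image in $\pi_1(M)_\Gamma \otimes \bQ$ equals the image of $\bar\nu_M$. The group $\pi_1(M)_\Gamma$ is a finitely generated abelian group (being a quotient of the finitely generated group $\pi_1(M)$), so its torsion subgroup is finite; condition (ii) forces $\bar\kappa_M$ to lie in a single coset of this torsion subgroup, leaving only finitely many candidates. Combined with the finiteness of the set of $\bar\nu_M$, the fiber of $B(M) \to B(G)$ over $[b]_G$ is finite, which via the bijection of Step~1 gives the claim.

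\textbf{Main obstacle.} The substantive point is the bijection $B(P) \cong B(M)$, which requires the cohomology vanishing for unipotent groups over $L$; once this is granted, the remainder is a routine analysis of the Kottwitz invariants. I would expect the vanishing argument to be standard and referenceable, with the whole proof fitting in a few lines.
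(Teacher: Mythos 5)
Your proposal is correct and takes essentially the same route as the paper, whose entire proof is the remark that the claim ``follows from the above description and its functoriality'': you simply make explicit the two ingredients that remark compresses, namely the reduction $B(P)\cong B(M)$ via vanishing of $H^1$ of the (twisted) unipotent radical, and the finiteness count using the injectivity of $(\nu,\kappa)$ together with functoriality of the Newton and Kottwitz maps. One small imprecision: the graded pieces of the lower central series of $N$ carry the operator twisted by $\mathrm{Ad}(m)$, so what you need is surjectivity of $1-\Phi$ on an arbitrary isocrystal over $L$ (still standard, by the same $\varpi$-adic approximation after splitting off slopes), not merely surjectivity of $\sigma-1$ on $L$ itself.
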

\begin{proof}
This follows from the above description and its functoriality.
\end{proof}

\subsection{Sheaf of $b\sigma$-fixed points}\label{sec:bsigma_fixed_points}

First we recall the following definition from \cite[1.12]{RapoportZ_96} (see also \cite[3.3, Appendix A]{Kottwitz_97}). Let $H$ be any  linear algebraic group over $k$ and let $b \in H(\breve k)$. Let $\ff \in \Perff$ be an algebraically closed field, $L = \bW(\ff)[1/\varpi]$ and $\sigma$ be as in \S \ref{sec:Kottwitz_review}. For $b \in H(\breve k)$, let $H_b$ denote the functor on $k$-algebras, 
\begin{equation}\label{eq:form_of_Levi}
R \mapsto H_b(R) = \{g \in H(R \otimes_k L) \colon g(b\sigma) = (b\sigma)g \}.
\end{equation}
This functor is representable by an affine smooth group over $k$, and moreover, the definition is independent of $\ff$, in the sense that if $H_b'$ denotes the group $H_b$ defined with respect to $\ff = \obF$, then $H_b' \cong H_b$. (In \cite[1.12]{RapoportZ_96} only the mixed characteristic case is considered, but the equal characteristic case works similarly).

We come back to our unramified reductive group $G$. Until the end of this section fix a $k$-rational parabolic subgroup $P$ of $G$. We have the projective $k$-scheme $G/P$. We denote its base change to $\breve k$ again by $G/P$, so that $L(G/P)$ is an arc-sheaf on $\Perf_{\obF}$ (by Theorem \ref{thm:LX_is_vsheaf}). The $k$-rational structure on $G/P$ gives the geometric Frobenius $\sigma$ on $L(G/P)$ (as at the end of \S\ref{sec:loop_spaces}). For $b \in G(\breve k)$, we can consider the arc-sheaf 
\begin{equation}\label{eq:LGP_bsigma_sheaf}
L(G/P)^{b\sigma} := {\rm Eq}\left(L(G/P) \doublerightarrow{\id}{b\sigma} L(G/P) \right),
\end{equation}
where $b\sigma$ is the automorphism of $L(G/P)$ induced by $gP \mapsto b\sigma(g)P$. We will show below that its is represented by the constant scheme attached to a profinite set. First we study the geometric points of $L(G/P)^{b\sigma}$.

\begin{prop}\label{prop:geometric_profiniteness_of_bsigma_fixed_points}
Let $\ff \in \Perff$ be any algebraically closed field and let $L = \bW(\ff)[1/\varpi]$. Let $b \in G(\breve k)$. Then the following hold:
\begin{itemize}
\item[(i)] If $[b]_G \cap P(\breve k) = \varnothing$, then $(G/P)(L)^{b\sigma} = \varnothing$.
\item[(ii)] If $b \in P(\breve k)$, then $(G/P)(L)^{b\sigma} = (G/P)(\breve k)^{b\sigma}$, and this set can naturally be identified with the set of $k$-rational points of a projective scheme over $k$. In particular, it is a profinite set with respect to the $\varpi$-adic topology.
\end{itemize}
\end{prop}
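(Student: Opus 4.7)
\emph{Part (i).} The point $gP \in (G/P)(L)$ is fixed by $b\sigma$ iff $p := g^{-1}b\sigma(g) \in P(L)$. If such a $g$ exists, then $b$ and $p$ are $\sigma$-conjugate in $G(L)$. By the Kottwitz theorem reviewed in \S\ref{sec:Kottwitz_review}, the set $B(H) = H^1(\langle\sigma\rangle, H(-))$ is independent of the choice of algebraically closed $\ff$; in particular the comparison map $B(H,\breve k) \to B(H,L)$ is bijective for any connected reductive $k$-group $H$. For the parabolic $P = M \ltimes U$ the same bijectivity still holds, by combining the reductive case for $M$ with the vanishing of $\sigma$-cohomology on the unipotent radical $U$ (which reduces inductively to $H^1(\langle\sigma\rangle, \mathbb{G}_a(L)) = L/(\sigma-1)L = 0$, surjectivity being easy to check on Witt vectors). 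Applied to $P$, this gives some $b'' \in P(\breve k)$ that is $\sigma$-conjugate in $P(L)$ to $p$; consequently $b''$ and $b$ are $\sigma$-conjugate in $G(L)$, and applying the bijection to $G$ they are in fact $\sigma$-conjugate already in $G(\breve k)$. So $b'' \in [b]_G \cap P(\breve k)$, contradicting the hypothesis.

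\emph{Part (ii).} Assuming $b \in P(\breve k)$, I will identify $(G/P)(L)^{b\sigma}$ with the set of $k$-rational points of a projective $k$-variety obtained by inner-twisting $G/P$ by $b$. Since $b \in G(\breve k)$ is automatically a $1$-cocycle for the procyclic group $\Gal(\breve k/k)$ (topologically generated by $\sigma$) and $G$ acts on $G/P$, the twisting construction produces an inner $k$-form $Y := (G/P)_b$, again projective over $k$, equipped with a canonical $\breve k$-isomorphism $Y \times_k \breve k \cong (G/P) \times_k \breve k$. By construction, for every $\breve k$-algebra $R$ this isomorphism induces a bijection $Y(R) = (G/P)(R)$ under which the natural $\sigma$-action on $Y(R)$ (coming from the $k$-structure of $Y$) corresponds to the twisted action $b\sigma$ on $(G/P)(R)$. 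Specializing to $R = \breve k$ gives $Y(k) = Y(\breve k)^\sigma = (G/P)(\breve k)^{b\sigma}$.

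It remains to prove $(G/P)(L)^{b\sigma} = Y(k)$. Specializing the identification above to $R = L$ yields $Y(L) = (G/P)(L)$ with $\sigma$ on the left matching $b\sigma$ on the right; hence $(G/P)(L)^{b\sigma} = Y(L)^\sigma$. Since $\bW(\ff)$ is a discrete valuation ring (as $\ff$ is a field) and $Y$ is proper over $k$, the valuative criterion gives $Y(L) = Y(\bW(\ff))$; combining $\bW(\ff)^\sigma = \bW(\bF_q) = \caO_k$ with a standard descent on $\sigma$-stable affine opens of $Y$ yields $Y(\bW(\ff))^\sigma = Y(\caO_k) = Y(k)$, as desired. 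Finally, $Y(k)$ is compact, Hausdorff and totally disconnected in the $\varpi$-adic topology (compactness and Hausdorffness from $Y$ being projective and separated over $k$; total disconnectedness inherited from $k$), hence profinite.

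\emph{Main obstacle.} The delicate step is the claimed compatibility ``$\sigma$-action on $Y(R)$ matches $b\sigma$-action on $(G/P)(R)$'' for the twist $Y = (G/P)_b$; this is a formal statement about Galois descent data, but must be unwound carefully. A secondary subtlety in (i) is the extension of Kottwitz's bijection from reductive to parabolic groups, which is handled via the Levi decomposition together with the triviality of $\sigma$-cohomology on unipotent groups over $\breve k$ and $L$.
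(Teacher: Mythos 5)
Your part (i) is essentially the paper's argument: a $b\sigma$-fixed point $gP$ yields $g^{-1}b\sigma(g)\in P(L)$, and the independence of $B(P)$ and $B(G)$ from the algebraically closed field produces a representative of $[b]_G$ inside $P(\breve k)$. (You should also justify that every point of $(G/P)(L)$ lifts to $G(L)$; in the paper this is Lemma \ref{lm:coh_vanishes} applied with $b=1$, the same Steinberg-type vanishing you invoke elsewhere.)

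Part (ii), however, is built on a construction that does not exist. The element $b$ gives a $1$-cocycle only of the discrete group $\langle\sigma\rangle\cong\bZ$, not a continuous cocycle of $\Gal(\breve k/k)\cong\widehat{\bZ}$ (that would force $b\sigma(b)\cdots\sigma^{n-1}(b)=1$ for some $n$), and descent along $\breve k/k$ is not effective for such $\sigma$-semilinear data: the relevant cohomology set is Kottwitz's $B(G)=H^1(\langle\sigma\rangle,G(\breve k))$, which is much larger than $H^1(k,G)$. A concrete refutation: take $G=\GL_2$, $P=B$ the upper-triangular Borel, and $b=\diag(\varpi,\varpi^{-1})$. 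For $[x:y]$ with $xy\neq 0$ the fixed-point equation gives $\varpi^{2}\sigma(x/y)=x/y$, which is impossible on valuations, so $(G/B)(\breve k)^{b\sigma}=\{[1:0],[0:1]\}$ has exactly two elements (cf. the $\GL_2$ table in the introduction). A $k$-form $Y$ of $\bP^1$ has either infinitely many or no $k$-points, so no such $Y$ can satisfy $Y(k)=(G/B)(\breve k)^{b\sigma}$. This failure is precisely why $G_b$ is defined by the functor \eqref{eq:form_of_Levi} rather than as a twist of $G$: it is an inner form of a Levi subgroup (here the diagonal torus), not of $G$. The paper's proof of (ii) accordingly runs through twisted Weil-group cohomology: it compares the exact sequences of pointed sets for $1\to P_b\to G_b\to G_b/P_b\to 1$ over $k^{\nr}$ and for $1\to P\to G\to G/P\to 1$ over $L$ with the $b$-twisted $\caW(L^{\rm sep}/k)$-action, uses finiteness of the fibers of $B(P)\to B(G)$ (Lemma \ref{lm:fibers_BPBG}), and concludes that $(G/P)(L)^{b\sigma}$ is a finite disjoint union of sets $G_{b_i}(k)/P_{b_i}(k)$, i.e.\ the $k$-points of a projective scheme attached to the $\sigma$-centralizers $G_{b_i}$, not a form of $G/P$. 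In particular, the equality $(G/P)(L)^{b\sigma}=(G/P)(\breve k)^{b\sigma}$ is obtained by identifying both sides with this union, not by a valuative criterion applied to a twist.
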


Let $k^{\nr}$ denote the maximal unramified extension of $k$.

\begin{lm}\label{lm:coh_vanishes}
With notation as in Proposition \ref{prop:geometric_profiniteness_of_bsigma_fixed_points}, let $F$ be $L$ or $k^{\nr}$. Then $H^1(F^{\rm sep}/F, P_b) = 1$.
\end{lm}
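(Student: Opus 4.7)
The plan is to reduce $H^1(F, P_b) = 1$ to a classical cohomology vanishing theorem for connected linear algebraic groups over henselian fields with algebraically closed residue field.

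First I would record the structural picture of $P_b$. Writing $P = U \rtimes M$ with $U$ the unipotent radical and $M$ a Levi subgroup of $P$, and letting $\bar b \in M(\breve k)$ denote the image of $b$ under $P \twoheadrightarrow M$, the functorial definition \eqref{eq:form_of_Levi} gives a short exact sequence of smooth affine $k$-groups
\[
1 \to U_b \to P_b \to M_{\bar b} \to 1,
\]
where $U_b$ is a twisted form of $U$ (hence connected unipotent) and $M_{\bar b}$ is an inner form of a Levi subgroup of $M$ (hence connected reductive by Kottwitz's analysis in \cite{Kottwitz_85, Kottwitz_97}). In particular, $P_b$ is a smooth connected affine $k$-group.

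The key input is then the following classical theorem: for any henselian discretely valued field $F$ with algebraically closed residue field, $H^1(F, H) = 1$ for every connected linear algebraic group $H$ over $F$. This is Steinberg's theorem when $F$ is perfect of cohomological dimension $\leq 1$, and is extended to the non-perfect case by Bruhat--Tits / Borel--Springer. Both fields we care about fit this hypothesis: $L = \bW(\ff)[1/\varpi]$ is a complete DVF with residue field the algebraically closed field $\ff$, and $k^{\rm nr}$ is a henselian DVF with residue field $\obF$. Applying the theorem to $H = P_b$ yields the claim.

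If one wishes to avoid the black box, one can also proceed stepwise via the sequence above. Since $G$ is unramified, $U$ splits over $k^{\rm nr} \subseteq F$, so $U_b \otimes_k F$ and all its twists by $M_{\bar b}$-cocycles admit filtrations by $\bG_a$'s, which gives $H^1(F, U_b^g) = 0$ for every $1$-cocycle $g$ by the additive Hilbert 90; meanwhile $H^1(F, M_{\bar b}) = 1$ by Steinberg's theorem applied to the connected reductive group $M_{\bar b}$. The non-abelian cohomology sequence and the vanishing of the unipotent $H^1$ on all twists then collapse everything to $H^1(F, P_b) = 1$.

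The main delicate point is the possible failure of perfectness of $F$ in equal characteristic (where $k^{\rm nr} = \obF(\!(\varpi)\!)$ and $L = \ff(\!(\varpi)\!)$ are not perfect), which means one cannot cite Steinberg's original statement verbatim. This is precisely the case covered by the Bruhat--Tits strengthening, and is the one place where more than a one-line citation is needed.
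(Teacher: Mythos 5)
Your proposal is correct and follows essentially the same route as the paper: filter the unipotent part by copies of $\bG_a$ (additive Hilbert 90) and apply the Borel--Springer extension of Steinberg's theorem to the connected reductive quotient, with the non-perfectness of $F$ in equal characteristic being exactly the point the paper also singles out. The only (minor) difference is that the paper obtains splitness of the unipotent radical by viewing $P_b$ as a $k$-rational parabolic of the connected reductive group $G_b$ and citing Borel--Tits, which is cleaner than your "a twist of a split unipotent group is split" step --- that implication is not automatic over non-perfect fields of characteristic $p$, so you should route the splitness through Borel--Tits as well.
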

\begin{proof}
If $\charac k = 0$, then $F$ is perfect and the result follows directly from Steinberg's theorem, as $\cd(F) \leq 1$ \cite[II.3.3 c)]{Serre_Galois_Cohomology}. If $\charac k > 0$, we need a small argument. The group $P_b$ is a $k$-rational parabolic subgroup of the (connected) reductive group $G_b$. Hence the unipotent radical $U$ of $P_b$ is defined over $k$ and split \cite[3.14,3.18]{BorelT_65}, i.e., has a composition series over $k$ with all subquotients isomorphic to $\bG_a$. As $H^1(F^{\rm sep}/F,\bG_a) = 1$ (see e.g. \cite[II.1.2 Prop.~1]{Serre_Galois_Cohomology}), we deduce  $H^1(F^{\rm sep}/F, U) = 1$. Now $P_b/U$ is a connected reductive $k$-group, and as $\cd(F) \leq 1$ (see \cite[II.3.3 c)]{Serre_Galois_Cohomology}), the extension \cite[8.6]{BorelS_68} due to Borel--Springer of Steinberg's theorem shows that $H^1(F^{\rm sep}/F, P_b/U) = 1$. Combining these two vanishing results, the lemma follows.
\end{proof}

\begin{proof}[Proof of Proposition \ref{prop:geometric_profiniteness_of_bsigma_fixed_points}]
(i): By Lemma \ref{lm:coh_vanishes} (applied to $b = 1$), the natural map $G(L) \rar (G/P)(L)$ is surjective. Suppose $(G/P)(L)^{b\sigma} \neq \varnothing$. Thus there exists $g \in G(L)$ such that $b' := g^{-1}b\sigma(g) \in P(L)$. Now $[b']_P \in B(P) = H^1(\langle \sigma \rangle, P(L)) = H^1(\langle \sigma \rangle, P(\breve k))$ (the equality follows from \cite{Kottwitz_85}). With other words, there is a representative $b'' \in P(\breve k)$ of $[b']_P$. We deduce $[b]_G = [b'']_G \in B(G)$, so that $[b]_G \cap P(\breve k) \neq \varnothing$. This shows (i).

\noindent (ii): We have the $\Gal(k^{\rm sep}/k)$-equivariant short exact sequence of discrete (with respect to $\Gal(k^{\rm sep}/k)$-action) pointed sets,
\[
1 \rightarrow P_b(k^{\rm sep}) \rightarrow G_b(k^{\rm sep}) \rightarrow (G_b/P_b)(k^{\rm sep}) \rightarrow 1.
\]
Taking cohomology with respect to the action of the subgroup $\Gal(k^{\rm sep}/k^{\nr})$, and using Lemma \ref{lm:coh_vanishes}, we deduce the exact  sequence of discrete pointed $\Gal(k^{\nr}/k)$-sets,
\begin{equation}\label{eq:first_seq}
1 \rightarrow P_b(k^{\nr}) \rightarrow G_b(k^{\nr}) \rightarrow (G_b/P_b)(k^{\nr}) \rightarrow 1.
\end{equation}

As in \cite[1.3]{Kottwitz_85} let $\caW(L^{\rm sep}/k)$ (resp. $\mathcal{W}(L/k)$) be the group of continuous automorphisms of $L^{\rm sep}$ (resp. $L$) fixing $k$ pointwise, which induce on the residue field an integral power of the Frobenius automorphism. Consider the $1$-cocycle $\tau \mapsto c_{\tau} \colon \caW(L^{\rm sep}/k) \rightarrow P(L^{\rm sep})$, which is trivial on the subgroup of elements of $\caW(L^{\rm sep}/k)$  fixing $\breve k$, and which is then determined by $c_\sigma = b$. Composing with the embedding $P(L^{\rm sep}) \har G(L^{\rm sep})$, this also gives a $1$-cocycle with values in $G(L^{\rm sep})$. Consider the actions of $\caW(L^{\rm sep}/k)$ on $P(L^{\rm sep})$ and $G(L^{\rm sep})$ twisted by these 1-cocycles, that is $\tau \in \caW(L/k)$ acts by $\tau^\ast(g) = c_\tau \tau(g)c_\tau^{-1}$. We have the short exact sequence of the pointed $\caW(L^{\rm sep}/k)$-sets with respect to this twisted action,
\[
1 \rightarrow  P(L^{\rm sep}) \rightarrow G(L^{\rm sep}) \rightarrow (G/P)(L^{\rm sep}) \rightarrow 1
\]
Taking the cohomology with respect to the subgroup $\Gal(L^{\sep}/L)$ of $\caW(L^{\rm sep}/k)$ and applying Lemma \ref{lm:coh_vanishes} again, we deduce the short exact sequence of $\caW(L/k)$-pointed sets,
\begin{equation}\label{eq:second_seq}
1 \rightarrow  P(L) \rightarrow G(L) \rightarrow (G/P)(L) \rightarrow 1,
\end{equation}
Let $B(G)' := H^1(\caW(L^{\rm sep}/k),G(L^{\rm sep})) = H^1(\caW(L/k),G(L))$ with respect to the twisted action. Then 
\[
B(G) \rar B(G)', \quad [g]_G \mapsto \text{cocycle determined by $\sigma \mapsto gb^{-1}$}
\]
is a bijection of sets (cf. \cite[I.5.3, Prop.~35]{Serre_Galois_Cohomology}), i.e., the pointed set $B(G)'$ can be identified with the set $B(G)$, but with distinguished element $[b]_G$. The same works for $B(P)'$ and $B(P)$.

By construction (cf. \cite[3.3]{Kottwitz_97}), we have $P_b(k^{\rm nr}) \subseteq P(L)$ and $G_b(k^{\nr}) \subseteq G(L)$, and these inclusions are equivariant with respect to the natural restriction map $\caW(L/k) \rightarrow \Gal(k^\nr/k)$, where we consider the twisted $\caW(L/k)$-action on $P(L)$, $G(L)$. Thus we deduce a map from the exact sequence \eqref{eq:first_seq} to \eqref{eq:second_seq}, which is equivariant with respect to $\caW(L/k) \rightarrow \Gal(k^\nr/k)$.
Using the functoriality of the long exact cohomology sequence we deduce the commutative diagram of pointed sets,
\[
\xymatrix{
1 \ar[r] & P_b(k) \ar@{=}[d] \ar[r] & G_b(k) \ar@{=}[d] \ar[r] & (G_b/P_b)(k) \ar[d] \ar[r] & H^1(k,P_b) \ar@{^(->}[d] \ar[r] & H^1(k,G_b) \ar@{^(->}[d] \\
1 \ar[r] & P(L)^{b\sigma(\cdot)b^{-1}} \ar[r] & G(L)^{b\sigma(\cdot)b^{-1}} \ar[r] & (G/P)(L)^{b\sigma} \ar[r] & B(P)' \ar[r] & B(G)'
}
\]
where the two left vertical arrows are bijections by \cite[1.12]{RapoportZ_96}, and the two right vertical arrows are the injective maps as in \cite[(3.5.1)]{Kottwitz_97}. By Lemma \ref{lm:fibers_BPBG} the fiber in $B(P)'$ over the distinguished point of $B(G)'$ is finite. Repeating the same arguments for all (finitely many) $\sigma$-conjugacy classes in $P(\breve k)$, which are contained in $[b]_G \cap P(\breve k)$, we thus deduce that $(G/P)(L)^{b\sigma}$ is a finite union of copies of $(G_b/P_b)(k)$. In particular, it is independent of the choice of $\ff$. The last claim follows from Lemma \ref{lm:projective_variety_gives_profinite_set}. 
\end{proof}

\begin{lm}\label{lm:projective_variety_gives_profinite_set}
If $X$ is a proper $k$-scheme, then $X(k)$ with the $\varpi$-adic topology is a profinite set.  
\end{lm}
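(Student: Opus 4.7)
My plan is to separately address the three defining properties of a profinite set: Hausdorff, totally disconnected, and compact. The first two are inherited from the ambient affine space: the $\varpi$-adic topology on $X(k)$ locally embeds $X(k)$ into some $k^N$ via affine charts of $X$, and both Hausdorffness and total disconnectedness pass from $k^N$ (a product of copies of the non-archimedean local field $k$) to any subspace. Properness is not used for these two properties.

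The heart of the matter is compactness. The plan is to produce a proper $\caO_k$-model of $X$ and identify $X(k)$ with an inverse limit of finite sets. Concretely, since $X$ is of finite type and proper over $k$, there exists a proper $\caO_k$-scheme $\caX$ with generic fiber isomorphic to $X$: spread out the finitely presented data defining $X$ to a finite-type $\caO_k$-model, compactify via Nagata over $\caO_k$, and replace the result by the scheme-theoretic closure of $X$ in its generic fiber. (If $X$ is already projective, this last step is unnecessary: simply close up inside $\bP^N_{\caO_k}$.) By the valuative criterion of properness for $\caX \to \Spec \caO_k$, any $k$-point of $\caX$ extends uniquely to an $\caO_k$-point, so that $X(k) = \caX(k) = \caX(\caO_k)$. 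Formal GAGA --- or directly, the fact that a section to a separated $\caO_k$-scheme is determined by its reductions mod $\varpi^n$, together with extendability of compatible systems for $\caX$ proper --- gives $\caX(\caO_k) = \prolim_n \caX(\caO_k/\varpi^n)$. Each factor $\caX(\caO_k/\varpi^n)$ is finite, because $\caO_k/\varpi^n$ is a finite ring (the residue field being $\bF_q$) and $\caX$ is of finite type over $\caO_k$. Hence $X(k)$ is a profinite set.

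It remains to match the inverse-limit topology on $\prolim_n \caX(\caO_k/\varpi^n)$ with the $\varpi$-adic topology on $X(k)$ defined in \S\ref{sec:schemes_for_profinite_sets}. This is local on $X$: on an affine chart $U \hookrightarrow \bA^N_k$, both topologies restrict to the natural one on $U(k) \subseteq k^N = \prolim_n (\caO_k/\varpi^n)^N$. The technical step I expect to require the most care is the construction of a proper $\caO_k$-model in the non-projective setting, which relies on Nagata's compactification theorem; in the projective case, which already suffices for the applications (e.g.\ $G/P$), the argument shortens to closing up in $\bP^N_{\caO_k}$ and using that $\bP^N(k) = (\caO_k^{N+1} \sm \varpi\caO_k^{N+1})/\caO_k^\times$ is compact as a quotient of a compact set by a compact group.
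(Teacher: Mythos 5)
Your argument is correct in substance but takes a genuinely different route from the paper, which disposes of the lemma in two sentences by citation: separatedness gives Hausdorff and totally disconnected by \cite[Prop.~5.4]{Conrad_12}, and properness gives compactness by \cite[Cor.~5.6]{Conrad_12}. Your self-contained argument for compactness --- spread out to a finite-type $\caO_k$-model, compactify by Nagata, replace by the closure of the generic fiber, use the valuative criterion to get $X(k)=\caX(\caO_k)$, and identify this with $\prolim_n \caX(\caO_k/\varpi^n)$ with finite terms --- is sound; note that surjectivity of $\caX(\caO_k)\to\prolim_n \caX(\caO_k/\varpi^n)$ needs neither properness nor formal GAGA, since a compatible system factors through a single affine chart around the specialization of the closed point and $\caO_k$ is $\varpi$-adically complete. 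One caveat: your justification of Hausdorffness is not right as stated. A space covered by opens homeomorphic to subspaces of $k^N$ is only \emph{locally} Hausdorff (compare the line with doubled origin); one genuinely needs separatedness of $X$, i.e.\ closedness of the diagonal together with the compatibility of the Gabber--Ramero topology with closed immersions and products, which is exactly what the cited result of Conrad uses. In your setup this slip is harmless: once the $\varpi$-adic topology is matched with the inverse-limit topology on $\prolim_n \caX(\caO_k/\varpi^n)$, the space is an inverse limit of finite discrete sets and is therefore automatically compact, Hausdorff and totally disconnected, so your first paragraph is in fact redundant. What your approach buys is an explicit profinite presentation of $X(k)$ by the finite sets $\caX(\caO_k/\varpi^n)$, at the cost of invoking Nagata compactification in the non-projective case; the paper's proof is shorter but entirely outsourced.
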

\begin{proof}
As $X/k$ is separated, $X(k)$ is Hausdorff and totally disconnected by \cite[Prop.~5.4]{Conrad_12}. Moreover, by \cite[Cor.~5.6]{Conrad_12}, $X(k)$ is compact, hence profinite.\qedhere


\end{proof}


\begin{ex}
Let $G = \GL_2$, $T$ the diagonal torus and $P$ a Borel subgroup containing $T$. For $b = \varpi^{(1,-1)}$ we obtain $G_b = P_b \cong T$, so that $(G_b/P_b)(k) = \{ \ast \}$ reduces to a point. On the other side, the fiber of $B(P) \rightarrow B(G)$ over $[b]_G$ consists of the two elements $[\varpi^{(1,-1)}]_P$ and $[\varpi^{(-1,1)}]_P$. 
\end{ex}

As a corollary to the proof of Proposition \ref{prop:geometric_profiniteness_of_bsigma_fixed_points}, we can describe the structure of $(G/P)(\breve k)^{b\sigma}$ more closely. As the fibers of $B(P) \rar B(G)$ are finite, we can write $[b]_G \cap P(\breve k) = \coprod_{i=1}^r [b_i]_P$ with $b_i = g_i^{-1} b \sigma(g_i)$ for some elements $g_i \in G(\breve k)$. Conjugation by $g_i$ defines an isomorphism ${\rm Int}(g_i) \colon G_b(k) \stackrel{\sim}{\rar} G_{b_i}(k)$.

\begin{cor}\label{cor:decomposition_of_set_GPbsigma}
With above notation, $(G/P)(\breve k)^{b\sigma} = \coprod_{i=1}^r G_{b_i}(k)/P_{b_i}(k)$ is a disjoint decomposition into clopen subsets. This decomposition is $G_b(k)$-equivariant, where $G_b(k)$ acts by left multiplication on $(G/P)(\breve k)^{b\sigma}$, and via ${\rm Int}(g_i)$ and left multiplication on $G_{b_i}(k)/P_{b_i}(k)$.
\end{cor}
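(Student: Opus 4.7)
The plan is to deduce both the decomposition and the equivariant identification directly from the commutative diagram and long exact non-abelian cohomology sequence used in the proof of Proposition \ref{prop:geometric_profiniteness_of_bsigma_fixed_points}(ii).

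The connecting map of pointed sets, concretely $\delta \colon (G/P)(L)^{b\sigma} \to B(P)'$, $xP \mapsto [x^{-1}b\sigma(x)]_P$, has image contained in the preimage of $[b]_G \in B(G)'$ under $B(P)' \to B(G)'$. That preimage is precisely $\{[b_1]_P,\dots,[b_r]_P\}$ by the parametrization above the corollary, and it is finite by Lemma \ref{lm:fibers_BPBG}. Each class $[b_i]_P$ is hit by $g_iP \in (G/P)(L)^{b\sigma}$, since $g_i^{-1}b\sigma(g_i) = b_i$. This yields the disjoint decomposition
\[
(G/P)(L)^{b\sigma} = \coprod_{i=1}^r F_i, \qquad F_i := \delta^{-1}([b_i]_P).
\]

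To identify each fiber, I would use the translation $\phi_i \colon (G/P)(L)^{b_i\sigma} \stackrel{\sim}{\to} (G/P)(L)^{b\sigma}$, $yP \mapsto g_i yP$ (which is well-defined since $b\sigma(g_iy)P = g_ib_i\sigma(y)P = g_iyP$ whenever $b_i\sigma(y)P = yP$). This bijection intertwines $\delta$ with the analogous connecting map $\delta_i$ attached to $b_i$, and it carries the base point $P \in (G/P)(L)^{b_i\sigma}$ (whose $\delta_i$-image is the distinguished element $[b_i]_P$) to $g_iP \in F_i$. Hence $\phi_i$ identifies $F_i$ with the distinguished fiber of $\delta_i$. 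Since $b_i \in P(\breve k)$, Proposition \ref{prop:geometric_profiniteness_of_bsigma_fixed_points}(ii) applied with $b_i$ in place of $b$ identifies the distinguished fiber with $G_{b_i}(k)/P_{b_i}(k)$, so $F_i \cong G_{b_i}(k)/P_{b_i}(k)$ as claimed.

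Each $G_{b_i}(k)/P_{b_i}(k)$ is a profinite set by Lemma \ref{lm:projective_variety_gives_profinite_set} applied to the projective $k$-scheme $G_{b_i}/P_{b_i}$, hence compact; combined with the finiteness of the decomposition, this forces each $F_i$ to be clopen in $(G/P)(\breve k)^{b\sigma}$. For the $G_b(k)$-equivariance, any $h \in G_b(k) \subset G(\breve k)$ satisfies $h^{-1}b\sigma(h) = b$, so for $xP \in F_i$ one has $(hx)^{-1}b\sigma(hx) = x^{-1}b\sigma(x)$, still representing $[b_i]_P$; thus left multiplication by $h$ preserves each $F_i$. Under the identification $F_i = g_i \cdot (G_{b_i}(k)/P_{b_i}(k))$ coming from $\phi_i$, left multiplication by $h$ transports to $\bar{y} \mapsto g_i^{-1}hg_i \cdot \bar{y}$, i.e., to left multiplication by $\mathrm{Int}(g_i)(h) \in G_{b_i}(k)$ in the convention fixed before the corollary. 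The main obstacle is verifying the compatibility of $\phi_i$ with the connecting maps $\delta$ and $\delta_i$ at the level of twisted cocycles; once this is checked, the remaining steps are direct book-keeping.
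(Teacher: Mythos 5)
Your proposal is correct and takes essentially the same route as the paper, whose entire proof is the single remark that the claim follows from the long exact sequence of pointed sets in the proof of Proposition \ref{prop:geometric_profiniteness_of_bsigma_fixed_points} applied to each $b_i$ --- your fibers $F_i=\delta^{-1}([b_i]_P)$ and the translations $\phi_i$ are precisely the details of that application. One minor imprecision: Lemma \ref{lm:projective_variety_gives_profinite_set} gives compactness of $(G_{b_i}/P_{b_i})(k)$ rather than of the possibly smaller orbit $G_{b_i}(k)/P_{b_i}(k)$, but since $G_{b_i}(k)$-orbits in $(G_{b_i}/P_{b_i})(k)$ are open (by smoothness of $G_{b_i}\to G_{b_i}/P_{b_i}$), hence clopen and compact, your clopenness argument still goes through.
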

\begin{proof}
This follows from the long exact sequence of pointed sets in the proof of Proposition \ref{prop:geometric_profiniteness_of_bsigma_fixed_points} applied to each $b_i$.
\end{proof}

In the rest of this section, for a topological space $T$ we will write $\underline{T}$ instead of $\underline{T}_{\obF}$ (see \eqref{eq:functor_cont_functions_top_space}).

\begin{prop}\label{prop:representability_bsigma_fixed_points}
Let $P \subseteq G$ be a $k$-rational parabolic subgroup and $b \in G(\breve k)$. There is a natural isomorphism of arc-sheaves $f \colon \underline{(G/P)(\breve k)^{b\sigma}} \stackrel{\sim}{\longrar} L(G/P)^{b\sigma} $. In particular, if $[b]_G \cap P(\breve k) = \varnothing$, then $L(G/P)^{b\sigma} = \varnothing$.
\end{prop}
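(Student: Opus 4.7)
The plan is to build the map $f$ using the representability result of Corollary~\ref{cor:morphisms_from_const_sheaf_into_loop_group}, then verify that it is an isomorphism of arc-sheaves by checking on $V$-points for valuation rings $V \in \Perf_\obF$ with algebraically closed fraction field, via Lemma~\ref{lm:check_surj_vsheaves}. Proposition~\ref{prop:geometric_profiniteness_of_bsigma_fixed_points}, together with Corollary~\ref{cor:decomposition_of_set_GPbsigma} and Lemma~\ref{lm:projective_variety_gives_profinite_set}, ensures that $T := (G/P)(\breve k)^{b\sigma}$, with the $\varpi$-adic topology, is a profinite set (possibly empty). The inclusion $T \hookrightarrow (G/P)(\breve k)$ is then continuous, so Corollary~\ref{cor:morphisms_from_const_sheaf_into_loop_group} (applied with $(\obF,\breve k)$ in place of $(\kappa,k)$) produces a morphism $\underline T_\obF \to L(G/P)$ which, by the same corollary and the fact that every point of $T$ is $b\sigma$-fixed, automatically factors through the equalizer $L(G/P)^{b\sigma}$, yielding $f$.

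If $[b]_G \cap P(\breve k) = \varnothing$ then $T$ is empty, and it remains to check that $L(G/P)^{b\sigma}$ is the empty arc-sheaf; I would argue directly that it has no $R$-points for nonzero $R \in \Perf_\obF$ by picking a closed point $x_0 \in \Spec R$ and letting $K$ be the algebraic closure of the (perfect) residue field $\kappa(x_0)$: any $R$-point would restrict to an element of $(G/P)(\bW(K)[1/\varpi])^{b\sigma}$, which is empty by Proposition~\ref{prop:geometric_profiniteness_of_bsigma_fixed_points}(i). In the contrary case, I plan to invoke Lemma~\ref{lm:check_surj_vsheaves}: the source $\underline T_\obF$ is affine and so qcqs, while $L(G/P)^{b\sigma}$ is a closed subsheaf of the separated (by Lemma~\ref{lm:representability_graph}) sheaf $L(G/P)$, hence itself separated, in particular quasi-separated. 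For $V$ a valuation ring with algebraically closed fraction field $K$, irreducibility of $\Spec V$ and total disconnectedness of $T$ give $\underline T_\obF(V) = T$, while the right-hand side is $(G/P)(\bW(V)[1/\varpi])^{b\sigma}$. The commutative diagram
\[
T \xrightarrow{f(V)} (G/P)(\bW(V)[1/\varpi])^{b\sigma} \hookrightarrow (G/P)(\bW(K)[1/\varpi])^{b\sigma} = T
\]
will then pin down $f(V)$: the second arrow is injective because $\bW(V)[1/\varpi] \hookrightarrow \bW(K)[1/\varpi]$ is an embedding of domains (via Lemma~\ref{lm:Witt_reduced}, plus a quick check that $\bW(V)$ is a domain in either characteristic, since it either embeds into the DVR $\bW(K)$ or equals $V[\![\varpi]\!]$) and $G/P$ is separated; the rightmost equality follows from Proposition~\ref{prop:geometric_profiniteness_of_bsigma_fixed_points}(ii) after reducing to $b \in P(\breve k)$ by $\sigma$-conjugation by some $g \in G(\breve k)$ and noting that left multiplication by $g$ is a $b\sigma \leftrightarrow b'\sigma$-equivariant bijection over both $\breve k$ and $\bW(K)[1/\varpi]$; and the composition is tautologically the identity, forcing $f(V)$ to be bijective.

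The hard part will be the last step, namely extending Proposition~\ref{prop:geometric_profiniteness_of_bsigma_fixed_points}(ii) beyond the field case: the proposition identifies $b\sigma$-fixed points of $(G/P)(\bW(\ff)[1/\varpi])$ with those of $(G/P)(\breve k)$ only when $\ff$ is an algebraically closed field, whereas Lemma~\ref{lm:check_surj_vsheaves} forces us to work over the more general valuation ring $V$. The detour through $K = \mathrm{Frac}(V)$, combined with separatedness of $G/P$ and the domain property of $\bW(V)[1/\varpi]$, is what bridges that gap; everything else is formal.
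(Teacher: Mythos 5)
Your proposal is correct and follows essentially the same route as the paper: construct $f$ via Corollary \ref{cor:morphisms_from_const_sheaf_into_loop_group}, reduce to valuation rings $V$ with algebraically closed fraction field via Lemma \ref{lm:check_surj_vsheaves} (after the same qcqs/quasi-separatedness checks), and pin down $f(V)$ by comparing with the generic point using Proposition \ref{prop:geometric_profiniteness_of_bsigma_fixed_points}. The only cosmetic differences are that the paper gets injectivity of $L(G/P)(V)\to L(G/P)(K)$ from Lemma \ref{lm:representability_coincidence_set} rather than from your (equivalent) reduced-ring-injection-plus-separatedness argument, and it does not treat the empty case separately since the general argument subsumes it.
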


\begin{proof}
First, by Corollary \ref{cor:morphisms_from_const_sheaf_into_loop_group}, there is a natural map $\underline{(G/P)(\breve k)^{b\sigma}} \rar L(G/P)$. As $(G/P)(\breve k)^{b\sigma}$ is the set of $b\sigma$-fixed points, one checks that this map factors through a map $f \colon \underline{(G/P)(\breve k)^{b\sigma}} \rar L(G/P)^{b\sigma}$. We have to show that this is an isomorphism. Let $R\in \Perff$ be a valuation ring with algebraically closed fraction field. Write $U = \Spec R$ and let $\eta \in U$ be the generic point. As $(G/P)(\breve k)^{b\sigma}$ is a profinite set and $|U|$ is a chain of specializations, we have
\begin{equation}\label{eq:val_ring_valued_points_constant_sheaf} \underline{(G/P)(\breve k)^{b\sigma}}(R) = {\rm Cont}(|U|, (G/P)(\breve k)^{b\sigma}) = {\rm Cont}(\{\eta\}, (G/P)(\breve k)^{b\sigma}) = \underline{(G/P)(\breve k)^{b\sigma}}(\{\eta\}).
\end{equation}
On the other hand, from Lemma \ref{lm:representability_coincidence_set} it follows that the natural map $L(G/P)(U) \rar L(G/P)(\{\eta\})$ is injective. Hence the same holds for the subsheaf $L(G/P)^{b\sigma}$. 
This observation combined with \eqref{eq:val_ring_valued_points_constant_sheaf} and Proposition \ref{prop:geometric_profiniteness_of_bsigma_fixed_points} implies that $f(U)$ is bijective.

Now, $L(G/P)^{b\sigma}$ is a subsheaf of the quasi-separated $v$-sheaf $L(G/P)$ (Lemma \ref{lm:representability_graph}). Therefore $L(G/P)^{b\sigma}$ is itself quasi-separated. 
Now, $(G/P)(\breve k)^{b\sigma}$ is a profinite set by Proposition \ref{prop:geometric_profiniteness_of_bsigma_fixed_points}. Thus $\underline{(G/P)(\breve k)^{b\sigma}}$ is qcqs $v$-sheaf. Finally, Lemma \ref{lm:check_surj_vsheaves} shows that $f$ is an isomorphism.
\end{proof}

Next, let $H$ be a linear algebraic group over $k$, let $b \in H(\breve k)$ and let $H_b$ be the $k$-group as in \eqref{eq:form_of_Levi}. Then $H_b(k)$ is a locally profinite (and second countable) group, and we have the corresponding arc-sheaf $\underline{H_b(k)}$ on $\Perff$. We also have the automorphism ${\rm Int}(b) \circ \sigma \colon g \mapsto b\sigma(g)b^{-1}$ of $LH$. This gives the equalizer
\[
LH^{b\sigma} := {\rm Eq}\left(LH \doublerightarrow{{\rm Int}(b) \circ \sigma}{\id} LH\right),
\]
which is also an arc-sheaf on $\Perff$. For $R \in \Perf_{\obF}$ and $g \in LH(R)$, regarded as automorphisms of the restriction $LH \times_{\obF} R$ of $LH$ to $\Perf_R$, we have $\sigma g = \sigma(g) \sigma$. Therefore, explicitly
\begin{equation}\label{eq:LGbsigma_R_points}
LH^{b\sigma}(R) = \{g \in LH(R) \colon g b\sigma = b\sigma g\},
\end{equation}
Similar to the above we have a morphism $\underline{H_b(k)} \rar LH^{b\sigma}$. We would like to show that it is an isomorphism. Lemma \ref{lm:check_surj_vsheaves} does not apply directly, as $\underline{H_b(k)}$ is in general not quasi-compact, so we need some more work.

\begin{prop}\label{lm:inclusion_into_LGbsigma}
The natural morphism $f \colon \underline{H_b(k)} \rar LH^{b\sigma}$ is an isomorphism.
\end{prop}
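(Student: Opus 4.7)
My plan is to reduce, via decomposing $\underline{H_b(k)}$ into quasi-compact pieces, to the comparison of sections over $R = V$ a valuation ring in $\Perff$ with algebraically closed fraction field --- the setting of Lemma \ref{lm:check_surj_vsheaves}. Since $H_b$ is a finite-type $k$-group, $H_b(k)$ is locally profinite and second countable, so I may write $H_b(k) = \coprod_{n \in \bN} K_n$ as a disjoint union of compact open (profinite) subsets, yielding $\underline{H_b(k)} = \coprod_n \underline{K_n}$ with each $\underline{K_n} = \Spec {\rm Cont}(K_n, \obF)$ qcqs.

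The key pointwise identification is $LH^{b\sigma}(V) = H_b(k)$ for $V$ a valuation ring in $\Perff$ with algebraically closed fraction field $K$. Since $|\Spec V|$ is a chain of specializations, in particular connected, any continuous map to the totally disconnected set $H_b(k)$ must be constant, so $\underline{H_b(k)}(V) = H_b(k)$. On the target side, the inclusion $V \har K$ induces $\bW(V)[1/\varpi] \har L_K := \bW(K)[1/\varpi]$ by Lemma \ref{lm:Witt_reduced}, hence (as $H$ is affine) $H(\bW(V)[1/\varpi]) \har H(L_K)$ compatibly with $b\sigma$. Taking $\ff = K$ in the definition of $H_b$, the independence-of-$\ff$ statement of \cite[1.12]{RapoportZ_96} identifies $H(L_K)^{b\sigma}$ with $H_b(k) = H(\breve k)^{b\sigma}$, so $H(\bW(V)[1/\varpi])^{b\sigma} \subseteq H(L_K)^{b\sigma} = H_b(k)$; the reverse inclusion is immediate from $H_b(k) \subseteq H(\breve k) \subseteq H(\bW(V)[1/\varpi])$, every element of $H_b(k)$ already being $b\sigma$-fixed in $H(\breve k)$. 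Unraveling the construction of $f$ via Corollary \ref{cor:morphisms_from_cont_locprofin_into_loop_group} then shows that $f(V) = \id_{H_b(k)}$.

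To globalize, I set $F_n \subseteq LH^{b\sigma}$ to be the image sub-sheaf of $\underline{K_n}$ under $f$. Each $F_n$ is quasi-compact (as image of the qc sheaf $\underline{K_n}$) and quasi-separated (as a sub-sheaf of $LH^{b\sigma} \subseteq LH$, the latter being separated by Lemma \ref{lm:representability_graph}). By the preceding paragraph combined with Lemma \ref{lm:check_surj_vsheaves}, $f|_{\underline{K_n}} \colon \underline{K_n} \stackrel{\sim}{\rar} F_n$. A further application of Lemma \ref{lm:check_surj_vsheaves} to the qcqs sub-sheaves $F_n \cap F_{n'}$ shows they are empty for $n \neq n'$; and the inclusion $\coprod_n F_n \har LH^{b\sigma}$ is $v$-surjective because, covering any $R$ via Lemma \ref{lm:standard_vcover} reduces a section $g$ on each valuation-ring component $V$ to an element of $LH^{b\sigma}(V) = H_b(k) = \coprod_n K_n$, which lies in exactly one $F_n$. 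Assembling these facts gives $LH^{b\sigma} = \coprod_n F_n$, so $\underline{H_b(k)} = \coprod_n \underline{K_n} \stackrel{\sim}{\rar} \coprod_n F_n = LH^{b\sigma}$ is the desired isomorphism.

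The main obstacle is the pointwise claim $H(\bW(V)[1/\varpi])^{b\sigma} = H_b(k)$: one must rule out any ``new'' $b\sigma$-fixed points arising from enlarging $\breve k$ to the much larger ring $\bW(V)[1/\varpi]$. This is precisely what the Rapoport--Zink independence-of-$\ff$ result encodes, and it is the essential input beyond the elementary topology of valuation-ring spectra and the bookkeeping of the decomposition.
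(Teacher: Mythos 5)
Your pointwise step is correct and coincides with the paper's: for a valuation ring $V\in\Perff$ with algebraically closed fraction field $K$, the injection $LH(V)\har LH(K)$ (from Lemma \ref{lm:Witt_reduced} and affineness of $H$) together with the independence-of-$\ff$ statement of \cite[1.12]{RapoportZ_96} gives $LH^{b\sigma}(V)=H(\bW(K)[1/\varpi])^{b\sigma}=H_b(k)$, exactly as in the proof of Proposition \ref{prop:representability_bsigma_fixed_points}. The gap is in the globalization, namely in your verification that $\coprod_n F_n\har LH^{b\sigma}$ is $v$-surjective.

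There you take the cover $\Spec A\rar\Spec R$ of Lemma \ref{lm:standard_vcover}, restrict a section $g\in LH^{b\sigma}(A)$ to each connected component $\Spec A_\tau$, and observe that each restriction is an element $g_\tau\in H_b(k)$ lying in exactly one $K_{n(\tau)}$. But the family $\{\Spec A_\tau\rar\Spec A\}_{\tau\in\pi_0(\Spec A)}$ is an infinite family of closed immersions with no quasi-compact refinement, hence is \emph{not} a $v$-cover, so a statement checked on every connected component does not yield the statement over $\Spec A$ (or over any genuine $v$-cover of it). To conclude $g\in(\coprod_nF_n)(A)$ you must show that $\tau\mapsto g_\tau$ is \emph{locally constant} on the profinite set $\pi_0(\Spec A)$ and that $g$ equals the resulting section of $\underline{H_b(k)}$; since $\bW(A_\tau)$ is only the $\varpi$-adic \emph{completion} of the colimit of $\bW(A)\otimes_{{\rm Cont}(T,\caO_k),\tau}\caO_k$ over clopen neighbourhoods of $\tau$ (Lemma \ref{lm:WAtau_is_padic_completion_of_fiber}), an identity of sections over $A_\tau$ does not automatically spread out to a neighbourhood, and an approximation argument of the kind carried out in \S\ref{sec:notorsors_step4} would be needed. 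The paper avoids this entirely: it first proves that $f$ is \emph{quasi-compact} (Lemma \ref{lm:qc_loc_profin}, by exhausting $L\bA^n$ by the subschemes $\varpi^{-i}L^+\bA^n_{\caO_k}$, over which $\underline{X(k)}$ restricts to affine constant schemes on profinite sets), so that the base change of $f$ along any quasi-compact $Y\rar LH^{b\sigma}$ has qcqs source and Lemma \ref{lm:check_surj_vsheaves} applies to it directly. Either import Lemma \ref{lm:qc_loc_profin} (which makes your decomposition into the $K_n$ unnecessary) or supply the spreading-out argument; as written, the covering step does not go through.
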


\begin{proof}

The group $H_b \otimes_k \breve k$ is (isomorphic to) a closed subgroup of $H$ \cite[\S3.3]{Kottwitz_97} (our cocycle is trivial on the inertia). If $\sigma_b$ denotes the geometric Frobenius on $LH_b$, we have $(LH_b)^{\sigma_b} = LH^{b\sigma}$, and we have to show that $f \colon \underline{H_b(k)} \rar (LH_b)^{\sigma_b}$ is an isomorphism. Lemma \ref{lm:qc_loc_profin} applied to $X = H_b$, shows that $f \colon \underline{H_b(k)} \rar LH_b$ is quasi-compact. As $(LH_b)^{\sigma_b} \har LH_b$ is a closed immersion, also $f$ is quasi-compact. 

It is easy to check that $f$ induces a bijection on field valued points. Moreover, the same argument as in the proof of Proposition \ref{prop:representability_bsigma_fixed_points} shows now that the bijectivity part of the condition (ii) of Lemma \ref{lm:check_surj_vsheaves} holds for $f$. It remains true after pullback along any $Y \rar (LH_b)^{\sigma_b}$ with $Y$ quasi-compact, hence $f$ is an isomorphism after each such pullback, and the result follows. \qedhere
\end{proof}

\section{Loop Deligne--Lusztig spaces}\label{sec:lDLS}
Now we define loop Deligne--Lusztig spaces. We work in the setup of \S\ref{sec:setup_over_finite_field}. 
We fix an unramified reductive group $G_0$ over $k$ and let $G = G_0 \times_k \breve{k}$ be the (split) base change to $\breve k$. In $G_0$ we fix a $k$-rational maximally split maximal torus $T_0$, which splits after an unramified extension, and a $k$-rational Borel subgroup $B_0 = T_0U_0$ with unipotent radical $U_0$ containing it. Denote by $T,B,U$ the base changes of $T_0,B_0,U_0$ to $\breve k$. Let $W = W(T,G)$ denote the Weyl group of $T$ in $G$. Let $S \subseteq W$ be the set of simple reflections attached to simple roots in $B$. The Frobenius $\sigma$ of $\breve{k}/k$ acts on $S$ and on $W$. For an element $w \in W$ we denote by ${\rm supp}(w)$ the set of simple reflections appearing in a (any) reduced expression of $w$. By $\overline{\supp}(w)$ we denote the  smallest $\sigma$-stable subset of $S$ containing $\supp(w)$.

\subsection{Relative position}\label{sec:rel_pos}

The group $G$ acts diagonally on $G/B \times G/B$ and on $G/U \times G/U$, and by the geometric Bruhat decomposition (for the split group $G$), we have the decomposition into $G$-orbits, 
\[
G/B \times G/B = \coprod_{w \in W} \caO(w) \quad  \text{ and } \quad   G/U \times G/U = \coprod_{\dot w \in N_G(T)(\breve k)} \dot \caO(\dot w),
\]
where the first is a locally closed decomposition into finitely many locally closed subvarieties. The field of definition of $\caO(w)$ is the unramified extension $k_d/k$ of degree $d$, where $d$ is the lowest number such that $\sigma^d(w) = w$, and all $\dot\caO(\dot w)$ are defined over $\breve k$.

\begin{Def}
\begin{itemize}
\item[(i)] For a $\breve k$-scheme $Y$, we say that two $Y$-valued points $g,h \in (G/B)(Y)$ of $G/B$ are \emph{in relative position $w \in W$}, in which case we write $g \stackrel{w}{\rightarrow} h$, if $(g,h) \colon Y \to G/B \times G/B$ factors through $\caO(w)$.
\item[(ii)] For $R \in \Perff$, we say that two $R$-valued points $g,h \in L(G/B)(R) = (G/B)(\bW(R)[1/\varpi])$ are \emph{in relative position $w \in W$}, in which case we write $g \stackrel{w}{\rightarrow} h$, if the corresponding $\bW(R)[1/\varpi]$-valued points of $G/B$ are in relative position $w$.
\end{itemize}
\end{Def}

It follows from the definitions that $g,h \in L(G/B)(R)$ are in relative position  $w$ if and only if $(g,h) \colon \Spec R \rightarrow L(G/B) \times L(G/B)$ factors through $L\caO(w)$.

\begin{lm}\label{lm:aux_factorization_rel_pos}
Let $R \in \Perf_{\obF}$ and $S = \Spec R$. Let $x,y \colon S \rar LG^2$. Then the composition $S \stackrel{(x,y)}{\rar} LG^2 \rar L(G/B)^2$ factors through $L\caO(w) \rar L(G/B)^2$ if and only if $x^{-1}y \colon S \rar LG$ factors through $L(BwB) \rar LG$. A similar claim holds for $G/U$ instead of $G/B$.
\end{lm}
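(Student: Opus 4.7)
The plan is to reduce the lemma to a scheme-theoretic identity in $G \times G$ and then apply the fact that the loop functor $L$ commutes with limits (Lemma \ref{lm:L_commutes_limits}). Let $\pi \colon G \times G \to G/B \times G/B$ be the natural projection and $m \colon G \times G \to G$ the morphism $(x,y) \mapsto x^{-1}y$. I will establish the identity of locally closed subschemes
\[
\pi^{-1}(\caO(w)) = m^{-1}(BwB) \qquad \text{in } G \times G.
\]
Granting this, the lemma follows: since $L$ preserves limits, both conditions in the statement translate, for $(x,y) \in LG^2(R)$, into the single condition that the morphism $(x,y) \colon \Spec \bW(R)[1/\varpi] \to G \times G$ factors through the common subscheme.

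To prove the displayed identity I would use the automorphism $\mu \colon G \times G \stackrel{\sim}{\to} G \times G$, $(x,y) \mapsto (x, x^{-1}y)$. Since $m = \mathrm{pr}_2 \circ \mu$, one has $\mu(m^{-1}(BwB)) = G \times BwB$ directly. For the other side, $\pi \circ \mu^{-1}$ sends $(x,z)$ to $x \cdot (eB, zB)$; this map factors as $G \times G \xrightarrow{\id \times q} G \times G/B \xrightarrow{\alpha} G/B \times G/B$, with $q \colon G \to G/B$ the quotient and $\alpha(x, \bar z) = (xB, x\bar z)$. The map $\alpha$ is $G$-equivariant for the action on the first source factor and the diagonal action on the target. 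Since $\caO(w) = G \cdot (eB, wB)$ is a single $G$-orbit with stabilizer $B$ of the base point $eB$, the elementary orbit computation $\{\bar z \in G/B : (eB, \bar z) \in \caO(w)\} = BwB/B$ yields $\alpha^{-1}(\caO(w)) = G \times (BwB/B)$, and pulling back along $\id \times q$ gives $\mu(\pi^{-1}(\caO(w))) = G \times BwB$, completing the proof of the identity.

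The $G/U$ variant goes through identically, with $\caO(w)$ replaced by $\dot\caO(\dot w) = G \cdot (eU, \dot w U)$ and $BwB$ replaced by $U \dot w U$, using that the stabilizer of $eU$ in $G$ is $U$; the similar claim is that $x^{-1}y$ factors through $L(U \dot w U)$. The only mild subtlety in the entire argument is confirming that scheme-theoretic (rather than merely set-theoretic) equality holds for the two subschemes of $G \times G$; this follows because both are preimages of reduced locally closed subschemes ($\caO(w)$ and $BwB$ from the Bruhat decomposition) under smooth morphisms, hence are themselves reduced, so that the set-theoretic equality established above upgrades to scheme-theoretic equality.
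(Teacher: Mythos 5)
Your proof is correct, but it takes a genuinely different route from the paper's. The paper translates both conditions into factorization statements for morphisms out of $\widetilde S = \Spec \bW(R)[1/\varpi]$, invokes Lemma \ref{lm:Witt_reduced} to reduce to purely topological factorization (a morphism from a \emph{reduced} scheme factors through a locally closed subscheme if and only if it does so set-theoretically), and then verifies the resulting set-theoretic equivalence on algebraically closed field-valued points via the Bruhat decomposition. You instead establish the scheme-theoretic identity $\pi^{-1}(\caO(w)) = m^{-1}(BwB)$ inside $G \times G$ once and for all --- via the shearing automorphism $\mu$ and the orbit description of $\caO(w)$, upgrading from sets to schemes by observing that both sides are smooth, hence reduced --- and then push this single identity through $L$ using Lemma \ref{lm:L_commutes_limits}, noting that $L$ preserves the relevant fiber products and monomorphisms. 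In effect you trade the reducedness of $\bW(R)[1/\varpi]$ for the reducedness of the Bruhat cells and orbits; both inputs are available here, so either argument goes through. Your version has the mild advantage of isolating a reusable statement about $G\times G$ that is independent of the loop functor (and of the base ring $R$), at the cost of having to address the scheme-structure issue explicitly, which you do correctly; the $G/U$ variant with stabilizer $U$ and cell $U\dot w U$ is likewise fine.
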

\begin{proof}
Let $\widetilde{S} = \Spec \bW(R)[1/\varpi]$ and let $\tilde x,\tilde y \colon \widetilde S \rar G$ be the maps corresponding to $x,y$. The first (resp. second) statement in the lemma holds if and only if $\widetilde S \stackrel{\tilde x,\tilde y}{\rar} G^2 \rar (G/B)^2$ factors through the locally closed subset $\caO(w) \subseteq (G/B)^2$ (resp. $\widetilde{x^{-1}y} = \tilde x^{-1}\tilde y \colon \widetilde S \rar G$ factors through the locally closed subset $BwB \subseteq G$). By Lemma \ref{lm:Witt_reduced}, $\widetilde S$ is reduced, hence each of these two factorization claims holds if and only if it holds topologically. Thus to show their equivalence it is sufficient to show that they are equivalent when $\widetilde S$ is replaced by $\Spec K$ for an algebraically closed extension $K/\breve k$. This equivalence follows from the Bruhat decomposition. The proof for $G/U$ is the same.
\end{proof}

\subsection{Definition of $X_w(b)$ and $\dot X_{\dot w}(b)$}\label{sec:def_DLV}

The following definition was suggested to the author by P. Scholze.

\begin{Def}\label{def:Xwb_and_covers}
Let $b \in G(\breve k)$, $w \in W$ and $\dot{w} \in N_G(T)(\breve k)$. We define $X_w(b)$ and $\dot X_{\dot w}(b)$ by the following Cartesian diagrams of presheaves on ${\rm Perf}_{\overline{\bF}_q}$:

\centerline{\begin{tabular}{cc}
\begin{minipage}{2in}
\begin{displaymath}
\leftline{
\xymatrix{
X_w(b) \ar[r] \ar[d] & L\caO(w) \ar[d]\\
L(G/B) \ar[r]^-{(\id,b\sigma)} & L(G/B) \times L(G/B) 
}
}
\end{displaymath}
\end{minipage}
& \qquad\qquad  and \qquad\qquad
\begin{minipage}{2in}
\begin{displaymath}
\leftline{
\xymatrix{
\dot X_{\dot w}(b) \ar[r] \ar[d] & L\dot \caO(\dot w) \ar[d]\\
L(G/U) \ar[r]^-{(\id,b\sigma)} & L(G/U) \times L(G/U) 
}
}
\end{displaymath}
\end{minipage}
\end{tabular}
}
\noindent where $b\sigma \colon L(G/B) \stackrel{\sim}{\rar} L(G/B)$, and $b$ acts by left multiplication. If we want to emphasize the group $G$, we write $X_w^G(b)$ resp. $\dot X_{\dot w}^G(b)$. 
\end{Def}

If $\dot w$ lies over $w$, then there is a natural map $\dot X_{\dot w}(b) \rar X_w(b)$. All maps in the Cartesian diagrams in Definition \ref{def:Xwb_and_covers} are injective.
Directly from Theorem \ref{thm:LX_is_vsheaf} (and the fact that the formation of limits commutes with    the inclusion of sheaves into presheaves) we deduce:
\begin{cor}\label{cor:Xwb_arc_sheaf}
$X_w(b)$ and $\dot X_{\dot w}(b)$ are arc-sheaves on $\Perff$. 
\end{cor}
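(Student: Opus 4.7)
The plan is to reduce the corollary almost entirely to Theorem \ref{thm:LX_is_vsheaf} and the fact that the full subcategory of arc-sheaves inside presheaves is closed under arbitrary limits. Since $X_w(b)$ and $\dot X_{\dot w}(b)$ are defined as fiber products of loop functors, once I know that each vertex in the defining Cartesian squares is an arc-sheaf, the fiber product in presheaves will automatically be an arc-sheaf.

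First I would verify that each of the schemes $G/B$, $\mathcal{O}(w)$, $G/U$, and $\dot{\mathcal{O}}(\dot w)$ is quasi-projective over $k$ (and hence, after base change, over $\breve k$). For $G/B$ this is classical projectivity of the flag variety; $\mathcal{O}(w)$ is locally closed inside the projective scheme $(G/B)^2$, hence quasi-projective. For $G/U$, the projection $G/U \to G/B$ is a $T$-torsor with $T$ affine, so $G/U$ is quasi-affine over the projective base $G/B$ and hence quasi-projective; finally $\dot{\mathcal{O}}(\dot w)$ is locally closed in $(G/U)^2$ and so quasi-projective as well.

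Next I would invoke Theorem \ref{thm:LX_is_vsheaf} to conclude that each of $L(G/B)$, $L\mathcal{O}(w)$, $L(G/B)^2$, $L(G/U)$, $L\dot{\mathcal{O}}(\dot w)$, $L(G/U)^2$ is an arc-sheaf on $\Perf_{\obF}$ (using that $L$ commutes with products by Lemma \ref{lm:L_commutes_limits}, so the products of loop spaces really coincide with the loop spaces of the products). The morphisms in the two Cartesian squares of Definition \ref{def:Xwb_and_covers}, namely the inclusion of the $G$-orbit into the square and the graph $(\id, b\sigma)$, are honest morphisms of arc-sheaves.

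Finally I would note that $X_w(b)$ and $\dot X_{\dot w}(b)$ are, by definition, the fiber products of these morphisms taken in the category of presheaves, and that this fiber product agrees with the fiber product computed in the full subcategory of arc-sheaves, since the inclusion of arc-sheaves into presheaves preserves limits (as sheafification is a left adjoint and limits in a reflective subcategory are computed in the ambient category). Thus $X_w(b)$ and $\dot X_{\dot w}(b)$ are themselves arc-sheaves. There is no genuine obstacle here; the only thing worth double-checking is the quasi-projectivity of $\mathcal{O}(w)$ and $\dot{\mathcal{O}}(\dot w)$ so that Theorem \ref{thm:LX_is_vsheaf} is applicable to all four loop functors in the defining squares.
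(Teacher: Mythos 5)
Your proposal is correct and follows exactly the paper's argument: the paper deduces the corollary in one line from Theorem \ref{thm:LX_is_vsheaf} together with the fact that the inclusion of arc-sheaves into presheaves, being a right adjoint to sheafification, preserves limits, so the defining fiber products are arc-sheaves. Your additional verification that $G/B$, $\caO(w)$, $G/U$ and $\dot\caO(\dot w)$ are quasi-projective (so that Theorem \ref{thm:LX_is_vsheaf} actually applies to every vertex of the Cartesian squares) is a sensible piece of due diligence that the paper leaves implicit.
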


\begin{lm}\label{lm:mult_by_bsigma} Let $w\in W$, $b \in G(\breve k)$. The automorphism $b\sigma \colon L(G/B) \stackrel{\sim}{\rar} L(G/B)$ restricts to an isomorphism $X_w(b) \stackrel{\sim}{\rightarrow} X_{\sigma(w)}(b)$. The same statement holds for $\dot X_{\dot w}(b)$.
\end{lm}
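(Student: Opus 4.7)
The plan is to exploit that $b\sigma$ is a self-isomorphism of $L(G/B)$ and to track how the product map $(b\sigma)\times (b\sigma)$ permutes the Bruhat strata. Since the geometric Frobenius $\sigma$ is an automorphism of $L(G/B)$ (the base is perfect, so Frobenii are invertible) and left multiplication by $b$ is clearly invertible, $b\sigma$ is an automorphism of $L(G/B)$, as already asserted in Definition~\ref{def:Xwb_and_covers}. Consequently $(b\sigma)\times(b\sigma)$ is an automorphism of $L(G/B)\times L(G/B)$.

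The key geometric fact I would establish is
\[
\bigl((b\sigma)\times(b\sigma)\bigr)\bigl(L\caO(w)\bigr) = L\caO(\sigma(w)).
\]
This splits into two pieces. First, the diagonal left translation $(b,b)$ preserves each stratum $L\caO(w)$, because $\caO(w)$ is by construction a $G$-orbit under the diagonal action. Second, the diagonal Frobenius $(\sigma,\sigma)$ sends $L\caO(w)$ to $L\caO(\sigma(w))$: since $B$ is $k$-rational, $\sigma$ acts on $W$, and on geometric points the condition $g^{-1}h\in BwB/B$ transforms into $\sigma(g)^{-1}\sigma(h)=\sigma(g^{-1}h)\in B\sigma(w)B/B$. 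Equivalently, the Bruhat decomposition of $(G_0/B_0)^2$ is defined over $k$ and $\sigma$ permutes its cells through the action on $W$.

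With this in hand, the desired isomorphism is a direct diagram chase. If $g\in X_w(b)(R)$, then $(g,b\sigma(g))$ factors through $L\caO(w)$; applying $(b\sigma)\times(b\sigma)$ shows that $(b\sigma(g),b\sigma(b\sigma(g)))$ factors through $L\caO(\sigma(w))$, which is precisely the defining condition for $b\sigma(g)\in X_{\sigma(w)}(b)(R)$. Thus $b\sigma$ restricts to a morphism $X_w(b)\to X_{\sigma(w)}(b)$; the inverse is supplied by $(b\sigma)^{-1}$, which by the symmetric argument restricts to $X_{\sigma(w)}(b)\to X_w(b)$.

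For $\dot X_{\dot w}(b)$ the same strategy applies verbatim, with $B$ replaced by $U$ and $\caO(w)$ replaced by $\dot\caO(\dot w)$. The only point to reverify is the Frobenius-equivariance $(\sigma,\sigma)(L\dot\caO(\dot w))=L\dot\caO(\sigma(\dot w))$, which holds for exactly the same reason: $U$ is $k$-rational, so $\sigma(U\dot w U)=U\sigma(\dot w)U$. I do not anticipate any real obstacle; the entire argument is a formal consequence of the good behavior of the Bruhat stratification under Frobenius, together with the fact that $L$ commutes with fiber products (Lemma~\ref{lm:L_commutes_limits}).
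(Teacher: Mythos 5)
Your proposal is correct and follows essentially the same route as the paper's proof: both arguments reduce to the commutativity of the square relating $(s,b\sigma(s))$ to $(b\sigma(s),(b\sigma)^2(s))$ via the restriction of $b\sigma\times b\sigma$ to an isomorphism $L\caO(w)\stackrel{\sim}{\rar}L\caO(\sigma(w))$, with the inverse supplied by $(b\sigma)^{-1}$. You merely spell out more explicitly than the paper why $b\sigma\times b\sigma$ permutes the strata (diagonal $G$-orbit invariance plus $k$-rationality of $B$, resp.\ $U$), which is a harmless elaboration rather than a different argument.
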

\begin{proof}
Let $R \in \Perff$ and let $Y = \Spec R$. Let $s \in X_w(b)(R) \subseteq L(G/B)(R)$. We have to show that $b\sigma(s)$ and $(b\sigma)^2(s)$  are in relative position $\sigma(w)$, i.e., that $(b\sigma(s), (b\sigma)^2(s)) \colon Y \rar L(G/B)^2$ factors through $L\caO(\sigma(w)) \har L(G/B)^2$. As $s,b\sigma(s)$ are in relative position $w$, we get a commutative diagram
\[
\xymatrixcolsep{5pc}\xymatrix{
Y \ar[r]^-{(s,b\sigma(s))} \ar[rd] & L(G/B) \times L(G/B) \ar[r]^{b\sigma \times b\sigma}& L(G/B) \times L(G/B) \\
& L\caO(w) \ar[r]^{\sim} \ar@{^{(}->}[u] & L\caO(\sigma(w)) \ar@{^{(}->}[u] 
}
\]
(where the lower horizontal arrow is the restriction of the upper horizontal arrow), and the composition of the two upper arrows is $(b\sigma(s),(b\sigma)^2(s))$. This shows that $b\sigma(s),(b\sigma)^2(s)$ are in relative position $\sigma(w)$. Thus we have a morphism $b\sigma \colon X_w(b) \rightarrow X_{\sigma(w)}(b)$  and it has an obvious inverse. The same proof applies to $\dot X_{\dot w}(b)$. \qedhere
\end{proof}

In the rest of this section, for a topological space $X$ we will write $\underline{X}$ instead of $\underline{X}_{\obF}$.

\subsubsection{Action of $\underline{G_b(k)}$}\label{sec:action_of_Gbk} Let $G_b$ be the inner form of a Levi subgroup of $G_0$ over $k$ as in \S\ref{sec:bsigma_fixed_points}. Recall that we have $\underline{G_b(k)} = LG^{b\sigma} \subseteq LG$ by Proposition \ref{lm:inclusion_into_LGbsigma}. Now $LG$ acts on $L(G/B)$ and this action restricts to an action of $\underline{G_b(k)}$ on $L(G/B)$. Similarly, we have an action of $\underline{G_b(k)}$ on $L(G/U)$.

\begin{lm}\label{lm:action_on_Xwb} The action of $\underline{G_b(k)}$ on $L(G/B)$ resp. $L(G/U)$ restricts to an action on $X_w(b)$ resp. $\dot X_{\dot w}(b)$. 
\end{lm}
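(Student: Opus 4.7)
The plan is to verify the statement on $R$-points for each $R \in \Perf_{\obF}$: given $g \in \underline{G_b(k)}(R) = LG^{b\sigma}(R)$ and $s \in X_w(b)(R) \subseteq L(G/B)(R)$, I would check that $g \cdot s$ again lies in $X_w(b)(R)$, i.e.\ that the morphism $(g \cdot s, b\sigma(g \cdot s)) \colon \Spec R \to L(G/B) \times L(G/B)$ factors through $L\caO(w)$. The argument for $\dot X_{\dot w}(b)$ will be identical, replacing $G/B$ and $L\caO(w)$ by $G/U$ and $L\dot\caO(\dot w)$.

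The key step will be the intertwining identity
\[
b\sigma \circ (g \cdot -) = (g \cdot -) \circ b\sigma
\]
as endomorphisms of $L(G/B) \times_{\Spec \obF} \Spec R$. It rests on two facts. First, the geometric Frobenius is compatible with the action, $\sigma(g \cdot t) = \sigma(g) \cdot \sigma(t)$, because the action map $LG \times L(G/B) \rar L(G/B)$ is the base change to $\obF$ of the corresponding action of $LG_0$ on $L(G_0/B_0)$ over $\bF_q$ coming from the $k$-rational action of $G_0$ on $G_0/B_0$. Second, the $b\sigma$-invariance of $g$ recorded in \eqref{eq:LGbsigma_R_points} unwinds to $b\sigma(g)b^{-1} = g$, i.e.\ $b\sigma(g) = gb$ in $LG(R)$. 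Combining these,
\[
b\sigma(g \cdot s) \;=\; b \cdot \sigma(g \cdot s) \;=\; b \cdot \sigma(g) \cdot \sigma(s) \;=\; \bigl(b\sigma(g)b^{-1}\bigr) \cdot \bigl(b\sigma(s)\bigr) \;=\; g \cdot b\sigma(s).
\]

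Granted this identity, $(g \cdot s, b\sigma(g \cdot s)) = (g \cdot s, g \cdot b\sigma(s))$ is the diagonal $LG$-translate of $(s, b\sigma(s))$ by $g$. Since $\caO(w) \subseteq (G/B)^2$ is by construction a single $G$-orbit under the diagonal action, by functoriality of $L$ the subfunctor $L\caO(w) \subseteq L(G/B)^2$ is stable under the diagonal action of $LG$. The hypothesis $s \in X_w(b)(R)$ gives a factorization of $(s, b\sigma(s))$ through $L\caO(w)$, which by the previous sentence passes to its diagonal $g$-translate, yielding $g \cdot s \in X_w(b)(R)$ as required. I expect no substantive obstacle: the lemma is essentially a direct unwinding of the definitions, the only non-tautological input being the intertwining identity above, which is the geometric expression of the condition defining $LG^{b\sigma} = \underline{G_b(k)}$.
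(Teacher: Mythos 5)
Your proposal is correct and follows essentially the same route as the paper: both reduce to checking $R$-points, use the identity $b\sigma(gs) = g\,b\sigma(s)$ coming from the defining condition \eqref{eq:LGbsigma_R_points} of $LG^{b\sigma}$, and conclude via stability of $L\caO(w)$ (resp.\ $L\dot\caO(\dot w)$) under the diagonal $LG$-action. The paper packages this as a commutative diagram analogous to the one in Lemma \ref{lm:mult_by_bsigma}, but the content is identical to your intertwining computation.
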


\begin{proof}
Let $R \in \Perff$ and let $Y = \Spec R$. Let $s \in X_w(b)(R) \subseteq L(G/B)(R)$ and let $g \in \underline{G_b(k)}(R) = LG^{b\sigma}(R)$. Similar as in Lemma \ref{lm:mult_by_bsigma} we have the commutative diagram of sheaves on $\Perf_R$,
\[
\xymatrixcolsep{5pc}\xymatrix{
Y \ar[r]^-{(s,b\sigma(s))} \ar[rd] & L(G/B) \times L(G/B) \ar[r]^{g \times g}& L(G/B) \times L(G/B) \\
& L\caO(w) \ar[r]^{\sim} \ar@{^{(}->}[u] & L\caO(w) \ar@{^{(}->}[u] 
}
\]
Now the composite of the two upper horizontal maps is $(gs,gb\sigma(s)) = (gs,b\sigma(gs)) \colon Y \rar L(G/B)^2$, as $g$ commutes with $b\sigma$ (see \eqref{eq:LGbsigma_R_points}). Thus we see that $(gs,b\sigma(gs))$ factors through $L\caO(w) \har L(G/B)^2$, i.~e., $gs \in X_w(b)(R)$. The proof for $\dot X_{\dot w}(b)$ is similar.
\end{proof}

\begin{rem}\label{rem:change_b}
If $b' = g^{-1}b\sigma(b)$ for $b,b',g \in G(\breve k)$, then $x \mapsto gx$ defines isomorphisms $X_w(b') \stackrel{\sim}{\rar} X_w(b)$ and $\dot X_{\dot w}(b') \stackrel{\sim}{\rar} \dot X_{\dot w}(b)$. In particular, $X_w(b), \dot X_{\dot w}(b)$ depend up to isomorphism only on the $\sigma$-conjugacy class $[b]$ of $b$. Also, conjugation by $g$ defines an isomorphism $\underline{G_{b'}(k)} \stackrel{\sim}{\rar} \underline{G_b(k)}$, with respect to which the above isomorphisms are equivariant. 
\end{rem}

\subsubsection{Action of $\underline{T_w(k)}$}\label{sec:action_torus}
We may consider the $1$-cocycle of the Weil group of $k$ with values in $W$, which is determined by being trivial on the inertia subgroup and sending $\sigma$ to $w$. This determines a form $T_w$ of $T$, which is (isomorphic to) an unramified $k$-rational maximal torus of $G$. We have 
\[
T_w(k) = \{t \in T(\breve k) \colon t^{-1}\dot w \sigma(t) = \dot w\}.
\]
As abelian groups we have $X_\ast(T_w) = X_\ast(T)$, and the action of $\sigma$ on $X_\ast(T_w)$ is given by $\sigma_w := {\rm Ad}(w) \circ \sigma$, where $\sigma$ stands for the Frobenius action on $X_\ast(T)$, induced from the action of the absolute Galois group of $k$ on $X_\ast(T)$ (recall that $T$ is unramified).

We have the sheaf $\underline{T_w(k)}$ attached to the locally profinite set $T_w(k)$, and it is equal to $LT^{{\rm Ad}(w)\circ \sigma}$ (by the same argument as in Proposition \ref{lm:inclusion_into_LGbsigma}). 

\begin{lm}
The right multiplication action of $LT$ on $L(G/U)$ restricts to an action of $\underline{T_w(k)}$ on $\dot X_{\dot w}(b)$. Moreover, the morphism $\dot X_{\dot w}(b) \rar X_w(b)$ is $\underline{T_w(k)}$-equivariant if $X_w(b)$ is equipped with the trivial $\underline{T_w(k)}$-action.
\end{lm}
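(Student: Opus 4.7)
The plan is first to extend the natural right $LT$-action on $L(G/U)$ to a restricted $\underline{T_w(k)}$-action preserving $\dot X_{\dot w}(b)$ by an arc-local computation, and then to deduce the equivariance of the projection from the fact that $LT$ acts trivially on $L(G/B)$.

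Since $T \subseteq N_G(U)$, the quotient $G/U$ carries a natural right $T$-action, giving a right $LT$-action on $L(G/U)$; via the embedding $\underline{T_w(k)} = LT^{{\rm Ad}(w)\circ\sigma} \hookrightarrow LT$ already noted above, this yields a right $\underline{T_w(k)}$-action on $L(G/U)$. To verify this preserves $\dot X_{\dot w}(b)$, I would fix $R \in \Perff$, $s \in \dot X_{\dot w}(b)(R)$ and $t \in \underline{T_w(k)}(R)$, and show that $(st, b\sigma(st)) = (st, b\sigma(s)\sigma(t))$ factors through $L\dot\caO(\dot w) \hookrightarrow L(G/U)^2$. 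Since $L\dot\caO(\dot w)$ is a subsheaf of the arc-sheaf $L(G/U)^2$ (Theorem \ref{thm:LX_is_vsheaf}), this factorization can be tested after passing to an arc-cover $R \to R'$ over which $s$ admits a lift $\tilde s \in LG(R')$; such a lift exists because $LG \to L(G/B)$ is $v$-surjective (Corollary \ref{thm:arc_exactness_and_splitting}) and $L(G/U) \to L(G/B)$ is a torsor under $LT \subseteq LG$. Applying the $G/U$-variant of Lemma \ref{lm:aux_factorization_rel_pos}, I would reduce the claim to showing that $(\tilde s t)^{-1} b\sigma(\tilde s t) = t^{-1}(\tilde s^{-1} b\sigma(\tilde s))\sigma(t)$ factors through $L(U\dot w U) \hookrightarrow LG$. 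The crux is the elementary identity
\[
t^{-1}(U\dot w U)\sigma(t) = (t^{-1}Ut)(t^{-1}\dot w \sigma(t))(\sigma(t)^{-1}U\sigma(t)) = U\dot w U,
\]
which uses that $T$ normalizes $U$ together with the defining relation $t^{-1}\dot w\sigma(t) = \dot w$ of $T_w(k)$.

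For the equivariance statement, the projection $G/U \to G/B$ identifies $G/B$ with the quotient of $G/U$ by the right $T$-action, so $LT$, and a fortiori $\underline{T_w(k)}$, acts trivially on $L(G/B)$. Consequently the subsheaf $X_w(b) \subseteq L(G/B)$ inherits the trivial $\underline{T_w(k)}$-action, and the natural map $\dot X_{\dot w}(b) \to X_w(b)$ is automatically $\underline{T_w(k)}$-equivariant.

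The main obstacle is the careful bookkeeping around the arc-local lift of $s$ to $LG$ and invoking the correct $G/U$-analog of Lemma \ref{lm:aux_factorization_rel_pos}; once that framework is in place, the verification collapses to the one-line identity above, in which the defining relation $t^{-1}\dot w\sigma(t) = \dot w$ is exactly what is needed.
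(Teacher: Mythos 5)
Your proof is correct, and the mathematical heart of it — the identity $t^{-1}(U\dot wU)\sigma(t)=U\dot wU$, which rests on $T$ normalizing $U$ and on the defining relation $t^{-1}\dot w\sigma(t)=\dot w$ of $T_w$ — is exactly what drives the paper's argument as well. The difference is one of packaging: the paper does not lift $s$ to $LG$ at all. It simply writes $(st,b\sigma(s)\sigma(t))$ as the composite of $(s,b\sigma(s))\colon \Spec R\to L(G/U)^2$ with the automorphism $t\times\sigma(t)$ of $L(G/U)^2$, and observes that this automorphism preserves the subsheaf $L\dot\caO(\dot w)$ (for the same reason as your identity, now read on the defining condition $g_0^{-1}g_1\in U\dot wU$ of $\dot\caO(\dot w)$ over $\bW(R)[1/\varpi]$). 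This makes your arc-local reduction — the $v$-surjectivity of $LG\to L(G/B)$, the $LT$-torsor structure of $L(G/U)\to L(G/B)$, and the appeal to Lemma \ref{lm:aux_factorization_rel_pos} — an unnecessary detour; it is valid (factorization through the sub-arc-sheaf $L\dot\caO(\dot w)$ can indeed be tested after an arc-cover, and the lift of $s$ to $LG(R')$ does exist by the argument you sketch), but it imports Corollary \ref{thm:arc_exactness_and_splitting}, a nontrivial input the statement does not actually need. Your treatment of the second claim (triviality of the $LT$-action on $L(G/B)=L((G/U)/T)$) matches the paper's.
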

\begin{proof}
The first claim is proven similarly to Lemma \ref{lm:action_on_Xwb}: if $R \in \Perff$, $Y = \Spec R$ and $s \in \dot X_{\dot w}(R)$, $t \in LT^{{\rm Ad}(w)\circ \sigma}(R)$, then $(st,b\sigma(st)) = (st,b\sigma(s)\sigma(t)) \in L(G/U)^2(R)$ is the composition of $(s,b\sigma(s)) \colon Y \rar L(G/U)^2$ and $t \times \sigma(t) \colon L(G/U)^2 \rar L(G/U)^2$. Now $(s,b\sigma(s))$ factors through $L\dot\caO(\dot w)$ and $t \times \sigma(t)$ preserves the subsheaf $L\dot\caO(\dot w)$. The second claim follows from a similar claim for the projection $L(G/U) \rar L(G/B)$ and the $LT$-action.
\end{proof}

We study the maps $\dot X_{\dot w}(b) \rar X_w(b)$ in detail in \S\ref{sec:Torsors}.

\subsection{Disjoint decomposition} Arguments from \cite[3]{Lusztig_76_Fin} generalize to the loop context. Let $I \subseteq S$ be a $\sigma$-stable subset, $W_I \subseteq W$ the subgroup generated by $I$, and $P_I = B W_I B$ the corresponding standard $k$-rational parabolic subgroup of $G$. 
Recall the arc-sheaf of $b\sigma$-fixed points $L(G/P_I)^{b\sigma}$ from \eqref{eq:LGP_bsigma_sheaf}.

\begin{lm}\label{lm:disj_union}
Let $I \subseteq S$ is $\sigma$-stable subset, and let $w \in W$ with $\supp(w) \subseteq I$. The composition $X_w(b) \har L(G/B) \rightarrow L(G/P_I)$ factors through $L(G/P_I)^{b\sigma} \har L(G/P_I)$.
\end{lm}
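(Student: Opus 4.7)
The plan is to reduce the statement to a purely scheme-theoretic factorization at the level of $\breve k$-varieties, and then apply the loop functor. Let $\pi \colon G/B \to G/P_I$ denote the projection and $\Delta \colon G/P_I \hookrightarrow (G/P_I)^2$ the diagonal. I claim the composition
\[
\caO(w) \hookrightarrow (G/B)^2 \xrightarrow{\pi \times \pi} (G/P_I)^2
\]
factors through $\Delta(G/P_I)$. Indeed, on $K$-points for any algebraically closed $K \supseteq \breve k$, if $(g_1 B, g_2 B) \in \caO(w)(K)$ then $g_1^{-1}g_2 \in B w B$. Since $\supp(w) \subseteq I$, we have $w \in W_I$, so $BwB \subseteq BW_I B = P_I$, and therefore $g_1 P_I = g_2 P_I$. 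Since $\caO(w)$ is reduced and $\Delta$ is a closed immersion, this factorization holds scheme-theoretically.

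Applying $L$, which commutes with arbitrary limits by Lemma \ref{lm:L_commutes_limits} (so in particular with fiber products and closed immersions), we obtain that $L\caO(w) \to L(G/B)^2 \to L(G/P_I)^2$ factors through $\Delta(L(G/P_I))$.

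Now let $R \in \Perff$ and $s \in X_w(b)(R)$. By definition of $X_w(b)$, the map $(s, b\sigma(s)) \colon \Spec R \to L(G/B)^2$ factors through $L\caO(w)$. Composing with $\pi \times \pi$ and using the previous paragraph, the map
\[
(\pi(s),\, \pi(b\sigma(s))) = (\pi(s),\, b\sigma(\pi(s))) \colon \Spec R \rar L(G/P_I)^2
\]
factors through $\Delta(L(G/P_I))$, where the equality holds because $\pi$ is $G$-equivariant and in particular commutes with the $b\sigma$-action. Hence $\pi(s) = b\sigma(\pi(s))$ in $L(G/P_I)(R)$, i.e.\ $\pi(s) \in L(G/P_I)^{b\sigma}(R)$, as desired.

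The proof has no real obstacle: the only nontrivial input is the elementary Bruhat-decomposition observation that $w \in W_I$ implies $BwB \subseteq P_I$, and the fact that $L$ preserves limits (which converts a scheme-theoretic factorization through a closed subscheme into the analogous factorization on loop spaces).
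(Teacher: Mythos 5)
Your proof is correct and follows essentially the same route as the paper: the key step in both is that $\supp(w)\subseteq I$ forces $BwB\subseteq P_I$, so $\caO(w)$ maps into the diagonal of $(G/P_I)^2$ (which the paper calls $\caO_I(1)$ in its Bruhat decomposition for $G/P_I$), and applying $L$ then identifies the image of $X_w(b)$ with $b\sigma$-fixed points. Your justification of the scheme-theoretic factorization via reducedness and geometric points merely fleshes out a step the paper leaves implicit.
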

\begin{proof}
We have the geometric Bruhat decomposition for $G/P_I$, 
\[
G/P_I \times G/P_I = \coprod_{w \in W_I \backslash W / W_I} \caO_I(w).
\]
As ${\rm supp}(w) \subseteq I$, we have $w \in P_I$, and hence $\caO(w)$ maps into $\caO_I(1)$ under the natural projection $(G/B)^2 \rar (G/P_I)^2$. For an $s \in X_w(b)(R)$ we thus have the commutative diagram
\[
\xymatrixcolsep{3pc}\xymatrix{
\Spec R \ar[r]^-{(s,b\sigma(s))} \ar[rd] & L(G/B) \times L(G/B) \ar[r] & L(G/P_I) \times L(G/P_I) \\
& L\caO(w) \ar[r] \ar@{^{(}->}[u] & L\caO_I(1) \ar@{^{(}->}[u] 
}
\]
With other words, $\Spec R \stackrel{s}{\rightarrow} L(G/B) \rightarrow L(G/P_I)$ factors through $L(G/P_I)^{b\sigma}$.\qedhere
\end{proof}

The following is an immediate consequence of Lemma \ref{lm:disj_union} and Proposition \ref{prop:representability_bsigma_fixed_points}.

\begin{cor}\label{cor:emptyness_Xwb_support}
Let $w \in W$ and $b \in G(\breve k)$. If $[b]_G \cap P_{\overline{\supp}(w)}(\breve k) = \varnothing$, then $X_w(b) = \varnothing$. 
\end{cor}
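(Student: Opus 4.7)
The plan is to apply Lemma \ref{lm:disj_union} with $I = \overline{\supp}(w)$ and combine it with the emptiness criterion furnished by Proposition \ref{prop:representability_bsigma_fixed_points}. Since $\overline{\supp}(w)$ is $\sigma$-stable by its very definition, Lemma \ref{lm:disj_union} applies and yields a factorization of the canonical morphism
\[
X_w(b) \hookrightarrow L(G/B) \longrightarrow L(G/P_{\overline{\supp}(w)})
\]
through the arc-subsheaf $L(G/P_{\overline{\supp}(w)})^{b\sigma}$.

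Next, the hypothesis $[b]_G \cap P_{\overline{\supp}(w)}(\breve k) = \varnothing$ is precisely what is needed to feed into Proposition \ref{prop:representability_bsigma_fixed_points}, which then identifies $L(G/P_{\overline{\supp}(w)})^{b\sigma}$ with the constant sheaf associated to the empty set, i.e., $L(G/P_{\overline{\supp}(w)})^{b\sigma} = \varnothing$. Combining this with the factorization, for every $R \in \Perff$ the set $X_w(b)(R)$ admits a map into the empty set, which forces $X_w(b)(R) = \varnothing$. Hence $X_w(b) = \varnothing$ as an arc-sheaf.

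There is essentially no obstacle here: the real geometric content was absorbed into the two prior results. Lemma \ref{lm:disj_union} translates the Bruhat relative position condition on $G/B$ into a $b\sigma$-fixed point condition on the larger partial flag variety $G/P_{\overline{\supp}(w)}$ (using that $w$ lies in the standard parabolic $P_{\overline{\supp}(w)}$, so the relative position $w$ in $(G/B)^2$ collapses to the trivial orbit in $(G/P_{\overline{\supp}(w)})^2$), while Proposition \ref{prop:representability_bsigma_fixed_points} encodes the Kottwitz-style characterization of when such a $b\sigma$-fixed locus can possibly carry any points. Putting these two inputs together gives the corollary immediately.
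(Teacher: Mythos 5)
Your argument is correct and is exactly the paper's proof: the corollary is stated there as an immediate consequence of Lemma \ref{lm:disj_union} (applied with $I=\overline{\supp}(w)$, which is $\sigma$-stable and contains $\supp(w)$) together with the emptiness clause of Proposition \ref{prop:representability_bsigma_fixed_points}. Nothing further is needed.
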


\begin{rem}\label{rem:wlog_assume_b_in_parabolic}
It follows from Corollary \ref{cor:emptyness_Xwb_support} and  Remark \ref{rem:change_b} that when studying the sheaves $X_w(b)$ and $\dot X_{\dot w}(b)$, we may without loss of generality assume that $b \in P_{\overline{\supp}(w)}(\breve k)$.
\end{rem}

Note that by exactness of \eqref{eq:second_seq} in the case $L = \breve k$, any element in $(G/P_I)(\breve k)^{b\sigma}$ is represented by some $gP_I$ with $g \in G(\breve k)$.

\begin{prop}\label{prop:disj_decomposition}
Let $w \in W$ and write $I = \overline{\supp}(w)$. Let $M_I$ be the unique Levi subgroup of $P_I$ containing $T$. Let $b \in G(\breve k)$. There is a natural morphism $X_w^G(b) \rar \underline{(G/P_I)(\breve k)^{b\sigma}}$. Let $gP_I \in (G/P_I)(\breve k)^{b\sigma}$. The fiber $X_w^G(b)_g$ over the corresponding $\obF$-point of $\underline{(G/P_I)(\breve k)^{b\sigma}}$ is isomorphic to $X_w^{M_I}(g^{-1}b\sigma(g))$. 

Moreover, if $\dot w \in G(\breve k)$ is a lift of $w$, then $\dot w \in M_I(\breve k)$, and the fiber $\dot X_{\dot w}^G(b)_g$ over $gP_I \in (G/P_I)(\breve k)^{b\sigma}$ of the composition $\dot X_{\dot w}^G(b) \rar X_w^G(b) \rar \underline{(G/P_I)(\breve k)^{b\sigma}}$ is isomorphic to $\dot X_{\dot w}^{M_I}(g^{-1}b\sigma(g))$.
\end{prop}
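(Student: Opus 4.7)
By Lemma \ref{lm:disj_union}, the composition $X_w^G(b) \hookrightarrow L(G/B) \to L(G/P_I)$ factors through $L(G/P_I)^{b\sigma}$, which Proposition \ref{prop:representability_bsigma_fixed_points} identifies with $\underline{(G/P_I)(\breve k)^{b\sigma}}$; the analogous morphism for $\dot X_{\dot w}^G(b)$ is obtained by composing with $L(G/U) \to L(G/P_I)$. For the statement $\dot w \in M_I(\breve k)$, note that $w \in W_I = N_{M_I}(T)/T$, so any lift $\dot w \in N_G(T)(\breve k)$ equals a product of an element of $N_{M_I}(T)(\breve k)$ and an element of $T(\breve k) \subseteq M_I(\breve k)$.

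\textbf{Fiber computation.} Fix $gP_I \in (G/P_I)(\breve k)^{b\sigma}$, put $b' := g^{-1}b\sigma(g) \in P_I(\breve k)$, and let $\bar b' \in M_I(\breve k)$ denote its image under the projection $P_I \twoheadrightarrow P_I/U_I = M_I$. For $R \in \Perff$, an $R$-point of the fiber $X_w^G(b)_g$ is an $s \in L(G/B)(R)$ with $s \stackrel{w}{\to} b\sigma(s)$ whose image in $L(G/P_I)(R)$ is the constant section $gP_I$. The latter forces $s$ to factor through the closed subscheme $gP_I/B \subseteq G/B$, so via left translation by $g^{-1}$ followed by the canonical isomorphism $P_I/B \stackrel{\sim}{\to} M_I/B_{M_I}$ (with $B_{M_I} := B \cap M_I$), we recover a unique $y \in L(M_I/B_{M_I})(R)$ with $s = g \cdot y$. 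Then
\[
b\sigma(s) = b\sigma(g) \sigma(y) = g \cdot b' \sigma(y) = g \cdot \bar b' \sigma(y),
\]
where the last equality uses that $U_I \subseteq U \subseteq B$, hence $U_I$ acts trivially on $P_I/B$.

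\textbf{Relative position and conclusion.} It remains to check that the condition $s \stackrel{w}{\to} b\sigma(s)$ in $L(G/B)(R)$ is equivalent to $y \stackrel{w}{\to} \bar b' \sigma(y)$ in $L(M_I/B_{M_I})(R)$. Left translation by $g$ preserves the $G$-orbits $\caO(w)$, so this reduces to the Bruhat-cell compatibility $\caO^{M_I}(w) = \caO^G(w) \cap (M_I/B_{M_I})^2$; by Lemma \ref{lm:aux_factorization_rel_pos} this in turn reduces to the standard identity $M_I \cap BwB = B_{M_I} w B_{M_I}$ for $w \in W_I$. Thus $y \mapsto g \cdot y$ defines the desired isomorphism $X_w^{M_I}(b') \stackrel{\sim}{\to} X_w^G(b)_g$ of arc-sheaves. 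The argument for $\dot X$ is entirely parallel, replacing $B$ by $U$ throughout and using $U = U_{M_I} \cdot U_I$ (so that $P_I/U \cong M_I/U_{M_I}$) together with the analogous identity $M_I \cap U \dot w U = U_{M_I} \dot w U_{M_I}$, which follows from the fact that $\dot w \in M_I$ normalizes $U_I$ and $M_I \cap U_I = 1$. The main delicate point is the Bruhat-style compatibility for the embedding of the Levi; once that is invoked, the rest is bookkeeping, transferred from geometric points to loop sheaves via Lemma \ref{lm:aux_factorization_rel_pos} and Lemma \ref{lm:Witt_reduced}.
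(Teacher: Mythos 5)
Your proposal is correct and follows essentially the same route as the paper: build the map from Lemma \ref{lm:disj_union} and Proposition \ref{prop:representability_bsigma_fixed_points}, translate by $g^{-1}$ to identify the fiber with sections of $L(P_I/B)=L(M_I/B\cap M_I)$, and match relative positions via the compatibility $L\caO^{M_I}(w)=L\caO^G(w)\times_{L(G/B)^2}L(M_I/B\cap M_I)^2$, checked on geometric points through Lemmas \ref{lm:aux_factorization_rel_pos} and \ref{lm:Witt_reduced}. Your added verifications (that $\dot w\in M_I(\breve k)$, and the explicit Bruhat identities $M_I\cap BwB=B_{M_I}wB_{M_I}$ and its $U$-analogue) are details the paper leaves implicit, and they are correct.
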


\begin{proof}
The first statement follows from Lemma \ref{lm:disj_union} and Proposition \ref{prop:representability_bsigma_fixed_points}: Let $g \in G(\breve k)$ be some lift of $gP_I$. From $b\sigma(gP_I) = gP_I$ we deduce $b' := g^{-1}b\sigma(g) \in P_I(\breve k)$, and via the natural projection $P_I \tar P_I/{\text{unipotent radical}} \cong M_I$, we regard $b'$ as an element of $M_I(\breve k)$. Let $\pi \colon L(G/B) \rar L(G/P_I)$ denote the natural map. Let $R \in \Perff$ and let $s \colon Y := \Spec R \rar L(G/B)$ be in $X_w(b)_g(R)$, that is $(s,b\sigma(s)) \colon Y \rar L(G/B)^2$ factors through $L\caO(w)$ and $\pi\circ s \colon Y \rar L(G/P_I)$ factors through $Y \rar \Spec \obF \stackrel{gP_I}{\rar} L(G/P_I)$. Then $g^{-1}s \colon Y \rar L(G/B) \rar L(G/B)$ satisfies $\pi \circ g^{-1}s = g^{-1}\pi s = 1\cdot P_I \colon Y \rar L(G/P_I)$, or equivalently, $g^{-1}s$ factors through a section $(g^{-1}s)' \colon Y \rar L(P_I/B) = L(M_I/B \cap M_I)$.

Further one checks that the relative position of the sections $g^{-1}s$ and $b'\sigma(g^{-1}s)$ is $w$, i.e., $(g^{-1}s, b'\sigma(g^{-1}s)) \colon Y \rar L(G/B)^2$ factors through $L\caO^G(w) \har L(G/B)^2$ (here the upper index $G$ only indicates that we refer to $\caO(w)$ for the group $G$). Combined with the above we see that $(g^{-1}s,b'\sigma(g^{-1}s))$ factors through $L\caO^G(w) \times_{L(G/B)^2} L(M_I/B \cap M_I)^2 = L\caO^{M_I}(w)$ (the right hand side makes sense as $w \in W_I \subseteq W$). This means that $(g^{-1}s)' \in X_w^{M_I}(b')(Y)$. Conversely, a section $s' \in X_w^{M_I}(b')(Y)$ determines a section $gs' \colon Y \rar L(G/B)$, which lies in $X_w(b)(Y)$. These two maps are mutually bijective and functorial in $Y$. The same proof applies to $\dot X_{\dot w}^G(b)$.
\end{proof}

Finally, we determine the structure of $X_w^G(b)$ in terms of arc-sheaves $X_w^{M_I}(b_i)$ attached to the Levi subgroup $M_I$ of $G$. Therefore we first show a general result.

\begin{prop}\label{prop:vsheaves_product_over_profinite_base}
Let $H$ be a locally profinite, second-countable group, $H' \subseteq H$ a closed subgroup such that $H \tar Q := H/H'$ has a continuous section. Let $\pi \colon X \rar \underline{Q}$ be a map of $v$-sheaves on $\Perff$, and assume that $\underline{H}$ acts on $X$ such that the action commutes with $\pi$ and the $\underline H$-action on $\underline Q$ by left multiplication. Let $t \colon \Spec \obF \rar \underline Q$ be a geometric point, and let $X_t := X \times_{\underline Q} \Spec \obF$ be the fiber over $t$. Then $X \cong \underline Q \times X_t$ as $v$-sheaves.
\end{prop}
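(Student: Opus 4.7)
The plan is to trivialise the map $\pi \colon X \to \underline{Q}$ using the assumed continuous section $s \colon Q \to H$ of $H \tar Q$. Since $s$ is continuous and $H$, $Q$ are locally profinite, the functoriality of the construction $T \mapsto \underline{T}$ from \eqref{eq:functor_cont_functions_top_space} yields a morphism of $v$-sheaves $\underline{s} \colon \underline{Q} \to \underline{H}$. Continuity of multiplication and inversion on $H$ moreover makes $\underline{H}$ a group object in $v$-sheaves, acting on $X$ by assumption and on $\underline{Q}$ by left multiplication, $\pi$-equivariantly. Write $\tau \in Q$ for the image of $t$ and set $h_\tau := s(\tau) \in H$, viewed as a constant global section of $\underline{H}$.

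I would then define
\[
\phi \colon \underline{Q} \times X_t \longrightarrow X, \qquad (q, x) \longmapsto \bigl(\underline{s}(q) \cdot h_\tau^{-1}\bigr) \cdot x,
\]
\[
\psi \colon X \longrightarrow \underline{Q} \times X_t, \qquad x \longmapsto \bigl(\pi(x),\; h_\tau \cdot \underline{s}(\pi(x))^{-1} \cdot x\bigr),
\]
where products and inverses are those of the group sheaf $\underline{H}$ and the dot denotes its action on $X$. Well-definedness of $\psi$ reduces, by $\pi$-equivariance of the action, to the identity $h_\tau \underline{s}(\pi(x))^{-1} \cdot \pi(x) = t$ in $\underline{Q}(S)$. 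This I would verify pointwise on $|S|$: since $s(q)$ is by construction a representative of the coset $q \in H/H'$, one has $s(q)^{-1} \cdot q = eH'$ in $Q$, and then left-multiplication by $h_\tau = s(\tau)$ returns the constant map to $s(\tau)H' = \tau$, which is exactly $t$.

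It then remains to check that $\phi$ and $\psi$ are mutually inverse, which is immediate from the cancellations $(\underline{s}(q) h_\tau^{-1}) (h_\tau \underline{s}(q)^{-1}) = 1$ and $(h_\tau \underline{s}(\pi(x))^{-1}) (\underline{s}(\pi(x)) h_\tau^{-1}) = 1$ in $\underline{H}$. I do not expect a serious obstacle here: the proposition is essentially the observation that any $\underline{H}$-equivariant object over the coset space $\underline{Q}$ trivialises as soon as the underlying fibration $H \tar Q$ admits a continuous section, and the continuity of $s$ is precisely what upgrades this pointwise trivialisation to a morphism of $v$-sheaves. The only point requiring genuine care is to verify that $\underline{s}$, $h_\tau$, $\phi$ and $\psi$ really define morphisms of $v$-sheaves rather than mere maps on points; this is formal from the construction of $\underline{H}$ as a group sheaf.
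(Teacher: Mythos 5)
Your proposal is correct and follows essentially the same route as the paper: both use the continuous section to produce a morphism of sheaves $\underline{Q}\to\underline{H}$ and then translate by it to identify $X$ with $\underline{Q}\times X_t$, the only (cosmetic) difference being that the paper first normalizes $t$ to the identity coset via the $\underline{H}$-action, whereas you carry the correction factor $h_\tau$ through explicitly.
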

\begin{proof}
As the fibers $X_t$ for varying $t$ are all isomorphic by the $\underline H$-action, we may assume that $t$ corresponds to the coset $1\cdot H' \in Q$. The continuous section to $H \tar Q$ induces a map $s \colon \underline{Q} \rar \underline{H}$ of $v$-sheaves. Let $\iota_t \colon X_t \rar X$ be the natural map. Let ${\rm act}_X$ denote the action of $\underline H$ on $X$. Put $\alpha := {\rm act}_X \circ (s \times \iota_t) \colon \underline Q \times X_t \rar \underline H \times X \rar X_w(b)$. Now we define a map in the other direction: let $R \in \Perff$, let $\beta \in X(R)$. The actions of the element $s\pi\beta \in \underline H(R)$ on $X$ and $\underline Q$ determine a commutative diagram with bijective horizontal arrows:
\[
\xymatrix{
X(R) \ar[r]^{s\pi\beta} \ar[d]^{\pi} & X(R) \ar[d]^{\pi} \\
\underline Q(R) \ar[r]^{s\pi\beta} & \underline Q(R)
}
\]
Let $\gamma \in X(R)$ be the unique element such that $(s\pi\beta)(\gamma) = \beta$. For a $v$-sheaf $Y$, let $f_Y \colon Y \rar \Spec \obF$ denote the unique morphism to the final object. We claim that $\pi\gamma = tf_{\Spec R} \in \underline Q(R)$. As $s\pi\beta \colon X(R) \rar X(R)$ is an isomorphism, it is enough to show that $(s\pi\beta)(\pi\gamma) = (s\pi\beta)(tf_{\Spec R})$. By the commutativity of the diagram above, we have $(s\pi\beta)(\pi\gamma) = \pi\beta$. On the other side, consider the composed map 
\[
\underline Q(R) \stackrel{s}{\longrar} H(R) \longrar \underline{Q}(R), \qquad \delta \mapsto (s\delta)(tf_{\Spec R})
\]
where the second map is the ``orbit map'' for the action of $H(R)$ on the element $tf_{\Spec R} \in \underline Q(R)$. Let ${\rm pr} \colon \underline H \rar \underline Q$ be the natural map. As $t \in \underline Q(\Spec \obF)$ corresponds to the coset $1\cdot H'$, we deduce that the image of $\pi \beta$ under the composed map above is ${\rm pr} (s\pi\beta) = ({\rm pr} \circ s)(\pi\beta) = \pi\beta$, i.~e., with other words we have $(s\pi\beta)(tf_{\Spec R}) = \pi\beta$, proving the claim. 

The association $X(R) \rar X(R)$, $\beta \mapsto \gamma$ defined above is functorial in $R$, so it defines a map $\varepsilon_0 \colon X \rar X$ of $v$-sheaves. The claim shows that $\pi\varepsilon_0 = tf_X$. This gives a map $\varepsilon_1 \colon X \rar X_t$, such that $\iota_t \varepsilon_1 = \varepsilon_0$. Finally we get the map $\varepsilon := (\pi,\varepsilon_1) \colon X \rar \underline Q \times X_t$. One now shows that $\alpha$ and $\varepsilon$ are mutually inverse. 
\end{proof}

\begin{thm}\label{cor:disjoint_dec_of_Xwb_complete} Let $b \in G(\breve k)$, $w \in W$, let $\dot w \in G(\breve k)$ be any lift of $w$. Write $I = \overline{\supp}(w)$. As in the paragraph preceding Corollary \ref{cor:decomposition_of_set_GPbsigma}, write $[b]_G \cap P_I(\breve k) = \coprod_{i=1}^r [b_i]_{P_I}$ with $b_i = g_i^{-1} b \sigma(g_i)$ for finitely many $b_i \in P_I(\breve k)$. We have the equivariant isomorphisms
\[
X_w^G(b) \cong \coprod_{i=1}^r \underline{G_{b_i}(k)/P_{I,b_i}(k)} \times X_w^{M_I}(b_i), \quad \text{ and } \quad \dot X_{\dot w}^G(b) \cong \coprod_{i=1}^r \underline{G_{b_i}(k)/P_{I,b_i}(k)} \times \dot X_{\dot w}^{M_I}(b_i).
\]
\end{thm}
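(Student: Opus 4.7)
The plan is to combine three previously established ingredients: the fiberwise description of Proposition \ref{prop:disj_decomposition}, the $G_b(k)$-orbit decomposition of $(G/P_I)(\breve k)^{b\sigma}$ from Corollary \ref{cor:decomposition_of_set_GPbsigma}, and the product structure for $v$-sheaves over a locally profinite base from Proposition \ref{prop:vsheaves_product_over_profinite_base}.

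First, Proposition \ref{prop:disj_decomposition} supplies a $\underline{G_b(k)}$-equivariant map $\pi \colon X_w^G(b) \to \underline{(G/P_I)(\breve k)^{b\sigma}}$, and an analogous map for $\dot X_{\dot w}^G(b)$. Combining this with the $G_b(k)$-equivariant clopen decomposition
\[
(G/P_I)(\breve k)^{b\sigma} = \coprod_{i=1}^r G_{b_i}(k)/P_{I,b_i}(k)
\]
of Corollary \ref{cor:decomposition_of_set_GPbsigma}, and taking preimages under $\pi$, one obtains a $\underline{G_b(k)}$-stable disjoint decomposition $X_w^G(b) = \coprod_{i=1}^r X_w^G(b)_i$ with $X_w^G(b)_i := \pi^{-1}(\underline{G_{b_i}(k)/P_{I,b_i}(k)})$, and likewise for $\dot X_{\dot w}^G(b)$.

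Next, I would apply Proposition \ref{prop:vsheaves_product_over_profinite_base} separately to each restriction $\pi_i \colon X_w^G(b)_i \to \underline{G_{b_i}(k)/P_{I,b_i}(k)}$, transporting the $\underline{G_b(k)}$-action to a $\underline{G_{b_i}(k)}$-action via the isomorphism ${\rm Int}(g_i)$; by Corollary \ref{cor:decomposition_of_set_GPbsigma} this action on the base is by left multiplication, so the hypotheses of Proposition \ref{prop:vsheaves_product_over_profinite_base} are met once a continuous section of $G_{b_i}(k) \twoheadrightarrow G_{b_i}(k)/P_{I,b_i}(k)$ is produced. The proposition then yields
\[
X_w^G(b)_i \cong \underline{G_{b_i}(k)/P_{I,b_i}(k)} \times (X_w^G(b)_i)_{t_i},
\]
where $t_i$ is the base point $1\cdot P_{I,b_i}$, which corresponds under Corollary \ref{cor:decomposition_of_set_GPbsigma} to $g_iP_I \in (G/P_I)(\breve k)^{b\sigma}$. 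Hence, by the second assertion of Proposition \ref{prop:disj_decomposition}, the fiber is
\[
(X_w^G(b)_i)_{t_i} \cong X_w^{M_I}\bigl(g_i^{-1}b\sigma(g_i)\bigr) = X_w^{M_I}(b_i),
\]
and reassembling the pieces over $i$ gives the desired decomposition. The $\dot X_{\dot w}^G(b)$-version is proved identically from the corresponding half of Proposition \ref{prop:disj_decomposition}, noting that $\dot w \in M_I(\breve k)$ because $\supp(w) \subseteq I$.

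The main technical point is verifying the continuous section hypothesis in Proposition \ref{prop:vsheaves_product_over_profinite_base}. The base $G_{b_i}(k)/P_{I,b_i}(k)$ is profinite by Lemma \ref{lm:projective_variety_gives_profinite_set}, since $G_{b_i}/P_{I,b_i}$ is projective over $k$. As $G_{b_i} \to G_{b_i}/P_{I,b_i}$ is smooth, on $k$-points it admits continuous sections on a suitable clopen neighborhood of every point; compactness gives a finite such cover, and total disconnectedness of the base lets one refine to a disjoint clopen cover, along which the local sections glue to a global continuous section. Everything else in the argument is formal bookkeeping with the previously proven results.
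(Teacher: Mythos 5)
Your proposal follows the paper's proof of this theorem essentially verbatim: the same three ingredients (Proposition \ref{prop:disj_decomposition}, Corollary \ref{cor:decomposition_of_set_GPbsigma}, Proposition \ref{prop:vsheaves_product_over_profinite_base}) assembled in the same way, with the continuous section of $G_{b_i}(k) \twoheadrightarrow G_{b_i}(k)/P_{I,b_i}(k)$ correctly identified as the only point needing verification. Your argument for that section (smoothness of $G_{b_i} \to G_{b_i}/P_{I,b_i}$ gives local sections on $k$-points, then glue over a disjoint clopen refinement of the profinite base) is valid but differs from the paper's, which instead produces a compact open subgroup $H(k)^0$ surjecting onto $H(k)/P(k)$ via the Iwasawa decomposition and then invokes Serre's existence of continuous sections for quotients of profinite groups.
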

\begin{proof}
This follows from Proposition \ref{prop:vsheaves_product_over_profinite_base}, Proposition \ref{prop:disj_decomposition} and Corollary \ref{cor:decomposition_of_set_GPbsigma}. The only thing to check (to be able to apply Proposition \ref{prop:vsheaves_product_over_profinite_base}) is that for each $i$, $G_{b_i}(k) \tar G_{b_i}(k)/P_{I,b_i}(k)$ has a continuous section. More generally, let $H$ be any reductive group over $k$ and $B$ a Borel subgroup. It follows from the Iwasawa decomposition 
and the existence of special points in the Bruhat--Tits building, that there is a compact open subgroup $H(k)^0 \subseteq H(k)$ such that the composition $H(k)^0 \har H(k) \tar H(k)/B(k)$ is surjective. In particular, $H(k)^0 \rar H(k)/P(k)$ is surjective for any $k$-rational parabolic subgroup $B \subseteq P \subseteq H$. Now $H(k)^0$ is profinite, and hence a continuous section exists by \cite[I\,\S1 Prop.~1]{Serre_Galois_Cohomology}.
\end{proof}

Concerning representability of $X_w(b)$, $\dot X_{\dot w}(b)$ we deduce the following result, allowing to reduce to the case that $\overline{\supp}(w) = S$.

\begin{cor}\label{cor:representability_reduction_to_full_support}
In the situation of Theorem \ref{cor:disjoint_dec_of_Xwb_complete} suppose that for all $i$, $X_w^{M_I}(b_i)$ resp. $\dot X_{\dot w}^{M_I}(b_i)$ is representable by an ind-scheme or a scheme. Then the same holds for $X_w^G(b)$ resp. $\dot X_{\dot w}^G(b)$.
\end{cor}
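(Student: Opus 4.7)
The plan is to apply the decomposition of Theorem \ref{cor:disjoint_dec_of_Xwb_complete} and argue that each summand $\underline{G_{b_i}(k)/P_{I,b_i}(k)} \times X_w^{M_I}(b_i)$ is an (ind-)scheme of the required type; a finite disjoint union then gives the conclusion. The crucial preliminary step is to establish that each quotient set $Q_i := G_{b_i}(k)/P_{I,b_i}(k)$ is profinite. I would deduce this from Corollary \ref{cor:decomposition_of_set_GPbsigma}, which realises $Q_i$ as one of finitely many clopen pieces of $(G/P_I)(\breve k)^{b\sigma}$, together with Proposition \ref{prop:geometric_profiniteness_of_bsigma_fixed_points}(ii), which ensures that the ambient set is profinite; alternatively, one can observe directly that $P_{I,b_i}$ is a $k$-rational parabolic in the reductive $k$-group $G_{b_i}$, so that $G_{b_i}/P_{I,b_i}$ is projective over $k$ and Lemma \ref{lm:projective_variety_gives_profinite_set} applies. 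Consequently, $\underline{Q_i}$ is representable by the affine perfect scheme $\Spec {\rm Cont}(Q_i,\obF)$ of \S\ref{sec:schemes_for_profinite_sets}.

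With this in hand, the rest is formal. In the scheme case, $\underline{Q_i} \times X_w^{M_I}(b_i)$ is a fibre product of perfect $\obF$-schemes, hence a perfect scheme; taking the finite disjoint union over $i$ and invoking the identification of Theorem \ref{cor:disjoint_dec_of_Xwb_complete} (which takes place among $v$-sheaves, in which both sides already live) gives representability of $X_w^G(b)$ by a scheme. In the ind-scheme case, I would write $X_w^{M_I}(b_i) = \colim_\alpha Y_{i,\alpha}$ with $Y_{i,\alpha}$ schemes and closed-immersion transition maps, and use that fibre products of sheaves commute with filtered colimits to obtain
\[
\underline{Q_i} \times X_w^{M_I}(b_i) \;=\; \colim_\alpha \bigl(\underline{Q_i} \times Y_{i,\alpha}\bigr).
\]
Each $\underline{Q_i} \times Y_{i,\alpha}$ is a scheme, the transition maps remain closed immersions (as base changes of closed immersions along $\underline{Q_i} \to \Spec \obF$), and the finite disjoint union over $i$ of such presentations is again an ind-scheme presentation. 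The argument for $\dot X_{\dot w}^G(b)$ is word-for-word identical.

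The main point worth highlighting is the profiniteness of $Q_i$: had one to work with the merely locally profinite group $G_{b_i}(k)$ directly, the sheaf $\underline{Q_i}$ would only be a countable disjoint union of affine schemes, and keeping track of the resulting ind-structure after multiplying by $X_w^{M_I}(b_i)$ would take more care. It is the properness of $G_{b_i}/P_{I,b_i}$, and thus the compactness of its $k$-points, that makes the reduction essentially bookkeeping.
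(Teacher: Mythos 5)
Your proof is correct and follows exactly the route the paper intends: the corollary is stated without proof as an immediate consequence of Theorem \ref{cor:disjoint_dec_of_Xwb_complete}, and your write-up supplies precisely the missing bookkeeping — profiniteness of $G_{b_i}(k)/P_{I,b_i}(k)$ (for which both of your justifications are valid, and whose ingredients appear in the paper's proofs of Proposition \ref{prop:geometric_profiniteness_of_bsigma_fixed_points} and Theorem \ref{cor:disjoint_dec_of_Xwb_complete}), representability of $\underline{Q_i}$ by an affine perfect scheme, and compatibility of the product with the filtered colimit presentation since $\underline{Q_i}$ is affine. No gaps.
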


\subsection{Frobenius-cyclic shift} 
The same arguments as in \cite[Proof of Thm.~1.6]{DeligneL_76} give the following lemma.

\begin{lm}\label{lm:Frobenius_cyclic_shift}
Assume $w = w_1w_2$, $w' = w_2 \sigma(w_1) \in W$, such that $\ell(w) = \ell(w_1) +\ell(w_2) = \ell(w')$. Then there is an isomorphism $X_w(b) \cong X_{w'}(b)$. If $\dot w$, $\dot w'$, $\dot w_1$, $\dot w_2 \in G(\breve k)$ are lifts of $w,w',w_1,w_2$, satisfying $\dot w = \dot w_1 \dot w_2$, $\dot w' = \dot w_2 \sigma(\dot w_1)$, then $\dot X_{\dot w}(b) \cong \dot X_{\dot w'}(b)$.
\end{lm}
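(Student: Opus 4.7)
The plan is to mimic the classical cyclic-shift argument of Deligne--Lusztig (proof of Thm.~1.6 in \cite{DeligneL_76}), now performed on arc-sheaves, by constructing an intermediate arc-sheaf $Y$ projecting isomorphically onto both $X_w(b)$ and $X_{w'}(b)$. I would define $Y$ as the arc-sheaf on $\Perff$ sending $R$ to
\[
Y(R) = \{(s,s') \in L(G/B)(R)^2 \colon s \stackrel{w_1}{\rar} s' \text{ and } s' \stackrel{w_2}{\rar} b\sigma(s)\};
\]
concretely, $Y$ is the fiber product of $L\caO(w_1)$ and $L\caO(w_2)$ along the maps to $L(G/B)^2$ induced by $(\id,\id)$ and $(\id,b\sigma)$, and it is an arc-sheaf by Theorem \ref{thm:LX_is_vsheaf} and Lemma \ref{lm:L_commutes_limits}.

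The key classical input I would invoke is that whenever $\ell(w_1 w_2) = \ell(w_1) + \ell(w_2)$, the concatenation morphism $\caO(w_1) \times_{G/B} \caO(w_2) \stackrel{\sim}{\longrar} \caO(w_1 w_2)$, $(B_1,B_2,B_3) \mapsto (B_1,B_3)$, is an isomorphism of $\breve k$-schemes (standard BN-pair fact), and applying $L$ preserves this isomorphism by Lemma \ref{lm:L_commutes_limits}. Consequently the first projection $\pi_1 \colon Y \to X_w(b)$, $(s,s') \mapsto s$, is an isomorphism: the datum of $s \in X_w(b)(R)$ is equivalent to $(s,b\sigma(s)) \in L\caO(w)(R)$, and the concatenation isomorphism supplies a unique intermediate $s'$ with $s \stackrel{w_1}{\rar} s' \stackrel{w_2}{\rar} b\sigma(s)$.

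For the symmetric projection, I would apply the automorphism $b\sigma \times b\sigma$ of $L(G/B)^2$ to the relation $(s,s') \in L\caO(w_1)$; since $B$ is $k$-rational and $b$ acts by left multiplication, this automorphism sends $L\caO(w_1)$ isomorphically onto $L\caO(\sigma(w_1))$, yielding $b\sigma(s) \stackrel{\sigma(w_1)}{\longrar} b\sigma(s')$. Combined with $s' \stackrel{w_2}{\rar} b\sigma(s)$ and the hypothesis $\ell(w_2\sigma(w_1)) = \ell(w_2)+\ell(w_1) = \ell(w')$, the concatenation isomorphism then gives $s' \stackrel{w'}{\rar} b\sigma(s')$, so $\pi_2 \colon Y \to X_{w'}(b)$, $(s,s') \mapsto s'$, is an isomorphism, and the desired map is $\pi_2 \circ \pi_1^{-1}$. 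The lifted version goes identically, working in $L(G/U)$ with $L\dot\caO(\dot w_i)$, where the hypothesis $\dot w = \dot w_1 \dot w_2$ (and the analogue for $\dot w'$) furnishes the concatenation isomorphism $\dot\caO(\dot w_1) \times_{G/U} \dot\caO(\dot w_2) \stackrel{\sim}{\longrar} \dot\caO(\dot w)$. The only delicate point is verifying these concatenation isomorphisms for the Bruhat cells and their refined $\dot\caO$-versions (including compatibility with the chosen lifts $\dot w_i$), but both are standard BN-pair facts; everything else in the argument is then formal.
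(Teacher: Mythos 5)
Your proposal is correct and follows essentially the same route as the paper: both rest on the length-additive concatenation isomorphisms $\caO(w_1)\times_{G/B}\caO(w_2)\cong\caO(w)$ and $\caO(w_2)\times_{G/B}\caO(\sigma(w_1))\cong\caO(w')$ (and their $\dot\caO$-analogues), transported through $L$, to produce the unique intermediate point $s'$ with $s \stackrel{w_1}{\rar} s' \stackrel{w_2}{\rar} b\sigma(s)$. The only cosmetic difference is that you package the construction as a correspondence $Y$ and check both projections are isomorphisms directly, whereas the paper defines the map $\tau \colon g \mapsto \tau(g)$ outright and deduces invertibility from the identity $\tau'\circ\tau = b\sigma$ together with Lemma \ref{lm:mult_by_bsigma}.
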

\begin{proof}
Let $R \in \Perff$. Let $g \in X_w(b)(R)$, so that $g \in L(G/B)(R)$ and $g \stackrel{w}{\rar} b\sigma(g)$. By assumption, $\caO(w) \cong \caO(w_1) \times_{G/B} \caO(w_2)$ and $\caO(w') \cong \caO(w_2) \times_{G/B} \caO(\sigma(w_1))$, so there exists a unique $\tau(g) \in L(G/B)(R)$ which fits into the commutative diagram of relative positions,
\begin{center}
\begin{tikzcd}
g \arrow[rr, "w"] \arrow[rd, "w_1"] &[-20pt] &[-20pt] b\sigma(g) \arrow[rd, "\sigma(w_1)"] &[-30pt] \\
&[-20pt] \tau(g) \arrow[rr, "w'"] \arrow[ru, ,"w_2"] &[-20pt] &[-20pt] b\sigma(\tau(g)) 
\end{tikzcd}
\end{center}
This defines a map $X_w(b) \rightarrow X_{w'}(b)$, $g \mapsto \tau(g)$. The same argument gives maps $\tau' \colon X_{w'}(b) \rightarrow X_{\sigma(w)}(b)$ and $\tau^{(\sigma)} \colon X_{\sigma(w)}(b) \rightarrow X_{\sigma(w')}(b)$. One checks that $\tau'(\tau(g)) = b\sigma(g)$, hence these maps fit into the commutative diagram
\begin{center}
\begin{tikzcd}
X_w(b) \arrow[r, "\tau"] \arrow[d, "\sigma"] & X_{w'}(b) \arrow[d, "\sigma"] \arrow[dl, "\tau'"'] \\
X_{\sigma(w)}(b) \arrow[r, "\tau^{(\sigma)}"]  & X_{\sigma(w')}(b)
\end{tikzcd}
\end{center}
As the vertical arrows are isomorphisms (Lemma \ref{lm:mult_by_bsigma}), also all others are. This proves the first claim. To prove the second claim we notice first that we have $U \dot w_1 U \dot w_2 U = U \dot w_1 \dot w_2 U$ (as subvarieties of $G$). 
Hence $\dot \caO(\dot w) \cong \dot \caO(\dot w_1) \times_{G/U} \dot \caO(\dot w_2)$, and similarly for $\dot w', \dot w_2, \sigma(\dot w_1)$. Now the same proof as for $X_w(b)$ also applies to $\dot X_{\dot w}(b)$. \qedhere
\end{proof}

Two elements $w,w' \in W$ are said to be \emph{$\sigma$-conjugate by a cyclic shift} (notation: $w \stackrel{\sigma}{\longleftrightarrow} w'$), if there are three sequences $(w_i)_{i=1}^{n+1}$, $(x_i)_{i=1}^n$, $(y_i)_{i=1}^n$ of elements of $W$ such that $w_1 = w$, $w_{n+1} = w'$ and for each $1\leq i \leq n$: $w_i = x_iy_i$, $w_{i+1} = y_i\sigma(x_i)$ and $\ell(w_i) = \ell(x_i) + \ell(y_i) = \ell(w_{i+1})$. 

A $\sigma$-conjugacy class $C$ in $W$ is called \emph{cuspidal} if $C \cap W_J = \varnothing$ for any proper subset $J \subsetneq S$. For a $\sigma$-conjugacy class $C \subseteq W$, let $C_{\rm min}$ denote the set of all elements of minimal length in $C$. One important property of the cyclic shift is the following result.

\begin{thm}[Theorem 3.2.7 of \cite{GeckP_00}, \S6 of \cite{GeckKP_00} and Theorem 7.5 of \cite{He_07}]\label{thm:all_min_length_elements_cyclic_shifts}
Let $C \subseteq W$ be a cuspidal $\sigma$-conjugacy class. Assume that $\overline{\supp}(w) = S$ for an (equivalently any) $w \in C_{\rm min}$. Then for all $w,w' \in C_{\rm min}$ we have $w \stackrel{\sigma}{\longleftrightarrow} w'$. 
\end{thm}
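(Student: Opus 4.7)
The plan is to follow the Geck--Pfeiffer strategy, working purely inside the Coxeter datum $(W, S, \sigma)$. I would first establish a preliminary reduction: for any $w \in W$ with $\sigma$-conjugacy class $C$, there is a sequence of $\sigma$-conjugations by simple reflections $w = w_0, w_1, \dots, w_N \in C$ with $w_N \in C_{\min}$, each step being either a cyclic shift (length preserved) or strictly length-decreasing. This is proved by induction on $\ell(w)$: one checks that if $w \notin C_{\min}$, there exists $s \in S$ with $\ell(sw\sigma(s)^{-1}) < \ell(w)$, by a standard length/exchange-condition argument.

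The heart of the matter is then: given $w, w' \in C_{\min}$, connect them by cyclic shifts. Write $w' = xw\sigma(x)^{-1}$ with $x \in W$ of minimal length, and induct on $\ell(x)$. Factor $x = s x'$ with $s \in S$ and $\ell(x) = 1 + \ell(x')$, and set $\widetilde w := x' w \sigma(x')^{-1} \in C$, so $w' = s \widetilde w \sigma(s)^{-1}$. By induction the pair $(w, \widetilde w)$ is handled once we show $\widetilde w \in C_{\min}$ and that $\widetilde w \stackrel{\sigma}{\longleftrightarrow} w'$. Both amount to the following local claim: whenever $v \in C_{\min}$ and $s \in S$ satisfy $s v \sigma(s)^{-1} \in C_{\min}$, then either $s v \sigma(s)^{-1} = v$ or the move $v \mapsto s v \sigma(s)^{-1}$ is literally a cyclic shift (i.e.\ either $\ell(sv) = \ell(v) - 1$ and $\ell(sv \cdot \sigma(s)) = \ell(v)$, or the mirror statement with $\sigma(s)$ on the right).

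To deduce this local claim I would argue by contradiction: if the move is neither trivial nor a cyclic shift, one exhibits a $\sigma$-conjugate of $v$ of strictly smaller length, contradicting $v \in C_{\min}$. The analysis reduces to a rank-$2$ calculation inside the parabolic $W_{\{s, \sigma(s)\}}$ (or, if $\sigma(s) = s$, inside $W_{\{s\}}$), together with an application of the hypothesis $\overline{\supp}(w) = S$ and cuspidality of $C$, which prevents $v$ from being trapped inside a proper $\sigma$-stable parabolic $W_J$.

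The main obstacle is precisely this local claim, because the naive induction sometimes forces intermediate elements $\widetilde w$ to land outside $C_{\min}$; routing such a detour back into $C_{\min}$ by cyclic shifts requires the cuspidality hypothesis in an essential way, and for the exceptional types a case-by-case verification (as performed in \cite{GeckP_00, GeckKP_00, He_07}) seems unavoidable. A type-free proof for classical types can be given by a direct combinatorial model of $W$ (signed permutations for $B_n, D_n$), but the exceptional types $E_6, E_7, E_8, F_4, G_2$ I would ultimately handle by computer-assisted enumeration of $C_{\min}$, as in the original references.
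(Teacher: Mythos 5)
This statement is not proved in the paper at all: it is quoted from the literature (Geck--Pfeiffer, Geck--Kim--Pfeiffer, He), so there is no internal argument to compare yours against. Judged on its own terms, your sketch reproduces the broad outline of the known strategy but does not amount to a proof, and two of its steps are off.

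First, in your preliminary reduction you claim that if $w \notin C_{\rm min}$ there is a single $s \in S$ with $\ell(sw\sigma(s)^{-1}) < \ell(w)$. This is false in general: one may first have to perform several length-\emph{preserving} elementary $\sigma$-conjugations before any length-decreasing one becomes available. The correct statement is \cite[Thm.~3.2.9]{GeckP_00} resp. \cite[Thm.~7.6]{He_07} (which the paper invokes separately, in the proof of Corollary \ref{cor:non_emptyness}), and it is itself a nontrivial theorem. Second, and more seriously, your ``local claim'' is not where the difficulty lies: if $v$ and $sv\sigma(s)^{-1}$ both lie in $C_{\rm min}$ they have equal length, and then the exchange lemma \cite[Lm.~1.6.4]{DeligneL_76} already forces the move to be a cyclic shift or the identity --- no cuspidality hypothesis or rank-$2$ analysis is needed there. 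The genuine obstruction, which you name in your final paragraph but do not overcome, is that in the induction on $\ell(x)$ the intermediate element $\widetilde w = x'w\sigma(x')^{-1}$ typically has length strictly greater than $\ell(w)$, hence leaves $C_{\rm min}$, so the induction hypothesis does not apply and the ``local claim'' is never in force. No uniform argument closing this gap is known; that is exactly why the cited references argue type by type (signed-permutation models in classical types, computer enumeration in exceptional ones). Since your proposal ultimately defers precisely these cases to the same references the paper cites, it is an expansion of the citation --- with the two inaccuracies above --- rather than an independent proof.
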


We have the following corollary to Theorems \ref{thm:all_min_length_elements_cyclic_shifts} and \ref{cor:disjoint_dec_of_Xwb_complete}.

\begin{cor}\label{cor:isomorphism_class_depends_only_on_Cmin}
Let $b \in G(\breve k)$ and let $C$ be a conjugacy class in $W$. All $X_w(b)$ for $w$ varying through $C_{\rm min}$ are mutually $G_b(k)$-equivariantly isomorphic.
\end{cor}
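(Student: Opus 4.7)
My plan proceeds in three stages: reduce to the Levi via the disjoint decomposition, apply the cyclic-shift theorem inside the Levi, and check equivariance.

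First, I would invoke a standard fact from the combinatorics of minimal length elements (due to Geck--Pfeiffer/He, implicit in the proof of Theorem~\ref{thm:all_min_length_elements_cyclic_shifts}): every $w \in C_{\min}$ has the same $\overline{\supp}(w) =: I$, and $C \cap W_I$ is a single $\sigma$-conjugacy class $C_I$ in $W_I$, cuspidal in $W_I$, such that $C_{\min} \subseteq (C_I)_{\min}$ and each such $w$ satisfies $\overline{\supp}_{W_I}(w) = I$. This is the only combinatorial input beyond what is already quoted in the paper.

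Next, for any $w \in C_{\min}$, Theorem~\ref{cor:disjoint_dec_of_Xwb_complete} applied with $I = \overline{\supp}(w)$ yields a $\underline{G_b(k)}$-equivariant isomorphism
\[
X_w^G(b) \;\cong\; \coprod_{i=1}^{r} \underline{G_{b_i}(k)/P_{I,b_i}(k)} \times X_w^{M_I}(b_i),
\]
in which the Levi $M_I$, the finite family $\{b_i\}$, and the outer factor depend only on $[b]_G$ and $I$, not on the specific $w \in C_{\min}$. Thus, given a second element $w' \in C_{\min}$, the problem reduces to constructing, for each $i$, a $\underline{M_{I,b_i}(k)}$-equivariant isomorphism $X_w^{M_I}(b_i) \cong X_{w'}^{M_I}(b_i)$.

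Inside $M_I$, the elements $w, w'$ lie in the cuspidal $\sigma$-conjugacy class $C_I$ of $W_I$ and are of minimal length with $\overline{\supp}_{W_I}(\cdot) = I$ equal to the full simple-reflection set of $M_I$; hence Theorem~\ref{thm:all_min_length_elements_cyclic_shifts} applies and gives a chain $w = w_{(0)} \stackrel{\sigma}{\longleftrightarrow} w_{(1)} \stackrel{\sigma}{\longleftrightarrow} \cdots \stackrel{\sigma}{\longleftrightarrow} w_{(m)} = w'$ of Frobenius-cyclic shifts in $W_I$. Iterating Lemma~\ref{lm:Frobenius_cyclic_shift} (applied to $M_I$ and the parameter $b_i$) along this chain yields the desired isomorphism. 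Equivariance of each individual step is immediate from the construction of $\tau$ in the proof of Lemma~\ref{lm:Frobenius_cyclic_shift}: the map $\tau(g)$ is characterized by a commutative diagram of relative positions involving only $g$ and $b\sigma(g)$, so if an element $h \in \underline{M_{I,b_i}(k)}$ (which by Proposition~\ref{lm:inclusion_into_LGbsigma} commutes with $b_i\sigma$) acts by left multiplication, then $\tau(hg) = h\tau(g)$.

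The expected main obstacle is not geometric but combinatorial bookkeeping: one must pin down carefully that $\overline{\supp}$ is constant on $C_{\min}$ and that passing to $W_I$ places us in the situation where Theorem~\ref{thm:all_min_length_elements_cyclic_shifts} is applicable. Once this is set up, the geometric content is essentially free, being packaged entirely in Theorem~\ref{cor:disjoint_dec_of_Xwb_complete} and Lemma~\ref{lm:Frobenius_cyclic_shift}.
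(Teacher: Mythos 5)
Your overall strategy (decompose via Theorem~\ref{cor:disjoint_dec_of_Xwb_complete}, then use cyclic shifts and Lemma~\ref{lm:Frobenius_cyclic_shift} inside the Levi, with equivariance coming for free from the construction of $\tau$) is the same as the paper's, but your first step contains a genuine error: it is \emph{not} true that $\overline{\supp}(w)$ is constant on $C_{\rm min}$. Already for $W$ of type $A_2$ with trivial $\sigma$, the conjugacy class of a simple reflection has $C_{\rm min}=\{s_1,s_2\}$ with $\overline{\supp}(s_1)=\{s_1\}\neq\{s_2\}=\overline{\supp}(s_2)$; moreover $s_1$ and $s_2$ are not related by any Frobenius-cyclic shift (the cyclic-shift class of a simple reflection is a singleton when $\sigma$ is trivial), so your chain of shifts inside a single $W_I$ simply does not exist. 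The "combinatorial bookkeeping" you defer to is precisely the point where the argument breaks.

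The paper repairs this by invoking the parametrization of $\sigma$-conjugacy classes (Theorem~\ref{thm:non_cuspidal_classes_parametrization}, due to Geck--Pfeiffer and Adams--He--Nie): the pairs $(\overline{\supp}(w),\,C\cap W_{\overline{\supp}(w)})$ for $w\in C_{\rm min}$ form a single orbit under conjugation by (a subgroup of) $W$. So for $w_1,w_2\in C_{\rm min}$ with supports $J_1,J_2$ one finds $x\in W$ with $J_2=J_1^x$ and $(C\cap W_{J_1})^x=C\cap W_{J_2}$; conjugation by $x$ then gives an isomorphism $M_{J_1}\cong M_{J_2}$ carrying the cuspidal class $C\cap W_{J_1}$ to $C\cap W_{J_2}$ and inducing an identification of the Levi-level spaces $X^{M_{J_1}}_{w_1}(b_{1,i})\cong X^{M_{J_2}}_{w_2}(b_{2,i})$ (after permuting the $b_i$'s), and only \emph{then} does the cuspidal cyclic-shift argument apply within a fixed $W_{J}$. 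You need to add this conjugation step; without it the proof only covers the case where all elements of $C_{\rm min}$ happen to share the same support (e.g.\ the cuspidal case).
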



\begin{proof}
If the class $C$ is cuspidal, this follows from Lemma \ref{lm:Frobenius_cyclic_shift} and Theorem \ref{thm:all_min_length_elements_cyclic_shifts}. If $C$ is arbitrary, we need the following result:

\begin{thm}[Theorem 3.2.12 of \cite{GeckP_00} and Theorem 8.2 of \cite{AdamsHN_20}] \label{thm:non_cuspidal_classes_parametrization}
The map 
\[
C \mapsto \{(\overline{\supp}(w),C \cap W_{\overline{\supp}(w)}) \colon w \in C_{\rm min}\}
\]
defines a bijection between $\sigma$-conjugacy classes of $W$ and the set of pairs $(J,D)$ consisting of a $\sigma$-stable subset $J \subseteq S$ and cuspidal $\sigma$-conjugacy class $D$ in $W_J$, modulo conjugation by (a certain subgroup of) $W$.
\end{thm}

For $j=1,2$, let $w_j \in C_{\rm min}$. Put $J_j = \overline{\supp}(w_j)$. By Theorem \ref{thm:non_cuspidal_classes_parametrization}, $C_{\rm min} \cap W_{J_j}$ is a single cuspidal conjugacy class of $W_{J_j}$, of which $w_j$ is an element of minimal length; moreover, there exists some element $x \in W$ such that $J_2 = J_1^x$ and $(C\cap W_{J_1})^x = C \cap W_{J_2}$. By Theorem \ref{cor:disjoint_dec_of_Xwb_complete}, 
\[X_{w_j}(b) \cong \coprod_{i=1}^r \underline{G_{b_{j,i}}(k)/P_{J_j,b_{j,i}}(k)} \times X_{w_j}^{M_{J_j}}(b_{j,i}) \]
where $[b]_G = \coprod_{i=1}^r [b_{j,i}]_{P_{J_j}}$.
Now, conjugation by $x$ defines an isomorphism $M_{J_1} \cong M_{J_2}$, which maps the cuspidal class $C\cap W_{J_1}$ to $C \cap W_{J_2}$,
thus defining an isomorphism between the sheaves $X_{w_1}^{M_{J_1}}(b_{1,i})$ and $X_{w_2}^{M_{J_2}}(b_{2,i})$ (permuting the indices $i$, if necessary), which by the cuspidal case discussed first, only depend on $C_ {\rm min} \cap W_{J_j}$, not on $w_j$. 
\end{proof}

\section{Ind-representability}\label{sec:bonn_roq}

We keep the setup of \S\ref{sec:lDLS} and study representability properties of the sheaves $X_w(b)$, $\dot X_{\dot w}(b)$. We closely follow the strategy of \cite{BonnafeR_08}, where in the setup of classical Deligne--Lusztig theory affineness of certain Deligne--Lusztig varieties is shown. Here the claim ``$X_w$ affine'' is replaced by ``$X_w(b)$ ind-representable''. The idea is that in \cite{BonnafeR_08} the result follows from affineness of certain subvarieties $\caO \subseteq (G/B)^d$, whereas in our setup the affineness of $\caO$ gives that $L\caO$ is ind-representable by Proposition \ref{prop:loop_of_affine_is_indproaffpfp}, which implies our result.

The \emph{Braid monoid} $B^+$ attached to $(W,S)$ is the monoid with the presentation
\[
B^+ = \left\langle (\underline{x})_{x \in W} \colon \forall x,x' \in W, \ell(xx') = \ell(x) + \ell(x) \Rar \underline{xx'} = \underline{x}  \,\underline{x}' \right\rangle.
\]
The automorphism $\sigma$ of $W$ extends to an automorphism of $B^+$. For $I \subseteq S$, we denote by $w_I$ the longest element in $W_I$, and by $\underline{w}_I$ the corresponding element of the $B^+$.

\begin{thm}\label{thm:ind_rep_via_Braid}
Let $I$ be an $\sigma$-stable subset of $S$ and let $w \in W_I$ be such that there exists an integer $d>0$ and $a \in B^+$ with $\underline w \sigma(\underline w) \dots \sigma^{d-1}(\underline w) = \underline{w}_I a$. Then for all $b \in G(\breve k)$ and all $\dot w$ lifting $w$, the arc-sheaves $X_w(b)$, $\dot X_{\dot w}(b)$ are representable by ind-schemes.
\end{thm}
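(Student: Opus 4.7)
The proof adapts the strategy of Bonnafé--Rouquier \cite{BonnafeR_08} from classical Deligne--Lusztig theory, where the parallel statement asserts affineness. The role of affineness is here replaced by ind-representability, the translation being provided by Proposition \ref{prop:loop_of_affine_is_indproaffpfp}, which sends affine schemes of finite type to ind-schemes under the loop functor.

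First, by Corollary \ref{cor:representability_reduction_to_full_support} applied to the standard Levi $M_I$, we reduce to the case $G = M_I$, in which $w_I = w_0$ is the longest element of the full Weyl group. The key observation is then that $\caO(w_0) \subseteq (G/B)^2$ is an affine variety: the stabilizer in $G$ of the base point $(B, w_0 B)$ is $B \cap w_0 B w_0^{-1} = T$, so $\caO(w_0) \cong G/T$, which is affine by Matsushima's criterion (as $T$ is reductive). Hence $L\caO(w_0)$ is ind-representable by Proposition \ref{prop:loop_of_affine_is_indproaffpfp}.

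Next, I would iterate the defining condition of $X_w(b)$ using the Frobenius cyclic shift (Lemma \ref{lm:Frobenius_cyclic_shift}) in conjunction with the braid identity $\underline w \sigma(\underline w) \cdots \sigma^{d-1}(\underline w) = \underline{w_0} \cdot a$. Writing $a = \underline{s}_{j_1}\cdots \underline{s}_{j_r}$ as a product of simple reflections in $B^+$, this produces an isomorphism between $X_w(b)$ and a Bott--Samelson type chain arc-sheaf $Z$ parametrizing tuples $(g_0, g_1, \ldots, g_{r+1}) \in L(G/B)^{r+2}$ subject to $g_0 \stackrel{w_0}{\rightarrow} g_1$, $g_i \stackrel{s_{j_i}}{\rightarrow} g_{i+1}$ for $1 \le i \le r$, and a Frobenius-twisted boundary condition $g_{r+1} = b^{(d)}\sigma^d(g_0)$, where $b^{(d)} = b\sigma(b)\cdots\sigma^{d-1}(b)$. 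The matching of $B^+$-lengths at each factorization step ensures that the chain is uniquely determined by its endpoints, yielding $Z \cong X_w(b)$ via projection to the first coordinate.

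Then $Z$ is a closed sub-arc-sheaf of the iterated loop fiber product $L\caO(w_0) \times_{L(G/B)} L\caO(s_{j_1}) \times_{L(G/B)} \cdots \times_{L(G/B)} L\caO(s_{j_r})$, the closed immersion coming from the graph of $b^{(d)}\sigma^d$ via Lemma \ref{lm:representability_graph}. The first projection $\caO(s_j) \to G/B$ is an affine morphism with fibers $Bs_jB/B \cong \bA^1$, so by successive base change the iterated fiber product $\caO(w_0) \times_{G/B} \caO(s_{j_1}) \times_{G/B} \cdots \times_{G/B} \caO(s_{j_r})$ is affine over the affine base $\caO(w_0)$, hence affine of finite type. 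Using that $L$ commutes with fiber products (Lemma \ref{lm:L_commutes_limits}) and Proposition \ref{prop:loop_of_affine_is_indproaffpfp}, the iterated loop fiber product is ind-representable, so $X_w(b) \cong Z$ is a closed sub-arc-sheaf of an ind-scheme, hence an ind-scheme itself. The argument for $\dot X_{\dot w}(b)$ is parallel, with $G/U$, $\dot\caO(\dot w)$ and the $\dot w$-version of the cyclic shift playing the corresponding roles. The hard part will be the third step: rigorously verifying the isomorphism $Z \cong X_w(b)$ through iterated Deligne--Lusztig reductions, and in particular tracking the Frobenius twists through the cyclic-shift iteration so that $b^{(d)}$ emerges as the natural boundary cocycle.
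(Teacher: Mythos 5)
Your overall architecture is the same as the paper's: reduce to $I=S$ via Corollary \ref{cor:representability_reduction_to_full_support}, realize $X_w(b)$ inside the loop space of a chain variety whose affineness follows from the braid condition $\underline w \sigma(\underline w)\cdots\sigma^{d-1}(\underline w)=\underline w_0 a$ (this is exactly \cite[Prop.~3]{BonnafeR_08}, i.e.\ Proposition \ref{prop:BonRouq_affine_Ow}; your Matsushima argument for $\caO(w_0)\cong G/T$ and the iterated affine $\bA^1$-fibrations reproduce its proof), and conclude with Proposition \ref{prop:loop_of_affine_is_indproaffpfp} and Lemma \ref{lm:representability_graph}. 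However, your third step contains a genuine error. You define $Z$ by imposing only the boundary condition $g_{r+1}=b^{(d)}\sigma^d(g_0)$ and assert that ``the matching of $B^+$-lengths at each factorization step ensures that the chain is uniquely determined by its endpoints.'' This is false precisely in the situation at hand: a chain of type $(x_1,\dots,x_r)$ is determined by its endpoints only when $\ell(x_1\cdots x_r)=\sum_i\ell(x_i)$, whereas here the relation $\underline w\sigma(\underline w)\cdots\sigma^{d-1}(\underline w)=\underline w_0 a$ is an identity in the braid monoid and the product $w\sigma(w)\cdots\sigma^{d-1}(w)$ in $W$ has length far smaller than $d\,\ell(w)$ (in the application to Corollary \ref{thm:ind_rep_min_length} it is even trivial). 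Consequently the fiber of the chain space over a fixed pair of endpoints is positive-dimensional, and $Z$ strictly contains $X_w(b)$: a point of $X_w(b)$ is a chain whose \emph{intermediate} vertices are forced to be $(b\sigma)^i(g_0)$ for $1\le i\le d-1$, not merely one whose last vertex is $b^{(d)}\sigma^d(g_0)$.

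The gap is repairable, and the repair is exactly what the paper does with the map $\Delta_d$: instead of one endpoint equation, impose all $d$ graph conditions $g_i=(b\sigma)^i(g_0)$ on the chain of type $(w,\sigma(w),\dots,\sigma^{d-1}(w))$ (iterating the relative-position condition uses Lemma \ref{lm:mult_by_bsigma}, not the cyclic shift Lemma \ref{lm:Frobenius_cyclic_shift} you cite). Each such condition is representable by closed immersions by Lemma \ref{lm:representability_graph}, so $X_w(b)$ is a closed subfunctor of $L\caO(w,\sigma(w),\dots,\sigma^{d-1}(w))$, which is an ind-scheme by your (correct) affineness argument together with Proposition \ref{prop:loop_of_affine_is_indproaffpfp}; closed subfunctors of ind-schemes are ind-schemes, which finishes the proof. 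With that substitution your argument coincides with the paper's.
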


\begin{cor}\label{thm:ind_rep_min_length}
Let $w \in W$ be of minimal length in its $\sigma$-conjugacy class. Then for all $b \in G(\breve k)$, and all lifts $\dot w$ of $w$, the arc-sheaves $X_w(b)$, $\dot X_{\dot w}(b)$ are representable by ind-schemes.
\end{cor}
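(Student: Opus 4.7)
The plan is to derive the corollary directly from Theorem \ref{thm:ind_rep_via_Braid} by verifying its hypothesis for any $w$ of minimal length in its $\sigma$-conjugacy class $C$. The proof has two main steps: a reduction to the case of full support, followed by an invocation of a known structural result about the Braid monoid.

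First I would reduce to the case $\overline{\supp}(w) = S$. Set $I = \overline{\supp}(w) \subseteq S$; this is a $\sigma$-stable subset of $S$, and $w \in W_I$. Theorem \ref{cor:disjoint_dec_of_Xwb_complete} (via Proposition \ref{prop:disj_decomposition}) reduces ind-representability of $X_w^G(b)$ and $\dot X_{\dot w}^G(b)$ to that of $X_w^{M_I}(b_i)$ and $\dot X_{\dot w}^{M_I}(b_i)$ for the finitely many representatives $b_i \in P_I(\breve k)$; this is precisely the content of Corollary \ref{cor:representability_reduction_to_full_support}. Since lengths in $W_I$ relative to $I$ agree with lengths in $W$ restricted to $W_I$, and since the $\sigma$-conjugacy class of $w$ inside $W_I$ contains $w$ as a minimal length element, we may without loss of generality pass to the Coxeter system $(W_I, I)$ inside $M_I$, and thereby assume $\overline{\supp}(w) = S$ from the outset.

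Second, under this assumption Theorem \ref{thm:non_cuspidal_classes_parametrization} (which identifies $\sigma$-conjugacy classes via the data $(\overline{\supp}(w), C \cap W_{\overline{\supp}(w)})$) forces the $\sigma$-conjugacy class $C$ of $w$ in $W$ to be cuspidal. Now I would invoke the theorem of Geck--Kim--Pfeiffer and He on minimal length elements in cuspidal $\sigma$-conjugacy classes, which asserts the existence of an integer $d \geq 1$ and an element $a \in B^+$ such that
\[
\underline{w}\,\sigma(\underline{w})\,\sigma^{2}(\underline{w}) \cdots \sigma^{d-1}(\underline{w}) \;=\; \underline{w}_S \cdot a
\]
holds in the Braid monoid $B^+$. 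This is exactly the hypothesis of Theorem \ref{thm:ind_rep_via_Braid} taken with $I = S$, which then yields the ind-representability of both $X_w(b)$ and $\dot X_{\dot w}(b)$.

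The whole argument is a formal bookkeeping of previously established results, so there is no substantive obstacle beyond invoking the correct external input; the most delicate point is ensuring that the Braid-monoid factorization for minimal length elements in cuspidal classes is available in the generality of an arbitrary finite Coxeter system with a length-preserving automorphism $\sigma$, but this is precisely what the cited body of work on minimal length elements guarantees.
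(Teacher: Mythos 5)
Your overall strategy (reduce to full support, then feed a braid-monoid factorization into Theorem \ref{thm:ind_rep_via_Braid}) is the right one and matches the paper's, but there is a genuine gap at the decisive step. The results you invoke on minimal length elements in cuspidal classes (Geck--Michel, Geck--Kim--Pfeiffer, He, as cited via \cite[Thm.~6]{BonnafeR_08}) do \emph{not} assert that every $w \in C_{\rm min}$ satisfies $\underline{w}\,\sigma(\underline{w})\cdots\sigma^{d-1}(\underline{w}) = \underline{w}_S\, a$; they assert only the \emph{existence} of a ``good'' element in $C_{\rm min}$, i.e.\ of at least one $w'' \in C_{\rm min}$ with $\underline{w}''\sigma(\underline{w}'')\cdots\sigma^{d-1}(\underline{w}'') = \underline{w}_{I_1}^2\cdots\underline{w}_{I_r}^2$ (whence, after checking $\sigma$-stability of $I_1$ as in \cite[Prop.~4]{BonnafeR_08}, the hypothesis of Theorem \ref{thm:ind_rep_via_Braid} holds for $w''$). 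For an arbitrary minimal length element your hypothesis is simply not supplied by the cited input, so Theorem \ref{thm:ind_rep_via_Braid} cannot be applied directly.

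The missing ingredient is the transfer from the good element to all other elements of $C_{\rm min}$: by Theorem \ref{thm:all_min_length_elements_cyclic_shifts}, in a cuspidal class with $\overline{\supp}(w) = S$ any two minimal length elements are related by Frobenius-cyclic shifts, and by Lemma \ref{lm:Frobenius_cyclic_shift} cyclic shifts induce isomorphisms $X_w(b) \cong X_{w'}(b)$ (and, choosing compatible lifts $\dot w = \dot w_1 \dot w_2$, also $\dot X_{\dot w}(b) \cong \dot X_{\dot w'}(b)$). This is how the paper propagates ind-representability from the good element to all of $C_{\rm min}$, and it is also why the $\dot X_{\dot w}(b)$ case needs the extra (easy) observation that cyclic shifts can be lifted to representatives in $N_G(T)(\breve k)$. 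There does exist a stronger theorem of He--Nie to the effect that \emph{all} minimal length elements are good, which would rescue your argument as stated, but that is a substantially deeper result than the existence statement you cite, and you would need to invoke it explicitly (in its twisted form) for the argument to be complete. Your reduction to full support is fine, modulo noting that Theorem \ref{thm:non_cuspidal_classes_parametrization} is what guarantees that $C \cap W_I$ is a single cuspidal class of $W_I$ in which $w$ remains of minimal length.
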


Prior to the proofs of Theorem \ref{thm:ind_rep_via_Braid} and Corollary \ref{thm:ind_rep_min_length} we note that as $G_0$ is unramified, it has a hyperspecial model $\cG$ over $\caO_k$ whose special fiber $\cG \otimes_{\caO_k} \bF_q$ is a reductive group over $\bF_q$, such that the Weyl group of its base change to $\obF$ is equal to the Weyl group of $G$. In particular, all combinatorial arguments from \cite{BonnafeR_08} carry over to our situation. 

\begin{rem}\label{rem:difference_to_Bonnafe_Rouquier}
A difference to \cite{BonnafeR_08} is that we need to give a separate proof for $\dot X_{\dot w}(b)$, whereas in the classical Deligne--Lusztig theory the equivalence ``$X_w$ affine $\LRar$ $\dot X_{\dot w}$ affine'' is immediate.
\end{rem}

\subsection{Proof of Theorem \ref{thm:ind_rep_via_Braid}}
The proof goes along the lines of the proof of \cite[Thm.~B]{BonnafeR_08}. For a sequence $(x_1,\dots,x_r)$ of elements of $W$ define
\begin{align*}
\caO(x_1,\dots,x_r) := \caO(x_1) \times_{G/B} \dots \times_{G/B} \caO(x_r) 
\end{align*}
If $y_1, \dots, y_s \in W$ such that $\underline{x}_1 \underline{x}_2 \dots \underline{x}_r = \underline{y}_1 \underline{y}_2 \dots \underline{y}_s$ in $B^+$, then $\caO(x_1, \dots, x_r) \cong \caO(y_1, \dots, y_s)$ (even canonically, see \cite[Application 2]{Deligne_97}). For lifts $\dot x_1, \dots, \dot x_r \in N_G(T)(\breve k)$ of $x_1, \dots, x_r$ put
\begin{align*}
\dot\caO(\dot x_1,\dots,\dot x_r) &:= \dot \caO(\dot x_1) \times_{G/U} \dots \times_{G/U} \dot \caO(\dot x_r) \\ 
&= \{(g_0 U, g_1 U, \dots, g_r U) \in (G/U)^{r+1} \colon \forall 1\leq i\leq r \colon g_i^{-1}g_{i+1} \in U\dot x_i U\}.
\end{align*}
Then $T$ acts on $\dot\caO(\dot x_1,\dots,\dot x_r)$ by
\[
T \ni t \colon (g_0U, g_1U, \dots, g_r U) \mapsto (g_0tU, g_1 {\rm Ad}(x_1)^{-1}(t)U, \dots, g_1 {\rm Ad}(x_r\dots x_1)^{-1}(t)U)
\]
As in \cite[Proof of Prop.~3]{BonnafeR_08}, the natural map 
\begin{align*}
\dot\caO(\dot{x}_1, \dots,\dot{x}_r) &\rar \caO(x_1,\dots,x_r) \\
(g_0U, \dots, g_r U) &\mapsto \caO(g_0B,\dots, g_rB)
\end{align*}
identifies $\caO(x_1,\dots,x_r)$ with the quotient of $\dot\caO(\dot{x}_1, \dots,\dot{x}_r)$ by the action of $T$. We then have: 
\begin{equation}\label{eq:affine_equiv_affine}
\dot\caO(\dot{x}_1, \dots,\dot{x}_r) \text{ affine } \LRar \caO(x_1,\dots,x_r) \text{ affine}
\end{equation}
($\Leftarrow$ as a (geometric) quotient maps are affine; $\Rar$ by \cite[Cor.~8.21]{Borel_91}). Let $w_0 \in W$ denote the longest element. 
\begin{prop}\label{prop:BonRouq_affine_Ow} (see \cite[Prop.~3]{BonnafeR_08})
If there exists $v \in B^+$, such that $\underline{x}_1\cdots\underline{x}_r = \underline{w}_0 v$, then $\caO(x_1,\dots,x_r)$ is affine. If $\dot{x}_i$ are lifts of the $x_i$ ($1\leq i \leq r$), then also $\dot\caO(\dot x_1,\dots,\dot x_r)$ is affine.
\end{prop}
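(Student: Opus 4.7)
The plan is to mimic \cite[Prop.~3]{BonnafeR_08}: prove affineness of $\caO(x_1,\dots,x_r)$ first, and then deduce the $\dot\caO$-version from the equivalence \eqref{eq:affine_equiv_affine}.

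\emph{Reduction.} Using the canonical isomorphism recalled above the proposition (any two factorisations in $B^+$ yielding the same element give canonically isomorphic $\caO$'s), I fix a decomposition $v=\underline{v}_1\cdots\underline{v}_m$ with $v_i\in S$, whence
\[
\caO(x_1,\dots,x_r)\;\cong\;\caO(w_0,v_1,\dots,v_m).
\]
It thus suffices to show that the right-hand side is affine.

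\emph{Affineness of $\caO(w_0)$.} The diagonal $G$-action on $(G/B)^2$ is transitive on $\caO(w_0)$, and the stabiliser of $(eB,\dot w_0B)$ equals $B\cap\dot w_0B\dot w_0^{-1}=T$. Hence $\caO(w_0)\cong G/T$ as $G$-varieties, which is affine by Matsushima's theorem, since $T$ is reductive.

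\emph{Affine-morphism argument.} Writing
\[
\caO(w_0,v_1,\dots,v_m)\;\cong\;\caO(w_0)\times_{G/B}\caO(v_1,\dots,v_m),
\]
with the fibre product formed via the second projection of $\caO(w_0)$ and the first projection of $\caO(v_1,\dots,v_m)$, the problem is reduced to showing that the first projection $\caO(v_1,\dots,v_m)\to G/B$ is affine. This map factors as the chain of forgetful maps $\caO(v_1,\dots,v_i)\to\caO(v_1,\dots,v_{i-1})$; each of these is the pullback, along the last flag, of the natural map $G/B\times_{G/P_{v_i}}G/B\to G/B$ restricted to the complement of the diagonal section, hence a Zariski-locally trivial $\bA^1$-bundle and in particular an affine morphism. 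Since compositions and base changes of affine morphisms are affine, and the target $\caO(w_0)$ is affine, we conclude that $\caO(w_0,v_1,\dots,v_m)$ is affine. The assertion for $\dot\caO$ then follows from \eqref{eq:affine_equiv_affine}.

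\emph{Main obstacle.} The only non-formal input is the $\bA^1$-bundle claim, which rests on the standard fact that for $s\in S$ the map $G/B\to G/P_s$ is a Zariski-locally trivial $\bP^1$-bundle, and that inside each fibre the relative-position-$s$ locus is the complement of a single point. This is well-known, so once the Braid-monoid reduction is in place, the rest of the proof is essentially formal.
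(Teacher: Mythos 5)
Your proof is correct and is essentially the paper's approach: the paper simply invokes \cite[Prop.~3]{BonnafeR_08} \emph{mutatis mutandis} for the first claim and uses \eqref{eq:affine_equiv_affine} for the second, and what you have written out (reduce to $\caO(w_0,v_1,\dots,v_m)$ via the braid-monoid isomorphism, note $\caO(w_0)\cong G/T$ is affine, and observe that the remaining projection is an iterated Zariski-locally trivial $\bA^1$-fibration, hence an affine morphism) is precisely the Bonnaf\'e--Rouquier argument being cited.
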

\begin{proof} The proof from \cite[Prop.~3]{BonnafeR_08} that $\caO(x_1,\dots,x_r)$ is affine applies \emph{mutatis mutandis} (in fact, the setup there is over a finite field $\bF_q$ instead of the local field $k$, but this does not affect anything). The claim for $\dot \caO(\dot x_1, \dots,\dot x_r)$ follows from the equivalence \eqref{eq:affine_equiv_affine}.
\end{proof}

Now we can prove Theorem \ref{thm:ind_rep_via_Braid}. By Corollary \ref{cor:representability_reduction_to_full_support}, we may assume that $I = S$. Now, the point (same as in \cite{BonnafeR_08}) is that $X_w(b)$ possesses also a slightly different presentation, which is more convenient for our purposes. Consider the morphism $\Delta_d \colon L(G/B) \rar L(G/B)^d$ given by $s \mapsto (s,b\sigma(s),(b\sigma)^2(s), \dots, (b\sigma)^{d-1}(s))$. Then the diagram
\[
\xymatrix{
X_w(b) \ar[r] \ar[d] & L\caO(w,\sigma(w),\dots, \sigma^{d-1}(w)) \ar[d]\\
L(G/B) \ar[r]^-{\Delta_d} & L(G/B)^d
}
\]
where the upper horizontal map is given by $s \mapsto (s,b\sigma(s), \dots, (b\sigma)^{d-1}(s))$ is Cartesian. Now $\caO(w,\sigma(w),\dots, \sigma^{d-1}(w))$ is of finite type over $k$, and by Proposition \ref{prop:BonRouq_affine_Ow} also affine, so \\ $L\caO(w,\sigma(w),\dots, \sigma^{d-1}(w))$ is an ind-scheme by Proposition \ref{prop:loop_of_affine_is_indproaffpfp}. Now $\Delta_d$ is by Lemma \ref{lm:representability_graph} representable by closed immersions. Hence $X_w(b)$ is closed sub-ind-scheme of \\ $L\caO(w,\sigma(w),\dots, \sigma^{d-1}(w))$. The same proof (with $G/B$ replaced by $G/U$ and \\ $\caO(w,\sigma(w),\dots,\sigma^{d-1}(w))$ by $\dot\caO(\dot w,\sigma(\dot w),\dots,\sigma^{d-1}(\dot w))$ applies to $\dot X_{\dot w}(b)$.

\subsection{Proof of Corollary \ref{thm:ind_rep_min_length}}

Corollary \ref{thm:ind_rep_min_length} follows now from Theorem \ref{thm:ind_rep_via_Braid} in a similar way as \cite[Thm.~A]{BonnafeR_08} follows from \cite[Thm.~B]{BonnafeR_08}. Namely, let $C$ be a $\sigma$-conjugacy class in $W$, and let $C_{\rm min}$ denote the set of elements of minimal length in $C$. Let $d$ be the smallest positive integer $k$ such that $w\sigma(w) \sigma^2(w) \dots \sigma(w)^{k-1} = 1$ and $\sigma^k$ acts trivially on $W$. 

First, we prove the theorem for good elements in $C_{\rm min}$ and their lifts. An element $w \in C_{\rm min}$ is called \emph{good}, if there exists a sequence of subsets $I_r \subseteq I_{r-1} \subseteq \dots \subseteq I_1 \subseteq S$, such that $\underline{w} \sigma(\underline{w}) \dots \sigma^{d-1}(\underline w) = \underline{w}_{I_1}^2 \underline{w}_{I_2}^2 \dots \underline{w}_{I_r}^2$. Now Theorem \ref{thm:ind_rep_via_Braid} applies to good $w$ and shows the ind-representability of $X_w(b)$, once it is proven that $I_1$ is $\sigma$-stable. But this is the case (see \cite[Prop.~4]{BonnafeR_08}). Also, if $\dot w\in G(\breve k)$ is any lift of a good element $w \in C_{\rm min}$, then Theorem \ref{thm:ind_rep_via_Braid} also shows ind-representability of $\dot X_{\dot w}(b)$.

Now we show Corollary \ref{thm:ind_rep_min_length} for all $w\in C_{\rm min}$. By Corollary \ref{cor:representability_reduction_to_full_support} we may assume that $\overline{\supp}(w) = S$. By the above paragraph, Theorem \ref{thm:ind_rep_min_length} holds for all good $w \in C_{\rm min}$. Thus by Theorem \ref{thm:all_min_length_elements_cyclic_shifts} and Lemma \ref{lm:Frobenius_cyclic_shift} it remains to show that there always exists a good element in $C_{\rm min}$. But this is a result of Geck--Michel, Geck--Kim--Pfeiffer and He (see \cite[Thm.~6]{BonnafeR_08} and the references there). This finishes the proof of Corollary \ref{thm:ind_rep_min_length} for $X_w(b)$.

Finally, let $w \in C_{\rm min}$ and let $\dot w$ be any lift of $w$. It remains to show that $\dot X_{\dot w}(b)$ is ind-representable. Again, by Corollary \ref{cor:representability_reduction_to_full_support} we may assume that $\overline{\supp}(w) = S$. As in the preceding paragraph, the result follows from the good case, the existence of a good $w'' \in C_{\rm min}$ with $w \stackrel{\sigma}{\longleftrightarrow} w''$, Lemma \ref{lm:Frobenius_cyclic_shift} and the following (obvious) observation: For any $w,w',w_1,w_2 \in W$ as in Lemma \ref{lm:Frobenius_cyclic_shift} and any lift $\dot w$ of $w$, there are lifts $\dot{w}_j$ of $w_j$ ($j = 1,2$), such that $\dot w = \dot w_1 \dot w_2$.

\section{A criterion for non-representability by schemes}\label{sec:nonrep}

We keep the setup of \S\ref{sec:lDLS}. It turns out that the sheaves $X_w(b)$ are rarely representable by schemes. More precisely, we have the following result. Recall that $C_{\rm min}$ denote the set of elements of minimal length in a $\sigma$-conjugacy class $C \subseteq W$.

\begin{thm}\label{thm:non_rep}
Let $b \in G(\breve k)$, and let $C$ be a $\sigma$-conjugacy class in $W$, such that $X_w(b) \neq \varnothing$ for a (equivalently, any) $w \in C_{\rm min}$. Then for any $w \in C \sm C_{\rm min}$, $X_w(b)$ is not representable by a scheme.
\end{thm}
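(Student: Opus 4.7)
The plan is to reduce $w$, via Frobenius-cyclic shifts, to a configuration $w = s v s^\sigma$ with $s \in S$, $v \in C$, $\ell(v) = \ell(w)-2$, then use the Bott--Samelson decomposition of $\caO(w)$ to produce inside $X_w(b)$ a closed copy of $L\bA^1$, and conclude non-representability from the fact that $L\bA^1$ is not a scheme.

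First, I would invoke the extension of He's theorem (Theorem 7.5 of \cite{He_07}, cf.\ Theorem \ref{thm:all_min_length_elements_cyclic_shifts}): any $w \in C$ is connected to $C_{\min}$ by a finite chain of Frobenius-cyclic shifts and length-reducing conjugations $u \mapsto s u s^\sigma \in C$ with $\ell(sus^\sigma) = \ell(u)-2$; since $\ell(w) > \ell(w_0)$ for $w_0 \in C_{\min}$, at least one length-reducing step must occur. By Lemma \ref{lm:Frobenius_cyclic_shift}, cyclic shifts preserve the isomorphism class of $X_\bullet(b)$, so I may assume $w$ itself admits $s \in S$ with $v := s w s^\sigma \in C$ and $\ell(v) = \ell(w)-2$.

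Next, with this $w$ the cell factors as $\caO(w) \cong \caO(s) \times_{G/B} \caO(v) \times_{G/B} \caO(s^\sigma)$ (the lengths add). Applying $L$, which commutes with limits by Lemma \ref{lm:L_commutes_limits}, and unwinding Definition \ref{def:Xwb_and_covers}, $X_w(b)$ identifies with the sheaf of $g \in L(G/B)$ equipped with (uniquely determined) intermediates $g_1, g_2 \in L(G/B)$ satisfying $g \xrightarrow{s} g_1 \xrightarrow{v} g_2 \xrightarrow{s^\sigma} b\sigma(g)$. Write $\alpha, \beta \colon X_w(b) \to L(G/B)$ for the maps $g \mapsto g_1,g_2$. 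By separatedness of $L(G/B)$ (Lemma \ref{lm:representability_graph}), the equalizer $X_w(b)'$ of $\beta$ and $b\sigma \circ \alpha$ is a closed sub-$v$-sheaf of $X_w(b)$; on $X_w(b)'$ one has $g_2 = b\sigma(g_1)$, hence $g_1 \in X_v(b)$, and $\alpha$ restricts to a morphism $X_w(b)' \to X_v(b)$. For any $g_1 \in X_v(b)(\obF)$, the fiber $F_{g_1}$ is a closed sub-$v$-sheaf of $X_w(b)$. A short computation shows that the two constraints ``$g \xrightarrow{s} g_1$'' and ``$b\sigma(g_1) \xrightarrow{s^\sigma} b\sigma(g)$'' collapse (after applying $\sigma^{-1}$ and translating by $b^{-1}$, using $\sigma^{-1}(s^\sigma) = s$ and the symmetry $\caO(s) = \caO(s^{-1})$) to the single condition that $g$ lies in the $\pi_s$-fiber through $g_1$ and differs from $g_1$; this identifies $F_{g_1}$ with the loop space of this punctured $\bP^1$-fiber, that is $L\bA^1$ as $v$-sheaves on $\Perf_{\obF}$, where $\pi_s \colon G/B \to G/P_s$ is the $s$-projection.

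Non-emptiness of $X_v(b)$ follows by running He's chain upward from the hypothesis $X_{w_0}(b) \ne \varnothing$ for $w_0 \in C_{\min}$: cyclic shifts preserve non-emptiness by Lemma \ref{lm:Frobenius_cyclic_shift}, while a reverse length-reducing step $u' \mapsto u = s u' s^\sigma$ with $\ell(u)= \ell(u')+2$ yields, by the same Bott--Samelson construction, a non-empty closed $L\bA^1$-bundle over $X_{u'}(b)$ inside $X_u(b)$. Fixing $g_1 \in X_v(b)(\obF)$, if $X_w(b)$ were representable by a scheme then its closed subsheaf $F_{g_1} \cong L\bA^1$ would be a closed subscheme, hence a scheme; but $L\bA^1$ is not representable by a scheme, since on any affine chart $\Spec A \hookrightarrow L\bA^1$ the universal section $\xi \in \bW(A)[1/\varpi]$ would lie in $\varpi^{-N}\bW(A)$ for some finite $N$, forcing a uniform bound on $\varpi$-adic pole orders inconsistent with $L\bA^1(\obF) = \breve k$ (compare Remark \ref{rem:LPn}). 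I expect the main technical difficulty to be verifying the collapse of the two fiber conditions, which crucially relies on the form $w = s v s^\sigma$ supplied by He's reduction; this step is a loop-space reincarnation of the classical Deligne--Lusztig decomposition of $X_w$ along a simple reflection in the case $\ell(sws^\sigma) = \ell(w)-2$.
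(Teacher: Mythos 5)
Your argument is correct and is essentially the paper's own proof: after the same cyclic-shift reduction to $w=sw'\sigma(s)$ with $\ell(w)=\ell(w')+2$, your closed subfunctor cut out by $g_2=b\sigma(g_1)$ is exactly the $X_1$ of Proposition \ref{prop:closed_subfunctor_X1}, its fiber over a geometric point of $X_{w'}(b)$ is identified with $L\bA^1$ in the same way, and non-emptiness of $X_{w'}(b)$ is Corollary \ref{cor:non_emptyness}. The one detail to add is that a non-empty $X_{w'}(b)$ need not have an $\obF$-point, so one should first base-change to an algebraically closed field $\ff$ over which it does (non-representability may be checked after such a base change), exactly as the paper does.
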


The proof is given in \S\ref{sec:proof_thm_nonemptyness}. In particular, in all cases ``between Theorem \ref{thm:ind_rep_via_Braid} and Corollary \ref{thm:ind_rep_min_length}'' we obtain ind-schemes, which are not schemes:

\begin{cor}\label{cor:ind_rep_non_rep}
Let $b, C$ be as in Theorem \ref{thm:non_rep}. Let $w \in C \sm C_{\rm min}$ be such that it satisfies the assumptions of Theorem \ref{thm:ind_rep_via_Braid}. Then $X_w(b)$ is an ind-scheme, which is not a scheme. 
\end{cor}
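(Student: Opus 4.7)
The plan is to observe that this corollary is simply the conjunction of the two immediately preceding results, so no new argument is required; I only need to verify that the hypotheses line up.

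First I would invoke Theorem \ref{thm:ind_rep_via_Braid} applied to the element $w$ with its subset $I = \overline{\supp}(w)$. The hypothesis of Corollary \ref{cor:ind_rep_non_rep} explicitly assumes the Braid-monoid factorization $\underline{w}\,\sigma(\underline{w})\cdots\sigma^{d-1}(\underline{w}) = \underline{w}_I a$ with $a \in B^+$, which is exactly what Theorem \ref{thm:ind_rep_via_Braid} requires. Applying it gives that $X_w(b)$ is representable by an ind-scheme (for any $b \in G(\breve k)$, in particular for our given $b$).

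Next I would invoke Theorem \ref{thm:non_rep} for the pair $(b,C)$, noting that by the hypothesis of Corollary \ref{cor:ind_rep_non_rep} we have $X_{w'}(b) \neq \varnothing$ for some (equivalently any, by Corollary \ref{cor:isomorphism_class_depends_only_on_Cmin}) $w' \in C_{\min}$. Since $w \in C \setminus C_{\min}$, Theorem \ref{thm:non_rep} applies and tells us that $X_w(b)$ is not representable by a scheme.

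Combining the two conclusions yields that $X_w(b)$ is an ind-scheme but not a scheme, which is exactly the statement. The only thing I would double-check is that the hypotheses of Theorems \ref{thm:ind_rep_via_Braid} and \ref{thm:non_rep} are jointly satisfiable in nontrivial examples — but this is the content of the phrase ``cases between Theorem \ref{thm:ind_rep_via_Braid} and Corollary \ref{thm:ind_rep_min_length}'' and is not needed for the logical argument. There is no genuine obstacle; the work has already been done in the two cited theorems.
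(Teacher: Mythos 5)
Your proposal is correct and matches the paper's (implicit) argument exactly: the corollary is stated as an immediate consequence of Theorem \ref{thm:ind_rep_via_Braid} (giving ind-representability under the assumed braid factorization) and Theorem \ref{thm:non_rep} (giving non-representability by a scheme since $w \in C \sm C_{\rm min}$ and the nonemptiness hypothesis is part of "$b,C$ as in Theorem \ref{thm:non_rep}"). No further argument is needed.
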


An example of an element $w$ satisfying the assumptions of the corollary is the longest element of $W$, unless the Dynkin diagram of $G$ is disjoint union of diagrams of type $A_1$.

\subsection{A closed subfunctor of $X_w(b)$}

We adapt an idea from \cite[Proof of Thm.~1.6]{DeligneL_76} to our situation. Let $b\in G(\breve k)$ and let $w, w' \in W$, $s \in S$ such that $w = sw'\sigma(s)$ and $\ell(w) = \ell(w') + 2$. By the properties of the Bruhat decomposition we have an isomorphism, 
\begin{equation}\label{eq:bruhat_iso_additive_length}
\caO(s) \times_{G/B} \caO(w') \times_{G/B} \caO(\sigma(s)) \stackrel{\sim}{\rar} \caO(w),
\end{equation}
given by sending $g_1 \stackrel{s}{\rar} g_2 \stackrel{w'}{\rar} g_3 \stackrel{\sigma(s)}{\rar} g_4$ to $g_1 \stackrel{w}{\rar} g_4$. It remains an isomorphism after applying $L(\cdot)$. Let now $R \in \Perf_{\obF}$ and $g \in X_w(b)(R)$, so $g \stackrel{w}{\rar} b\sigma(g)$. Applying $L(\cdot)$ to \eqref{eq:bruhat_iso_additive_length}, we see that there exist unique $\gamma g, \delta g \in L(G/B)(R)$, such that $g \stackrel{s}{\rar} \gamma g \stackrel{w'}{\rar} \delta g \stackrel{\sigma(s)}{\rar} b\sigma(g)$. Let $X_1$ be the subfunctor of $X_w(b)$ defined by
\begin{equation}\label{eq:subfunctor_X1}
X_1(R) = \{g \in X_w(b)(R) \colon \delta g = b\sigma(\gamma g) \}.
\end{equation}
In \cite{DeligneL_76} it is immediate that the so defined $X_1$ is a closed subscheme of a classical Deligne--Lusztig variety. Similarly, we have the following proposition. 
\begin{prop}\label{prop:closed_subfunctor_X1}
In the above situation, $X_1 \rar X_w(b)$ is representable by closed immersions, and moreover, $X_1$ fits into the Cartesian diagram
\[
\xymatrix{
X_1 \ar[r]^\gamma \ar[d] & X_{w'}(b) \ar[d] \\ L\caO(s) \ar[r]^{{\rm pr}_2} & L(G/B),
}
\]
where the lower map is $(g \stackrel{s}{\rar} h) \mapsto h$ and the left map is $g \mapsto (g \stackrel{s}{\rar} \gamma g)$.
\end{prop}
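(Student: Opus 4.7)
The plan is to realize $X_1$ simultaneously as an equalizer (to deduce the closed-immersion statement) and as the fiber product in the displayed Cartesian square (to deduce the second statement), and then to verify the Cartesian square by a direct check on $R$-points.

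First, I would make precise the two morphisms $\gamma,\delta \colon X_w(b) \to L(G/B)$. The isomorphism \eqref{eq:bruhat_iso_additive_length}, being an isomorphism of $k$-schemes, remains an isomorphism after applying $L(\cdot)$ (which commutes with fiber products by Lemma \ref{lm:L_commutes_limits}). Therefore, for any $g \in X_w(b)(R)$, the data $g \stackrel{w}{\to} b\sigma(g)$ corresponds to a unique factorization $g \stackrel{s}{\to} \gamma g \stackrel{w'}{\to} \delta g \stackrel{\sigma(s)}{\to} b\sigma(g)$, which defines $\gamma$ and $\delta$ functorially in $R$, hence as morphisms of arc-sheaves.

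Second, for the closed-immersion claim, I would simply observe that $X_1$ as defined in \eqref{eq:subfunctor_X1} is exactly the equalizer of the two morphisms $\delta, \, b\sigma\circ\gamma \colon X_w(b) \rightrightarrows L(G/B)$. Since $G/B$ is projective (hence separated) over $k$, Lemma \ref{lm:representability_graph} shows that $L(G/B)$ is separated, so its diagonal is a closed immersion. Writing $X_1$ as the pullback of the diagonal along $(b\sigma\circ\gamma,\delta) \colon X_w(b) \to L(G/B)\times L(G/B)$, one concludes that $X_1 \hookrightarrow X_w(b)$ is representable by closed immersions.

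Third, for the Cartesian square, I would define the two projections on the nose: $X_1 \to L\caO(s)$ sends $g \mapsto (g \stackrel{s}{\to} \gamma g)$, and $X_1 \to X_{w'}(b)$ sends $g \mapsto \gamma g$; the latter lands in $X_{w'}(b)$ because the defining equation $\delta g = b\sigma(\gamma g)$ combined with $\gamma g \stackrel{w'}{\to} \delta g$ gives $\gamma g \stackrel{w'}{\to} b\sigma(\gamma g)$. Commutativity is immediate since both compositions produce $\gamma g$. To check the universal property, it suffices (as every sheaf involved is an arc-sheaf) to do so on $R$-points. An $R$-point of the pullback is a pair $(g \stackrel{s}{\to} h, h)$ with $h \in X_{w'}(b)(R)$, i.e., with $h \stackrel{w'}{\to} b\sigma(h)$. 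Applying the isomorphism $b\sigma$ of $L(G/B)$ to $g \stackrel{s}{\to} h$ yields $b\sigma(g) \stackrel{\sigma(s)}{\to} b\sigma(h)$, since $b\sigma$ sends $L\caO(s)$ to $L\caO(\sigma(s))$. Concatenating the three relations via \eqref{eq:bruhat_iso_additive_length} produces a unique $g \stackrel{w}{\to} b\sigma(g)$, i.e., $g \in X_w(b)(R)$; moreover, by uniqueness in the Bruhat factorization, $\gamma g = h$ and $\delta g = b\sigma(h) = b\sigma(\gamma g)$, so $g \in X_1(R)$. This assignment is a two-sided inverse to $g \mapsto (g \stackrel{s}{\to} \gamma g, \gamma g)$.

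I do not anticipate a serious obstacle here: the two nontrivial inputs (separatedness of $L(G/B)$ and the Bruhat factorization for $\caO(w)$) are already available. The only point requiring care is the compatibility of $b\sigma$ with the Bruhat stratification of $(G/B)^2$, which follows from the fact that the geometric Frobenius $\sigma$ permutes the strata $\caO(x) \mapsto \caO(\sigma(x))$ and that left multiplication by $b$ preserves each $G$-orbit on $(G/B)^2$.
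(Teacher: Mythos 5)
Your proposal is correct and follows essentially the same route as the paper: both arguments define $\gamma,\delta$ via $L(\cdot)$ applied to \eqref{eq:bruhat_iso_additive_length}, deduce the closed-immersion claim from Lemma \ref{lm:representability_graph} (you pull back the diagonal along $(b\sigma\circ\gamma,\delta)$, the paper equivalently pulls back the graph $(\id,b\sigma)$ along $(\gamma,\delta)$), and verify the Cartesian square from the uniqueness in the Bruhat factorization. The paper merely packages both claims into a single diagram of two Cartesian squares whose right-hand square is the defining square of $X_{w'}(b)$, whereas you treat the two claims separately; the content is the same.
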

\begin{proof}
We have a diagram with Cartesian squares
\[
\xymatrix@C+2pc{
X_1 \ar[r]^\gamma \ar[d] & X_{w'}(b) \ar[r] \ar[d] & L(G/B) \ar[d]^{(\id,b\sigma)} \\
X_w(b) \ar[r]^{g\mapsto \gamma g,\delta g} & L\caO(w') \ar[r] & L(G/B)\times L(G/B),
} 
\]
where the right square is as in the definition of $X_{w'}(b)$. Then also the outer square is Cartesian, and the first claim follows, as the right vertical map is representable by closed immersions by Lemma \ref{lm:representability_graph}. The second claim formally follows from \eqref{eq:bruhat_iso_additive_length} after applying $L(\cdot)$.
\end{proof}

\begin{cor}\label{cor:non_emptyness}
Let $b \in G(\breve k)$ and let $C$ be a $\sigma$-conjugacy class in $W$. If for some $w \in C_{\rm min}$, $X_w(b)$ is non-empty, then $X_{w'}(b)$ is non-empty for all $w' \in C$.
\end{cor}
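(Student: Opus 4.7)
The plan is as follows. By Corollary~\ref{cor:isomorphism_class_depends_only_on_Cmin}, the hypothesis $X_w(b) \ne \varnothing$ for some $w \in C_{\rm min}$ propagates to every $w \in C_{\rm min}$, giving the base case of an induction on $\ell(w') - \ell(C_{\rm min})$. For the inductive step with $w' \in C \sm C_{\rm min}$, the Geck--Pfeiffer--He reduction theory for $\sigma$-conjugacy classes (see e.g.\ \cite{GeckP_00, He_07}) produces a finite sequence $w' = w_0, w_1, \dots, w_n$ in $C$ with $\ell(w_n) < \ell(w')$, in which each step $w_i \to w_{i+1}$ is either a cyclic shift in the sense of Lemma~\ref{lm:Frobenius_cyclic_shift} or of the form $w_{i+1} = s_i w_i \sigma(s_i)$ for some $s_i \in S$ with $\ell(w_{i+1}) = \ell(w_i) - 2$.

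The idea is to traverse this chain from the smaller end: by induction $X_{w_n}(b) \ne \varnothing$, and we propagate non-emptiness back step by step to $X_{w'}(b) = X_{w_0}(b)$. A cyclic shift is handled directly by Lemma~\ref{lm:Frobenius_cyclic_shift}, yielding $X_{w_i}(b) \cong X_{w_{i+1}}(b)$. A length-decreasing step is rewritten as $w_i = s_i w_{i+1} \sigma(s_i)$ with $\ell(w_i) = \ell(w_{i+1}) + 2$; Proposition~\ref{prop:closed_subfunctor_X1} then produces a closed sub-arc-sheaf $X_1 \subseteq X_{w_i}(b)$ that sits in a Cartesian square with $X_{w_{i+1}}(b)$ and ${\rm pr}_2 \colon L\caO(s_i) \to L(G/B)$. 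To conclude $X_{w_i}(b) \ne \varnothing$ it suffices to check $X_1 \ne \varnothing$, and by base change this reduces to the surjectivity of ${\rm pr}_2$ on the image of $X_{w_{i+1}}(b) \to L(G/B)$.

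This surjectivity reflects that the second projection $\caO(s) \to G/B$ is a Zariski-locally trivial $\mathbb{A}^1$-bundle, the fiber over $hB$ being the translate $hBsB/B$ of the one-dimensional Schubert cell. Fix an algebraically closed field $K \in \Perf_{\obF}$ with $X_{w_{i+1}}(b)(K) \ne \varnothing$ (obtained by taking the algebraic closure of a residue field of a witnessing test algebra). A section $\xi \in X_{w_{i+1}}(b)(K)$ corresponds to a $\bW(K)[1/\varpi]$-valued point of $G/B$; since $\bW(K)[1/\varpi]$ is a field, this is a single scheme-theoretic point, which lies in some affine open $U \subseteq G/B$ over which $\caO(s_i) \cong \mathbb{A}^1 \times U$. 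The zero section produces the required lift in $L\caO(s_i)(K)$, and the Cartesian property then yields an element of $X_1(K)$, forcing $X_{w_i}(b) \ne \varnothing$ and completing the induction.

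The only ingredient not developed within this paper is the Geck--Pfeiffer--He combinatorial reduction for $\sigma$-conjugacy classes, which is classical. Once granted, the argument is a direct assembly of Proposition~\ref{prop:closed_subfunctor_X1}, Lemma~\ref{lm:Frobenius_cyclic_shift}, and the local triviality of $\caO(s) \to G/B$; the only conceptual subtlety is that non-emptiness of the sub-arc-sheaf $X_1$ can be checked on a single geometric $K$-point.
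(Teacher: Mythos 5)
Your proposal is correct and follows essentially the same route as the paper: reduce to $C_{\rm min}$ via Corollary \ref{cor:isomorphism_class_depends_only_on_Cmin}, run the Geck--Pfeiffer--He chain backwards, handle length-preserving steps by Lemma \ref{lm:Frobenius_cyclic_shift} and length-dropping steps by the closed subfunctor of Proposition \ref{prop:closed_subfunctor_X1}, whose non-emptiness is checked on a geometric fiber. The only (harmless) difference is that you spell out the fiber computation via local triviality of ${\rm pr}_2\colon\caO(s)\to G/B$, where the paper simply cites the identification $L\caO(s)\times_{L(G/B)}\Spec\ff\cong L\bA^1_\ff$ from \S\ref{sec:proof_thm_nonemptyness}.
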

\begin{proof}
By Corollary \ref{cor:isomorphism_class_depends_only_on_Cmin}, $X_w(b)$ is non-empty for all $w \in C_{\rm min}$. Let $w' \in C$. By \cite[Thm.~3.2.9]{GeckP_00} resp. \cite[Thm.~7.6]{He_07}, there exists a sequence of elements $w' = w_0$, $w_1,\dots,w_n$ of $W$ such that $w_n \in C_{\rm min}$ and for each $0\leq i\leq n-1$ there is some $s_i \in S$ such that $w_{i+1} = s_i w_i \sigma(s_i)$, $\ell(w_{i+1}) \leq \ell(w_i)$. It suffices to show that $X_{w_{i+1}}(b)\neq \varnothing$ implies $X_{w_i}(b) \neq \varnothing$. There are two cases: either $\ell(w_i) = \ell(w_{i+1}) + 2$ or $\ell(w_i) = \ell(w_{i+1})$. In the first case we are in the situation of Proposition \ref{prop:closed_subfunctor_X1}, so that $X_{w_i}(b)$ has a closed subfunctor, which lies over the non-empty $X_{w_{i+1}}(b)$, and is easily seen to be non-empty itself, e.g., by looking at its fiber at a geometric point (as in \S\ref{sec:proof_thm_nonemptyness}). In the second case, $\ell(w_i) = \ell(w_{i+1})$, and $w_i$, $w_{i+1}$ are related by a cyclic shift (using \cite[Lm.~1.6.4]{DeligneL_76}), so that $X_{w_i}(b) \cong X_{w_{i+1}}(b)$.
\end{proof}

\subsection{Proof of Theorem \ref{thm:non_rep}}\label{sec:proof_thm_nonemptyness}

As $w$ is not minimal in its $\sigma$-conjugacy class, there exists some $w' \in W$ and a simple reflection $s \in S$ with $w = s w' \sigma(s)$ and $\ell(w) = \ell(w') + 2$. By Corollary \ref{cor:non_emptyness}, $X_{w'}(b) \neq \varnothing$. Let $\ff \in \Perf_{\obF}$ be a field such that $X_{w'}(b)(\ff) 
\neq \varnothing$ and let $x \in X_{w'}(b)(\ff)$. The claim of the theorem may be checked after restriction of $X_w(b)$ to $\Perf_{\ff}$, so we may replace $\obF$ by $\ff$. Using that $L(G/B)$ is separated (Lemma \ref{lm:representability_graph}) and that $X_{w'}(b) \rar L(G/B)$ is a monomorphism, one shows by a standard argument that $x \colon \Spec \ff \rar X_{w'}(b)$ is representable by closed immersions.

The map $x$ induces a map $\Spec\bW(\ff)[1/\varpi] \rar G/B$ and we have 
\[
\caO(s) \times_{G/B} \Spec\bW(\ff)[1/\varpi] \cong \bA^1_{\bW(\ff)[1/\varpi]}.
\]
As $\Spec\ff = L(\Spec \bW(\ff)[1/\varpi])$, and as $L(\cdot)$ commutes with fiber products, we deduce that 
\begin{equation}\label{eq:fiber_product_Os}
L\caO(s) \times_{L(G/B)} \Spec\ff \cong L\bA^1_{\ff}. 
\end{equation}
Thus we have a commutative diagram, 

\[
\xymatrix{
L\bA^1_{\ff} \ar[r] \ar[d] & X_1 \ar[r] \ar[d] & L\caO(s) \ar[d] \\ 
\Spec \ff \ar[r]^x & X_{w'}(b) \ar[r] & L(G/B)  
}
\]
in which the outer square is Cartesian by \eqref{eq:fiber_product_Os}, and the right square is Cartesian by Proposition \ref{prop:closed_subfunctor_X1}. It follows that also the left square is Cartesian. By the above, the left lower map is representable by closed immersions, so the upper left map also is. Thus the first map in
\[
L\bA^1_\ff \rar X_1 \rar X_w(b)
\]
is representable by closed immersions. But by Proposition \ref{prop:closed_subfunctor_X1}, the second also is, so the composition is too. If $X_w(b)$ would be representable by a scheme, $L\bA^1$ would then be representable by a closed subscheme, which is false. This proves Theorem \ref{thm:non_rep}.

\section{The morphisms $\dot X_{\dot w}(b) \rar X_w(b)$}\label{sec:Torsors}

Here we study in detail the maps $\dot X_{\dot w}(b) \rar X_w(b)$. The first goal is to define certain discrete sheaf $LF_w/\ker \kappa_w$, a natural map $\alpha_{w,b} \colon X_w(b) \rar LF_w/\ker \kappa_w$, and a map $\dot w \mapsto \bar{\dot w}$ from the set of all lifts $F_w(\breve k)$ of $w$ to $(LF_w/\ker \kappa_w)(\obF)$, such that $\dot X_{\dot w}(b) \rar X_w(b)$ factors over the clopen subsheaf $X_w(b)_{\bar{\dot w}} := \alpha_{w,b}^{-1}(\underline{\{\bar{\dot w}\}})$ (Proposition \ref{prop:natural_classifying_map}). The second goal will be to prove that $\dot X_{\dot w}(b) \rar X_w(b)_{\bar{\dot w}}$ is a $\underline{T_w(k)}$-torsor for the $v$-topology (Proposition \ref{prop:arc_surj_of_covers}), and actually, almost a pro-\'etale map. Moreover, all $\dot X_{\dot w}(b)$ for $\dot w$ lying over the same $\bar{\dot w}$ are isomorphic.

\subsection{Units in Witt vectors}\label{sec:units_Witt}

Let $R \in \Perf$. Attached to any $x \in \bW(R)[1/\varpi]$ we have the function
\[
{\rm ord}_\varpi(x) \colon \Spec R \rar \bZ \cup \{\infty\}, \quad s \mapsto \text{$\varpi$-adic valuation of $x(s) \in \bW(k(s))[1/\varpi]$,}
\]
where $k(s)$ is the residue field of $s$. For $R \in \Perf_{\bF_q}$, let $\phi \colon \bW(R)[1/\varpi] \rar \bW(R)[1/\varpi]$ be the Frobenius automorphism (cf. e.g. \cite[\S1.2.1]{FarguesFontaine_book}). It is given by the formula $\sum_{i\geq N}^\infty [x_i]\varpi^i \mapsto \sum_{i\geq N}^\infty [x_i^q]\varpi^i$.

\begin{lm}\label{lm:properties_of_ord}
Let $R \in \Perf$, $x,y \in \bW(R)[1/\varpi]$. Then $\ord_\varpi(xy) = \ord_\varpi(x) + \ord_\varpi(y)$. Moreover, if $R \in \Perf_{\bF_q}$, then $\ord_\varpi(x) = \ord_\varpi(\phi(x))$.
\end{lm}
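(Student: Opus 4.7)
The plan is to reduce both claims to a pointwise statement: the function $\ord_\varpi(x)$ is defined via evaluation on residue fields $k(s)$ for $s \in \Spec R$, so it suffices to prove both identities separately on each $\bW(k(s))[1/\varpi]$, for $s \in \Spec R$. Thus I would first reduce to the case where $R = k(s)$ is itself a perfect field of characteristic $p$.

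In this situation, $\bW(k(s))$ is a discrete valuation ring with uniformizer $\varpi$ and residue field $k(s)$: in the equal characteristic case this is just $k(s)[\![\varpi]\!]$, while in the mixed characteristic case this is the standard fact that $W(k(s)) \otimes_{W(\bF_p)} \caO_k$ is a complete DVR, since $k(s)$ is perfect and $\caO_k/\caO_{k_0}$ is totally ramified with uniformizer $\varpi$. Consequently $\bW(k(s))[1/\varpi]$ is a field, and $\ord_\varpi$ is the (additive) normalised discrete valuation on it. Multiplicativity $\ord_\varpi(xy) = \ord_\varpi(x) + \ord_\varpi(y)$ is then a standard property of any discrete valuation on a field.

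For the second claim I would use the uniqueness of the Witt presentation. Any nonzero $x \in \bW(k(s))[1/\varpi]$ can be written uniquely as $x = \sum_{i \geq N} [a_i]\varpi^i$ with $a_i \in k(s)$ and $a_N \neq 0$, in which case $\ord_\varpi(x) = N$. By the explicit formula stated in the excerpt, $\phi(x) = \sum_{i\geq N}[a_i^q]\varpi^i$, and since $k(s)$ is a field we have $a_N^q \neq 0$, so $\phi(x)$ again has leading term in degree $N$; hence $\ord_\varpi(\phi(x)) = N = \ord_\varpi(x)$.

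There is no real obstacle here: the only mild point is to spell out that $\bW(k(s))$ is a DVR in both the mixed and equal characteristic cases, which I would handle by invoking the general Witt-vector description recalled in \S\ref{sec:setup_general}. Everything else is routine from the explicit Teichmüller expansion.
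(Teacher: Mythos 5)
Your proof is correct and is precisely the argument the paper intends: its own proof of this lemma is literally the two words ``Immediate computation.'' The pointwise reduction to the discrete valuation ring $\bW(k(s))$ (using that $k(s)$ is perfect, so $\bW(k(s))$ is a complete DVR with uniformizer $\varpi$) together with the Teichm\"uller-expansion argument for $\phi$ is exactly the computation being elided.
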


\begin{proof}
Immediate computation. 
\end{proof}

\begin{lm}\label{lm:units_Witt} Let $R \in \Perf$ and $x \in \bW(R)[1/\varpi]$. Then $\ord_\varpi(x)$ is upper semi-continuous. If $x$ is a unit, then it is continuous. Moreover, if $\ord_\varpi(x)<\infty$ and locally constant, then $x$ is a unit.
\end{lm}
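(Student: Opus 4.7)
The plan is to address the three assertions in turn, beginning with upper semi-continuity and using it to bootstrap the other two.

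First, for upper semi-continuity, fix $x \in \bW(R)[1/\varpi]$ and pick $M \geq 0$ with $\varpi^M x \in \bW(R)$; write $\varpi^M x = \sum_{i \geq 0}[a_i]\varpi^i$ with $a_i \in R$. For any integer $n$, the condition $\ord_\varpi(x)(s) \geq n$ at a point $s \in \Spec R$ translates, via the residue map $\bW(R)[1/\varpi] \rar \bW(k(s))[1/\varpi]$, to the vanishing of the images of $a_0, a_1, \dots, a_{M+n-1}$ in $k(s)$. This is a closed condition on $\Spec R$, since it is cut out by finitely many elements of $R$, so $\{s \colon \ord_\varpi(x)(s) \geq n\}$ is closed in $\Spec R$ and upper semi-continuity follows.

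For the second claim, suppose $x$ is a unit with inverse $y$. By Lemma \ref{lm:properties_of_ord}, $\ord_\varpi(x) + \ord_\varpi(y) = \ord_\varpi(1) = 0$, so $\ord_\varpi(x) = -\ord_\varpi(y)$. Since $\ord_\varpi(y)$ is upper semi-continuous by the previous paragraph, $\ord_\varpi(x) = -\ord_\varpi(y)$ is lower semi-continuous; being both upper and lower semi-continuous with values in the discrete set $\bZ \cup \{\infty\}$, it is locally constant, i.e., continuous.

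For the third (main) claim, the question is local on $\Spec R$, so we may assume $\ord_\varpi(x) = n$ is a finite constant. Multiplying by the unit $\varpi^{-n}$ reduces us to the case $\ord_\varpi(x) = 0$ everywhere. I will first argue that this forces $x \in \bW(R)$: writing $\varpi^M x = \sum_{i \geq 0}[a_i]\varpi^i$ as above, the hypothesis $\ord_\varpi(x) \geq 0$ at every point says $a_0, \dots, a_{M-1}$ lie in every prime ideal of $R$; since $R$ is perfect and hence reduced, these elements vanish, so $x \in \bW(R)$. Writing $x = [x_0] + \varpi y$ with $y \in \bW(R)$, the hypothesis $\ord_\varpi(x)(s) = 0$ for all $s$ means that $x_0 \in R$ has no zero, i.e., $x_0$ is a unit in $R$. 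Then $[x_0]$ is a unit in $\bW(R)$ with inverse $[x_0^{-1}]$ (by multiplicativity of the Teichmüller lift), and $x = [x_0](1 + \varpi [x_0^{-1}] y)$; the factor $1 + \varpi [x_0^{-1}] y$ is a unit in $\bW(R)$ via the geometric series, which converges because $\bW(R)$ is $\varpi$-adically complete. Hence $x$ is a unit in $\bW(R)$, in particular in $\bW(R)[1/\varpi]$.

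The main obstacle is the last step, specifically the passage from a pointwise statement (nonvanishing of $\ord_\varpi(x)(s)$) to a global algebraic one ($x_0$ is a unit, $x$ lies in $\bW(R)$). The perfectness of $R$ is essential here: it both guarantees that the Teichmüller lift is multiplicative (so $[x_0^{-1}]$ inverts $[x_0]$ in $\bW(R)$ rather than only modulo $\varpi$) and it forces the Witt-coordinate obstruction $a_i$ to vanish on the nose, not merely up to nilpotents.
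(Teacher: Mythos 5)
Your proof is correct and follows essentially the same route as the paper: for the two semicontinuity claims the paper cites Kedlaya--Liu but notes in a footnote that exactly your direct computation with Teichm\"uller coordinates (using that $R$ is perfect, hence reduced) works, and the third claim is proved there by the same leading-coefficient argument. The only step to make precise is ``the question is local on $\Spec R$'': since $\ord_\varpi(x)$ is locally constant and $\Spec R$ is quasi-compact, it takes finitely many values, so $R$ splits as a \emph{finite} product of rings on which $\ord_\varpi(x)$ is constant, and one then uses that $\bW(\cdot)[1/\varpi]$ commutes with finite products --- an arbitrary open cover would not obviously suffice for checking that $x$ is a unit of $\bW(R)[1/\varpi]$.
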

\begin{proof}
The first two claims follow from \cite[Rem.~5.0.2]{KedlayaL_15} in $\charac k = 0$ case; in the other case the same argument works.\footnote{Also a direct computation using that $R$ is perfect (and, in particular, reduced) works.} Suppose $\ord_\varpi(x) < \infty$ and locally constant.  Then it takes only finitely many values. Using that $\bW(\cdot)[1/\varpi]$ commutes with finite products, we may suppose that $\ord_\varpi(x)$ is constant. Then the leading coefficient of $x$ is in no maximal ideal of $R$, hence in $R^\times$, and hence $x$ is a unit. \qedhere
\end{proof}

It follows from this lemma that there is a natural map $L\bG_m \rar \underline{\bZ}_{\obF}$ of arc-sheaves on $\Perf_{\obF}$, where $\bZ$ is equipped with the discrete topology.

\subsection{Loop groups of unramified tori}\label{sec:loop_groups_of_unram_tori}
Let $S_0$ be an unramified $k$-torus and $S \cong \bG_m^n$ its base change to $\breve k$. Let $\sigma = \sigma_S \colon LS \rar LS$ be the corresponding geometric Frobenius, and denote also by $\sigma$ the action of the Frobenius automorphism of $\breve k/k$ on $X_\ast(S)$. According to \S\ref{sec:units_Witt}, we have a map $LS \rar \underline{X_\ast(S)}$ of sheaves of abelian groups on $\Perf_{\obF}$. Moreover, it is equivariant with respect to the action of $\sigma$ of both sides. Passing to $\sigma$-coinvariants on the right, we deduce a map
\[ 
\kappa = \kappa_S \colon LS \rar \underline{X_\ast(S)_{\langle \sigma \rangle}}.
\]
and its kernel is a subgroup $\ker \kappa$ of $LS$. The map $\kappa$ is the sheaf version of the Kottwitz map \cite[2.4,2.9]{Kottwitz_85}.

\begin{prop}\label{prop:surj_Lang_LT_onto_ker_kappa} We have a short exact sequence of sheaves of abelian groups for the pro-\'etale topology on $\Perf_{\obF}$,
\[
0 \rar \underline{S(k)} \rar LS \rar \ker \kappa \rar 0,
\]
where the right map is induced by the Lang map $t \mapsto t^{-1}\sigma(t) \colon LS \rar \ker\kappa \subseteq LS$.
\end{prop}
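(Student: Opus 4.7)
The proof splits naturally into three parts.

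First, I would establish the elementary facts: the Lang map $L(t) := t^{-1}\sigma(t)$ lands in $\ker\kappa$, and its kernel is $\underline{S(k)}$. For the image, since $\tilde\kappa \colon LS \rar \underline{X_\ast(S)}$ is $\sigma$-equivariant and the target $\underline{X_\ast(S)_{\langle\sigma\rangle}}$ of $\kappa$ consists of coinvariants, one computes $\kappa(L(t)) = \sigma(\kappa(t)) - \kappa(t) = 0$. For the kernel, Proposition \ref{lm:inclusion_into_LGbsigma} applied to $S_0$ with $b = 1$ identifies $LS^\sigma$ with $\underline{S_0(k)} = \underline{S(k)}$.

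Next, I would reduce pro-\'etale surjectivity of $L \colon LS \rar \ker\kappa$ to pro-\'etale surjectivity of $L \colon L^+S \rar L^+S$. The key tool is the short exact sequence of pro-\'etale abelian sheaves
\[
0 \rar L^+ S \rar LS \xrightarrow{\tilde\kappa} \underline{X_\ast(S)} \rar 0,
\]
where $\tilde\kappa$ is the pre-Kottwitz map, whose composition with the projection $\underline{X_\ast(S)} \twoheadrightarrow \underline{X_\ast(S)_{\langle\sigma\rangle}}$ equals $\kappa$. Surjectivity of $\tilde\kappa$ is Zariski-local, since any $\mu \in X_\ast(S)$ is hit by the constant section $\mu(\varpi) \in S(\breve k) \subseteq LS(R)$ and sections of $\underline{X_\ast(S)}$ are locally constant. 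Given $x \in \ker\kappa(R)$, after passing to a Zariski cover one may assume $\tilde\kappa(x) = \mu$ is constant, and since its image in $X_\ast(S)_{\langle\sigma\rangle}$ vanishes one can write $\mu = (\sigma - 1)\lambda$ for some $\lambda \in X_\ast(S)$. Setting $y := \lambda(\varpi)$ and using that the Witt-Frobenius $\phi_R$ restricts to the Frobenius of $\breve k/k$ on $\caO_{\breve k} \subseteq \bW(R)$ (as in the proof of Lemma \ref{lm:Witt_of_continuous_maps}), one finds $\sigma_{LS}(y) = (\sigma\lambda)(\varpi)$, whence $\tilde\kappa(L(y)) = (\sigma-1)\lambda = \mu = \tilde\kappa(x)$. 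Hence $x \cdot L(y)^{-1} \in L^+S(R)$, completing the reduction.

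Finally, for the Lang map on positive loops: let $\cS_0$ denote the canonical smooth $\caO_k$-model of $S_0$, so that $L^+S = \prolim_r L^+_r S$ with $L^+_r S$ the perfection of a smooth connected commutative algebraic group over $\obF$ on which $\sigma$ acts with finite fixed points $\cS_0(\caO_k/\varpi^r)$. Classical Lang's theorem then yields \'etale surjectivity of $L \colon L^+_r S \rar L^+_r S$ at each level. Preimages at level $r$ lift to level $r+1$ after an \'etale cover, since the kernel of $L^+_{r+1} S \twoheadrightarrow L^+_r S$ is a vector group on which Lang is an Artin--Schreier-type \'etale surjection. Assembling these levelwise into an inverse system produces the desired pro-\'etale cover on which an arbitrary $u \in L^+S(R)$ admits a Lang preimage. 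The main obstacle lies here: one must verify that the tower of \'etale surjections coming from classical Lang at each finite truncation genuinely assembles into a pro-\'etale surjection of sheaves on $\Perf_{\obF}$, and that the passage between smooth algebraic groups and their perfections is compatible with these arguments.
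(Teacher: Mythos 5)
Your proposal is correct and follows essentially the same route as the paper: identify the kernel via Proposition \ref{lm:inclusion_into_LGbsigma}, reduce surjectivity onto $\ker\kappa$ to surjectivity of the Lang map on $L^+\caS$ using the exact sequence $0 \rar L^+\caS \rar LS \rar \underline{X_\ast(S)} \rar 0$ (the paper packages your explicit chase with $\lambda(\varpi)$ as a snake-lemma argument on a two-row diagram), and conclude by classical Lang's theorem applied to the connected truncations $L^+_r\caS$, assembled into a pro-\'etale cover.
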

\begin{proof}
Let ${\rm Lang} \colon LS \rar LS$ denote the Lang map. By Proposition \ref{lm:inclusion_into_LGbsigma}, $\ker {\rm Lang} = (LS)^\sigma = \underline{S(k)}$. Next, $\kappa \circ {\rm Lang} = 0_{\underline{X_\ast(S)_{\langle \sigma \rangle}}} \circ \kappa = 0$, i.e., ${\rm Lang}$ factors through $\ker\kappa \subseteq LS$, and it remains to show that it is surjective onto $\ker\kappa$ for the \'etale topology. Let $\caS \cong \bG_{m,\caO_{\breve k}}^n$ be the canonical model of $S$ over $\caO_{\breve k}$. Then $L^+\caS$ is a subsheaf of $LS$, stable under $\sigma$. The restriction ${\rm Lang} \colon L^+\caS \rar L^+\caS$ is surjective for the pro-\'etale topology (by Lang's theorem for all of its truncations $L^+_r\caS$ ($r\geq 1$), which are connected perfectly finitely presented group schemes over $\obF$).

Now $\kappa$ factors as $LS \rar \underline{X_\ast(S)} \rar \underline{X_\ast(S)_{\langle \sigma \rangle}}$, the kernel of the first of these two maps is $L^+\caS$, and the proposition follows from the commutative diagram with exact rows
\[
\xymatrix{
0 \ar[r] & L^+\caS \ar[r] \ar[d] & LS \ar[r] \ar[d]^{{\rm Lang}} & \underline{X_\ast(S)} \ar[r] \ar[d] & 0 \\
0 \ar[r] & L^+\caS \ar[r] & \ker \kappa \ar[r] & \underline{(\sigma-1)X_\ast(S)} \ar[r] & 0
}
\]
by applying the snake lemma, and using that the outer vertical arrows are surjective.
\end{proof}

\subsection{A discrete invariant} For the rest of \S\ref{sec:Torsors} we work in the setup of \S\ref{sec:lDLS}. Fix $b \in G(\breve k)$ and $w \in W$.  As in \S\ref{sec:action_torus} we have the unramified torus $T_w$. Applying the results of \S\ref{sec:loop_groups_of_unram_tori} to $T_w$, and writing $\sigma_w$ for $\sigma_{T_w}$ and $\kappa_w$ for $\kappa_{T_w}$, we have the map $\kappa_w \colon LT_w \rar \underline{X_\ast(T_w)} \rar \underline{X_\ast(T_w)_{\langle \sigma_w \rangle}}$.
Consider also the $\breve k$-scheme $F_w$, defined as the connected component of $N_G(T)$ corresponding to $w$. Clearly, $F_w \rar \Spec \breve k$ is a trivial $T_w$-torsor. Applying the loop functor, we deduce the trivial $LT_w$-torsor $LF_w \rar \Spec \obF$. Quotienting out the $\ker\kappa_w$-action, we obtain the trivial $\underline{X_\ast(T_w)_{\langle \sigma_w \rangle}}$-torsor 
\begin{equation}\label{eq:LFw_mod_ker}
LF_w/\ker\kappa_w \rar \Spec \obF.
\end{equation}

\begin{prop}\label{prop:natural_classifying_map}
There is a natural map of arc-sheaves $\alpha_{w,b} \colon X_w(b) \rar LF_w/\ker\kappa_w$, satisfying the following properties. Let $\dot w \in LF_w(\obF) = F_w(\breve k)$ and let $\bar{\dot w} \in (LF_w/\ker\kappa_w)(\obF)$ be 
its image, then:
\begin{itemize}
\item[(i)] $\dot X_{\dot w}(b) \rar X_w(b)$ factors through the clopen subset $X_w(b)_{\bar{\dot w}} := \alpha_{w,b}^{-1}(\underline{\{ \bar{\dot w} \}})$. \item[(ii)] $\dot X_{\dot w}(b)\neq \varnothing$ if and only if $\bar{\dot w} \in \im(\alpha_{w,b})$.
\end{itemize}
In particular, $X_w(b) = \coprod_{\bar{\dot w}} X_w(b)_{\bar{\dot w}}$ is a disjoint decomposition into clopen subsets.
\end{prop}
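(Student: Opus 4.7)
The plan is to construct $\alpha_{w,b}$ by working $v$-locally. For $g \in X_w(b)(R)$, Corollary~\ref{thm:arc_exactness_and_splitting} provides a $v$-cover $R \rar R'$ over which $g$ lifts to some $\tilde g \in LG(R')$; Lemma~\ref{lm:aux_factorization_rel_pos} forces $\tilde g^{-1}b\sigma(\tilde g) \in L(BwB)(R')$. Fixing an auxiliary lift $\dot w_0 \in F_w(\breve k)$, standard Bruhat theory provides an isomorphism of $\breve k$-varieties $U_w \times F_w \times U \overset{\sim}{\rar} BwB$, $(u_1,f,u_2) \mapsto u_1 f u_2$, with $U_w = U \cap \dot w_0 U^-\dot w_0^{-1}$; projection to the middle factor gives a canonical morphism $\phi_w \colon BwB \rar F_w$, which is independent of $\dot w_0$ since the embedding $F_w \hookrightarrow G$ is intrinsic. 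We set $\dot w_{\tilde g} := L\phi_w(\tilde g^{-1}b\sigma(\tilde g)) \in LF_w(R')$.

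Next, I would verify that $\dot w_{\tilde g}$ is well-defined modulo the left action of $\ker\kappa_w$ on $LF_w$. The ambiguity in the choice of $\tilde g$ is right multiplication by a section of $LB = LT \ltimes LU$. A direct Bruhat-type computation shows that $\tilde g \mapsto \tilde g u$ with $u \in LU$ leaves $\dot w_{\tilde g}$ unchanged, while $\tilde g \mapsto \tilde g t$ with $t \in LT$ produces $\dot w_{\tilde g t} = t^{-1}\dot w_{\tilde g}\sigma(t)$. Using the identity $\dot w_{\tilde g}\sigma(t) = \sigma_w(t)\dot w_{\tilde g}$ (from $\sigma_w = \mathrm{Ad}(w)\circ\sigma$), this equals $\mathrm{Lang}_w(t)\cdot \dot w_{\tilde g}$ with $\mathrm{Lang}_w(t) := t^{-1}\sigma_w(t)$. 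Proposition~\ref{prop:surj_Lang_LT_onto_ker_kappa} applied to $S = T_w$ identifies the image of $\mathrm{Lang}_w$ with $\ker\kappa_w$, so the class $\bar{\dot w}_{\tilde g} \in LF_w/\ker\kappa_w$ is independent of $\tilde g$, agrees on the overlap $R' \otimes_R R'$, and by $v$-descent produces a functorial global section $\alpha_{w,b}(g) \in (LF_w/\ker\kappa_w)(R)$.

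For property (i), any $v$-local lift $\tilde h \in LG$ of a section $h \in \dot X_{\dot w}(b)(R)$ satisfies $\tilde h^{-1}b\sigma(\tilde h) \in L(U\dot w U)$ by definition of $\dot\caO(\dot w)$, hence $\dot w_{\tilde h} = \dot w$ and $\alpha_{w,b}$ evaluates to $\bar{\dot w}$ on the image of $h$; this gives the factorization through $X_w(b)_{\bar{\dot w}}$ and the ``only if'' half of (ii). Conversely, if $g \in X_w(b)$ maps to $\bar{\dot w}$, then $v$-locally $\dot w_{\tilde g} = \ell \cdot \dot w$ for some $\ell \in \ker\kappa_w$, and Proposition~\ref{prop:surj_Lang_LT_onto_ker_kappa} allows us to solve $\ell = t\sigma_w(t)^{-1}$ after a further pro-\'etale (hence $v$-) cover, so that $\tilde g t$ is a new lift with $\dot w_{\tilde g t} = \dot w$ whose image in $L(G/U)$ gives a section of $\dot X_{\dot w}(b)$, proving the remaining direction of (ii). The disjoint clopen decomposition then follows from the fact that $LF_w$ is a trivial $LT$-torsor and $LT/\ker\kappa_w \cong \underline{X_\ast(T_w)_{\langle\sigma_w\rangle}}$ is a constant sheaf on a discrete set, so $LF_w/\ker\kappa_w$ is itself constant with clopen $\obF$-points, and $\alpha_{w,b}^{-1}$ of this decomposition is exactly $X_w(b) = \coprod_{\bar{\dot w}} X_w(b)_{\bar{\dot w}}$. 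The main obstacle is making the scheme-theoretic Bruhat decomposition rigorous enough to yield the canonical morphism $\phi_w$; once this is in place, the remaining $v$-descent and Lang-map manipulations are essentially formal.
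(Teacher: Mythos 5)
Your construction is correct and coincides with the paper's: the paper likewise defines $\alpha_{w,b}$ by lifting locally along the $v$-surjection $LG\rar L(G/B)$ (Corollary \ref{thm:arc_exactness_and_splitting} plus Lemma \ref{lm:standard_vcover}), projecting $\tilde g^{-1}b\sigma(\tilde g)\in L(BwB)$ to $LF_w$ via the canonical Bruhat-cell retraction of Lemma \ref{lm:BwB_to_Fw_map}, and checking that changing the lift by $LB$ moves the result by $t^{-1}\sigma_w(t)\in\ker\kappa_w$ before descending. The only cosmetic difference is that for the converse direction of (ii) the paper cites Proposition \ref{prop:arc_surj_of_covers}, whereas you reproduce that same Lang-map/pro-\'etale adjustment argument inline.
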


We prove Proposition \ref{prop:natural_classifying_map} in \S\ref{sec:proof_of_nat_class_map}. The map $\alpha_{w,b}$ can be thought of as the discrete part of a ``classifying map'' for the maps $\dot X_{\dot w}(b) \rar X_w(b)$. 
The map $\alpha_{w,b}$ can be non-trivial, even for $G = \GL_2$, see \S\ref{sec:expl_GL2}. Finally, if $\dot w_1,\dot w_2$  lie in the same fiber of $LF_w(\obF) \rar (LF_w/\ker\kappa_w)(\obF)$, then $\dot X_{\dot w_1}(b)$, $\dot X_{\dot w_2}(b)$ are isomorphic, as the next lemma shows.

\begin{lm}\label{lm:Xdot_changes_nicely}
If $\dot w_1, \dot w_2 \in LF_w(\obF)$ have the same image in $(LF_w/\ker\kappa_w)(\obF)$, then there exists some $t \in T(\breve k)$ with $t^{-1}\dot w_1 \sigma(t) = \dot w_2$, and $g \mapsto gt$ induces an isomorphism $\dot X_{\dot w_1}(b) \stackrel{\sim}{\rar} \dot X_{\dot w_2}(b)$. 
\end{lm}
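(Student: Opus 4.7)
The plan is to reduce the existence of $t$ to Proposition \ref{prop:surj_Lang_LT_onto_ker_kappa}. First I would unpack the hypothesis: since $LF_w$ is a trivial $LT_w$-torsor over $\Spec\obF$, fixing $\dot w_1$ as trivialisation writes $\dot w_2 = \tau_0 \cdot \dot w_1$ for a unique $\tau_0 \in T(\breve k) = LT_w(\obF)$, and the assumption that $\dot w_1, \dot w_2$ have the same image in $(LF_w/\ker\kappa_w)(\obF)$ becomes $\tau_0 \in \ker\kappa_w(\obF)$. I would then observe that producing $t \in T(\breve k)$ with $t^{-1}\sigma_w(t) = \tau_0$ is equivalent to producing $t$ with $t^{-1}\dot w_1\sigma(t) = \dot w_2$, using $\sigma_w = \mathrm{Ad}(\dot w_1) \circ \sigma$ on $T$ (so $t^{-1}\sigma_w(t) = \tau_0$ rearranges to $t^{-1}\dot w_1\sigma(t) = \tau_0 \dot w_1 = \dot w_2$).

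Next I would apply Proposition \ref{prop:surj_Lang_LT_onto_ker_kappa} to the unramified torus $T_w$ to obtain the pro-\'etale surjection $\mathrm{Lang}_w \colon LT_w \twoheadrightarrow \ker\kappa_w$, $t \mapsto t^{-1}\sigma_w(t)$, and then establish surjectivity on $\obF$-points directly. The bounded piece $L^+\caT_w(\obF) \to L^+\caT_w(\obF)$ is hit by classical Lang applied to each finite truncation $L^+_r\caT_w$ (a smooth connected perfect group scheme over $\obF$), while on the lattice quotient the image of $\mu \in X_\ast(T_w) \hookrightarrow LT_w(\obF)$ via $\mu \mapsto \mu(\varpi) \mapsto (\sigma_w(\mu) - \mu)(\varpi)$ sweeps out all of $(\sigma_w - 1) X_\ast(T_w) = \ker(X_\ast(T_w) \to X_\ast(T_w)_{\langle \sigma_w\rangle})$. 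Combining these produces the required $t \in T(\breve k)$.

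Finally, to verify that $g \mapsto gt$ defines the isomorphism $\dot X_{\dot w_1}(b) \stackrel{\sim}{\to} \dot X_{\dot w_2}(b)$, I would exploit that $T$ normalises $U$, so right multiplication by $t$ descends to a well-defined automorphism of $L(G/U)$. For $g \in \dot X_{\dot w_1}(b)(R)$, after lifting locally to some $\tilde g \in LG$, Lemma \ref{lm:aux_factorization_rel_pos} expresses the relative position condition as $\tilde g^{-1} b \sigma(\tilde g) \in L(U\dot w_1 U)$; writing this element as $u_1 \dot w_1 u_2$ and computing
\[
(\tilde g t)^{-1} b \sigma(\tilde g t) = (t^{-1} u_1 t)(t^{-1}\dot w_1 \sigma(t))(\sigma(t)^{-1} u_2 \sigma(t)) \in U \dot w_2 U,
\]
using $T$-normalisation of $U$ together with the identity $t^{-1}\dot w_1\sigma(t) = \dot w_2$, would conclude that $gt \in \dot X_{\dot w_2}(b)(R)$. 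The obvious inverse $g \mapsto gt^{-1}$ finishes. The main (minor) obstacle is the $\obF$-point surjectivity of $\mathrm{Lang}_w$, which I handle by the explicit bounded-plus-lattice decomposition above rather than by pro-\'etale cohomological vanishing; everything else is a formal manipulation of relative positions.
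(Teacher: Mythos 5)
Your proof is correct, and its overall skeleton (first produce $t$ with $t^{-1}\dot w_1\sigma(t)=\dot w_2$, then check relative positions) matches the paper's; but the way you obtain $t$ is genuinely different. The paper simply invokes Kottwitz's identification $X_\ast(T_w)_{\langle\sigma_w\rangle}\cong B(T_w)$, which says precisely that every $\tau_0\in\ker\kappa_w(\obF)$ is of the form $t^{-1}\sigma_w(t)$. You instead re-derive this on $\obF$-points from Proposition \ref{prop:surj_Lang_LT_onto_ker_kappa} by splitting $LT_w(\obF)$ into its bounded part (classical Lang on each truncation $L^+_r\caT_w$, with the passage to the inverse limit justified because the fibres of the truncated Lang maps are torsors under finite groups) and the lattice part (where $(\sigma_w-1)X_\ast(T_w)$ is exactly the kernel of the map to coinvariants); combining the two pieces uses commutativity of $T_w$. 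This is more self-contained and is essentially the proof of the torus case of Kottwitz's theorem, at the cost of a few lines. For the second claim, the paper argues as in Lemma \ref{lm:action_on_Xwb}, observing that $t\times\sigma(t)$ carries $L\dot\caO(\dot w_1)$ into $L\dot\caO(\dot w_2)$ inside $L(G/U)^2$; you instead lift to $LG$ and compute in the Bruhat cell via Lemma \ref{lm:aux_factorization_rel_pos}. Your lifting step is harmless — in fact $LG(R)\to L(G/U)(R)$ is surjective for every $R$ since $H^1_{\mathrm{et}}$ of an affine scheme with coefficients in the split unipotent $U$ vanishes, so no localisation is even needed — and the identity $(\tilde gt)^{-1}b\sigma(\tilde gt)=(t^{-1}u_1t)\,\dot w_2\,(\sigma(t)^{-1}u_2\sigma(t))$ does the rest. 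Both routes are valid; the paper's is shorter because it outsources the cohomological input to \cite{Kottwitz_85}.
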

\begin{proof}
By \cite[2.4, 2.9]{Kottwitz_85}, $X_\ast(T_w)_{\langle \sigma_w \rangle} \cong B(T_w)$, the set of $\sigma_w$-conjugacy classes of $T_w$, so that for each $\tau \in \ker(\kappa_w(\obF))$, there exists $t \in T_w(\breve k)$ such that $\tau = t^{-1}\sigma_w(t) = t^{-1}\dot w \sigma(t) \dot w^{-1}$. This implies the first claim. The second claim is proven similarly as Lemma \ref{lm:action_on_Xwb}.
\end{proof}

\begin{rem}\label{rem:alpha_wb_on_geometric_points}
Anticipating the proof, we explicate $\alpha_{w,b}$ on geometric points. Let $\ff \in \Perff$ be an algebraically closed field, put $L = \bW(\ff)[1/\varpi]$. Let $g \in X_w(b)(\ff)$, that is $g \in L(G/B)(\ff)$, such that $(\widetilde g, \widetilde{b\sigma(g)}) \colon \Spec L \rar (G/B)^2$ factors through $\caO(w) \rar (G/B)^2$. The natural map $(G/U)(L) \rar (G/B)(L)$ is surjective, so $g$ lifts to some $\dot g \in L(G/U)(\ff)$. As $L$ is a field, by the geometric Bruhat decomposition (``at the level of $U$''), the section $(\widetilde{\dot g}, \widetilde{b\sigma(\dot g)}) \in (G/U)^2(L)$ lies in $\caO(\dot w)$ for some $\dot w \in LF_w (\ff) = F_w(L)$, that is $\widetilde{\dot g}^{-1}\widetilde{b\sigma(\dot g)} \in U(L)\dot w U(L)$. Replacing $\dot g$ by another lift amounts to replacing $\dot w$ by $t^{-1}{\rm Ad}(w)(\sigma(t)) \dot w = t^{-1}\sigma_w(t) \dot w\in LF_w(\ff)$ for some $t \in T(L)$. But $t^{-1}\sigma_w(t) \in (\ker\kappa_w)(\ff)$, so that the class of $\dot w$ in the discrete set $LF_w(\ff)/\ker\kappa_w (\ff) = (LF_w/\ker\kappa_w)(\ff)$ does not depend on the choice of the lift $\dot g$. 
\end{rem}

\subsection{Schubert cells} We have the locally closed subvariety $BwB \subseteq G$, which gives the subfunctor $L(BwB) \subseteq LG$. Fix a lift $\dot w \in F_w(\breve k)$ of $w$. Let $U,U^-$ be the unipotent radicals of $B$ resp. the opposite Borel subgroup. By the Bruhat decomposition \cite[14.12 Thm.~]{Borel_91}, 
\begin{equation}\label{eq:Bruhat_dec_param}
\beta \colon (U \cap wU^-w^{-1}) \times T \times U \rar BwB, \quad u_1,t,u_2 \mapsto u_1 \dot w t u_2
\end{equation}
is an isomorphism of $\breve k$-varieties. (In particular, $BwB$ is affine, and hence $L(BwB)$ is ind-representable; we will not need this, and moreover, the above map $L(BwB) \rar LG$ is in general not a locally closed immersion).

\begin{lm}\label{lm:BwB_to_Fw_map}
There is a natural (independent of $\dot w$) map $BwB \rar F_w$, given in terms of \eqref{eq:Bruhat_dec_param} by $\beta(u_1,t,u_2) \mapsto \dot w t$. Further, it maps $u_1t_1 \dot w t_2 u_2 \in BwB$ with $u_i \in U$, $t_i \in T$ to $t_1\dot w t_2$.
\end{lm}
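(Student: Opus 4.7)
The plan is to use the Bruhat normal form to define the map pointwise, then verify well-definedness on $BwB$ and independence of the lift $\dot w$, and finally do a short calculation for the second assertion.

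First, since $\beta$ in \eqref{eq:Bruhat_dec_param} is an isomorphism of $\breve k$-varieties, every $g \in BwB$ has a unique expression $g = u_1 \dot w t u_2$ with $u_1 \in U \cap wU^-w^{-1}$, $t \in T$, $u_2 \in U$; I would define the map $BwB \rar F_w$ by $g \mapsto \dot w t$. This is manifestly functorial on $R$-points. To show independence of the choice of $\dot w$, note that any other lift has the form $\dot w' = \dot w s$ for some $s \in T$, and the rewriting $g = u_1 \dot w t u_2 = u_1 \dot w'(s^{-1}t) u_2$ exhibits the Bruhat normal form with respect to $\dot w'$ (the sets $U \cap wU^-w^{-1}$ and $U$ only depend on $w$, not on the lift). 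Hence the image $\dot w'\cdot s^{-1}t = \dot w s s^{-1} t = \dot w t$ agrees with the image computed via $\dot w$.

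For the second statement, given $g = u_1 t_1 \dot w t_2 u_2$ with $u_i \in U$, $t_i \in T$, I would bring $g$ into Bruhat normal form in three elementary steps. (i) Use that $\dot w \in N_G(T)$ to move $t_1$ past $\dot w$, writing $t_1 \dot w = \dot w \cdot t_1'$ with $t_1' := \dot w^{-1} t_1 \dot w \in T$. (ii) Apply the direct product decomposition $U = (U \cap wU^-w^{-1}) \cdot (U \cap wUw^{-1})$ (which is standard, following from the root subgroup structure of $U$ relative to $B$ and $w$, cf.\ \cite[14.12]{Borel_91}) to write $u_1 = \alpha \beta$ with $\alpha \in U \cap wU^-w^{-1}$ and $\beta \in U \cap wUw^{-1}$; then push $\beta$ past $\dot w$ via $\beta \dot w = \dot w \cdot (\dot w^{-1}\beta\dot w)$, where $\dot w^{-1}\beta \dot w \in U$ by the definition of $U \cap wUw^{-1}$. (iii) Use that $T$ normalizes $U$ to move the resulting element of $U$ past $t_1' t_2 \in T$ and absorb it into the outer $U$-factor with $u_2$. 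The end result is a normal form $g = \alpha \dot w (t_1' t_2) u_2'$ with $\alpha \in U \cap wU^-w^{-1}$, $u_2' \in U$, so the image of $g$ in $F_w$ is $\dot w \cdot t_1' t_2 = \dot w \cdot (\dot w^{-1} t_1 \dot w) \cdot t_2 = t_1 \dot w t_2$, as claimed.

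The argument is essentially formal once the decompositions of $U$ and $\beta$ are in place, so there is no serious obstacle; the only point to be careful about is that the $(U \cap wU^-w^{-1}) \times T \times U$-decomposition is a product decomposition of $BwB$ as a variety (hence unique on $R$-points as well), which is exactly the content of the Bruhat decomposition cited from Borel.
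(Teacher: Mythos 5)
Your proof is correct and is exactly the "elementary computation" the paper alludes to: you use the uniqueness of the Bruhat normal form $u_1\dot w t u_2$ to define the map, check independence of the lift via $\dot w' = \dot w s$, and rewrite $u_1 t_1 \dot w t_2 u_2$ in normal form using $U = (U\cap wU^-w^{-1})(U\cap wUw^{-1})$ and the fact that $N_G(T)$ and $T$ normalize $T$ and $U$ respectively. No gaps; this matches the intended argument.
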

\begin{proof}
This is an elementary computation. 
\end{proof}

Applying the loop functor to the map from Lemma \ref{lm:BwB_to_Fw_map} and composing with the natural projection we deduce a map
\begin{equation}\label{eq:map_from_LBwB_to_discrete_torsor}
L(BwB) \rar LF_w \rar LF_w / \ker\kappa_w.
\end{equation}

\subsection{Proof of Proposition \ref{prop:natural_classifying_map}}\label{sec:proof_of_nat_class_map}

We will define the map $\alpha_{w,b}$ arc-locally. This is sufficient as source and target are arc-sheaves, and the definition will be natural, hence compatible with pull-backs. Let $p \colon LG \rar L(G/B)$ denote the natural projection. The map $(LG)^2 \rar LG$, $(x,y)\mapsto x^{-1}y$ induces a map $(p\times p)^{-1}(L\caO(w)) \rar L(BwB)$, as follows by applying $L(\cdot)$ to the corresponding maps over $\breve k$. 

Let $R \in \Perf_{\obF}$ be such that $p(R)$ is surjective. Fix some section $s \colon L(G/B)(R) \rar LG(R)$ to $p(R)$. In what follows we always look at $R$-points, but for brevity we write $F$ instead of $F(R)$ for any sheaf $F$. We have the commutative diagram 

\begin{equation}\label{eq:diag_XwbR_LBwB}
\begin{tikzcd}[column sep=3em,row sep=3em]
  X_w(b) \arrow[r] \arrow[rrrd] & L(G/B) \arrow[r,"s"] & LG \arrow[r, "g \mapsto (g{,}b\sigma(g))"] & LG^2 \ar[r, "(x{,}y) \mapsto x^{-1}y"] & LG 
  \\
  &&& (p\times p)^{-1}(L\caO(w)) \arrow[u,hook] \arrow[r] & L(BwB) \arrow[u,hook] \arrow[r, "(11.3)"] & LF_w/\ker\kappa_w
\end{tikzcd}
\end{equation}
Indeed, the only thing we have to justify is that the composed horizontal map $X_w(b) \rar LG^2$ factors through the left vertical arrow. But we have $(p\times p)^{-1}(L\caO(w)) = LG^2 \times_{L(G/B)^2} L\caO(w)$, the composition $X_w(b) \rar LG^2 \stackrel{p\times p}{\rar} L(G/B)^2$ is just the map $g \mapsto (g,b\sigma(g))$ (as $p$ commutes with $b\sigma$), and the latter map factors through $L\caO(w) \rar L(G/B)^2$.

Diagram \eqref{eq:diag_XwbR_LBwB} gives a map $\alpha_{w,b} \colon X_w(b) \rar LF_w/\ker\kappa_w$, and one checks (using the second claim of Lemma \ref{lm:BwB_to_Fw_map}) that it is independent of the choice of the section $s$. In particular, if $R \rar R'$ is a map in $\Perf_{\obF}$, such that $p(R)$, $p(R')$ are surjective, then the obvious diagram, into which $\alpha_{w,b}(R)$, $\alpha_{w,b}(R')$ fit, is commutative. Finally, by Corollary \ref{thm:arc_exactness_and_splitting} and Lemma \ref{lm:standard_vcover}, any $R_0 \in \Perf_{\obF}$ admits an arc-cover $R_0 \rar R$, such that $p(R) \colon LG(R) \rar L(G/B)(R)$ is surjective, and the construction of the map $\alpha_{w,b}$ is complete. It is clear from this construction that $\alpha_{w,b}$ satisfies property (i) claimed in the proposition. If $\dot X_{\dot w}(b)\neq \varnothing$, then $\bar{\dot w} \in \im(\alpha_{w,b})$ by (i). This is one direction of (ii). The other follows from Proposition \ref{prop:arc_surj_of_covers} below (we do not use (ii) in the proof of Proposition \ref{prop:arc_surj_of_covers}, so there is no circular reasoning).

\subsection{The $\underline{T_w(k)}$-torsor $\dot X_{\dot w}(b) \rar X_w(b)_{\bar{\dot w}}$}

Similar as in \cite[10.12]{Scholze_ECD} we define:

\begin{Def}\label{def:torsor_loc_prof}
Let $H$ be a locally profinite group and let $\ast \in \{v, {\rm proet}\}$. A (sheaf) \emph{$\underline{H}$-torsor for the $\ast$-topology} is a map $Y \rar X$ of $v$-sheaves on $\Perf_\kappa$, with an action of $\underline{H}$ on $Y$ over $X$ such that $\ast$-locally on $X$, we have $\underline H$-equivariant isomorphism $Y \cong \underline H \times X$.
\end{Def}

In particular, for $\ast \in \{v, {\rm proet}\}$, an $\ast$-torsor is a $\ast$-surjection. 

\begin{prop}\label{prop:arc_surj_of_covers}
Let $\dot w \in LF_w(\obF)$ with image $\bar{\dot w} \in (LF_w/\ker\kappa_w)(\obF)$. Then $\dot X_{\dot w}(b) \rar X_w(b)_{\bar{\dot w}}$ is $\underline{T_w(k)}$-torsor for the pro-\'etale topology.
\end{prop}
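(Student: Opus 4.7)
The plan is to verify that $\dot X_{\dot w}(b) \to X_w(b)_{\bar{\dot w}}$ meets Definition \ref{def:torsor_loc_prof}, i.e., that it is a $\underline{T_w(k)}$-pseudo-torsor admitting sections in the pro-\'etale topology. First I would check the pseudo-torsor property purely formally from the Bruhat decomposition; then I would establish surjectivity in the $v$-topology using Corollary \ref{thm:arc_exactness_and_splitting} together with Proposition \ref{prop:surj_Lang_LT_onto_ker_kappa}; and finally I would invoke the descent result of Gabber mentioned in the introduction to pass from a $v$-torsor to a pro-\'etale torsor.

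The key formal input is the following computation. For $\tilde g \in L(G/U)(R)$ lifting a section $g \in X_w(b)(R) \subseteq L(G/B)(R)$, write $\dot w(\tilde g) \in LF_w(R)$ for the image of $\tilde g^{-1} b\sigma(\tilde g) \in L(BwB)(R)$ under the map \eqref{eq:map_from_LBwB_to_discrete_torsor}. A direct manipulation, using that $\mathrm{Ad}(\dot w') = \mathrm{Ad}(w)$ on $T$ for any lift $\dot w' \in F_w$ of $w$, shows that for any $t \in LT(R)$,
\[
\dot w(\tilde g \cdot t) \;=\; \bigl(t^{-1}\sigma_w(t)\bigr)\cdot \dot w(\tilde g).
\]
This immediately implies: (i) the action of $\underline{T_w(k)} = (LT)^{\sigma_w}$ preserves the subfunctor $\dot X_{\dot w}(b) = \{\tilde g : \dot w(\tilde g) = \dot w\}$; (ii) this action is free (inherited from the free action of $LT$ on $L(G/U)$ over $L(G/B)$); and (iii) any two lifts $\tilde g_1,\tilde g_2 \in \dot X_{\dot w}(b)(R)$ of the same $g$ differ by some $t \in LT(R)$ with $t^{-1}\sigma_w(t) = 1$, i.e., $t \in \underline{T_w(k)}(R)$. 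Together these show that $\underline{T_w(k)} \times \dot X_{\dot w}(b) \to \dot X_{\dot w}(b) \times_{X_w(b)_{\bar{\dot w}}} \dot X_{\dot w}(b)$ is an isomorphism.

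For surjectivity in the $v$-topology, let $g \in X_w(b)_{\bar{\dot w}}(R)$. Since $LG \to L(G/B)$ factors through $L(G/U)$ and is $v$-surjective by Corollary \ref{thm:arc_exactness_and_splitting}, the map $L(G/U) \to L(G/B)$ is $v$-surjective too, so after a $v$-cover $R \to R'$ we may lift $g$ to some $\tilde g \in L(G/U)(R')$. By the construction of $\alpha_{w,b}$ in Proposition \ref{prop:natural_classifying_map} and the hypothesis $g \in X_w(b)_{\bar{\dot w}}$, we have $\dot w(\tilde g) = \tau \cdot \dot w$ for some $\tau \in \ker\kappa_w(R')$. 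Applying Proposition \ref{prop:surj_Lang_LT_onto_ker_kappa} to the torus $T_w$, we pass to a pro-\'etale cover $R' \to R''$ on which $\tau^{-1} = t^{-1}\sigma_w(t)$ for some $t \in LT_w(R'')$; the transformation law then yields $\dot w(\tilde g\cdot t) = \dot w$, so $\tilde g \cdot t \in \dot X_{\dot w}(b)(R'')$ is a lift of $g$. Combined with the pseudo-torsor property, this realises $\dot X_{\dot w}(b) \to X_w(b)_{\bar{\dot w}}$ as a $\underline{T_w(k)}$-torsor for the $v$-topology.

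To conclude I would invoke the descent result of Gabber cited in the introduction, which promotes any $v$-torsor under the sheaf $\underline H$ attached to a locally profinite group $H$ to a torsor for the pro-\'etale topology. The substantive step in the whole argument is the $v$-surjectivity of $L(G/U) \to L(G/B)$, which is non-formal and rests ultimately on Theorem \ref{lm:no_torsors}; the remaining ingredients are purely bookkeeping with the Bruhat decomposition and the Lang map.
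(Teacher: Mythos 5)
Your proposal is correct and follows essentially the same route as the paper: establish the pseudo-torsor property via the Bruhat decomposition and the transformation law $\dot w(\tilde g t) = t^{-1}\sigma_w(t)\,\dot w(\tilde g)$, obtain $v$-local sections by lifting through $LG \rar L(G/B)$ (Corollary \ref{thm:arc_exactness_and_splitting}) and then correcting the lift using the pro-\'etale surjectivity of the Lang map onto $\ker\kappa_w$ (Proposition \ref{prop:surj_Lang_LT_onto_ker_kappa}), and finally upgrade from a $v$-torsor to a pro-\'etale torsor via Gabber's descent result. The only cosmetic difference is that the paper lifts to $LG$ rather than directly to $L(G/U)$, and observes that in equal characteristic the lifting is already \'etale-local (Remark \ref{rem:Bouthier_Cesnavicius}) so Gabber's theorem is only needed when $\charac k = 0$.
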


\begin{proof}
One checks that $\dot X_{\dot w}(b) \times \underline{T_w(k)} \rar \dot X_{\dot w}(b) \times_{X_w(b)_{\bar{\dot w}}} \dot X_{\dot w}(b)$, $(\dot g,t) \mapsto (\dot g, \dot gt)$ is an isomorphism. We now show that $\dot X_{\dot w}(b) \rar X_w(b)_{\bar{\dot w}}$ is surjective for the $v$-topology (resp. pro-\'etale topology if $\charac k > 0$). Let $R \in \Perf_{\obF}$ and $g \in X_w(b)_{\bar{\dot w}}(R)$. Replacing $R$ by a $v$-cover if necessary, we may lift $g$ to some $\dot{g} \in LG(R)$ (Theorem \ref{thm:arc_exactness_and_splitting}). If $\charac k > 0$, the same works for the \'etale topology by Remark \ref{rem:Bouthier_Cesnavicius}. As in \S\ref{sec:proof_of_nat_class_map}, $\dot g^{-1}b\sigma(g) \in L(BwB)(R)$, hence determines by Lemma \ref{lm:BwB_to_Fw_map} an element $\underline{\dot w}_{\dot g} \in LF_w(R)$. By Lemma \ref{lm:aux_factorization_rel_pos}, the image of $\dot g$ in $L(G/U)(R)$ will lie in $\dot X_{\dot w}(b)(R)$ if and only if 
\begin{equation}\label{eq:dot_w_must_be_constant_on_SpecR}
\text{$\underline{\dot w}_{\dot g}$ equals the image of $\dot w \in LF_w(\obF)$ under $LF_w(\obF) \rar LF_w(R)$.}
\end{equation}
It now suffices to show that after replacing $R$ by a pro-\'etale cover, our fixed lift $\dot g$ can be replaced by $\dot g' = \dot g t$ for some $t \in LT(R)$, such that $\dot g'$ satisfies \eqref{eq:dot_w_must_be_constant_on_SpecR}.

Let $(LF_w)_{\bar{\dot w}} = LF_w \times_{(LF_w/\ker\kappa_w)} \underline{\{\bar{\dot w}\}}$. This is a trivial torsor under the group $\ker \kappa_w$ and $\ker \kappa_w \rar (LF_w)_{\bar{\dot w}}$, $\tau \mapsto \tau \dot w$ is an isomorphism (of sheaves on $\Perf_{\obF}$). As $g \in X_w(b)_{\bar{\dot w}}(R)$, we have $\underline{\dot w}_{\dot g} \in (LF_w)_{\bar{\dot w}}(R)$. Replacing $\dot g$ by $\dot g t$ with $t \in LT(R)$ has the effect of replacing $\underline{\dot w}_{\dot g}$ by $t^{-1}\sigma_w(t)\underline{\dot w}_{\dot g}$. By Proposition \ref{prop:surj_Lang_LT_onto_ker_kappa}, $t \mapsto t^{-1}\sigma_w(t) \colon LT_w \rar \ker\kappa_w$ is surjective for the \'etale topology on $\Perf_{\obF}$, hence replacing $R$ by an \'etale cover, we may find some $t\in LT(R)$ such that $t^{-1}\sigma_w(t) \underline{\dot w}_{\dot g} = \dot w$, so that replacing $\dot g$ by $\dot g' = \dot g t$, we achieve $\underline{\dot w}_{\dot g'} = t^{-1}\sigma_w(t) \underline{\dot w}_{\dot g} = \dot w$, i.e., \eqref{eq:dot_w_must_be_constant_on_SpecR} holds for $\dot g'$. 

We are done in the case $\charac k > 0$. Finally, an unpublished result of Gabber \cite{Gabber_unpub} states (in particular) that separated \'etale morphisms descend along universally submersive maps between schemes (and hence along $v$-covers). Using this and the fact that the $v$-torsor $X_{\dot w}(b) \rar X_w(b)_{\bar{\dot w}}$ trivializes itself, it follows that $\dot X_{\dot w}(b) \rar X_w(b)_{\bar{\dot w}}$ is a pro-\'etale torsor also when $\charac k = 0$.
\qedhere

\end{proof}


\section{Variants}\label{sec:variants}

\subsection{Another presentation of $X_w(b)$} \label{sec:var1}
Let the notation be as in the beginning of \S\ref{sec:lDLS}. Let $b \in G(\breve k)$ and consider the automorphism $\sigma_b$ of $LG$ given by $\sigma_b(g) = b\sigma(g)b^{-1}$. In contrast to the classical theory, $\sigma_b$ needs not to be the geometric Frobenius corresponding to an $\mathbb{F}_q$-rational structure on $LG$.\footnote{Let $G = \GL_2$, $b = \diag(\varpi,1)$, and let $\sigma_b(g) = b\sigma(g)b^{-1}$ be an automorphism of $LG$. One can show that there is no ind-scheme $H$ over $\bF_q$, such that $LG \cong H \otimes_{\bF_q} \obF$ and $\sigma_b$ is the corresponding geometric Frobenius.} For $b \in G(\breve k)$, $w \in W$, resp. its lift $\dot w \in N_G(T)(\breve k)$, we may consider the functors $S_w(b)$, $\dot S_{\dot w}(b)$ on $\Perf_{\obF}$ defined by Cartesian diagrams

\centerline{\begin{tabular}{cc}
\begin{minipage}{2in} 
\begin{displaymath}
\leftline{
\xymatrix{
S_w(b) \ar[r] \ar[d] & \dot w b^{-1} \sigma_b(LB) \ar[d]\\
LG \ar[r] & LG 
}
}
\end{displaymath}
\end{minipage}
& \qquad\qquad  and \qquad\qquad
\begin{minipage}{2in}
\begin{displaymath}
\leftline{
\xymatrix{
\dot S_{\dot w}(b) \ar[r] \ar[d] & \dot w b^{-1} \sigma_b(LU) \ar[d]\\
LG \ar[r] & LG, 
}
}
\end{displaymath}
\end{minipage}
\end{tabular}
}

\smallskip
\noindent where the lower horizontal maps are $g \mapsto g^{-1}\sigma_b(g)$. Note that in the left diagram the upper left entry only depends on $w$, not on $\dot w$. As $LU \rar LB \rar LG$ are closed immersions, $\dot S_{\dot w}(b) \rar S_w(b) \rar LG$ are closed sub-ind-schemes. The closed subgroup $L(B \cap wBw^{-1})$ of $LG$ acts on $LG$ by right multiplication. By checking on $R$-points (as in \S\ref{sec:def_DLV}) one sees that this action restricts to an action of $L(B \cap wBw^{-1})$ on $S_w(b)$ and to an action of $L(U \cap w U w^{-1})$ on $\dot S_{\dot w}(b)$. The following proposition can be regarded as an analog of \cite[1.11]{DeligneL_76}.

\begin{prop}\label{prop:another_formulation}
There are natural isomorphisms
\begin{align*} 
X_w(b)' &:= S_w(b) / L(B \cap wB w^{-1}) \stackrel{\sim}{\longrar} X_w(b) \\
\dot X_{\dot w}(b)' &:= \dot S_{\dot w}(b) / L(U \cap wU w^{-1}) \stackrel{\sim}{\longrar} \dot X_{\dot w}(b), \\
\end{align*}
where the quotients are taken in the category of arc-sheaves on $\Perf_{\obF}$.
\end{prop}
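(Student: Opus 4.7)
The argument is entirely parallel for $X_w(b)$ and $\dot X_{\dot w}(b)$, so I focus on the former. For $g \in S_w(b)(R)$, using that $B$ is $k$-rational gives $\sigma_b(LB) = bLBb^{-1}$, and the defining condition simplifies to $g^{-1}b\sigma(g) \in \dot w \cdot LB \subseteq L(BwB)$. By Lemma~\ref{lm:aux_factorization_rel_pos}, the image $gB \in L(G/B)(R)$ then lies in $X_w(b)(R)$, which defines a natural morphism $S_w(b) \to X_w(b)$. A direct computation of $(gh)^{-1}b\sigma(gh) = h^{-1}(g^{-1}b\sigma(g))\sigma(h)$ for $h \in LB$ shows that $gh \in S_w(b)$ if and only if $\dot w^{-1} h \dot w \in LB$, i.e., iff $h \in L(B \cap wBw^{-1})$. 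Hence right multiplication by this subgroup preserves $S_w(b)$, and as it lies in $LB$ it fixes $gB$, yielding the desired morphism $X_w(b)' \to X_w(b)$.

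I then show $S_w(b) \to X_w(b)$ is a $v$-torsor under $L(B \cap wBw^{-1})$. For surjectivity: given $s \in X_w(b)(R)$, Corollary~\ref{thm:arc_exactness_and_splitting} provides a $v$-cover $R \to R'$ and a lift $g \in LG(R')$ with $gB = s$, so $g^{-1}b\sigma(g) \in L(BwB)(R')$ by Lemma~\ref{lm:aux_factorization_rel_pos}. Applying $L$ to the Bruhat scheme isomorphism~\eqref{eq:Bruhat_dec_param} --- whose formation commutes with $L$ by Lemma~\ref{lm:L_commutes_limits} --- we factor $g^{-1}b\sigma(g) = u\dot w b_0$ uniquely with $u \in L(U \cap wU^-w^{-1})(R')$ and $b_0 \in LB(R')$. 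Since $u \in LU \subseteq LB$, replacing $g$ by $gu$ leaves $gB$ equal to $s$, and
\[
(gu)^{-1}b\sigma(gu) \;=\; \dot w\, b_0\, \sigma(u) \;\in\; \dot w \cdot LB,
\]
so $gu \in S_w(b)(R')$ is the required lift. For the torsor structure, any two lifts $g_1, g_2 \in S_w(b)(R) \subseteq LG(R)$ of the same $s$ satisfy $g_2 = g_1 h$ for the unique $h = g_1^{-1}g_2 \in LB(R)$ (as $B$ is the stabilizer of the identity coset), and the constraint that both lie in $S_w(b)$ forces $h \in L(B \cap wBw^{-1})$ by the calculation in the first paragraph.

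As $X_w(b)$ is an arc-sheaf (Corollary~\ref{cor:Xwb_arc_sheaf}), the resulting isomorphism of $v$-sheaves $S_w(b)/L(B \cap wBw^{-1}) \stackrel{\sim}{\to} X_w(b)$ --- the quotient being formed in $v$-sheaves --- exhibits the $v$-quotient as itself an arc-sheaf; by universality it therefore coincides with the arc-quotient $X_w(b)'$. The analogous proof applies to $\dot X_{\dot w}(b)$, substituting $U$ for $B$, $G/U$ for $G/B$, and the Bruhat scheme isomorphism $(U \cap wU^-w^{-1}) \times U \cong U\dot w U$ for~\eqref{eq:Bruhat_dec_param}; the needed $v$-surjectivity of $LG \to L(G/U)$ is automatic since $U$ is split unipotent, so $H^1_{\et}(\Spec \bW(R)[1/\varpi], U) = 0$ (by induction on the composition series, as in the proof of Corollary~\ref{thm:arc_exactness_and_splitting}), meaning every section lifts without passing to a cover. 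The main obstacle is the clean transfer of the Bruhat decomposition to the loop-functor level compatibly with $v$-local lifting: the crux is that the modification $g \mapsto gu$, with $u$ the Bruhat component in $L(U \cap wU^-w^{-1})$, simultaneously preserves the image in $L(G/B)$ (because $u \in LU \subseteq LB$) and lands in $S_w(b)$, which is precisely what aligns the two presentations.
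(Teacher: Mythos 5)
Your proof is correct and follows essentially the same route as the paper: the map is constructed via Lemma \ref{lm:aux_factorization_rel_pos}, surjectivity comes from Corollary \ref{thm:arc_exactness_and_splitting}, and the fibers are identified with $L(B\cap wBw^{-1})$-orbits (resp. $L(U\cap wUw^{-1})$-orbits) by the same Bruhat computation the paper uses for injectivity. Your treatment is in fact slightly more explicit than the paper's at the surjectivity step, where you correctly observe that an arbitrary lift $g$ must be adjusted by the $L(U\cap wU^-w^{-1})$-component $u$ of $g^{-1}b\sigma(g)$ to land in $S_w(b)$; the identification of the $v$-quotient with the arc-quotient by universality is also fine.
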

\begin{proof}
We only prove the first isomorphism, the second has an analogous proof. We have the composed map $S_w(b) \rar LG \rar L(G/B)$, and we claim that it factors through the natural monomorphism $X_w(b) \rar L(G/B)$. By Lemma \ref{lm:aux_factorization_rel_pos}, this is a computation on $R$-points. We have constructed a map $S_w(b) \rar X_w(b)$. One checks that it factors through the presheaf quotient $Q := (S_w(b)/L(B \cap wB w^{-1}))^{\rm presh}$ and, as $X_w(b)$ is an arc-sheaf, also through the arc-sheafification $X_w(b)'$. This gives the desired map $X_w(b)' \rar X_w(b)$ of arc-sheaves. Its surjectivity follows from Theorem \ref{thm:arc_exactness_and_splitting}, along with Lemma \ref{lm:aux_factorization_rel_pos}. To show injectivity, first observe that it suffices to show that $Q \rar X_w(b)$ is injective. Let $\alpha \colon S_w(b) \rar X_w(b) \har L(G/B)$ be the natural map. It suffices to show that if $R \in \Perf_{\obF}$ and $s_1,s_2 \in S_w(b)(R)$ satisfy $\alpha(s_1) = \alpha(s_2)$, then $\exists \gamma \in L(B \cap wBw^{-1})(R)$ such that $s_1\gamma = s_2$. The map $\alpha$ is induced by the map $\alpha_0 \colon S_w(b) \har LG \rar (LG/LB)^{\rm presh}$ (presheaf quotient) by composition with $(LG/LB)^{\rm presh} \har LG/LB \har L(G/B)$. As the latter two maps are injective (this holds as the quotient presheaf is always separated and by Theorem \ref{thm:arc_exactness_and_splitting}), we already have $\alpha_0(s_1) = \alpha_0(s_2)$. From this we get some $\gamma \in LB(R)$ with $s_1\gamma = s_2$. But now $s_i \in S_w(b)(R)$, i.e., $s_i^{-1}b\sigma(s_i) \in wLB(R)$ for $i =1,2$, and hence ($\dot w$ is any lift of $w$)
\[
\dot w LB(R) \ni s_2^{-1}b\sigma(s_2) = \gamma^{-1} s_1^{-1}b\sigma(s_1)\sigma(\gamma) \in \gamma^{-1} \dot w LB(R) \sigma(\gamma) = \dot w (\dot w^{-1}\gamma^{-1}\dot w)LB(R). 
\]
I.e., $\gamma$ must lie in $LB(R) \cap wLB(R)w^{-1} = L(B\cap wBw^{-1})(R)$, and we are done.
\end{proof}

\begin{ex}\label{ex:regular_b_example_GL2}
In the classical setup \cite[1.19]{DeligneL_76}, replacing $b$ by a $\sigma$-conjugate element, one can always achieve $\dot w = b$. This is not the case in our setup, already for $G = \GL_2$: Let $T$ the diagonal torus, $b = \diag(\varpi^c,\varpi^d)$ with $c \neq d \in \bZ$, $w$ the nontrivial element of the Weyl group of $T$. Then there is no representative of the $\sigma$-conjugacy class $[b]_G$ lying over $w$, and $X_w(b)$ is not of the form as considered in \S\ref{sec:var2} below.
\end{ex}

\subsection{Variant of the construction} \label{sec:var2} Let $G'$ be a reductive $k$-group, which splits over $\breve k$. Let $B' = T'U'$ be a $\breve k$-rational Borel subgroup and a $\breve k$-split maximal torus contained in it. Let $F$ denote the geometric Frobenius of $LG'$ (regarded as a sheaf on $\Perf_{\obF}$). 
We then can consider the functor $S_{T',B'} = S_{T',B'}^{G'}$ (resp. $S_{T',U'} = S_{T',U'}^{G'}$) on $\Perf_{\obF}$, defined as the pull-back of $F(LB')$ (resp. $F(LU')$) under the Lang map $\Lang_{G'} \colon LG' \rar LG'$, $g \mapsto g^{-1}F(g)$.
As in \S\ref{sec:var1}, $S_{T',U'} \subseteq S_{T',B'} \subseteq LG'$ are closed sub-ind-schemes, and $LB' \cap F(LB')$ resp. $LU' \cap F(LU')$ acts on $S_{T',B'}$ resp. on $S_{T',U'}$ by right multiplication. We can therefore consider the arc-sheaf quotients
\begin{align*}
X_{T',B'} &= S_{T',B'} /  LB' \cap F(LB') \\
X_{T',U'} &= S_{T',U'} /  LU' \cap F(LU') \\
\end{align*}
(we write $X_{T',B'}^{G'}$ resp. $X_{T',U'}^{G'}$ to specify $G'$). 
Then $\underline{G'(k)}$ acts on $S_{T',B'}$ and on $X_{T',B'}$ by left multiplication, and $\underline{G'(k)} \times \underline{T'(k)}$ acts on $S_{T',U'}$ and $X_{T',U'}$ by $(g,t) \colon x \mapsto gxt$ (this is shown similar as in \S\ref{sec:def_DLV}). The natural map $S_{T',U'} \rar S_{T',B'}$ induces a map $X_{T',U'} \rar X_{T',B'}$, and both of these maps are $\underline{G(k)}$-equivariant.

The spaces $X_{T',B'}$, $X_{T',U'}$ are related to $X_w(b)$, $\dot X_{\dot w}(b)$. With notation as in the beginning of \S\ref{sec:lDLS}, for $b \in N_G(T)(\breve k)$ we have the group $G_b$ as in \eqref{eq:form_of_Levi}. Its base change to $\breve k$ (again denoted $G_b$) may be identified with the centralizer of the Newton point $\nu_b$ of $b$, a closed subgroup of $G$ (cf. \cite[3.3]{Kottwitz_97}), which in our situation is a Levi factor of $G$. Let $T_b$ be the torus $T$ regarded as a subgroup of $G_b$. It is defined over $k$, as $b \in N_G(T)(\breve k)$. We also have the Borel subgroup $B_b := B \cap G_b$ of $G_b$ containing $T_b$. The geometric Frobenius of $LG_b$ is $\sigma_b(g) := b\sigma(g)b^{-1}$, where $\sigma$ is the geometric Frobenius on $LG$. 

\begin{lm}
There are injections of arc-sheaves $X_{T_b,B_b}^{G_b} \rar X_w^G(b)$ and $X_{T_b,U_b}^{G_b} \rar \dot X_b^G(b)$, where $w$ is the image of $b$ in $W$. Moreover, they are isomorphisms if $b$ is basic.
\end{lm}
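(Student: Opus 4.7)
The plan is to exploit Proposition \ref{prop:another_formulation} with the choice of lift $\dot w = b$, available since $b \in N_G(T)(\breve k)$ lies over $w$. Under this choice $S_w^G(b)$ is the pullback of $\sigma_b(LB)$ along the Lang map $g \mapsto g^{-1}\sigma_b(g) \colon LG \rar LG$; since $B$ is $k$-rational, $\sigma_b(LB) = L(bBb^{-1}) = L(wBw^{-1})$ and the stabilizer $L(B \cap wBw^{-1})$ coincides with $LB \cap \sigma_b(LB)$. This matches formally the defining pullback of $S_{T_b,B_b}^{G_b}$ and its stabilizer $LB_b \cap \sigma_b(LB_b)$ inside $LG_b$. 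The closed embedding $LG_b \har LG$ intertwines the two Lang maps, because $\sigma_b$ on $LG_b$ is by definition the restriction of $g \mapsto b\sigma(g)b^{-1}$ on $LG$, and the inclusions $B_b \subseteq B$, $U_b \subseteq U$ give $\sigma_b(LB_b) \subseteq \sigma_b(LB)$, $\sigma_b(LU_b) \subseteq \sigma_b(LU)$. This produces natural stabilizer-equivariant maps $S_{T_b,B_b}^{G_b} \rar S_w^G(b)$ and $S_{T_b,U_b}^{G_b} \rar \dot S_b^G(b)$, which descend to the claimed morphisms of arc-sheaf quotients.

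For injectivity, suppose $g_1, g_2 \in S_{T_b,B_b}^{G_b}(R)$ have equal image in $X_w^G(b)(R)$, so that $h := g_1^{-1}g_2 \in L(B \cap wBw^{-1})(R)$. Since $g_1, g_2 \in LG_b(R)$, also $h \in LG_b(R)$, giving $h \in L(G_b \cap B \cap wBw^{-1})(R) = L(B_b \cap wBw^{-1})(R)$. The remaining point is $wBw^{-1} \cap G_b = bB_bb^{-1}$, which reduces to $bG_bb^{-1} = G_b$. The latter holds because the image $w$ of $b$ in $W$ fixes the Newton point $\nu_b$: choosing $N > 0$ with $b^N \in T(\breve k)$, one notes that $b$ commutes with $b^N$ while conjugation by $b$ acts on $X_\ast(T)$ as $w$, so $w \cdot \nu_{b^N} = \nu_{b^N}$, and $\nu_b = N^{-1}\nu_{b^N}$ yields $w\nu_b = \nu_b$; conjugation by $b$ then preserves $G_b \otimes_k \breve k = Z_G(\nu_b)$. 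Hence $h \in L(B_b \cap bB_bb^{-1}) = LB_b \cap \sigma_b(LB_b)$, proving injectivity; the argument for $U_b$ is analogous.

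When $b$ is basic, $\nu_b$ is central in $G$, so $G_b \otimes_k \breve k = G$, whence $B_b = B$, $U_b = U$, $T_b = T$ as $\breve k$-groups, $LG_b = LG$ as sheaves on $\Perf_{\obF}$, and the Frobenius $\sigma_b$ on $LG_b$ is literally the one on $LG$. The pullbacks and stabilizers defining $S_{T_b,B_b}^{G_b}$ and $S_w^G(b)$ coincide verbatim, so the constructed map is a tautological identity; likewise for the $U$-version. I expect the main subtle step to be verifying the group-theoretic identity $G_b \cap wBw^{-1} = bB_bb^{-1}$ (equivalent to $bG_bb^{-1} = G_b$) used for injectivity; the remainder is a direct unwinding of the pullback and quotient definitions.
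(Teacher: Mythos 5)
Your overall architecture is the correct one and is essentially the paper's: the paper's proof is a one-line pointer to Proposition \ref{prop:another_formulation}, and your identification of $S_w(b)$ for the lift $\dot w=b$ with the pullback of $\sigma_b(LB)$ along $g\mapsto g^{-1}\sigma_b(g)$, the compatibility of the two Lang maps under $LG_b\hookrightarrow LG$, and the treatment of the basic case (where $G_b=G$ over $\breve k$ and the two constructions coincide verbatim) all match.

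However, the step you single out as the crux is wrong as stated: the claim $bG_bb^{-1}=G_b$, equivalently $w\nu_b=\nu_b$, is false in general. The slope homomorphism satisfies the \emph{twisted} equivariance $b\sigma(\nu_b)b^{-1}=\nu_b$, i.e.\ $w\sigma(\nu_b)=\nu_b$, which yields $\sigma(G_b\otimes\breve k)=b^{-1}(G_b\otimes\breve k)b$ rather than $b(G_b\otimes\breve k)b^{-1}=G_b\otimes\breve k$. For split $G_0$ the two coincide, but not for general unramified $G_0$: take $G_0={\rm Res}_{k_2/k}\GL_3$, $w=(s_1,s_1)$ and $b=\varpi^{((1,0,0),(0,0,0))}\dot w$ with $\dot w\sigma(\dot w)=1$; then $\nu_b=\tfrac12((1,0,0),(0,1,0))$ is not $w$-fixed and $bZ_G(\nu_b)b^{-1}=Z_G(w\nu_b)\neq Z_G(\nu_b)$. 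Your derivation also misstates the Newton point: $\nu_b$ is computed from the $\sigma$-twisted power $b\sigma(b)\cdots\sigma^{N-1}(b)$, not from $b^N$, so $\nu_b=N^{-1}\nu_{b^N}$ fails unless $\sigma(b)=b$. Fortunately the identity you actually need does not require $bG_bb^{-1}=G_b$: using $\sigma(B)=B$ and $\sigma(G_b\otimes\breve k)=b^{-1}(G_b\otimes\breve k)b$ one computes directly
\[
G_b\cap wBw^{-1}=G_b\cap bBb^{-1}=b\left(b^{-1}G_bb\cap B\right)b^{-1}=b\,\sigma(G_b\cap B)\,b^{-1}=\sigma_b(B_b),
\]
whence $LG_b\cap L(B\cap wBw^{-1})=LB_b\cap\sigma_b(LB_b)$. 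Note that $\sigma_b(B_b)=b\sigma(B_b)b^{-1}$ equals $bB_bb^{-1}$ only when $\sigma(B_b)=B_b$, which is exactly the point where your argument slips. With this substitution your injectivity argument, and its analogue for $U_b$, goes through.
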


\begin{proof}
The proof of the first claim is analogous to the proof of Proposition \ref{prop:another_formulation}. If $b$ is basic, then $G_b=G$ (as $\breve k$-groups) and surjectivity follows from Proposition \ref{prop:another_formulation}.
\end{proof}

Not for every $[b]_G$ there exists such a map into $X_w(b')$ for some $b' \in [b]_G$, cf. Example \ref{ex:regular_b_example_GL2}.

\section{Examples}\label{sec:examples}
Let the notation be as in the beginning of \S\ref{sec:lDLS}. For $R \in \Perf_{\obF}$, we denote by $\phi \colon \bW(R) \rar \bW(R)$ the unique lift $\sum_n [x_n]\varpi^n \mapsto \sum_n [x_n^q]\varpi^n$ of the $q$-power Frobenius map of $R$. On the other side, the letter $\sigma$ is reserved for the geometric Frobenius automorphism of sheaves over $\Spec \obF$.

\subsection{Unramified tori}
Let $G_0$ be an unramified $k$-torus. Then $W = 1$, $L(G/B) = \Spec \obF$ and $L(G/U) = LG$. As in \S\ref{sec:loop_groups_of_unram_tori} we have the Kottwitz map,
\[
\kappa_{G_0} \colon LG(\obF) = G(\breve k) \tar B(G_0) \cong X_\ast(G_0)_{\langle \sigma \rangle}.
\]

\begin{prop}\label{prop:ex_tori}
Let $G_0$ be an unramified torus. For any $b \in G(\breve k)$, we have $X_1(b) = \Spec \obF$. The image of $\alpha_{1,b}$ is $[b]_{G_0}\in B(G_0) = (LG/\ker\kappa_{G_0})(\obF)$, i.e., for any $b, \dot w \in G(\breve k)$, we have 
\[
\dot X_{\dot w}(b) \neq \varnothing \LRar \kappa_{G_0}(\dot w) = \kappa_{G_0}(b).
\]
Assume this is the case. Then $\dot X_{\dot w}(b)$ is $\underline{G_0(k) \times G_0(k)}$-equivariantly isomorphic to 
\[
\dot X_{\dot w}(\dot w) = \underline{G_0(k)}
\]
with $\underline{G_0(k) \times G_0(k)}$-action by $(g,t) \colon x \mapsto gxt$.
\end{prop}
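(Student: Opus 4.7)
The plan is to unwind all the definitions in the torus case, where the machinery collapses dramatically, and verify each assertion in turn. Since $G_0$ is a torus, we have $B = T = G$, so $G/B = \Spec k$ and $L(G/B) = \Spec \obF$, while the unique orbit $\caO(1) \subseteq (G/B)^2$ is also a single point. The Cartesian diagram defining $X_1(b)$ thus trivially gives $X_1(b) = \Spec \obF$.

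Next, I will identify $\alpha_{1,b}$. Because $p \colon LG \to L(G/B) = \Spec\obF$ is the structure morphism, it is surjective on $R$-points for every $R$, and the constant section $s = 1 \in LG(R)$ is a global lift. Following the construction in the proof of Proposition \ref{prop:natural_classifying_map}, we compute $s^{-1} b \sigma(s) = b$ in $L(BwB)(R) = LG(R)$ and then project along $LG \to LF_1/\ker\kappa_{G_0} = \underline{B(G_0)}$. This gives the constant function with value $[b]_{G_0}$, so $\alpha_{1,b}$ is the $\obF$-point of $\underline{B(G_0)}$ classifying $[b]_{G_0}$. Applying Proposition \ref{prop:natural_classifying_map}(ii) to $\bar{\dot w} = [\dot w]_{G_0}$ then yields the non-emptiness criterion $\kappa_{G_0}(\dot w) = \kappa_{G_0}(b)$.

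For the identification $\dot X_{\dot w}(b) \cong \underline{G_0(k)}$, I first reduce to the case $b = \dot w$: if $\kappa_{G_0}(\dot w) = \kappa_{G_0}(b)$, then Kottwitz's classification for tori produces $x \in G(\breve k)$ with $x^{-1} b \sigma(x) = \dot w$, and Remark \ref{rem:change_b} gives an isomorphism $\dot X_{\dot w}(\dot w) \stackrel{\sim}{\to} \dot X_{\dot w}(b)$ via $y \mapsto xy$. For a torus, $\dot\caO(\dot w) = \{(g_1, g_2) \in G^2 : g_1^{-1} g_2 = \dot w\}$, so directly
\[
\dot X_{\dot w}(\dot w)(R) = \{g \in LG(R) : g^{-1}\dot w \sigma(g) = \dot w\} = \{g : \sigma(g) = \dot w^{-1} g \dot w\}.
\]
Since $G$ is commutative this reduces to $\{g : \sigma(g) = g\} = LG^\sigma$, which equals $\underline{G_0(k)}$ by Proposition \ref{lm:inclusion_into_LGbsigma}.

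Finally, for equivariance, note that for commutative $G$ and $w = 1$ both groups $G_b$ and $T_w = T_1$ coincide canonically with $G_0$, and the actions of $\underline{G_b(k)}$ (by left multiplication) and $\underline{T_w(k)}$ (by right multiplication) on $\dot X_{\dot w}(b)$ become the obvious left-right $\underline{G_0(k) \times G_0(k)}$-action. The isomorphism $y \mapsto xy$ intertwines these with the corresponding translation actions on $\underline{G_0(k)}$, since inner conjugation in the abelian group is trivial. There is no real obstacle here: the proof is essentially a bookkeeping exercise, the only subtle point being the explicit computation of $\alpha_{1,b}$, which requires tracing through the construction in Section \ref{sec:Torsors}.
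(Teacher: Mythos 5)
Your proof is correct, and its core — the reduction to $b=\dot w$ via Kottwitz's classification for tori and Remark \ref{rem:change_b}, the identification of $\dot X_{\dot w}(\dot w)$ with the equalizer $\{g:\sigma(g)=\dot w^{-1}g\dot w\}=LG^{\sigma}$ using commutativity, and the appeal to Proposition \ref{lm:inclusion_into_LGbsigma} — is exactly the paper's argument. The one place you diverge is the non-emptiness criterion: the paper checks it directly on geometric points (a point of $\dot X_{\dot w}(b)(\ff)$ is a $g$ with $g^{-1}b\sigma(g)=\dot w$, which forces $\kappa_{G_0}(b)=\kappa_{G_0}(\dot w)$, and conversely non-emptiness of $\dot X_{\dot w}(\dot w)\cong\underline{G_0(k)}$ gives the other direction), whereas you compute $\alpha_{1,b}$ explicitly from the construction in \S\ref{sec:Torsors} (using the global section $s=1$ of $LG\to L(G/B)=\Spec\obF$) and then invoke Proposition \ref{prop:natural_classifying_map}(ii). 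Your route has the merit of actually verifying the assertion "the image of $\alpha_{1,b}$ is $[b]_{G_0}$", which the paper's proof leaves implicit behind an "i.e."; the cost is that the "if" direction of the criterion then rests on Proposition \ref{prop:arc_surj_of_covers}, which is heavier machinery than the paper needs here (though there is no circularity, as the paper notes). Both arguments are complete.
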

\begin{proof}
The first statement is clear as $L\caO(1) = L(G/B)^2 = \Spec \obF$. For the second and third statements assume $\dot w$, $b$ are such that $\dot X_{\dot w}(b) \neq \varnothing$. Then $\dot X_{\dot w}(b)$ has some geometric point, i.e., $\dot X_{\dot w}(b)(\ff) \neq \varnothing$ for some algebraically closed field $\ff \in \Perf_{\obF}$. But
\[
\dot X_{\dot w}(b)(\ff) = \{ g \in G(\bW(\ff)[1/\varpi]) \colon g^{-1}b\sigma(g) = \dot w \}.
\]
This can only be non-empty if $\kappa_{G_0}(b) = \kappa_{G_0}(\dot w)$. Suppose this holds. By Remark \ref{rem:change_b} we may replace $b$ by a $\sigma$-conjugate element, e.g. $\dot w$, without changing $\dot X_{\dot w}(b)$ (up to an equivariant isomorphism). So it remains to compute $\dot X_{\dot w}(\dot w)$. Towards this, note that $\caO(\dot w) \cong G$ via $(x,\dot w x) \mapsto x$, and hence from Definition \ref{def:Xwb_and_covers} we deduce $\dot X_{\dot w}(\dot w) = (LG \doublerightarrow{\id}{\sigma} LG)$. Corollary \ref{cor:morphisms_from_cont_locprofin_into_loop_group} gives now a map $\underline{G_0(k)} \rar LG$, which factor through this equalizer, i.e. we get a map $\underline{G_0(k)} \rar \dot X_{\dot w}(\dot w)$, which is an isomorphism by the same argument as in the proof of Proposition \ref{lm:inclusion_into_LGbsigma}.
\end{proof}

\subsection{Varieties $X_w(b)$ for $\GL_2$}\label{sec:expl_GL2} We list all essentially different possibilities for $X_w(b)$ in the case $G_0 = \GL_2$. Let $T_0 \subseteq B_0 \subseteq G_0$ be the diagonal torus and upper triangular Borel subgroup, and let $T,B,G$ be the base changes to $\breve k$. Let $w_0$ denote the non-trivial element of the Weyl group. The following elements form a minimal system of representatives of all $\sigma$-conjugacy classes in $G(\breve k)$: $b = \varpi^{(c,c)}$ with $c \in \bZ$, $b = \left(\begin{smallmatrix} 0 & \varpi^c \\ \varpi^{c+1} & 0 \end{smallmatrix}\right)$ with $c \in \bZ$, $b = \varpi^{(c,d)}$ with $c > d$. The first two types are basic, the second is superbasic; in the first (resp. second, resp. third) case $G_b = G_0$ (resp. $G_b(k) = $ units of the quaternion algebra over $k$, resp. $G_b = T_0$). Let $Z \subseteq G(k)$ be the center, and consider the $\caO_{\breve k}$-scheme 
\[ 
\Omega^1_{\caO_{\breve k}} = \Spec \caO_{\breve k}[T]_{T-T^q}. 
\]

\begin{thm}\label{thm:GL2varieties}
For $G = \GL_2$, all $X_w(b)$ are schemes. With the obvious modifications (replace $\bQ_p$, $\bZ_p$, $\breve \bZ_p$, $p$, $\Omega_{\breve \bZ_p}^1$ by $k$, $\caO_k$, $\caO_{\breve k}$, $\varpi$, $\Omega^1_{\caO_{\breve k}}$) they are listed in Table \ref{tab:1} in the introduction.
\end{thm}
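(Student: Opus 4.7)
The strategy is to treat each of the six combinations of $w \in \{1, w_0\}$ and $[b] \in B(\GL_2)$ separately, using Remark \ref{rem:change_b} to reduce to the prescribed representatives. Both $1$ and $w_0$ are of minimal length in their $\sigma$-conjugacy classes in $W$ (the Frobenius acts trivially on $W \cong \bZ/2$), so Corollary \ref{thm:ind_rep_min_length} already guarantees ind-representability; only explicit identifications with the stated schemes remain.

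For the $w = 1$ cases, Proposition \ref{prop:representability_bsigma_fixed_points} gives $X_1(b) \cong \underline{\bP^1(\breve k)^{b\sigma}}$, reducing to a computation of fixed points on $\bP^1(\breve k)$. This yields $\bP^1(k)$ when $b$ is central, the empty set for superbasic $b$ (by Corollary \ref{cor:emptyness_Xwb_support}, since $P_{\overline{\supp}(1)} = B$ and the superbasic class does not meet $B(\breve k)$), and $\{0,\infty\}$ when $b = \varpi^{(c,d)}$ with $c > d$ (using Lemma \ref{lm:properties_of_ord} to exclude solutions with both coordinates nonzero, since $\phi(t)/t = \varpi^{d-c}$ forces a valuation mismatch).

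For the $w = w_0$ cases I use Proposition \ref{prop:another_formulation}: since $B \cap w_0 B w_0^{-1} = T$, we have $X_{w_0}(b) \cong S_{w_0}(b)/LT$. I work in matrix coordinates on $S_{w_0}(b) \subseteq LG$ expanded along the open Bruhat cell. For $b = \varpi^{(c,d)}$ with $c > d$, the affine-chart condition becomes $x \neq \varpi^{c-d}\phi(x)$; topological nilpotence of $\varpi^{c-d}\phi$ on $L\bA^1$ makes $y = x - \varpi^{c-d}\phi(x)$ an automorphism of $L\bA^1$, restricting to an isomorphism of $X_{w_0}(b)$ with $L\bG_m$ (the fixed points $\{0,\infty\}$ of $b\sigma$ on $\bP^1(\breve k)$ do not contribute). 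For superbasic $b$, an analogous matrix-coordinate computation, exploiting that $b\sigma$ has no fixed points on $\bP^1(\breve k)$, produces $L^+\bA^1_{\caO_{\breve k}} \sqcup L^+\bA^1_{\caO_{\breve k}}$ with the two copies indexed by the choice of Bruhat chart.

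The main obstacle is the central case $w = w_0$, $b = \varpi^{(c,c)}$, where the answer $\coprod_{G_0(k)/ZG_0(\caO_k)} L^+\Omega^1_{\caO_{\breve k}}$ encodes the combinatorics of the Bruhat--Tits tree of $G_0(k) = \GL_2(k)$ via Drinfeld's $p$-adic upper half-plane $\Omega = \bP^1 \setminus \bP^1(k)$. Here $b\sigma$ acts on $L\bP^1$ as $\sigma$, so $X_{w_0}(b)$ parametrizes sections $g$ with $g \neq \sigma(g)$. I would identify this sheaf with $L^+\widehat\Omega$ for Deligne's semi-stable formal model $\widehat\Omega$ over $\caO_{\breve k}$: a $\bW(R)$-valued point of $\widehat\Omega$ (for $R$ a valuation ring in $\Perf_{\obF}$ with algebraically closed fraction field) specializes to a unique irreducible component of the special fiber, and the components are indexed by vertices of the Bruhat--Tits tree, each isomorphic to $\bP^1_{\obF}$ with its $q+1$ $\bF_q$-rational intersection points removed, i.e., to the special fiber of $\Omega^1_{\caO_{\breve k}} = \Spec \caO_{\breve k}[T]_{T-T^q}$. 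Extracting the $L^+$ contribution component by component (the nilpotent intersection loci collapse onto the vertices) and identifying the vertex set of the tree modulo the center with $G_0(k)/ZG_0(\caO_k)$ yields the claimed decomposition, completing the verification that every $X_w(b)$ is a scheme.
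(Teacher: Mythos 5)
Your handling of the $w=1$ cases and of $w=w_0$, $b=\varpi^{(c,d)}$ with $c>d$ is essentially sound and close in spirit to the paper's (which settles $w=1$ via Theorem \ref{cor:disjoint_dec_of_Xwb_complete} and works with homogeneous coordinates $[x:y]$ on $L\bP^1$ for $w=w_0$). Two cautions there. First, the condition cutting out $X_{w_0}(b)$ is that $\varpi^c\phi(x)y-\varpi^dx\phi(y)$ be a \emph{unit} of $\bW(R)[1/\varpi]$, not merely nowhere vanishing; by Lemma \ref{lm:units_Witt} this is the difference between $\ord_\varpi$ being finite and being finite \emph{and locally constant}, and it is exactly what makes the answer $L\bG_m$ rather than something strictly larger — your ``$x\neq\varpi^{c-d}\phi(x)$'' reads as the pointwise condition. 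Second, a section of $L\bP^1$ over a general $R$ does not lie in an affine chart (cf.\ Remark \ref{rem:LPn}), so one should argue with the homogeneous unit condition and deduce a posteriori that $x$ and $y$ are both units, as the paper does, rather than fixing a chart at the outset. For the superbasic case your sketch omits the decisive point, namely the parity argument: $\ord_\varpi(\varpi x\phi(x))$ and $\ord_\varpi(y\phi(y))$ have different parities at every point of $\Spec R$, so the ultrametric inequality is an equality and the loci $\ord_\varpi(x)\le\ord_\varpi(y)$ and $\ord_\varpi(y)<\ord_\varpi(x)$ are clopen; this is what produces the two disjoint copies of $L^+\bA^1$.

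The genuine gap is the central case $w=w_0$, $b=\varpi^{(c,c)}$. Your identification of $X_{w_0}(1)$ with $L^+\widehat{\Omega}$ for the semistable formal model, and the component-by-component decomposition, are only justified on valuation-ring points; since neither side is quasi-compact, Lemma \ref{lm:check_surj_vsheaves} does not apply, and in any case the real content is to show that for arbitrary $R\in\Perf_{\obF}$ and $[x:y]\in X_{w_0}(1)(R)$ the functions $\ord_\varpi(x)$, $\ord_\varpi(y)$ and $\ord_\varpi(\phi(yx^{-1})-yx^{-1})$ are \emph{locally constant} on $\Spec R$. That local constancy is what makes the decomposition into $G_0(k)$-translates of $L^+\Omega^1_{\caO_{\breve k}}$ a decomposition into clopen subfunctors, and it does not follow from the defining unit condition alone (which only controls the difference of the two sections, not each coordinate separately). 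The paper obtains it by first checking via Lemma \ref{lm:alphawb_for_GL2_w0} that $\alpha_{w_0,1}$ hits a single point, so that $\dot X_{\dot w}(1)\rar X_{w_0}(1)$ is a $v$-surjection by Proposition \ref{prop:arc_surj_of_covers}, and then importing the explicit equations of \cite[Prop.~2.6]{CI_loopGLn} for $\dot X_{\dot w}(1)$, where the local constancy is visible. Your proposal contains no substitute for this step, so the hardest of the six cases remains unproven as written.
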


\begin{proof}
For $w=1$, everything easily follows from Theorem \ref{cor:disjoint_dec_of_Xwb_complete}. For $w = w_0$, $X_w(b)$ is by definition a subsheaf of $L(G/B) \cong L\bP^1$. For any $R \in \Perf_{\obF}$ with $\widetilde R := \bW(R)[1/\varpi]$ such that $L(\bA^2 \sm \{0\})(R) \rar L\bP^1(R)$ is surjective, we have
\[
L\bP^1(R) = \{(x,y) \in \widetilde R^2 \colon x,y \text{ generate $\widetilde R$ as $\widetilde R$-module} \}/\widetilde R^\times
\]
Denote the class of $(x,y)$ by $[x:y]$. By definition, $[x:y] \in L\bP^1(R)$ lies in $X_w(b)(R)$ if and only if the induced map $([x:y], b\sigma[x:y]) \colon \Spec \widetilde R \rar \bP^1 \times \bP^1$ factors through the open subscheme $(\bP^1 \times \bP^1) \sm \Delta$, where $\Delta$ is the diagonal of $\bP^1_k$. One checks that this is the case if and only if
\begin{align*} 
\varpi^c\phi(x)y - \varpi^dx\phi(y) \in \widetilde R^\times & \qquad \text{if $b=\varpi^{(c,d)}$ with $c\geq d$, \qquad \qquad  resp.} \\
\varpi^{c+1} x \phi(x) - \varpi^c y\phi(y) \in \widetilde R^\times & \qquad \text{if $b = \left(\begin{smallmatrix} 0 & \varpi^c \\ \varpi^{c+1} & 0 \end{smallmatrix}\right)$}. 
\end{align*} 

In what follows we use Lemmas \ref{lm:properties_of_ord}, \ref{lm:units_Witt} without further reference. First suppose $b = \varpi^{(c,d)}$ with $c>d$. Then for all $(x,y) \in L(\bA^2 \sm \{0\})(R)$, 
\begin{equation} 
{\rm ord}_\varpi(\varpi^c\phi(x)y - \varpi^dx\phi(y)) = d + {\rm ord}_\varpi(x\phi(y)) = d + {\rm ord}_\varpi(x) + {\rm ord}_\varpi(y).
\end{equation}
If $[x:y] \in L\bG_m(R)$, then $x,y \in \widetilde R$ are units, so the right hand side is a locally constant function on $\Spec R$, and it follows that the left hand side also is, i.e., $[x:y] \in X_w(b)(R)$. Conversely, if $[x:y] \in X_w(b)(R)$, then the left hand side is locally constant, hence the right side also is. Hence $\ord_\varpi(x) = f - \ord_\varpi(y)$ for a locally constant $f$, i.e., $\ord_\varpi(x)$ is both upper and lower semi-continuous, hence continuous, i.e., locally constant. This implies that $x$ is a unit, and similarly we see that $y$ is a unit, i.e., $[x:y] \in L\bG_m(R)$. All this shows $X_w(b)(R) = L\bG_m(R)$ for all $R$ such that $L(\bA^2 \sm \{0\})(R) \rar L\bP^1(R)$ is surjective. As both, $X_w(b)$ and $L\bG_m$ are arc-subsheaves of $L\bP^1$, Corollary \ref{cor:projective_space_as_quotient} suffices to conclude that $X_w(b) = L\bG_m$.

Next, suppose $b = \left(\begin{smallmatrix} 0 & \varpi^c \\ \varpi^{c+1} & 0 \end{smallmatrix}\right)$. One checks that $X_w(b)$ does not change if $b$ is multiplied by a central element of $G(\breve k)$, so we may assume $c=0$. For all $(x,y) \in L(\bA^2 \sm \{0\})(R)$, $\ord_\varpi(\varpi x\phi(x)) \not\equiv \ord_\varpi(y\phi(y)) \,{\rm mod}\, 2$ at any point of $\Spec R$. Thus, by triangle inequality, 
\[
{\rm ord}_\varpi(\varpi x \phi(x) - y\phi(y)) = {\rm min}\{2{\rm ord}_\varpi(x), 2{\rm ord}_\varpi(y) + 1 \}.
\]
For $[x:y] \in L\bP^1(R)$ we may consider the disjoint decomposition $\Spec R = U_0 \cup U_1$, where 
\begin{align*}
U_0 &= \{s \in \Spec R \colon {\rm ord}_\varpi(x)(s) \leq {\rm ord}_\varpi(y)(s) \} \\
U_1 &= \{s \in \Spec R \colon {\rm ord}_\varpi(y)(s) < {\rm ord}_\varpi(x)(s) \} 
\end{align*}
(note that this is well defined, as multiplication of $x$ and $y$ by the same unit does not change the function $\ord(y) - \ord(x)$). Now suppose that $[x:y] \in X_w(b)(R)$. Then $U_0$ and $U_1$ are both open and closed. Indeed, in this case the function ${\rm min}\{2{\rm ord}_\varpi(x), 2{\rm ord}_\varpi(y) + 1 \}$ is locally constant on $\Spec R$ and $U_0$ (resp. $U_1$) is the preimage of $2\bZ$ resp. $2\bZ + 1$ under it. Now, define subsheaves $X_w(b)_0$ (resp. $X_w(b)_1$) of $X_w(b)$ by taking $X_w(b)_0(R) = \{ [x:y] \in X_w(b)(R) \colon \ord_\varpi(x) \leq \ord_\varpi(y) \}$ (resp. same formula with $>$ instead of $\leq$) for all $R$ as above. For any point $[x:y] \in X_w(b)(R)$, the pull-back $X_w(b)_i \times_{X_w(b)} \Spec R$ is $U_i$ ($i=0,1$). Thus $X_w(b)_0$, $X_w(b)_1$ are open and closed subfunctors covering $X_w(b)$. It remains to determine $X_w(b)_i$. On $X_w(b)_0$ (resp. $X_w(b)_1$), $\ord_\varpi(x)$ (resp. $\ord_\varpi(y)$) is locally constant, i.e., $x$ (resp. $y$) is a unit, and 
\[ 
X_w(b)_0 \cong L^+\bA^1_{\caO_k}, \quad [x:y] = [1:y/x] \mapsto y/x
\]
and similarly for $X_w(b)_1$.

Finally, let $b = \varpi^{(c,c)}$. As before, we may assume that $b=1$. Let $\dot w = \left(\begin{smallmatrix} 0 & 1 \\ 1 & 0 \end{smallmatrix}\right)$ be a lift of $w$. As we will see below (Lemma \ref{lm:alphawb_for_GL2_w0}), $\alpha_{w,b}$ factors through one point, and hence $\dot X_{\dot w}(1) \rar X_w(1)$ is surjective for the $v$-topology by Proposition \ref{prop:arc_surj_of_covers}. Let $[x:y] \in X_w(b)(R)$. After replacing $R$ by a $v$-cover, it lifts to a section of $\dot X_{\dot w}(1)$, which may be represented by $(x,y) \in \widetilde R^2$ satisfying $x\varphi(y) - \varphi(x)y = 1$.  
It follows from the description in \cite[Prop.~2.6]{CI_loopGLn} that for any $(x,y) \in \dot X_{\dot w}(1)(R)$, the functions $\ord_\varpi(x)$, $\ord_\varpi(y)$ are locally constant, and hence the same holds for any $[x:y] \in X_w(1)(R)$. It follows that $\ord_\varpi(yx^{-1})$ and $\ord_\varpi(\phi(yx^{-1}) - yx^{-1})$ are locally constant. Hence the locus in $\Spec R$, defined by $\ord_\varpi(yx^{-1}) \geq 0$ and $\ord_\varpi(\phi(yx^{-1}) - yx^{-1}) = 0$ is open and closed. We get the corresponding open and closed subfunctor 
\[
X_w(b)_0 = \left\{[x:y] \in L\bP^1 \colon \text{$x$ invertible, ${\rm ord}_{\varpi}(yx^{-1}) \geq 0$ and ${\rm ord}_\varpi(\phi(yx^{-1}) - yx^{-1}) = 0$} \right\} 
\] of $X_w(b) \subseteq L\bP^1$. (Note that the inequality ${\rm ord}_{\varpi}(yx^{-1}) \geq 0$ here is automatically an equality.) The locus in $L\bP^1$ given by the first of these two conditions is represented by $L^+\bA^1_{\caO_{\breve k}}$, and $X_w(b)_0 \cong L^+\Omega_{\caO_{\breve k}}^1$ is an open subscheme of it. 

It is clear that $X_w(b)_0$ is stabilized by the action of the subgroup $ZG_0(\caO_k) \subseteq G_0(k)$ and its $G_0(k)$-translates cover $X_w(b)$ (as follows by surjectivity of $\dot X_{\dot w}(b) \rar X_w(b)$ and \cite[Prop.~2.6]{CI_loopGLn}).  \qedhere
\end{proof}

Now we list the maps $\alpha_{w,b}$ (\S\ref{sec:Torsors}) and the coverings $\dot X_{\dot w}(b)$. By Lemma \ref{lm:Xdot_changes_nicely}, for all $\dot w \in LF_w(\obF)$ in the preimage of some $\bar{\dot w} \in (LF_w/\ker\kappa_w)(\obF)$, all $\dot X_{\dot w}(b)$ are mutually isomorphic. So it is enough to describe just one such. For $w=1$ we have $LF_1/\ker \kappa_1 = \underline{X_\ast(T)} \cong \underline{\bZ^2}$ ($(c,d) \in \bZ^2$ corresponds to the image of $\varpi^{(c,d)} \in LF_1/\ker\kappa_1$). According to Theorem \ref{thm:GL2varieties} we have two cases:

\begin{itemize}
\item \underline{$b = \varpi^{(c,c)}$, $w = 1$}. Then $\alpha_{1,b} \colon \underline{\bP^1(k)} \rar \underline{\bZ^2}$ factors through the point $\underline{\{(c,c)\}} \rar \underline{\bZ^2}$, i.e., for $\dot w \in F_1(\breve k)$, $\dot X_{\dot w}(\varpi^{(c,c)}) \neq \varnothing \LRar \dot w \in \varpi^{(c,c)}T(\caO_{\breve k})$. Moreover, $X_1(1) \cong \underline{(G/U)(k)}$.
\item \underline{$b=\varpi^{(c,d)}$ ($c > d$), $w = 1$}. Then $\alpha_{1,b} \colon \{0,\infty\} \rar \underline{\bZ^2}$ maps $0$ to $(c,d)$ and $\infty$ to $(d,c)$. This corresponds to the decomposition $[b]_G \cap T(\breve k) = [b_1]_T \, \dot\cup \,[b_2]_T$ with $b_1 =b$, $b_2= \diag(\varpi^d,\varpi^c)$. For $\dot w = \varpi^{(c,d)}$, $\dot X_{\dot w}(b) \cong T_0(k)$ according to Theorem \ref{cor:disjoint_dec_of_Xwb_complete} and Proposition \ref{prop:ex_tori}, and similarly in the other case.
\end{itemize}

For $w = w_0$, $X_\ast(T_{w_0}) \cong \bZ^2$ with $\sigma_{w_0}$ acting by $(c,d) \mapsto (d,c)$, hence $X_\ast(T_{w_0})_{\langle \sigma_{w_0} \rangle} \cong \bZ$, and $LF_{w_0}/\ker\kappa_{w_0} \cong \underline{\bZ}$, induced by $\left(\begin{smallmatrix} 0 & x \\ y & 0 \end{smallmatrix}\right) \mapsto \ord_\varpi(xy)$. Concerning $\alpha_{w_0,b}$ we have: 
\begin{lm}\label{lm:alphawb_for_GL2_w0}
For each $b$, $\alpha_{w_0,b} \colon X_{w_0}(b) \rar \underline{\bZ}$ factors through the point $\underline{\{\ord_\varpi(\det(b))\}} \rar \underline\bZ$.
\end{lm}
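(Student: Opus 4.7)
The plan is to trace through the construction of $\alpha_{w,b}$ from \S\ref{sec:proof_of_nat_class_map} in the particular case at hand and show that the output only records $\ord_\varpi\circ\det$, which is automatically $\sigma$-invariant. Throughout, fix the lift $\dot w_0=\left(\begin{smallmatrix} 0 & 1 \\ 1 & 0 \end{smallmatrix}\right)$.

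First, I would reinterpret the isomorphism $LF_{w_0}/\ker\kappa_{w_0}\cong\underline{\bZ}$ in terms of the determinant. The identification $F_{w_0}\cong T$, $t\mapsto t\dot w_0$, together with the Kottwitz map $LT\to\underline{X_\ast(T)}\cong\underline{\bZ^2}$ followed by the projection $\bZ^2\to \bZ$, $(c,d)\mapsto c+d$ (the map to coinvariants), shows exactly that $LF_{w_0}\to LF_{w_0}/\ker\kappa_{w_0}$ sends $\left(\begin{smallmatrix} 0 & x \\ y & 0 \end{smallmatrix}\right)$ to $\ord_\varpi(xy)=\ord_\varpi(-\det)$. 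Combining this with the description of $L(Bw_0B)\to LF_{w_0}$ from Lemma \ref{lm:BwB_to_Fw_map}, which sends $u_1t_1\dot w_0 t_2 u_2\mapsto t_1\dot w_0 t_2$, we see that the composed map $L(Bw_0B)\to\underline{\bZ}$ is simply $h\mapsto \ord_\varpi(\det(h))$, using that $\det(u_i)=1$ and that $\det(t_1\dot w_0 t_2)=-\det(t_1)\det(t_2)$ has the same $\varpi$-adic valuation as $-xy$.

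Second, I would unpack $\alpha_{w_0,b}$ on $R$-points. After replacing $R$ by an arc-cover on which $LG(R)\to L(G/B)(R)$ is surjective (possible by Theorem \ref{thm:arc_exactness_and_splitting}), lift $g\in X_{w_0}(b)(R)$ to $\dot g\in LG(R)$. The construction in \S\ref{sec:proof_of_nat_class_map} then sends $g$ to the image under $L(Bw_0B)\to LF_{w_0}/\ker\kappa_{w_0}$ of the element
\[
h:=\dot g^{-1}b\sigma(\dot g)\in L(Bw_0B)(R).
\]
By the previous step, this image is the locally constant function $\ord_\varpi(\det(h))$ on $\Spec R$.

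Third, compute this valuation. Since $\det\colon G_0\to \bG_{m,k}$ is defined over $k$, the map $L\det\colon LG\to L\bG_m$ commutes with $\sigma$, so $\det(\sigma(\dot g))=\sigma(\det(\dot g))=\phi(\det(\dot g))$, where $\phi$ is the Frobenius lift on $\bW(R)[1/\varpi]$. Hence
\[
\det(h)=\det(\dot g)^{-1}\det(b)\,\phi(\det(\dot g)).
\]
Since $\dot g\in LG(R)$, the element $\det(\dot g)$ is a unit in $\bW(R)[1/\varpi]$, so $\ord_\varpi(\det(\dot g))$ is a locally constant integer-valued function (Lemma \ref{lm:units_Witt}); moreover $\ord_\varpi(\phi(\det(\dot g)))=\ord_\varpi(\det(\dot g))$ by Lemma \ref{lm:properties_of_ord}. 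The two contributions of $\det(\dot g)$ therefore cancel, leaving $\ord_\varpi(\det(h))=\ord_\varpi(\det(b))$, which is the constant function with this value. This shows that $\alpha_{w_0,b}(g)$ is the constant $\ord_\varpi(\det(b))\in\bZ$, independently of $g$ (and, in particular, independently of the chosen lift $\dot g$, confirming the construction of $\alpha_{w_0,b}$ in this case). I do not anticipate a genuine obstacle: the content of the proof is the cancellation $\ord_\varpi(\det(\dot g))=\ord_\varpi(\phi(\det(\dot g)))$, together with the observation that the map to $\underline{\bZ}$ only sees $\det$ modulo units.
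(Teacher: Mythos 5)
Your proposal is correct and follows essentially the same route as the paper's (very terse) proof: both reduce the statement to the observation that $\det$ kills unipotents, that the map $L(Bw_0B)\to LF_{w_0}/\ker\kappa_{w_0}\cong\underline{\bZ}$ only records $\ord_\varpi\circ\det$, and that $\ord_\varpi(\det(\dot g^{-1}b\sigma(\dot g)))=\ord_\varpi(\det(b))$ because $\ord_\varpi\circ\phi=\ord_\varpi$. The only cosmetic difference is that the paper packages the conclusion via Proposition \ref{prop:natural_classifying_map}(ii) by comparing with $\ord_\varpi(\det(\dot w))$ for lifts $\dot w$, whereas you trace the construction of $\alpha_{w_0,b}$ from \S\ref{sec:proof_of_nat_class_map} directly; both hinge on the identical determinant computation.
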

\begin{proof} Using Lemma \ref{lm:aux_factorization_rel_pos} for $L(G/U)$, and that $\det(u) = 1$ for any $R \in \Perf_{\obF}$ and any $u\in LU(R)$, the lemma follows by comparing $\ord_\varpi(\det(g^{-1}b\sigma(g)))$ with $\ord_\varpi(\det(\dot w))$ for a lift $\dot w$ of $w_0$ and applying Proposition \ref{prop:natural_classifying_map}(ii).
\end{proof}

Thus, for $b$ fixed, and $\dot w$ varying through lifts of $w_0$, all non-empty $X_{\dot w}(b)$ are mutually isomorphic. For $b$ basic, see \S\ref{sec:expl_GLn_basic_coxeter}. For $b=\varpi^{(c,d)}$ with $c > d$ we describe $\dot X_{\dot w}(b)$ here: 

\begin{itemize}
\item \underline{$b=\varpi^{(c,d)}$ ($c>d$) , $w = w_0$}. Let $\dot w = \left(\begin{smallmatrix} 0 & -\varpi^{\alpha}\\ \varpi^\beta & 0 \end{smallmatrix}\right)$ with $\alpha + \beta = c+d$. Let $a = y/x$ be a fixed coordinate on $X_{w_0}(b) = L\bG_m \subseteq L\bP^1$ (as in the proof of Theorem \ref{thm:GL2varieties}). Let $\tau$ be a coordinate on a second $L\bG_m$. Then $X_{\dot w}(b)$ is isomorphic to the subscheme of $L\bG_m^2$ given by the equation 
\[\tau^{-1} \sigma^2(\tau) = (\varpi^{d-\beta}\sigma(a) - \varpi^{c-\beta}a)^{-1}\sigma(\varpi^{d-\beta}\sigma(a) - \varpi^{c-\beta}a). \]
The (left) action of $\underline{G_b(k)} = \underline{T_0(k)}$ is given by $\diag(t_1,t_2).(a,\tau) = (t_1^{-1}t_2a, t_1\tau)$. The (right) action of $\underline{T_w(k)} \cong \underline{k_2^\times}$ (here $k_2/k$ denotes the unramified extension of degree $2$) is given by $(a,\tau).\lambda = (a,\tau\lambda)$.
\end{itemize}

\subsection{Case $G_0 = \GL_n$, $w$ Coxeter, $b$ basic}\label{sec:expl_GLn_basic_coxeter}
Let $G_0 = \GL_n$, $b$ basic and $w$ Coxeter. Then $X_w(b)$ is independent of the choice of the Coxeter element $w$ by Corollary \ref{cor:isomorphism_class_depends_only_on_Cmin}. The non-empty covers $\dot X_{\dot w}(b)$ are all isomorphic (for varying $\dot w$), by the same argument as in Lemma \ref{lm:alphawb_for_GL2_w0}. For a special Coxeter element $w$, $\dot X_{\dot w}(b)$ were studied in detail in \cite{CI_loopGLn}.

As an example, in the case $b=1$ we describe the scheme $\dot X_{\dot w}(1)$, with $\dot w$ arbitrary with $\ord_\varpi(\det(\dot w)) = 0$, up to a $\underline{G_b(k)\times T_w(k)}$-equivariant non-canonical isomorphism. We have $\dot X_{\dot w}(1) \cong \coprod_{G(k)/G(\caO_k)} X_\caO$, where $X_\caO$ is a locally closed subscheme of $L^+\bA^n_{\caO_{\breve k}}$, 
\[
X_\caO = \{x \in L^+\bA^n_{\caO_{\breve k}} \colon \det g_n(x) \in L^+\bG_m \text{ and } \sigma(\det g_n(x)) = (-1)^{n-1}\det g_n(x) \},
\]
where $g_n(x)$ is the $n\times n$-matrix, whose $i$-th column is $\sigma^{i-1}(x)$. For general basic $b$, the non-empty $\dot X_{\dot w}(b)$ admit similar descriptions and are, in particular, schemes. For details, see \cite[Prop.~2.6, \S5.1 and \S5.2]{CI_loopGLn}.

\subsection{Case $G = \GL_3$, $b=1$ and $w$ the longest element}\label{sec:GL3_longest_w}

Let $G_0 = \GL_3$, $T$ the diagonal torus and $B$ the upper triangular Borel subgroup of $G = G_0 \times_k \breve k$, and $U$ its unipotent radical. Then $\dot w = \left(\begin{smallmatrix} 0&0&1\\0&1&0 \\ 1&0&0 \end{smallmatrix}\right) \in F_w(\breve k)$ is a lift of the longest element $w \in W$. 

Let $\caG$ be the canonical $\caO_k$-model of $G_0$, and for $\lambda \in \bZ$, let $\caG_\lambda$ be the unique (connected) hyperspecial $\caO_{\breve k}$-model of $G_0$ whose $\breve k$-points are $\varpi^{(\lambda,-2\lambda,\lambda)} \caG(\caO_{\breve k}) \varpi^{-(\lambda,-2\lambda,\lambda)}$. Let $\caU_\lambda$ be the schematic closure of $U$ in $\caG_\lambda$. Let $U_+ = \left(\begin{smallmatrix} 1&L\bG_a&L^+\bG_a\\0&1&0 \\ 0&0&1 \end{smallmatrix}\right)$ and $U_- = \left(\begin{smallmatrix} 1& 0 & L^+\bG_a\\0&1& L\bG_a \\ 0&0&1 \end{smallmatrix}\right)$ be closed sub-ind-group schemes of $LU$. We have the inner $k$-form $G_{\dot w}$ of $G$ (as in \S\ref{sec:bsigma_fixed_points}), which is $k$-isomorphic to $G_0$ as $[\dot w]_{G_0} = [1]_{G_0}$. As ${\rm Ad}\,\dot w$ stabilizes the cocharacter $(\lambda, -2\lambda, \lambda)$, $\caG_\lambda$ descends to a $k$-subgroup $\caG_{\lambda,k}$ of $G_{\dot w}$. Finally, define $\sigma_{\dot w} \colon LG \rar LG$ by $\sigma_{\dot w}(g) = \dot w \sigma(g) \dot w^{-1}$.

\begin{prop}\label{prop:ex_GL3_longest} The ind-scheme $\dot X_{\dot w}(1)$ is covered by closed $\underline{G_0(k)}$-stable sub-ind-schemes $X_{\infty}$, $X_{-\infty}$ and sub-schemes $X_\lambda$ for $\lambda \in \bZ$, each two of them intersecting non-trivially, such that, with notation as above, 
\begin{itemize}
\item[(i)] $X_{\pm\infty}$ is isomorphic to the pullback along $g \mapsto g^{-1}\sigma(g) \colon LG \rar LG$ of $\dot w U_{\pm}$
\item[(ii)] For $\lambda \in \bZ$, 

\centerline{\begin{tabular}{cc}
\begin{minipage}{2in}
\begin{displaymath}
X_\lambda \cong \coprod\limits_{g \in G_{\dot w}(k)/\caG_{\lambda,k}(\caO_k)} g X_{\lambda,\caO}
\end{displaymath}
\end{minipage}
& \qquad  where \qquad
\begin{minipage}{2in}
\begin{displaymath}
\leftline{
\xymatrix{
X_{\lambda,\caO} \ar[r] \ar[d] & \sigma_{\dot w}(L^+\caU_{\lambda}) \ar[d] \\
L^+\caG_\lambda \ar[r]^{g \mapsto g^{-1}\sigma_{\dot w}(g)} & L^+\caG_\lambda
}
}
\end{displaymath} 
\end{minipage}
\end{tabular}
}
\noindent is Cartesian.
\end{itemize}
\end{prop}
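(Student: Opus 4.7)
My plan is first to use the variant construction of \S\ref{sec:var2}. Since $\dot w \in G_0(k)$ satisfies $\det(\dot w) = -1 \in \caO_k^{\times}$ and $\dot w^2 = 1$, both the Kottwitz and Newton points of $[\dot w]_{G_0}$ vanish, giving $[\dot w]_{G_0} = [1]_{G_0}$. Combining Remark \ref{rem:change_b} (which provides a $\underline{G_0(k)}$-equivariant isomorphism $\dot X_{\dot w}^G(1) \cong \dot X_{\dot w}^G(\dot w)$) with the lemma at the end of \S\ref{sec:var2} applied to the basic element $\dot w$, I identify $\dot X_{\dot w}^G(1)$ with $X_{T_{\dot w}, U_{\dot w}}^{G_{\dot w}}$. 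Since $\sigma_{\dot w}(U) = \dot w U \dot w^{-1} = U^{-}$ meets $U$ trivially, the quotient by $L(U \cap \sigma_{\dot w}(U))$ is trivial, and I will work with the clean functorial description
\begin{equation*}
\dot X_{\dot w}^G(1)(R) \;\cong\; \{ g \in LG(R) \,:\, g^{-1}\sigma_{\dot w}(g) \in \sigma_{\dot w}(LU)(R) \},
\end{equation*}
equivariantly for the $\underline{G_0(k)}$-action by left multiplication (via $G_{\dot w} \cong G_0$).

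In this description the subfunctor $X_{\lambda,\caO}$ is exactly the ``integral locus at the vertex $\caG_\lambda$'': since the cocharacter $(\lambda,-2\lambda,\lambda)$ is $w_0$-fixed, $\dot w$ normalizes $\caG_\lambda$ and $\sigma_{\dot w}$ preserves both $L^+\caG_\lambda$ and its closed subscheme $L^+\caU_\lambda$, equipping $\caG_\lambda$ with a $k$-rational form $\caG_{\lambda,k} \subseteq G_{\dot w}$. The inclusion $L^+\caG_\lambda \hookrightarrow LG$ induces a closed immersion $X_{\lambda,\caO} \hookrightarrow \dot X_{\dot w}(1)$, and translating by $\underline{G_0(k)} = \underline{G_{\dot w}(k)}$ (with stabilizer $\caG_{\lambda,k}(\caO_k)$ at the vertex of $\caG_\lambda$) yields the disjoint decomposition in (ii). The strata $X_{\pm\infty}$ are defined directly as pullbacks of the closed sub-ind-schemes $\dot w U_\pm \subseteq \dot w LU$; they capture the cases where, writing $g^{-1}\sigma(g) = \dot w u$ with $u = \bigl(\begin{smallmatrix} 1 & a & c \\ 0 & 1 & b \\ 0 & 0 & 1\end{smallmatrix}\bigr)$, one of $a,b$ vanishes while $c$ stays integral, so that no integral model $\caG_\lambda$ captures $g$.

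To verify that these strata cover $\dot X_{\dot w}(1)$ with pairwise non-trivial intersections, I would argue as follows. For any $R$-point $g$ with associated $u(a,b,c)$, Lemma \ref{lm:units_Witt} combined with the Frobenius equation forces $\mathrm{ord}_\varpi(a) + \mathrm{ord}_\varpi(b) \in \bZ \cup \{\infty\}$ to be locally constant on $\Spec R$. If $a$ (resp.\ $b$) vanishes on a clopen piece, then $g$ lies in $X_{-\infty}$ (resp.\ $X_{\infty}$) there; otherwise the common value of the stratifying invariant singles out a unique $\lambda$ such that, after a $G_0(k)$-translation, $g$ lies in $L^+\caG_\lambda$ and satisfies the $X_{\lambda,\caO}$-condition. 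The non-trivial pairwise intersections arise from the Iwahori-type overlaps $L^+\caG_\lambda \cap L^+\caG_{\lambda'}$ inside $LG$, and from the evident incidences of the boundary strata with nearby $X_\lambda$'s.

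The main obstacle will be the matching step for the $X_\lambda$'s: showing that every $R$-point of $\dot X_{\dot w}(1)$ admits, up to a $G_0(k)$-translation, a canonical integral lift to some $L^+\caG_\lambda$ (or lands in $\dot w U_\pm$). This requires an Iwasawa-style decomposition of $g$ adapted to the Frobenius equation $\sigma(g) = \dot w^{-1} g \dot w u$, together with a verification that the stratifying integer $\mathrm{ord}_\varpi(a) + \mathrm{ord}_\varpi(b)$ is globally locally constant on $\dot X_{\dot w}(1)$, not merely upper semi-continuous. The calculation extends the one-parameter analysis of Theorem \ref{thm:GL2varieties} to a rank-two setting, in which the two simple-root coordinates $a,b$ must be balanced simultaneously against the cocharacter $(\lambda,-2\lambda,\lambda)$, with the additional top-right coordinate $c$ contributing a further integrality bookkeeping that has no analogue in the $\GL_2$ picture.
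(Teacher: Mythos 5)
Your reduction to a Lang--map description is fine and is essentially the paper's first step: since $U\cap wUw^{-1}=1$ for the longest element, Proposition \ref{prop:another_formulation} identifies $\dot X_{\dot w}(1)$ with $\dot S_{\dot w}(1)=\{g\in LG\colon g^{-1}\sigma(g)\in \dot wLU\}$, and your route through \S\ref{sec:var2} differs from this only by the change of variable $g\mapsto hg$ with $h^{-1}\sigma(h)=\dot w$ (the paper performs the same change of variable, but only when proving (ii)). Your sketch of (ii) is also in the right direction, though you should spell out the mechanism: $g^{-1}\sigma_{\dot w}(g)\in\sigma_{\dot w}(L^+\caU_\lambda)\subseteq L^+\caG_\lambda$ forces the coset $gL^+\caG_\lambda$ to be $\sigma_{\dot w}$-fixed, and $(LG_{\dot w}/L^+\caG_\lambda)^{\sigma_{\dot w}}=G_{\dot w}(k)/\caG_{\lambda,k}(\caO_k)$ is discrete; ``translating by $G_{\dot w}(k)$ with stabilizer $\caG_{\lambda,k}(\caO_k)$'' does not by itself explain why every point of $X_\lambda$ meets a rational translate of $L^+\caG_\lambda$.

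The genuine gap is the covering claim, which you flag as ``the main obstacle'' but misdiagnose. Writing $g^{-1}\sigma(g)=\dot w\,u(a,b,c)$, what has to be proved is not that $\ord_\varpi(a)+\ord_\varpi(b)$ is locally constant, but the lower bounds $\ord_\varpi(a)+\ord_\varpi(b)\geq 0$ and $\ord_\varpi(c)\geq 0$; without these, nothing places the point in any stratum, and your proposal says nothing at all about why $c$ must be integral. Local constancy follows from membership in a stratum, it does not produce it. The paper's mechanism, which is absent from your plan, is an isocrystal argument: since $b=1$ the isocrystal $(L^3,\sigma)$ is isoclinic of slope $0$; the relation $\sigma(g)=g\dot wu$ exhibits a column of $g$ as a cyclic vector (with the three cases $b\neq 0$, $a\neq 0$, $a=b=0$ treated separately), and the requirement that the valuations of the coefficients of its characteristic polynomial lie over the trivial Newton polygon yields exactly the two inequalities. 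Finally, your assertion that the common value of $\ord_\varpi(a)+\ord_\varpi(b)$ ``singles out a unique $\lambda$'' misreads the geometry of the statement: membership in $X_\lambda$ requires $\ord_\varpi(a)\geq 3\lambda$ and $\ord_\varpi(b)\geq -3\lambda$ separately (not a condition on the sum), the strata are not disjoint --- the proposition explicitly asserts that any two of them intersect non-trivially --- and no single numerical invariant indexes them.
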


Note that $X_{\pm \infty}$ are closed sub-ind-schemes of $\dot X_{\dot w}(1)$, not representable by schemes, in accordance with Theorem \ref{thm:non_rep}. Moreover, the $1$-truncation (i.e., when $L^+$ is replaced by $L^+_1$; here for an $\caO_k$-scheme $\fX$ and $r\geq 1$, $L^+_r\fX$ is the set-valued functor on $\Perf_{\obF}$, sending $R$ to $\fX(\bW(R)/\varpi^r\bW(R))$) of $X_{\lambda,\caO}$ is isomorphic to the natural torsor over a classical Deligne--Lusztig variety attached to $\GL_3$ over $\bF_q$ and the longest element in the Weyl group.

\begin{proof}
As $U \cap wUw^{-1} = 1$, we have by Proposition \ref{prop:another_formulation}, $\dot X_{\dot w}(1) \cong \dot X_{\dot w}(1)' = \dot S_{\dot w}(1)$, the pull-back of $\dot wLU$ along ${\rm Lang} \colon LG \rar LG$, $g\mapsto g^{-1}\sigma(g)$. Thus we may replace $\dot X_{\dot w}(1)$ by $\dot S_{\dot w}(1)$. We claim that the image of $\dot S_{\dot w}(1)$ under ${\rm Lang}$ is contained in the sub-ind-scheme $\dot w \left(U_+ \cup U_- \cup \bigcup_{\lambda \in \bZ} L^+\caU_\lambda\right)$ of $\dot w LU$. This may be checked on geometric points. Let $\ff\in \Perf_{\obF}$ be an algebraically closed field and let $L = \bW(\ff)[1/\varpi]$. Suppose ${\rm Lang}$ maps $g \in \dot S_{\dot w}(1)(\ff)$ to $u = \dot w \left(\begin{smallmatrix} 1& a & c\\0&1& b \\ 0&0&1 \end{smallmatrix}\right)$ with $a,b,c \in L$. Our claim follows once we show that $\ord_\varpi(a) + \ord_\varpi(b) \geq 0$ and $\ord_\varpi(c) \geq 0$. Towards this, write $g_i$ ($1\leq i\leq 3$) for the $i$th column of $g$. Then $\sigma(g) = g\dot w u$ says that $g_3 = \sigma(g_1)$ and (eliminating $g_3$) we are left with the two equations
\begin{align}
\label{eq:cyclic_equation_GL3_longest} \sigma(g_2) &= g_2 + a\sigma(g_1) \\ 
\nonumber \sigma^2(g_1) &= g_1 + c\sigma(g_1) + bg_2.
\end{align}
Suppose first $b\neq 0$. Using the second equation we can eliminate $g_2$, and the first equation gets
\begin{equation}\label{eq:char_pol_g1_GL3_expl}
\sigma^3(g_1) - (\sigma(c) - b^{-1}\sigma(b))\sigma^2(g_1) + (b^{-1}\sigma(b)c - 1 - \sigma(b)a)\sigma(g_1) + b^{-1}\sigma(b) g_1 = 0.
\end{equation}
Moreover, as $g \in \GL_3(L)$, the columns of $g$ generate the $L$-vector space $L^3$, and this implies (using \eqref{eq:cyclic_equation_GL3_longest} and $g_3 = \sigma(g_1)$) that $g_1,\sigma(g_1),\sigma^2(g_1)$ is a basis of $L^3$. Thus $g_1$ is a generator of the $\ff$-isocrystal $(L^3,\sigma)$, and hence the $\varpi$-adic valuations of the coefficients of the characteristic polynomial \eqref{eq:char_pol_g1_GL3_expl} of $g_1$ lie over the Newton polygon of this isocrystal (cf. e.g. \cite[\S1.1]{Beazley_09}). From this one deduces $\ord_\varpi(a) + \ord_\varpi(b) \geq 0$ and $\ord_\varpi(c) \geq 0$. If $a \neq 0$, one can proceed similarly (this time eliminating $g_1$ via the first equation) to show the same inequalities. Finally, if $a=b=0$, second equation gives $\sigma^2(g_1) = g_1 + c\sigma(g_1)$, and the same argument works with the two-dimensional sub-isocrystal of $L^3$ generated by $g_1$ (it is indeed two-dimensional, as $g\in \GL_3(L)$), and shows that $\ord_\varpi(c)\geq 0$. 
This proves our claim.

Put $X_\lambda := {\rm Lang}^{-1}(\dot wL^+\caU_\lambda)$ and $X_{\pm \infty} := {\rm Lang}^{-1}(\dot w U_{\pm})$. That $\dot X_{\dot w}(1)$ is covered by $X_{\pm\infty}$ and all $X_\lambda$ follows from the above claim (stability under $\underline{G_0(k)}$ is immediate). That each two of $X_{\pm\infty}$, $X_\lambda$ intersect non-trivially is easily checked on geometric points, and (i) follows by construction. Let $\lambda \in \bZ$. Fix some $h \in LG(\obF)$ with $h^{-1}\sigma(h) = \dot w$. Then 
\[ 
X_\lambda \stackrel{\sim}{\rar} X_\lambda' := \{g \in LG \colon g^{-1}\sigma_{\dot w}(g) \in \sigma_{\dot w}(\caU_{\lambda}) \}, \quad g \mapsto h^{-1}g,
\]
transforming the $G_0(k)$-action into $G_{\dot w}(k)$-action via ${\rm Ad}\, h \colon G_{\dot w}(k) \stackrel{\sim}{\rar} G_0(k)$. The smallest $\sigma_{\dot w}$-stable subgroup of $G(\breve k)$ containing $\caU_\lambda(\caO_{\breve k})$ is $\caG_\lambda(\caO_{\breve k})$. Identify $LG_{\dot w}$ with $LG$ (only the geometric Frobenius $\sigma_{\dot w}$ is different). Consider the projection $\pi\colon LG_{\dot w} \rar LG_{\dot w}/L^+\caG_\lambda$. For $R \in \Perf_{\obF}$ and $g \in X_\lambda'(R)$, $\sigma_{\dot w}(g) \in g\sigma_{\dot w}(L^+\caU_\lambda)(R) \subseteq g L^+\caG_\lambda(R)$. Thus $\pi$ maps to $X_\lambda'$ to the discrete set $(LG_{\dot w}/L^+\caG_\lambda)^\sigma = G_{\dot w}(k)/\caG_{\lambda, k}(\caO_k)$, and (ii) follows easily.
\end{proof}

\bibliography{bib_ADLV}{}
\bibliographystyle{alpha}

\end{document}